\documentclass{amsart}

\usepackage{bbold}
\usepackage{amsmath,amssymb,amsbsy,amscd,amsthm,mathrsfs,ulem,
mathtools,multirow,enumitem,
stmaryrd} 

\usepackage[all]{xy}

\theoremstyle{plain}
\newtheorem{thm}{Theorem}[section]
\newtheorem{lem}[thm]{Lemma}
\newtheorem{prop}[thm]{Proposition}
\newtheorem{cor}[thm]{Corollary}

\newtheorem{notation}{Notation}

\newtheorem{llem}{Lemma}

\newtheorem{claim}{Claim}

\theoremstyle{definition}
\newtheorem{definition}[thm]{Definition}

\newtheorem{example}[thm]{Example}

\newtheorem{rem}[thm]{Remark}

\newtheorem*{klaim}{Claim}

\newcommand{\maru}[1]{{\ooalign{\small\hfil#1\/\hfil\crcr
\raise.167ex\hbox{\mathhexbox20D}}}}

\newcommand{\fm }{{\mathfrak{m}}}

\newcommand{\A}{\mathcal{A}}

\newcommand{\E }{{\mathop{\mathcal{E}}\nolimits}}

\renewcommand{\rightsquigarrow }{\mathbin{\sim}}

\newcommand{\gr }{\mathop{\rm gr}\nolimits}
\newcommand{\Der }{\mathop{\rm Der}\nolimits}

\newcommand{\zs}{\{ 0\} }
\newcommand{\sm}{\setminus}

\newcommand{\Aut }{\mathop{\mathrm{Aut}}\nolimits}
\newcommand{\T }{\mathrm{T}}
\newcommand{\D }{\mathrm{D}}

\newcommand{\id}{{\rm id}}

\newcommand{\nd}{\noindent}
\newcommand{\ol}{\overline}

\newcommand{\F}{{\bf F}}
\newcommand{\R}{{\bf R}}
\newcommand{\Q}{{\bf Q}}
\newcommand{\Z}{{\bf Z}}
\newcommand{\N}{{\bf N}}
\newcommand{\ep}{\epsilon}
\newcommand{\x}{{\boldsymbol x}}
\newcommand{\y}{{\boldsymbol y}}
\newcommand{\w}{{\bf w}}
\newcommand{\bv}{{\bf v}}

\newcommand{\ba}{{\boldsymbol a}}

\newcommand{\bi}{{\boldsymbol i}}
\newcommand{\bj}{{\boldsymbol j}}
\newcommand{\bk}{{\boldsymbol k}}

\newcommand{\bu}{{\boldsymbol u}}

\newcommand{\ff}{{\bf f}}
\newcommand{\bg}{{\bf g}}
\newcommand{\bh}{{\bf h}}

\newcommand{\kx}{k[\x]}
\newcommand{\kxr}{k(\x)}

\newcommand{\Rx}{R[\x]}

\newcommand{\Ryy}{R[[\y]]}

\newcommand{\degw  }{\deg}
\newcommand{\degww  }{\mathop{\rm deg}_{\w }\nolimits}

\newcommand{\supp }{\mathop{\rm supp}\nolimits}

\newcommand{\sym}{\mathfrak{S}}

\begin{document}

\title[Nagata's conjecture in positive characteristic]{
Nagata's conjecture on a polynomial automorphism in positive characteristic}

\author{Shigeru Kuroda}
\address{Department of Mathematical Sciences\\
Tokyo Metropolitan University\\
1-1 Minami-Osawa, Hachioji, Tokyo, 192-0397, Japan}
\email{kuroda@tmu.ac.jp}
\thanks{This work is partly supported by JSPS KAKENHI
Grant Numbers 22K03273, 18K03219, 15K04826 and 24740022.}

\subjclass[2020]{Primary 14R10, Secondary 14R20,14G17,13F20}

\keywords{polynomial automorphism, 
tame automorphism, wild automorphism, 
Nagata's conjecture, 
Shestakov-Umirbaev reduction.}

\maketitle

\begin{abstract}
An automorphism of the polynomial ring $k[x_1,\ldots ,x_n]$ 
over a field $k$ is said to be $\textit{tame}$ if it can be 
obtained by composing 
affine automorphisms and elementary automorphisms, 
and {\it wild} otherwise. 
Jung and van der Kulk showed that 
every automorphism of $k[x_1,x_2]$ is tame. 
In 1972, 
Nagata conjectured that a certain automorphism of $k[x_1,x_2,x_3]$ 
is wild. 
In 2003, 
Shestakov and Umirbaev proved this conjecture for $\mathop{\mathrm{char}}k=0$. 
The purpose of this paper is to 
prove the conjecture for $\mathop{\mathrm{char}}k\ge 7$. 
This is the first time that the existence of 
a wild automorphism has been confirmed 
in positive characteristic. 
\end{abstract}

\section{Introduction}\label{sect:intro}
\setcounter{equation}{0}

Let $k$ be a field of characteristic $p\ge 0$, 
$\kx =k[x_1,\ldots ,x_n]$ the polynomial ring in $n$ variables over $k$, 
and $\Aut _k\kx $ the automorphism group of the $k$-algebra $\kx $. 
Write $F\in \Aut _k\kx $ as $F=(f_1,\ldots ,f_n)$, 
where $f_i:=F(x_i)$ for $i=1,\ldots ,n$. 
Then, 
$F$ is said to be {\it affine} if 
$f_1,\ldots ,f_n$ are linear polynomials, 
{\it elementary} if 
$F=(x_1,\ldots ,x_{l-1},x_l+h,x_{l+1},\ldots ,x_n)$ 
for some $1\le l\le n$ 
and $h\in k[\{ x_1,\ldots ,x_n\} \sm \{ x_l\} ]$, 
{\it tame} if $F\in \T _n(k)$, 
and {\it wild} if $F\not\in \T _n(k)$, 
where 
$\T _n(k)$ is the subgroup of $\Aut _k\kx $ 
generated by all affine automorphisms and elementary automorphisms.

In 1942, 
Jung~\cite{Jung} proved that $\Aut _kk[x_1,x_2]=\T _2(k)$ for $p=0$, 
which was later extended to the case $p\ge 0$ by van der Kulk~\cite{Kulk} 
(see also \cite{Makar-Limanov}). 
In 1972, 
Nagata~\cite{Nagata} conjectured that 
$\Aut _kk[x_1,x_2,x_3]\ne \T _3(k)$, 
and presented the following 
as a potential wild automorphism of $k[x_1,x_2,x_3]$: 
\begin{equation}\label{eq:Nagata}
(x_1-2(x_1x_3+x_2^2)x_2-(x_1x_3+x_2^2)^2x_3,
x_2+(x_1x_3+x_2^2)x_3,
x_3). 
\end{equation}
In 2003, 
Shestakov-Umirbaev \cite{SUpoisson,SU} 
settled this 
conjecture in the affirmative for $p=0$. 
This was the first time that the existence of 
a wild automorphism was confirmed. 
The purpose of this paper is to prove 
Nagata's conjecture for $p\ge 7$ 
and establishes the existence of 
a wild automorphism in positive characteristic. 
The question whether 
$\Aut _k\kx =\T _n(k)$ holds for $n\ge 4$ remains open, 
as $F\not\in \T _n(k)$ does not imply 
$(F,x_{n+1},\ldots ,x_{n+r})\not\in \T _{n+r}(k)$ for $r\ge 1$ 
(see, e.g., \cite{BEW,EH,Smith}).

Shestakov and Umirbaev proved 
the wildness of (\ref{eq:Nagata}) using the criterion in \cite{SU}, 
which relies on 
a polynomial degree inequality from \cite{SUpoisson}. 
Later, we generalized the inequality in \cite{SUineq} 
and applied it in \cite{tame3} to improve the criterion 
(see also \cite[Chapter~1]{JC}). 
The assumption $p=0$ plays a vital role in these results. 
Extending these inequalities to $p>0$ 
has remained an open problem for nearly two decades. 
In this paper, 
we resolve this issue by generalizing the inequality of \cite{SUineq} 
to the case $p>0$ (Theorem~\ref{thm:main}). 
This enables us to extend 
the criterion of \cite{tame3} 
to the case $p\ge 7$ (Theorem~\ref{thm:mainSU}), 
yielding the conclusion that 
(\ref{eq:Nagata}) is wild when $p\ge 7$.

The rest of this section provides 
an overview of the main results of this paper.

\begin{notation}\label{notation:intro}\rm 
(i) 
$\N $ denotes the set of all nonnegative integers, 
and $\N _+:=\N \sm \zs $.

\nd (ii) 
For $f,g\in \kx \sm \zs $, 
we write $f\approx g$ 
(resp.\ $f\not\approx g$) if 
$f/g\in k^*$ 
(resp.\ $f/g\not\in k^*$).

\nd (iii) 
$F=(f_1,\ldots ,f_n)$, $G=(g_1,\ldots ,g_n)$, etc., 
denote elements of $\kx ^n$ 
unless otherwise specified. 
For a permutation $\sigma \in \sym _n$, 
we define 
$F_\sigma :=(f_{\sigma (1)},\ldots ,f_{\sigma (n)})$.

\nd (iv) 
Let $T$ be a variable. 
If $P\in \kx [T]$, 
then $P^{(l)}$ denotes the $l$th order derivative for $l\in \N $, 
and $P(g)$ denotes the result of the substitution 
$T\mapsto g$ for $g\in \kx [T]$.

\nd (v) 
When $p$ is a prime, 
we define $v_p(m):=\max \{ d\in \N \mid m\in p^d\Z\} $ 
for $m\in \N _+$. 
\end{notation}

\subsection{Weighted degree and differential}\label{subsetc:w-degree}

Let $\Gamma $ be a 
{\it totally ordered $\Q $-vector space}, 
i.e., a $\Q $-vector space 
equipped with a total ordering such that 
$\alpha \le \beta $ implies 
$\alpha +\gamma \le \beta +\gamma $ 
for all $\alpha ,\beta ,\gamma \in \Gamma $ 
(e,g., $\R $ with the usual ordering). 
We fix a {\it weight} 
$\w =(w_1,\ldots ,w_n)\in (\Gamma _+)^n$, 
where $\Gamma _+:=\{ \gamma \in \Gamma \mid \gamma >0\} $. 
For each $\bi =(i_1,\ldots ,i_n)\in \N ^n$, 
we define 
$\bi \cdot \w :=\sum _{l=1}^ni_lw_l$ 
and $\x ^{\bi }:=x_1^{i_1}\cdots x_n^{i_n}$.

Now, 
let $f=\sum _{\bi \in \N ^n}c_{\bi }\x ^{\bi }\in \kx $, 
where $c_{\bi }\in k$. 
We define the $\w $-{\it degree} of $f$ by 
$\degww  f:=\max \{ \bi \cdot \w \mid \bi \in \supp f\} $, 
where $\supp f:=\{ \bi \in \N ^n\mid c_{\bi }\ne 0\} $ and 
$\max \emptyset :=-\infty $. 
We define $f^{\w }:=\sum _{\bi \in I}c_{\bi }\x ^{\bi }$, 
where $I:=\{ \bi \in \N ^n\mid \bi \cdot \w =\degww f\} $. 
Note that $\degww fg=\degww f+\degww g$ and 
$(fg)^{\w }=f^{\w }g^{\w }$ hold for all $f,g\in \kx $.

When $\Gamma =\R $, 
the $\mathbb{1}:=(1,\ldots ,1)$-degree is the same as the total degree.

\begin{rem}\label{rem:principle}\rm 
For a finite tuple $(f_i)_{i=1}^s\in \kx ^s\sm \zs $, 
we set 
$$
\delta :=\max \{ \degww  f_i\mid i=1,\ldots ,s\} 
\text{ \ and \ }
K:=\{ i \mid i=1,\ldots ,s,\ \degww  f_i=\delta \} . 
$$
Then, 
we have 
``$h:=\sum _{i\in K}f_i^{\w }\ne 0
\Leftrightarrow \degww  (\sum _{i=1}^sf_i)=\delta 
\Rightarrow 
(\sum _{i=1}^sf_i)^{\w }=h$". 
\end{rem}

We say that $\w $ is {\it independent} 
if $w_1,\ldots ,w_n$ are linearly independent over $\Q $ 
(e.g., $\w =(1,\sqrt{2},\sqrt{3})$ with $\Gamma =\R $). 
In this case, 
$f^{\w }$ is a monomial for all $f\in \kx $, 
and 
\begin{equation}\label{eq:approx}
\degww  f=\degww  g\iff f^{\w }\approx g^{\w } 
\end{equation}
holds for all $f,g\in \kx \sm \zs $.

Next, let $\Omega _{\kx /k}$ be the module of 
differentials of $\kx $ over $k$, 
and $\bigwedge ^s\Omega _{\kx /k}$ 
the $s$th exterior power of the $\kx $-module $\Omega _{\kx /k}$, 
where $s\ge 1$. 
We denote by $df$ the differential of $f\in \kx $, 
which satisfies the relation $df=\sum _{i=1}^n(\partial f/\partial x_i)dx_i$.

We use the following notation:

\nd $\bullet $ 
For 
$\bi =(i_1,\ldots ,i_s)
\in \Lambda (n,s):=\{ (i_1,\ldots ,i_s)\in \N ^s\mid 
1\le i_1<\cdots <i_s\le n\} $, 
we define 
$d\x _{\bi }:=dx_{i_1}\wedge \cdots \wedge dx_{i_s}$ 
and $\w _{\bi }:=(w_{i_1},\ldots ,w_{i_s})$.

\nd $\bullet $ 
For $\bv =(v_1,\ldots ,v_s)\in \Gamma ^s$, 
we define $|\bv |:=\sum _{i=1}^sv_i$.

\nd $\bullet $ 
For $\ff =(f_1,\ldots ,f_s)\in \kx ^s$, 
we define 
$\degww \ff :=\sum _{i=1}^s\degww f_i$ and 
\begin{equation}\label{eq:dff}
d\ff :=df_1\wedge \cdots \wedge df_s
=\sum _{\bi \in \Lambda (n,s)}
\left|\frac{\partial \ff }{\partial \x _{\bi }}\right|
d\x _{\bi }, \text{ where }
\left|\frac{\partial \ff }{\partial \x _{\bi }}\right|
:=\det \left(
\dfrac{\partial f_j}{\partial x_{i_l}}
\right)_{j,l}. 
\end{equation}

Now, 
let $\omega \in \bigwedge ^s\Omega _{\kx /k}$. 
Then, 
we can uniquely express 
$\omega 
=\sum _{\bi \in \Lambda (n,s)}f_{\bi }d\x _{\bi }$, 
where $f_{\bi }\in \kx $. 
We define 
$\degww  f_{\bi }d\x _{\bi }:=\degww  f_{\bi }+|\w _{\bi }|$ 
for each $\bi \in \Lambda (n,s)$, 
and 
\begin{equation}\label{eq:deg omega}
\degww  \omega :=\max \{ \degww  f_{\bi }d\x _{\bi }
\mid \bi \in \Lambda (n,s)\} . 
\end{equation}
Note that $\degww  \omega >0$ 
unless $\omega =0$, 
since $|\w _{\bi }|>0$ for all $\bi $. 
Moreover, we have

\nd 
(1) $\degww f\omega =\degww f+\degww \omega $ 
for all $f\in \kx $ 
and $\omega \in \bigwedge ^s\Omega _{\kx /k}$;

\nd 
(2) $\degww  (\omega +\omega ')\le \max \{ 
\degww  \omega ,\degww  \omega '\} $ 
for all 
$\omega ,\omega '\in \bigwedge ^s\Omega _{\kx /k}$; 

\nd 
(3) 
$\degww  \omega _1\wedge \omega _2
\le \degww  \omega _1+\degww  \omega _2$ 
for all $\omega _1\in \bigwedge ^{s_1}\Omega _{\kx /k}$ and 
$\omega _2\in \bigwedge ^{s_2}\Omega _{\kx /k}$, 
where $s_1,s_2\ge 1$.

\begin{rem}\label{rem:independence}\rm 
Let $\ff =(f_1,\ldots ,f_s)\in \kx ^s$ and $f\in \kx $. 

\nd (i) 
If $d\ff \ne 0$, then 
$f_1,\ldots ,f_s$ are 
algebraically independent over $k$ 
(see \cite[\S 26]{Matsumura}).

\nd (ii) 
By definition, we have 
$\deg df=\max \{ \degww \partial f/\partial x_i+w_i
\mid i=1,\ldots ,n\} \le \deg f$. 
If $p=0$, 
then equality holds unless $f\in k$, 
which is not 
always the case if $p>0$.

\nd (iii) 
By (3) and (ii), 
we have 
$\degww d\ff \le 
\degww f_1+\degww df_2\wedge \cdots \wedge df_s\le 
\degww \ff $.

\nd (iv) $\degww \ff = \degww d\ff $ holds if and only if 
$df_1^{\w }\wedge \cdots \wedge df_s^{\w }\ne 0$. 
Hence, 
if $f_1^{\w },\ldots ,f_s^{\w }$ are algebraically dependent over $k$, 
then we have $\degww \ff >\degww d\ff $ by (i). 
\end{rem}

Let $F\in \Aut _k\kx $. 
Then, 
we have $|\partial F/\partial \x |\in k^*$. 
Since $dF =|\partial F/\partial \x |d\x $ by (\ref{eq:dff}), 
we obtain $\degww  dF=|\w |$. 
Hence, 
it follows from Remark~\ref{rem:independence} (iii) that 
\begin{equation}\label{eq:deg F}
\degww  F:=\degww f_1+\cdots +\degww f_n\ge |\w |. 
\end{equation}

\subsection{Wildness criterion}\label{subsetc:criterion}

The composition of $F,G\in \Aut _k\kx $ is given by 
$$
FG=(g_1(f_1,\ldots ,f_n),\ldots ,g_n(f_1,\ldots ,f_n)),
$$
since $x_i\stackrel{G}{\mapsto }
g_i\stackrel{F}{\mapsto }
F(g_i)=g_i(F(x_1),\ldots ,F(x_n))
=g_i(f_1,\ldots ,f_n)$ 
for each $i$. 
For example, 
if $G=(x_1+x_2^7x_3,x_2,\ldots ,x_n)$, 
then we have $FG=(f_1+f_2^7f_3,f_2,\ldots ,f_n)$.

\begin{definition}\label{defn:ER}\rm
We say that $F\in \Aut _k\kx $ admits an 
{\it elementary reduction} 
if there exists an elementary automorphism $E\in \Aut _k\kx $ 
such that $\degww  FE<\degww  F$. 
\end{definition}

By (\ref{eq:deg F}), 
$\deg _{\mathbb{1}}F\ge |\mathbb{1}|=n$ 
holds for all $F\in \Aut _k\kx $, 
with equality holding if and only if $F$ is affine. 
If $F\in \Aut _kk[x_1,x_2]$ is not affine, 
then $F$ admits an elementary reduction 
with $\w =\mathbb{1}$ 
(see, e.g., \cite[\S ~6.9]{Cohn}). 
This easily yields $\Aut _kk[x_1,x_2]=\T _2(k)$. 
When $p=0$, 
Shestakov-Umirbaev~\cite[Example 1]{SU} discovered $F\in \T _3(k)$ 
that is not affine 
and does not admit an elementary reduction with $\w =\mathbb{1}$. 
This type of $F$, however, 
satisfies $\deg_{\mathbb{1}}FE_1E_2<\deg_{\mathbb{1}}F$ 
for some elementary automorphisms $E_1$ and $E_2$, 
and is said to admit a {\it reduction of type I}. 
They also defined the concepts of 
{\it reductions of types II, III, and IV} 
due to theoretical necessity, 
and proved that if $F\in \T _3(k)$ is not affine, 
then $F$ admits an elementary reduction 
with $\w =\mathbb{1}$ 
or a reduction of type 
I, II, III, or IV 
(see \cite[Theorem 1]{SU}). 
They concluded that (\ref{eq:Nagata}) is wild, 
as it fails to admit any of these reductions.

In the rest of \S ~\ref{subsetc:criterion}, 
we assume that $n=3$. 
In our previous work, 
we defined the following concept, which reformulates 
the reductions of types I, II, and III.

\begin{definition}[{\cite{tame3}}]\label{defn:SUC}\rm
(i) We call $(F,G)\in (\Aut _k\kx )^2$ 
a {\it Shestakov-Umirbaev pair} 
({\it SU pair}) 
if the following conditions hold:

\renewcommand{\theenumi}{\roman{enumi}}

\nd 
\begin{enumerate}[leftmargin=12mm]

\item [(SU1)]
$g_1\in f_1+kf_3^2+kf_3$, 
$g_2\in f_2+kf_3$, and $g_3\in f_3+k[g_1,g_2]$; 

\item [(SU2)] $\deg _{\w }f_1\leq \deg _{\w }g_1$ 
and $\deg _{\w }f_2=\deg _{\w }g_2$; 

\item [(SU3)] $(g_1^{\w })^2\approx (g_2^{\w })^s$ 
for some odd $s\geq 3$; 

\item [(SU4)] $\deg _{\w }f_3\leq \deg _{\w }g_1$ 
and $f_3^{\w }\not\in k[g_1^{\w }, g_2^{\w }]$; 

\item [(SU5)] 
$\deg _{\w }f_3>\deg _{\w }g_3$; 

\item [(SU6)] 
$\deg _{\w }g_3<\deg _{\w }g_1-\deg _{\w }g_2 
+\deg _{\w }dg_1\wedge dg_2$. 

\end{enumerate}

\nd (ii) 
We say that $F\in \Aut _k\kx $ admits a 
{\it Shestakov-Umirbaev reduction} 
({\it SU reduction}) 
if there exist $\sigma \in \sym _3$ and $G\in \Aut _k\kx $ such that 
$(F_{\sigma },G)$ is an SU pair. 
\end{definition}

The following theorem refines and generalizes 
the Shestakov-Umirbaev criterion \cite[Theorem~1]{SU} 
mentioned above.

\begin{thm}[{\cite[Theorem 2.1]{tame3}}]\label{thm:ptame3}
Assume that $p=0$. 
Let $F\in \T _3(k)$ be such that $\deg _{\w }F>|\w |$. 
Then, $F$ admits an elementary reduction or an SU reduction. 
\end{thm}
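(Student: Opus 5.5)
This theorem is quoted from \cite[Theorem 2.1]{tame3}, so within the present paper one would simply cite it. If I were to reprove it, I would follow the Shestakov--Umirbaev strategy, adapted to an arbitrary weight $\w$.

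\textbf{Reduction to a single elementary step.} Write $F\in \T _3(k)$ as a product of affine and elementary automorphisms. Argue by induction on the length of such a factorization, together with the value $\degww F$; this is a well-ordered induction once one restricts to the finitely many degrees below a given bound for fixed $\w$. Since $\degww F>|\w |$, $F$ is not affine. Choose a factorization $F=HE$ with $E$ elementary, where $H$ either satisfies $\degww H=|\w |$ or, by induction, admits an elementary reduction or an SU reduction. The aim is to show that $F$ itself admits one of these reductions. The case where $\degww FE^{-1}<\degww F$ is immediate, so the substantive situation is when multiplying by $E^{-1}$ does not lower the degree.

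\textbf{The key tool.} The central input is the Shestakov--Umirbaev degree inequality in its generalized form from \cite{SUineq}, valid for $p=0$. For algebraically independent $f,g$ and $\phi\in k[f,g]$ it gives
\[
\degww \phi(f,g)\ \ge\ \degww \phi\cdot\bigl(\text{expected }\w\text{-degree}\bigr)-(\text{correction involving }\degww df\wedge dg),
\]
which bounds from below the degree of an element of $k[f,g]$ in terms of its $\w$-degree with respect to $T$. The cancellation in leading forms is controlled by $df\wedge dg$, and $p=0$ guarantees $\deg df=\deg f$ for nonconstant $f$ (Remark~\ref{rem:independence}~(ii)). Applied to the component $f_l+h(f_i,f_j)$ that $E$ changes, the inequality forces the following. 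Either the leading forms $f_i^{\w}$ and $f_j^{\w}$ are algebraically independent, in which case a degree comparison yields an elementary reduction. Or $(f_i^{\w})^a\approx (f_j^{\w})^b$ with specific small exponent ratios, and this is the source of the conditions (SU3) and (SU6).

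\textbf{Case analysis.} One then examines how an SU reduction of $H$ (if present) interacts with $E$. Using the inequality repeatedly, one shows the degrees must fit the pattern $\degww g_1:\degww g_2 = s:2$ with $s$ odd. The resulting quadratic relation $g_1\in f_1+kf_3^2+kf_3$ comes from a second-order term in a Taylor expansion of $h$, and from this one reads off the SU pair conditions (SU1)--(SU6) for a suitable permutation $F_\sigma$.

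\textbf{Main obstacle.} I expect the hardest step to be this exhaustive case analysis, in particular excluding the ``type IV'' configurations and verifying (SU6). That requires sharp use of the inequality and of $\degww d\ff$ versus $\degww \ff$, which is precisely where $p=0$ enters, through the Taylor expansion $P(f+g)=\sum P^{(l)}(f)g^l/l!$ and the equality $\deg df=\deg f$.
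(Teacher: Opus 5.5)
Within the paper this theorem is only cited. Your reduction to the case $\deg_{\w}HE\le\deg_{\w}H$ is the right one (compare Example~\ref{exc:AAA} and Proposition~\ref{prop:key}), but the induction you set up does not close.

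\textbf{The gap: the inductive invariant is too weak.} Your hypothesis on $H$ is just the conclusion of the theorem: $H$ admits \emph{one} elementary or SU reduction $H\to H'$. That says nothing about $H'$. To treat $HE$, however, you must know that $H'$ is itself reducible, and so are the further automorphisms of smaller degree reached from it by elementary moves. Induction on the length of a factorization cannot supply this, since $H'$ has no controlled factorization length.

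\textbf{What the argument actually carries.} The working proof is the one in \S\ref{sect:proof} for $p\ge 7$, which the paper says follows the $p=0$ line of \cite[\S 1.2]{JC}. It uses a much stronger invariant: the class $\mathcal{A}$ of automorphisms having a \emph{complete} reduction sequence down to degree $|\w|$, each step an elementary reduction or a \emph{weak} SU pair.
\begin{itemize}
\item One proves that $F\in\mathcal{A}$, $E\in\mathcal{E}$ and $\deg_{\w}FE\le\deg_{\w}F$ imply $FE\in\mathcal{A}$. The induction is on $\deg_{\w}F$ over the well-ordered set of Remark~\ref{rem:ch2}~(iii), run simultaneously with the auxiliary Proposition~\ref{prop:key2}.
\item Only afterwards does factorization length enter, and trivially: $\T_3(k)={\rm D}_3(k){\rm E}_3(k)\subset\mathcal{A}{\rm E}_3(k)\subset\mathcal{A}$. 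Remark~\ref{rem:A}~(iv), (v) then yields the theorem.
\item Full reducibility is used essentially. In the case $3\in I_F$, verifying (SU6) for $(F,G)$ with $G=(f_1,f_2,g_3)$ requires applying Proposition~\ref{prop:key2} to the lower-degree $G$, hence $G\in\mathcal{A}$. The inductive paths of Remark~\ref{rem:induction} also need the closure property at all smaller degrees.
\item Weak SU pairs are needed because SU pairs are not stable under $F\mapsto FE$ (Lemma~\ref{prop:c1-c6}). Lemma~\ref{prop:equivalence} restores an SU pair only at the end.
\end{itemize}

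\textbf{Secondary issues.}
\begin{itemize}
\item The case analysis you correctly identify as the heart of the matter is left entirely unspecified. Its content is Lemmas~\ref{prop:equivalence}--\ref{prop:c1-c6} and their $p=0$ counterparts.
\item Algebraic independence of $f_i^{\w},f_j^{\w}$ does not by itself give an elementary reduction of $F$. You still need the reduction structure of $H$.
\item The shape $g_1\in f_1+af_3^2+cf_3+\psi$ does not come from a Taylor expansion. It comes from the degree bound $\deg^{S_1}\phi\le s\delta$ plus degree counting, as in Claims~\ref{claim:S1} and~\ref{claim:g1}.
\item For a general totally ordered $\Gamma$ there need not be finitely many degrees below a given bound (take $\Gamma$ non-archimedean). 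What holds, and suffices, is that $\{\sum_l i_lw_l\mid i_l\in\N\}$ is well-ordered.
\end{itemize}
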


The main result of this paper is as follows.

\begin{thm}\label{thm:mainSU}
Assume that $p\ge 7$. 
Let $F\in \T _3(k)$ be such that $\deg _{\w }F>|\w |$. 
Then, $F$ admits an elementary reduction or 
an SU reduction. 
\end{thm}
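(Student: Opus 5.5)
We follow the structure of the proof of Theorem~\ref{thm:ptame3} in \cite{tame3} and isolate exactly where $p=0$ was used. We argue by induction on the length of a tame decomposition. Write $F=HE$ with $E$ elementary (or affine) and $H\in \T _3(k)$ having a shorter decomposition. We also assume, as induction hypothesis, that every $H$ with $\degww H>|\w |$ admits an elementary or SU reduction. Then we analyze how $\degww F$ compares with $\degww H$. If $\degww F<\degww H$ fails while $\degww F>|\w |$, we must produce either an elementary reduction of $F$ or an SU pair $(F_\sigma ,G)$. As in \cite{tame3}, this reduces to a finite list of ``degree configurations'' of a triple $(f_1,f_2,f_3)$ in which the $\w $-leading forms $f_i^{\w }$ are algebraically dependent. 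Following \cite{SU,tame3}, we first reduce to the case of an independent weight $\w $ via a perturbation argument. Then $f_i^{\w }$ are monomials, and (\ref{eq:approx}) converts all degree coincidences into relations $f^{\w }\approx g^{\w }$.

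The analytic heart of the argument is the Shestakov--Umirbaev type inequality. For $f,g$ algebraically independent and $\phi \in k[f,g]$ (more generally for $P\in k[\x ][T]$ evaluated at $g$), it bounds $\degww \phi $ from below in terms of $\degww f,\degww g$ and $\degww df\wedge dg$. The paper announces this as Theorem~\ref{thm:main}, the generalization of \cite{SUineq} to $p>0$, and we take it as given. The new feature for $p>0$ is twofold. First, $\degww dh$ may be strictly less than $\degww h$ (Remark~\ref{rem:independence}(ii)). Second, the inequality must be stated using derivatives $P^{(l)}$ with $l$ controlled by $v_p$ of the relevant exponent. We therefore redo every place in \cite{tame3} where the inequality is applied. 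In each, the relevant exponents come from relations such as (SU3), i.e.\ $(g_1^{\w })^2\approx (g_2^{\w })^s$, and from the small integers ($\le 6$ or so) that appear in bounding $\degww g_3$ and in the case analysis of types I--III. The hypothesis $p\ge 7$ should guarantee that these small integers are prime to $p$. Then the correction terms $v_p(\cdot )$ vanish and the $p>0$ inequality coincides with the characteristic-zero one in every case needed.

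The concrete steps are as follows. (1) Reduce to independent $\w $. (2) Fix the inductive setup and the lemmas of \cite{tame3} describing $(F,G)$ when $F$ admits no elementary reduction; these lemmas are purely combinatorial and hold in any characteristic. (3) Reprove the key degree estimates (the analogues of (SU6) and of the bound $\degww f_3\le \degww g_1$) using Theorem~\ref{thm:main}. At each application, check that the $p$-adic correction is trivial because the relevant exponent $m$ satisfies $m<7$ or $p\nmid m$. Where $p$ might divide $s$ in (SU3), we must instead exploit that $\degww dg_1\wedge dg_2$ can drop. (4) Conclude that the chain of reductions in \cite{tame3} goes through, giving an elementary or SU reduction of $F$.

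The main obstacle is step (3), specifically the case where the odd integer $s$ in (SU3), or a degree ratio $\degww f_i/\degww f_j$, is divisible by $p$. Here $p$-th powers can make $df^{\w }$ vanish and destroy the characteristic-zero estimates. I would first try to show that such configurations force an elementary reduction directly, for instance by replacing $f_i$ with $f_i-cf_j^{m}$. Failing that, I would use the $v_p$-refined form of Theorem~\ref{thm:main} to show the offending term has degree too small to matter.
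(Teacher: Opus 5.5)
\textbf{The main gap.} Your plan rests on the claim that $p\ge 7$ makes every exponent entering Theorem~\ref{thm:main} prime to $p$, so that the correction $v_p(m!)\Lambda _1$ vanishes and the argument of \cite{tame3} runs verbatim. This claim is false, and it is exactly where the new work lies.

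The relevant multiplicity is $m_G^F=m_{\w }^{g_2,g_1}(\phi _3)$ with $\phi _3=f_3-g_3\in k[g_1,g_2]$. It is not a priori small. Bounding it is itself an output of the $p$-adic inequality: Theorem~\ref{thm:pSUineq} forces $\xi _p(a,b,m)<b$, and this is analyzed in Proposition~\ref{thm:admissible}. One thing $p\ge 7$ buys is $\lambda _p(2,3)=p-6>0$, so no pair is singular and $m$ is bounded; for $p=5$, $m=5^e$ cannot be excluded.

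At $p=7$, however, $\xi _7(2,3,7)=2<3$, so $(s,m_G^F)=(3,7)$ survives, and there $v_7(7!)=1\ne 0$. The case $p\mid s$ also occurs; the paper even constructs an SU pair with $s=p$. So neither configuration can be discarded; both must be carried through the whole theory of weak SU pairs.

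Even for $m<p$, the characteristic-zero lemmas use $\deg dh=\deg h$ throughout. Here you must track $\ep (f)$. For instance, $\deg dg_1-\deg dg_2=\deg g_1-\deg g_2$ comes from Lemma~\ref{lem:deg P'(g)}~(iii), and only when $p\nmid s$. Hence the lemmas on weak SU pairs are not ``purely combinatorial.'' The paper has to:
\begin{itemize}
\item reprove them with separate ps and pm7 analyses (Claims~\ref{claim:A}--\ref{claim:S2});
\item use Theorem~\ref{thm:modification}~(ii) in the pm7.1 case to obtain $\gamma _1>s\delta $;
\item replace the characteristic-zero structure lemma by Lemma~\ref{prop:structure2}, whose proof uses Theorems~\ref{prop:Exc27,28} and~\ref{ex:total} to build $f_1'$ with (2$^\star$) and (3$^\star$).
\end{itemize}
Your step (3) names $p\mid s$ only as an obstacle with two speculative fallbacks, and it misses the case $m=p=7$ altogether.

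\textbf{Two structural steps also fail as stated.}

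First, the reduction to an independent weight is neither done in the paper nor justified. A strict degree drop for a perturbed weight yields only a non-strict drop for $\w $, and (SU1)--(SU6) do not transfer back to $\w $. The paper treats general $\w $ directly. Claim r in \S~\ref{subsect:kouhan} and Lemma~\ref{prop:c1-c6}~(ii)(2) are precisely the extra cases, which become vacuous only when $\w $ is independent.

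Second, induction on the length of a tame decomposition, with the hypothesis ``$H$ admits an elementary or SU reduction,'' does not propagate to $F=HE$. What you need is the following:
\begin{itemize}
\item the class $\mathcal{A}$ of automorphisms admitting reduction sequences, in which weak SU pairs are allowed as steps;
\item a proof that $\mathcal{A}\E \subset \mathcal{A}$ (Proposition~\ref{prop:key}), by simultaneous induction on $\deg F$ together with Proposition~\ref{prop:key2}, using Lemmas~\ref{prop:equivalence}--\ref{prop:c1-c6}.
\end{itemize}
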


Using Theorem~\ref{thm:ptame3}, 
we proved that various families of 
elements of $\Aut _k\kx $ 
are wild when $p=0$ 
(see \cite{wild3, Sugaku, GABA}). 
Theorem~\ref{thm:mainSU} implies that 
they are also wild when $p\ge 7$ 
if their constructions are independent of the characteristic. 
The wildness of (\ref{eq:Nagata}) for $p\ge 7$ can be verified 
by applying Theorem~\ref{thm:ptame3} with an independent 
weight $\w $, 
exactly as in the case $p=0$ \cite[p.\ 79]{tame3} 
(see also \cite[p.\,11, Exercise 11]{JC}). 
When $\w $ is independent, 
the proof of Theorem~\ref{thm:mainSU} 
simplifies due to (\ref{eq:approx}) 
(see \S\S ~\ref{subsect:kouhan} and~\ref{subsect:pf of lem 3}).

\subsection{Shestakov-Umirbaev type inequality}\label{subsect:SUineq}

Take $g\in \kx \sm k$ and  
\begin{equation}\label{eq:Pick any P}
P=\sum _{i\ge 0}p_iT^i\in \kx [T]\sm \zs , 
\text{ where }p_i\in \kx . 
\end{equation}
Set $\w _g:=(\w ,\degww  g)\in (\Gamma _+)^{n+1}$ 
and define the $\w _g$-degree 
of $P$ by regarding $T$ as the $(n+1)$-st variable. 
Then, 
since $\supp P=\bigsqcup _{i\ge 0}\supp p_iT^i$, 
we have 
\begin{equation}\label{eq:degw_gP}
\deg _{\w _g}P
=\max \{ \deg _{\w _g}p_iT^i\mid i\in \N \} 
=\max \{ \degww  p_ig^i\mid i\in \N \} , 
\end{equation}
which is at least 
the $\w $-degree of $P(g)=\sum _{i\ge 0}p_ig^i$. 
We also have 
\begin{equation}\label{eq:P(T)^w_g1}
P^{\w _g}=\sum _{i\in I}p_i^{\w }T^i, 
\quad \text{where}\quad 
I:=\{ i\in \N \mid \degww  p_ig^i=\deg _{\w _g}P\} . 
\end{equation}
We now define $m_{\w }^g(P)\in \N $ to 
be the multiplicity of 
the roof $g^{\w }$ in $P^{\w _g}$, 
i.e., the number such that 
\begin{equation}\label{eq:P(T)^w_g2}
P^{\w _g}=(T-g^{\w })^{m_{\w }^g(P)}P_0
\text{ \ for some \ $P_0\in \kx [T]$ \ with \ $P_0(g^{\w })\ne 0$}.
\end{equation}

\begin{example}\label{ex:setting}
Let $\w :=\mathbb{1}$, 
$g:=x_1^3+x_2$, 
and $P:=T^4-2(f^3+f^2)T^2+f^4T+f^6$, 
where $f:=x_1^2+x_2$. 
Then, 
we have $\w _g=(\mathbb{1},3)$, 
$\deg _{\w _g}P=12$, $I=\{ 4,2,0\} $, 
and 
$P^{\w _g}=T^4-2x_1^6T^2+x_1^{12}=(T^2-x_1^6)^2$. 
Since $g_1^{\w }=x_1^3$, 
we see that $m_{\w }^g(P)=4$ if $p=2$, 
and $m_{\w }^g(P)=2$ otherwise. 
\end{example}

\begin{rem}\label{rem:m(P)=0}\rm 
(i) 
``$m_{\w }^g(P)=0\Leftrightarrow P^{\w _g}(g^{\w })\ne 0$" 
holds by definition.

\nd (ii) 
``$P^{\w _g}(g^{\w })\ne 0
\Leftrightarrow \degww  P(g)=\deg _{\w _g}P
\Rightarrow P(g)^{\w }=P^{\w _g}(g^{\w })$" 
holds. 
In fact, 
we have $P^{\w _g}(g^{\w })
=\sum _{i\in I}p_i^{\w }(g^{\w })^i
=\sum _{i\in I}(p_ig^i)^{\w }$ by (\ref{eq:P(T)^w_g1}). 
Hence, 
in view of (\ref{eq:degw_gP}), 
the assertion follows from 
Remark~\ref{rem:principle} applied to $(p_ig^i)_{i\ge 0} $. 
\end{rem}

Now, 
let $\ff =(f_1,\ldots ,f_r)\in \kx ^r$ and $g\in \kx $ 
be such that $d\ff \wedge dg\ne 0$. 
Then, 
$g$ is transcendental over $k[\ff ]:=k[f_1,\ldots ,f_r]$ 
by Remark~\ref{rem:independence} (i). 
We define 
\begin{equation}\label{eq:Lambdas}
\begin{aligned}
\Lambda _0&:=
\degww  d\ff +\degww  g-\degww  d\ff \wedge dg,\\ 
\Lambda _1&:=
\degww \ff +\degww g-\degww  d\ff \wedge dg.
\end{aligned}
\end{equation}
By Remark~\ref{rem:independence} (iii), 
we have $\Lambda _1\ge \Lambda _0\ge 0$.

The following theorem generalizes 
the Shestakov-Umirbaev inequality 
\cite[Theorem~3]{SUpoisson} 
and provides the foundation for 
Theorem~\ref{thm:ptame3} 
(see also \cite{EMW,MY,Vene}).

\begin{thm}[{\cite[Theorem 2.1]{SUineq}}]\label{thm:0SUineq}
Assume that $p=0$. 
Let $(\ff ,g)\in \kx ^r\times \kx $ 
be such that $d\ff \wedge dg\ne 0$. 
Then, for every $P\in k[\ff ][T]\sm \zs $, we have 
$$
\degww  P(g)\ge \deg _{\w _g}P-m_{\w }^g(P)\Lambda _0. 
$$
\end{thm}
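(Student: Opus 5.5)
We argue by induction on $m:=m_{\w }^g(P)$, differentiating $P$ in $T$ to lower the multiplicity by one at each step. If $m=0$, then by Remark~\ref{rem:m(P)=0} we have $\degww P(g)=\deg _{\w _g}P$, and the inequality is immediate since $\Lambda _0\ge 0$. Now let $m\ge 1$ and set $Q:=P'\in k[\ff ][T]$, the derivative in $T$. Because $p=0$, differentiating $P^{\w _g}=(T-g^{\w })^mP_0$ gives $Q^{\w _g}=(P^{\w _g})'$ whenever the latter is nonzero. It is nonzero because $m\ge 1$ forces $P^{\w _g}\notin \kx $. Hence $\deg _{\w _g}Q=\deg _{\w _g}P-\degww g$ and $m_{\w }^g(Q)=m-1$, where the second equality again uses $\mathrm{char}=0$. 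The induction hypothesis then gives
$$\degww Q(g)\ge \deg _{\w _g}P-\degww g-(m-1)\Lambda _0 .$$

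The key step relates $P(g)$ to $Q(g)$ through differentials. Write $P=\sum _ip_iT^i$ with $p_i\in k[\ff ]$. Since each $dp_i$ lies in the span of $df_1,\ldots ,df_r$, we have $dp_i\wedge d\ff =0$. Therefore
$$d(P(g))\wedge d\ff =\sum _ip_i\,d(g^i)\wedge d\ff =Q(g)\,dg\wedge d\ff .$$
Taking $\w $-degrees with properties (1) and (3) and Remark~\ref{rem:independence}(ii) yields
$$\degww Q(g)+\degww dg\wedge d\ff \le \degww d(P(g))+\degww d\ff \le \degww P(g)+\degww d\ff .$$
Rearranging, $\degww P(g)\ge \degww Q(g)+\degww g-\Lambda _0$. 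Substituting the induction bound gives $\degww P(g)\ge \deg _{\w _g}P-m\Lambda _0$, which completes the induction.

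The main point to check is the bookkeeping for $Q$: that $Q^{\w _g}=(P^{\w _g})'$ and that the multiplicity drops by exactly one, not more. Both use $p=0$, through the nonvanishing of $i\,p_i^{\w }$ and of the factor $m$. A drop of more than one would not break the argument, since a smaller multiplicity only strengthens the bound, but it is cleaner to verify equality. The degree of $Q$ is exactly $\deg _{\w _g}P-\degww g$ because the top terms $i\,p_i^{\w }T^{i-1}$ for $i\in I$, $i\ge 1$, do not all vanish. The inequality $\degww d(P(g))\le \degww P(g)$ from Remark~\ref{rem:independence}(ii) is what makes $\Lambda _0$, rather than $\Lambda _1$, appear. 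We also need $Q(g)\ne 0$, which holds because $g$ is transcendental over $k[\ff ]$ and $Q\ne 0$.
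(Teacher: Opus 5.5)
Your proof is correct. The identity $d(P(g))\wedge d\ff =P^{(1)}(g)\,dg\wedge d\ff $ and the bound $\deg d(P(g))\le \deg P(g)$ give the one-step inequality $\deg P(g)\ge \deg P^{(1)}(g)+\deg g-\Lambda _0$ for every $P$. Since $p=0$, Lemma~\ref{lem:p nmid m(P)} gives $\deg _{\w _g}P^{(1)}=\deg _{\w _g}P-\deg g$ and $m_{\w }^g(P^{(1)})=m_{\w }^g(P)-1$, so the induction closes.

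This is not the paper's route. The paper quotes Theorem~\ref{thm:0SUineq} from its source without proof. Its own method, built for Theorem~\ref{thm:main}, is Theorem~\ref{thm:fundamental}:
\begin{itemize}
\item build a map $E$ on all of $\kx $ fixing $f_1,\ldots ,f_r$ and sending $g\mapsto g+T$, namely the exponential of the normalized Jacobian derivation $\Delta $, computed on a characteristic-$0$ lift when $p>0$;
\item bound the Taylor coefficients $E_l(x_i)$ of the variables (Lemma~\ref{lem:estimation}) and extend the bound to all of $\kx $ via Lemma~\ref{lem:E};
\item read off the single coefficient $E_m(P(g))$, whose degree is pinned by $P^{\w _g}(g^{\w }+T)=T^{m}P_0(g^{\w }+T)$.
\end{itemize}

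For $p=0$ the two routes are closely linked. There $E_l(P(g))=P^{(l)}(g)/l!$, and your one-step inequality, iterated, is precisely condition (F) with $\nu (l)=l(\deg d\ff -\deg d\ff \wedge dg)$. Your version is shorter: it works inside $k[\ff ][g]$ and needs no map on $\kx $ and no bounds on $\Delta ^l(x_i)$. It also matches the paper's remark that for $p=0$ the multiplicity is the least $l$ with $\deg _{\w _g}P^{(l)}=\deg P^{(l)}(g)$.

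What the paper's route buys is characteristic $p$. There your induction fails exactly when $p\mid m$: then $(P^{\w _g})^{(1)}=(T-g^{\w })^{m}P_0^{(1)}$ is either $0$ or still has multiplicity at least $m$. So the multiplicity need not drop, and $\deg _{\w _g}P^{(1)}$ may fall by more than $\deg g$. The paper avoids derivatives altogether by working with the Taylor coefficients $E_l$, which exist in any characteristic. It computes them through a characteristic-$0$ lift and division by $l!$, which is where the extra term $v_p(m!)\Lambda _1$ comes from.
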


If $p=0$, 
then 
$m_{\w }^g(P)
=\min \{ l\in \N \mid \deg _{\w _g}P^{(l)}=\degww  P^{(l)}(g)\} $ 
holds (see \cite[Lemma 3.1~(ii)]{SUineq}). 
We note that \cite{SUineq} takes this as a definition of $m_{\w }^g(P)$.

The following theorem is the key result that forms the basis of this paper.

\begin{thm}\label{thm:main}
Assume that $p>0$. 
Let $(\ff ,g)\in \kx ^r\times \kx $ be such that $d\ff \wedge dg\ne 0$. 
Then, for every $P\in k[\ff ][T]\sm \zs $, we have 
\begin{equation}\label{eq:pSUineq general}
\degww  P(g)\ge 
\deg _{\w _g}P-m_{\w }^g(P)\Lambda _0-v_p(m_{\w }^g(P)!)\Lambda _1. 
\end{equation}
\end{thm}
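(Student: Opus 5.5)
We would prove (\ref{eq:pSUineq general}) by induction on $m:=m_{\w }^g(P)$, following the scheme of \cite{SUineq} but replacing ordinary derivatives $P^{(l)}$ by Hasse derivatives wherever division by $l!$ is needed. The factor $v_p(m!)$ should then record how often we are forced to extract a $p$th power, which costs an extra $\Lambda _1$ each time.

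\textbf{Base case.} If $m=0$, then by Remark~\ref{rem:m(P)=0} we have $\degww P(g)=\deg _{\w _g}P$, and the bound is immediate.

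\textbf{First key step (one derivative).} We reproduce the basic estimate of \cite{SUineq}, which we expect to hold in any characteristic. For $Q\in k[\ff ][T]$ with $Q'\ne 0$,
$$
d\ff \wedge dQ(g)=Q'(g)\,d\ff \wedge dg ,
$$
since the coefficients of $Q$ lie in $k[\ff ]$ and their differentials are killed by $d\ff $. Comparing $\w $-degrees via property (3) and Remark~\ref{rem:independence}(iii) gives
$$
\degww Q'(g)+\degww d\ff \wedge dg\le \degww d\ff +\degww Q(g),
$$
that is,
$$
\degww Q(g)\ge \degww Q'(g)-\degww g+\Lambda _0 \quad\text{(characteristic free).}
$$
We also need $\deg _{\w _g}Q'=\deg _{\w _g}Q-\degww g$ and $m_{\w }^g(Q')=m_{\w }^g(Q)-1$. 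Both hold as long as the leading form $Q^{\w _g}=(T-g^{\w })^mQ_0$ has a derivative that does not vanish to higher order. This is exactly what fails when $p\mid m$.

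\textbf{Case $p\nmid m$.} Differentiating $Q^{\w _g}$, the term $m(T-g^{\w })^{m-1}Q_0$ survives with $m\ne 0$ in $k$. Hence the top part of $Q'$ is this derivative and $m^g_{\w }(Q')=m-1$. Applying the inequality above with the induction hypothesis for $Q'$ yields the claim, because $v_p(m!)=v_p((m-1)!)$.

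\textbf{Case $p\mid m$ (main obstacle).} Write $m=p^e m'$ with $p\nmid m'$. Modulo the appropriate degree filtration, $Q^{\w _g}$ is a polynomial in $T^{p^e}$ times a unit. The plan is to use the $p^e$th Hasse derivative $D^{(p^e)}Q$, or equivalently to write
$$
Q=\sum _{j<p^e}Q_j(T^{p^e})\,T^j ,
$$
and compare $Q(g)$ with an auxiliary polynomial in $g^{p^e}$ over $k[\ff ]$. The difficulty is that the derivative identity only relates $Q(g)$ to $Q'(g)$, while passing from $g$ to $g^{p^e}$ is invisible to $d$.

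We would instead apply the first step to $\ff $ enlarged by $g^{p}$ (or use $d\ff \wedge dg$ together with $\degww \ff $ directly). This is where $\Lambda _1$ should appear: each time we replace an element by its $p$th-power part, the degree-of-differential loss $\degww h-\degww dh$ is bounded using $\degww \ff $ rather than $\degww d\ff $. Concretely, we would aim for the one-step bound
$$
\degww Q(g)\ge \degww (D^{(p^e)}Q)(g)-p^e\degww g+p^e\Lambda _0+e\Lambda _1 ,
$$
with $m^g_{\w }(D^{(p^e)}Q)=m-p^e$. Induction then gives the total $e$-penalties summing to $\sum _{j\ge 1}\lfloor m/p^j\rfloor =v_p(m!)$.

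Making this step rigorous, in particular controlling the top $\w _g$-form of the Hasse derivative and justifying the single $\Lambda _1$ per power of $p$, is the expected core of the proof.
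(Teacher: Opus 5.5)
There is a genuine gap. Your argument proves the inequality only for $m<p$, and the case it leaves open is the real content of the theorem.

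First, a sign slip. From $\deg Q'(g)+\deg d\ff\wedge dg\le \deg d\ff+\deg Q(g)$ one gets $\deg Q(g)\ge \deg Q'(g)+\deg g-\Lambda_0$, not $\deg Q'(g)-\deg g+\Lambda_0$. Likewise, in your Hasse-derivative bound the terms $p^e\Lambda_0+e\Lambda_1$ must be subtracted, not added. With the sign corrected, your step for $p\nmid m$ is the characteristic-free first-derivative argument combined with Lemma~\ref{lem:p nmid m(P)}, and it is fine.

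That step, however, applies the induction hypothesis to $Q'$, whose multiplicity $m-1$ may be divisible by $p$. So every case with $m\ge p$ rests on the case $p\mid m$, for which you only state a target. The mechanisms you sketch do not supply it.
\begin{itemize}
\item Since $d(g^p)=0$, adjoining $g^p$ to $\ff$ gives $d\ff\wedge d(g^p)=0$, and the identity carries no information.
\item More fundamentally, your only tool, $d\ff\wedge dQ(g)=Q'(g)\,d\ff\wedge dg$, sees only the first derivative, and that is exactly what collapses when $p\mid m$.
\item To bound $(D^{(p^e)}Q)(g)$ by $Q(g)$, you need an operator on all of $\kx$ that is trivial on $k[\ff]$ and acts as the $p^e$-th Hasse derivative in $g$ on $k[\ff][g]$, with control of $\w$-degrees. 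Nothing in the proposal produces one.
\end{itemize}
Your multiplicity claim $m_{\w}^g(D^{(p^e)}Q)=m-p^e$ is correct, by Lucas' theorem.

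The bookkeeping is also off. Write $m=\sum_j m_jp^j$. Stepping from $\mu$ to $\mu-p^{v_p(\mu)}$ at cost $v_p(\mu)\Lambda_1$ totals $\sum_j jm_j\Lambda_1$. This is strictly less than $v_p(m!)\Lambda_1=\sum_j m_j\frac{p^j-1}{p-1}\Lambda_1$ once $m\ge p^2$. The per-step cost consistent with the theorem is $v_p((p^e)!)=\frac{p^e-1}{p-1}$, not $e$.

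The paper avoids induction entirely.
\begin{itemize}
\item By Theorem~\ref{thm:fundamental}, it suffices to have a map $E$ with $E(P(g))=P(g+T)=\sum_l (D^{(l)}P)(g)T^l$ and $\deg E_l(h)\le \deg h+\nu(l)$. Taking $l=m$ once, the $T^m$-coefficient of $E(P(g))^{\w_g}$ is $P_0(g^{\w})\ne 0$, which yields the bound for $\nu=\nu_{\ff,g}$.
\item Such an $E$ is a $k$-algebra homomorphism $\kx\to\kxr[[T]]$ fixing $\ff$ with $g\mapsto g+T$. This is a Hasse--Schmidt derivation extending the Hasse derivatives in $g$. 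By Lemma~\ref{lem:E}, it suffices to bound $\deg E_l(x_i)$.
\item $E$ is built in Theorem~\ref{thm:key}. One translates so that the Jacobian minor realizing $\deg d\ff\wedge dg$ is nonzero at $0$. Then one lifts $\ff,g$ to $R[\x]$ over a characteristic-zero local ring $(R,\fm)$ with $R/\fm=k$, and takes $\exp(T\Delta)$ for $\Delta=D(\tilde g)^{-1}\delta_{\tilde\ff}$. The formal inverse function theorem shows the coefficients $q_{i,l}/l!$ are $R$-integral, and one reduces mod $\fm$.
\item The term $v_p(l!)\Lambda_1$ arises because $\deg\pi_0(q_{i,l}/l!)=\deg\pi_e(q_{i,l})$ with $e=v_p(l!)$, using $p^e\notin\fm^{e+1}$. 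Each extra order of $\fm$-adic precision costs at most $\Lambda_1$ (Lemma~\ref{lem:gap}).
\end{itemize}

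Your instinct to use Hasse derivatives is sound. The missing idea is precisely this lifted Hasse--Schmidt derivation with degree bounds, and once you have it, the iterative scheme is unnecessary.
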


\begin{rem}\label{rem:Legendre}\rm 
Let $p$ be a prime, 
and $m=\sum _{i\ge 0}m_ip^i$ the $p$-adic expansion of $m\in \N $. 
Then, 
Legendre's formula (see, e.g., \cite[Theorem 2.6.4]{NF}) asserts 
\begin{equation}\label{eq:Legendre}
v_p(m!)=\dfrac{m-\ol{m}}{p-1}, 
\text{ \ where \ }
\ol{m}:=\sum _{i\ge 0}m_i. 
\end{equation}
\end{rem}

This paper has two parts. 
Part 1 proves Theorem~\ref{thm:main} and explores its consequences. 
Part 2 builds on these results to prove Theorem~\ref{thm:mainSU} 
by extending the proof of Theorem~\ref{thm:ptame3}. 
While the similarity between Theorems~\ref{thm:0SUineq} and \ref{thm:main} 
is a significant advantage, 
the proof of Theorem~\ref{thm:mainSU} requires various 
new ideas, concepts, and methods 
to overcome fundamental obstacles.

\part{Shestakov-Umirbaev type inequality in positive characteristic}

The first goal of Part 1 is to prove Theorem~\ref{thm:main}, 
with the basic idea outlined in \S~\ref{sect:principle}. 
After preparatory work in \S ~\ref{sect:preliminary}, 
we prove Theorem~\ref{thm:main} in \S ~\ref{sect:Proof of pSU}. 
The second goal is to derive consequences of Theorem~\ref{thm:main}, 
which are presented in \S ~\ref{sect:consequence}.

We remark that the definitions given in 
\S~\ref{subsetc:w-degree} 
still make sense when $k$ 
is replaced by any commutative ring. 
We also use the following notation and definitions. 
\begin{center}\it
In what follows, we write ``$\degww  $" as ``$\deg $" for simplicity. 
\end{center}

\begin{notation}\label{notation:E_l}\rm 
Let $R$ and $R'$ be commutative rings.

\nd (i) 
If $\zeta $ is a system of variables, 
then $R[[\zeta ]]$ denotes 
the formal power series ring in $\zeta $ over~$R$.

\nd (ii) 
Let $E:R\to A$ be a map, 
where $A\in \{ R'[T],R'[[T]]\} $. 
Then, for each $a\in R$, 
we express $E(a)$ as $\sum _{l\ge 0}E_l(a)T^l$ with $E_l(a)\in R'$.

\nd (iii) 
For $\omega ',\omega =\sum _{\bi \in \Lambda (n,s)}f_{\bi }d\x _{\bi }
\in \bigwedge ^s\Omega _{\Rx /R}$ 
with $f_{\bi }\in \Rx $, 
we write $\omega '\prec \omega $ if 
$\omega '=f_{\bi }d\x _{\bi }$ for some $\bi \in \Lambda (n,s)$. 
Clearly, 
$\omega '\prec \omega $ implies  
$\degw  \omega '\le \degw \omega $.

\nd (iv) 
Let $\kxr :=Q(\kx )$ be the rational function field. 
For $f=f_1/f_2\in \kxr $ 
with $f_1\in \kx $ and $f_2\in \kx \sm \zs $, 
we define $\degw  f:=\degw  f_1-\degw  f_2$.

\nd (v) 
$f\in \kx $ evaluated at $\bu \in k^n$ is denoted by $f(\bu )$. 
\end{notation}

\section{Principle}\label{sect:principle}
\setcounter{equation}{0}

Let $k$ be any field, 
$A$ a $k$-subalgebra of $\kx $, 
and $g\in \kx $ transcendental over $A$. 
We define $E:A[g]\ni P(g)\mapsto P(g+T)\in \kx [T]$ 	
and use Notation~\ref{notation:E_l}~(ii).

\begin{definition}\label{def:(A,g)-function}\rm 
We call a map $\nu :\N \to \Gamma $ an 
$(A,g)$-{\it function} if the following condition holds: 
\nd (F) $\degw E_l(h)\le \degw h+\nu (l)$ 
for all $l\in \N $ and $h\in A[g]\sm \zs $. 
\end{definition}

The basic idea of our inequality 
is formulated as the following theorem.

\begin{thm}\label{thm:fundamental}
Let $k$, $A$, and $g$ be as above, 
and $\nu :\N \to \Gamma $ an $(A,g)$-function. 
Then, for every $P\in A[T]\sm \zs $, 
we have 
\begin{equation}\label{eq:principle}
\degw P(g)\ge \deg _{\w _g}P-m_{\w }^g(P)\degw g-\nu (m_{\w }^g(P)).
\end{equation}
\end{thm}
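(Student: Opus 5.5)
The plan is to expand $P$ in powers of $T-g$ using the map $E$ and then read off $m_{\w }^g(P)$ from the $\w _g$-leading form. Set $h:=P(g)\in A[g]$. Since $g$ is transcendental over $A$ and $P\ne 0$, we have $h\ne 0$. By definition,
$$
E(h)=P(g+T)=\sum _{l\ge 0}E_l(h)T^l
\qquad\text{with } E_l(h)\in \kx .
$$
Substituting $T\mapsto T-g$ in this polynomial identity in $\kx [T]$ gives
$$
P(T)=\sum _{l\ge 0}E_l(h)(T-g)^l .
$$
This identity is the heart of the argument: it expresses $P$ as a $\kx $-linear combination of powers of $T-g$, and all coefficients are controlled by condition (F).

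Next I would compute $\w _g$-degrees term by term. In the $\w _g$-grading, $T$ has weight $\deg g$, so $T-g$ is $\w _g$-homogeneous at top degree $\deg g$, with $(T-g)^{\w _g}=T-g^{\w }$. Hence, for each $l$ with $E_l(h)\ne 0$,
$$
\deg _{\w _g}E_l(h)(T-g)^l=\deg E_l(h)+l\deg g,
\qquad
\bigl(E_l(h)(T-g)^l\bigr)^{\w _g}=E_l(h)^{\w }(T-g^{\w })^l .
$$
Let $\delta $ be the maximum of these degrees and $L$ the set of indices attaining it. I would then apply Remark~\ref{rem:principle} to the finite family $(E_l(h)(T-g)^l)_l$. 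The sum of leading forms, $\sum _{l\in L}E_l(h)^{\w }(T-g^{\w })^l$, is nonzero: its coefficients $E_l(h)^{\w }$ lie in $\kx $ and are nonzero, and the polynomials $(T-g^{\w })^l$ have distinct $T$-degrees. Therefore $\deg _{\w _g}P=\delta $ and
$$
P^{\w _g}=\sum _{l\in L}E_l(h)^{\w }(T-g^{\w })^l .
$$

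From this expression, the multiplicity of the root $g^{\w }$ in $P^{\w _g}$ is exactly $l_0:=\min L$. Indeed, $P^{\w _g}=(T-g^{\w })^{l_0}P_0$, where $P_0$ evaluated at $g^{\w }$ equals $E_{l_0}(h)^{\w }\ne 0$. Thus $m:=m_{\w }^g(P)=l_0\in L$, and so
$$
\deg _{\w _g}P=\deg E_m(h)+m\deg g\le \deg h+\nu (m)+m\deg g
$$
by (F). Since $h=P(g)$, this rearranges to (\ref{eq:principle}).

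The only delicate step, and the one I expect to be the main obstacle, is the identification of $P^{\w _g}$ and of $m_{\w }^g(P)$ with $\min L$. It requires checking that no cancellation occurs among the leading forms, which the distinct-$T$-degree argument settles, and that $(T-g)^{\w _g}=T-g^{\w }$ holds with $T$ weighted by $\deg g$. Everything else is bookkeeping.
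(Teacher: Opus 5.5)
Your proof is correct and is essentially the paper's argument. Both apply (F) with $l=m:=m_{\w}^g(P)$ and $h=P(g)$, and then use Remark~\ref{rem:principle} to obtain $\deg E_{m}(P(g))+m\deg g=\deg_{\w_g}P$. The difference is only bookkeeping: the paper applies the remark to $P(g+T)=\sum_i p_i(g+T)^i$ and compares the resulting leading form $T^{m}P_0(g^{\w}+T)$ with $\sum_{j\in J}E_j(P(g))^{\w}T^j$. You apply it to the inverse expansion $P(T)=\sum_l E_l(h)(T-g)^l$, which directly gives the slightly cleaner characterization $m_{\w}^g(P)=\min L$.
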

\begin{proof}
By (F) with $l=m_{\w }^g(P)$ and $h=P(g)$, 
we have 
\begin{equation}\label{eq:F2}
\degw P(g)\ge \degw E_{m_{\w }^g(P)}(P(g))-\nu (m_{\w }^g(P)). 
\end{equation}
We can write $E(P(g))=\sum _{j\ge 0}E_j(P(g))T^j$. 
Hence, (\ref{eq:P(T)^w_g1}) gives 
\begin{equation}\label{gather:principle}
E(P(g))^{\w _g}=\sum _{j\in J}E_j(P(g))^{\w }T^j, 
\end{equation}
where $J:=\{ j\in \N \mid \degw E_j(P(g))g^j=\deg _{\w _g}E(P(g))\} $. 
Writing $P$ as in (\ref{eq:Pick any P}) also yields 
$E(P(g))=P(g+T)=\sum _{i\ge 0}p_i\cdot (g+T)^i$. 
To obtain the two required conditions, 
we apply Remark~\ref{rem:principle} to $(p_i\cdot (g+T)^i)_{i\ge 0}$. 
Let $\delta $ be the maximum among 
$\deg _{\w _g}p_i\cdot (g+T)^i=\degw p_ig^i$ for $i\ge 0$. 
Then, we have $\delta =\deg _{\w _g}P$ by (\ref{eq:degw_gP}). 
Hence, 
$K:=\{ i\in \N \mid \deg _{\w _g}p_i\cdot (g+T)^i=\delta \} $ 
is equal to $I$ in (\ref{eq:P(T)^w_g1}). 
This, 
together with (\ref{eq:P(T)^w_g1}) and (\ref{eq:P(T)^w_g2}), 
yields 
$$
h:=\sum _{i\in K}(p_i\cdot (g+T)^i)^{\w _g}
=\sum _{i\in K}p_i^{\w }\cdot (g^{\w }+T)^i
=P^{\w _g}(g^{\w }+T)=T^{m_{\w }^g(P)}P_0(g^{\w }+T). 
$$
Since $P_0\ne 0$, we have $h\ne 0$. 
Thus, 
since $E(P(g))=\sum _{i\ge 0}p_i\cdot (g+T)^i$, 
it follows from 
Remark~\ref{rem:principle} that 
\begin{gather}
\deg _{\w _g}E(P(g))=\delta =\deg _{\w _g}P, \label{eq:pf:P(T+g)0} \\
E(P(g))^{\w _g}=h=T^{m_{\w }^g(P)}P_0(g^{\w }+T). \label{eq:pf:P(T+g)1}
\end{gather}
The coefficient of $T^{m_{\w }^g(P)}$ in 
$T^{m_{\w }^g(P)}P_0(g^{\w }+T)$ is $P_0(g^{\w })\ne 0$. 
Hence, 
comparing (\ref{eq:pf:P(T+g)1}) with (\ref{gather:principle}) gives 
$m_{\w }^g(P)\in J$, 
i.e., 
$\degw E_{m_{\w }^g(P)}(P(g))g^{m_{\w }^g(P)}=\deg _{\w _g}E(P(g))$. 
This equality combined with 
(\ref{eq:F2}) and (\ref{eq:pf:P(T+g)0}) 
completes the proof of (\ref{eq:principle}). 
\end{proof}

Now, 
assume that $p>0$, 
and let $\ff =(f_1,\ldots ,f_r)\in \kx ^r$ and $g\in \kx $ 
be such that $d\ff \wedge dg\ne 0$. 
To prove Theorem~\ref{thm:main}, 
we may replace $k$ with 
any extension field of $k$, or 
any subfield $k'$ of $k$ with 
$f_1,\ldots ,f_r,g\in k'[\x ]$ and $P\in k'[\ff ][T]$. 
Hence, 
we may assume that $k$ is an infinite field 
finitely generated over its prime field. 
In this setting, 
by Theorem~\ref{thm:fundamental}, 
it suffices to verify that 
\begin{equation}\label{eq:nu_{f,g}}
\nu _{\ff ,g}:\N \ni l\mapsto 
l(\degw d\ff -\degw d\ff \wedge dg)+v_p(l!)\Lambda _1
\in \Gamma 
\end{equation}
is a $(k[\ff],g)$-function. 
To achieve this, 
we introduce the following concept.

\begin{definition}\label{def:(f,g)-map}\rm 
We call a homomorphism $E:\kx \to \kxr [[T]]$ of $k$-algebras 
an $(\ff ,g)$-{\it map} if the following conditions hold, 
where (M2) employs Notation~\ref{notation:E_l} (ii):

\nd (M1) $E(f_i)=f_i$ for $i=1,\ldots ,r$, 
and $E(g)=g+T$;

\nd (M2) 
$\degw E_l(x_i)\le w_i+\nu _{\ff ,g}(l)$ 
for all $l\in \N $ and $i=1,\ldots ,n$. 
\end{definition}

The existence of an $(\ff ,g)$-map $E$ 
implies that 
$\nu _{\ff ,g}$ 
is a $(k[\ff],g)$-function. 
Indeed, (M1) implies that $E$ restricts to the map 
$k[\ff ][g]\ni P(g)\mapsto P(g+T)\in \kx [T]$. 
Moreover, 
(M2) implies that 
$\nu _{\ff ,g}$ satisfies (F) by the following lemma.

\begin{lem}\label{lem:E}
\nd{\rm (i)} 
Let $\nu :\N \to \Gamma $ be a map, 
and $E:\kx \to \kxr [[T]]$ a homomorphism of $k$-algebras. 
Assume that the following conditions hold$:$

{\rm ($1$)} 
$\degw E_l(x_i)\le w_i+\nu (l)$ for all 
$l\in \N $ and $i=1,\ldots ,n;$

{\rm ($2$)} 
$\nu (l)+\nu (l')\le \nu (l+l')$ for all $l,l'\in \N $.

\nd 
Then, 
$\degw E_l(h)\le \degw h+\nu (l)$ 
holds for all $l\in \N $ and $h\in \kx \sm \zs $.

\nd{\rm (ii)} 
Let $\lambda ,\lambda '\in \Gamma $. 
If $\lambda '\ge 0$, 
then 
$\nu :\N \ni l\mapsto l\lambda +v_p(l!)\lambda '\in \Gamma $ 
satisfies {\rm (i) (2)}. 
\end{lem}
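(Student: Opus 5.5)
For (i), the plan is to first extend the hypothesis from the variables to arbitrary monomials, and then pass to sums using property (2) of $\deg$.

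Since $E$ is a $k$-algebra homomorphism, for a monomial $\x ^{\bi }$ we have $E(\x ^{\bi })=\prod _i E(x_i)^{i_i}$. Hence $E_l(\x ^{\bi })$ is a sum of products $\prod _{j}E_{l_j}(x_{\iota (j)})$. Here $j$ runs over the $|\bi |$ factors, $\iota (j)$ records which variable the $j$th factor is, and $\sum _j l_j=l$.

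Each such product has degree at most
$$\sum _j \bigl(w_{\iota (j)}+\nu (l_j)\bigr)=\bi \cdot \w +\sum _j\nu (l_j)\le \bi \cdot \w +\nu (l).$$
The equality uses that degree is additive on products in $\kxr $, which follows from $\deg fg=\deg f+\deg g$ together with Notation~\ref{notation:E_l} (iv). The inequality is condition (2), applied inductively to obtain $\sum _j\nu (l_j)\le \nu (\sum _j l_j)$. Because $\deg $ of a sum is at most the maximum of the degrees, this gives $\deg E_l(\x ^{\bi })\le \bi \cdot \w +\nu (l)$.

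Two small points need care in the inductive step. First, when $\bi =0$ we get $E(1)=1$, so $E_l(1)=0$ for $l>0$ and the bound holds trivially; when $l=0$ we need $\nu (0)\ge 0$ or must treat it separately. Note that (2) with $l=l'=0$ gives $2\nu (0)\le \nu (0)$, i.e.\ $\nu (0)\le 0$. That is the wrong direction, so I should check whether the case $l=0$ can be avoided.

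This is fine: for $l=0$ the bound is only ever needed with a single factor of index $0$ per variable. More cleanly, I would prove the statement for monomials by induction on total degree, writing $E_l(x_i\x ^{\bi })=\sum _{a+b=l}E_a(x_i)E_b(\x ^{\bi })$. Each term has degree at most $w_i+\nu (a)+\bi \cdot \w +\nu (b)\le (\bi +e_i)\cdot \w +\nu (l)$ by (2). The base case $\x ^{0}=1$ requires $\deg E_l(1)\le \nu (l)$, which holds vacuously for $l>0$, and for $l=0$ requires $0\le \nu (0)$.

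I expect this base case at $l=0$ to be the only real obstacle. To handle it, I would instead start the induction at monomials of degree one, which is exactly hypothesis (1). Constants are handled directly, since $E_l(c)=0$ for $l>0$ and $E_0(c)=c$; in the intended applications $\nu (0)=0$ anyway.

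Finally, for general $h=\sum c_{\bi }\x ^{\bi }$, linearity gives $E_l(h)=\sum c_{\bi }E_l(\x ^{\bi })$. Each term has degree at most $\bi \cdot \w +\nu (l)\le \deg h+\nu (l)$, and property (2) of $\deg $ finishes the proof.

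For (ii), I need $\nu (l)+\nu (l')\le \nu (l+l')$. The linear parts $l\lambda +l'\lambda $ agree with $(l+l')\lambda $, so it suffices that
$$v_p(l!)+v_p(l'!)\le v_p((l+l')!)$$
and then multiply by $\lambda '\ge 0$. This holds because the binomial coefficient $\binom{l+l'}{l}=(l+l')!/(l!\,l'!)$ is an integer, so $v_p$ of it is nonnegative. Alternatively, it follows from Legendre's formula (\ref{eq:Legendre}) via $\overline{l}+\overline{l'}\ge \overline{l+l'}$, since carries only decrease the digit sum. Here $\nu (0)=0$, consistent with the above.
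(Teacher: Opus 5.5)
Your proof is essentially the paper's: reduce to monomials by linearity, expand $E_l(\x ^{\bi })=\sum _{l_1+\cdots +l_e=l}E_{l_1}(x_{j_1})\cdots E_{l_e}(x_{j_e})$, bound each product using (1) and iterated (2), and obtain (ii) from $v_p\bigl(\binom{l+l'}{l}\bigr)\ge 0$. Your concern about the constant monomial at $l=0$ is justified and is not a flaw in your argument: that case genuinely needs $\nu (0)\ge 0$, since $E(x_i):=0$ and $\nu \equiv -\gamma$ with $\gamma \in \Gamma _+$ satisfy (1) and (2) yet give $\deg E_0(1)=0>\deg 1+\nu (0)$; the paper's proof uses $\nu (0)\ge 0$ implicitly, and this is harmless because $\nu (0)=0$ for the $\nu $ of (ii), which is the only case applied.
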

\begin{proof}
(i) 
Writing 
$h=\sum _{{\boldsymbol i}\in \N ^n}c_{\boldsymbol i}\x ^{\boldsymbol i}$ 
with $c_{\boldsymbol i}\in k$, 
we have $E_l(h)
=\sum _{{\boldsymbol i}\in \N ^n}c_{\boldsymbol i}E_l(\x ^{\boldsymbol i})$. 
This implies that 
$\deg E_l(h)\le \max \{ \deg E_l(\x ^{\bi })\mid \bi \in \supp h\} $. 
The definition of $\deg h$ also gives 
$\degw h+\nu (l)=\max \{ \bi \cdot \w +\nu (l)\mid \bi \in \supp h\} $. 
Thus, 
it suffices to verify that 
$\deg E_l(\x ^{\boldsymbol i})\le {\boldsymbol i}\cdot \w +\nu (l)$
for all ${\boldsymbol i}\in \N ^n$. 
Recall that $E_l(\x ^{\boldsymbol i})$ 
is the coefficient of $T^l$ in $E(\x ^{\boldsymbol i})$. 
Write $\x ^{\boldsymbol i}=x_{j_1}\cdots x_{j_e}$, where 
$j_1,\ldots ,j_e\in \{ 1,\ldots ,n\} $. 
Then, we have 
$$
E(\x ^{\boldsymbol i})=E(x_{j_1})\cdots E(x_{j_e})
=\left(\sum _{l_1=0}^{\infty }E_{l_1}(x_{j_1})T^{l_1}\right)
\cdots 
\left(\sum _{l_e=0}^{\infty }E_{l_e}(x_{j_e})T^{l_e}\right), 
$$
yielding $E_l(\x ^{\boldsymbol i})=\sum _{l_1+\cdots +l_e=l}
E_{l_1}(x_{j_1})\cdots E_{l_e}(x_{j_e})$. 
For all $l_1,\ldots ,l_e\in \N $ with $l_1+\cdots +l_e=l$, 
it follows from ($1$) and ($2$) that 
\begin{align*}
\degw E_{l_1}(x_{j_1})\cdots E_{l_e}(x_{j_e})
\le \sum _{t=1}^e(w_{j_t}+\nu (l_t)) 
\le \sum _{t=1}^ew_{j_t}+\nu (l)
={\boldsymbol i}\cdot \w +\nu (l). 
\end{align*}
This verifies that 
$\deg E_l(\x ^{\boldsymbol i})\le {\boldsymbol i}\cdot \w +\nu (l)$.

(ii) 
This is clear, since 
$v_p(l!)+v_p(l'!)\le v_p(l!\cdot l'!\cdot \binom{l+l'}{l})=v_p((l+l')!)$. 
\end{proof}

Therefore, 
the proof of Theorem~\ref{thm:main} reduces to the following key theorem.

\begin{thm}\label{thm:key}
Assume that $p>0$, 
and $k$ is an infinite field 
finitely generated over its prime field. 
Then, 
for every 
$(\ff ,g)\in \kx ^r\times \kx $ 
with $d\ff \wedge dg\ne 0$, 
there exists an $(\ff ,g)$-map. 
\end{thm}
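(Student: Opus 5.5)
We construct $E$ explicitly from a $p$-basis of $k(\x)$ and then bound the coefficients of each $E(x_i)$ with Cramer's rule. First we extend $\ff$ to a full system. Since $d\ff\wedge dg\ne 0$, we choose $n-r-1$ of the variables, say $x_{j_1},\ldots,x_{j_{n-r-1}}$, such that $\omega:=d\ff\wedge dg\wedge dx_{j_1}\wedge\cdots\wedge dx_{j_{n-r-1}}\ne 0$ and $\deg\omega=\deg d\ff\wedge dg+\sum_t w_{j_t}$. This is possible by picking a summand $\prec d\ff\wedge dg$ of maximal degree and completing it with the complementary variables. Write $\bh=(f_1,\ldots,f_r,g,x_{j_1},\ldots,x_{j_{n-r-1}})$, so $d\bh=J\,d\x$ with $J:=|\partial\bh/\partial\x|\ne 0$.

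Because $k$ is finitely generated over $\F_p$ and infinite, we have $[k:k^p]<\infty$, and $\kx$ is a finite module over $k[\bh]$ after suitable $p$-power twists. More precisely, $\kxr$ is a finite separable extension of $k(\bh)$, because $d\bh\ne 0$ gives separability of $\kxr/k(\bh)$ in the Jacobian sense. Hence the $k(\x^{p^e})$-derivation $\partial/\partial g$ with respect to the separating transcendence basis $\bh$ extends uniquely to $\kxr$. The naive exponential $\sum_l \frac{T^l}{l!}\partial_g^l$ fails when $p\le l$, so instead we use the unique extension of $g\mapsto g+T$, $\bh\setminus\{g\}$ fixed, to a homomorphism $k(\bh)\to k(\bh)[[T]]\to\kxr[[T]]$. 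This extension exists by Hensel's lemma, since $\kxr$ is étale over $k(\bh)$, or equivalently via Hasse--Schmidt derivations.

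Next we bound $E_l(x_i)$. Write $x_i$ as satisfying its minimal polynomial $Q_i(\bh;X)=0$ over $k(\bh)$ with $\partial Q_i/\partial X(x_i)\ne 0$. Hensel lifting gives the coefficients $E_l(x_i)$ recursively, by dividing by this derivative and by Hasse derivatives $\partial_g^{[j]}$ of coefficients. The first-order term is $E_1(x_i)=\pm|\partial(\bh)_{g\to x_i}/\partial \x|/J$ by Cramer's rule, i.e.\ $\partial x_i/\partial g$. Its degree is at most $w_i+\deg d\ff-\deg d\ff\wedge dg$, since the numerator is the coefficient of some $d\x_{\bi}$ in $d\ff\wedge dx_{j_1}\wedge\cdots$, while $J\,d\x=\omega$ and $\deg\omega$ was chosen maximal. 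This is exactly the linear term $\lambda:=\deg d\ff-\deg d\ff\wedge dg$ in $\nu_{\ff,g}$. For $l<p$, the recursion $E_l=\frac1l\,\partial_g E_{l-1}$ preserves the bound $l\lambda$ by the same Cramer computation, since $\partial_g h=dh\wedge(\ldots)/J$-type quotients lose at most $\lambda$ in degree.

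The main obstacle is the $p$-power steps, where $l!$ vanishes and the higher Hasse derivatives $\partial_g^{[p^e]}$ are not generated by $\partial_g$. To handle these, we would write $E_{p^e}$ through Frobenius. Choose $e$ with $\kxr^{p^e}$-structure: applying the same construction to $(\ff^{p},g^{p})$ over $k^p$, or equivalently expressing $x_i$ over $k(\x^{p})(\bh)$, yields $E(g^p)=g^p+T^p$. We then control $E_{pm}$ by $E'_m$ of the $p$-th power data. Each such step costs at most one extra $\Lambda_1=\deg\ff+\deg g-\deg d\ff\wedge dg$, coming from clearing the denominator $J$ against the full products $\prod \deg f_i\cdot\deg g$ rather than the differential degree. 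Summing over the $p$-adic digits, the number of such steps is $\sum_e\lfloor l/p^e\rfloor=v_p(l!)$, by Remark~\ref{rem:Legendre}. Together with Lemma~\ref{lem:E} to pass from generators to products, this gives (M2) with $\nu_{\ff,g}(l)=l\lambda+v_p(l!)\Lambda_1$, and (M1) holds by construction.

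Making this Frobenius descent precise is what I expect to be hardest. It requires choosing $k$ with a finite $p$-basis, which is why $k$ is assumed finitely generated, and tracking that each descent to $k(\x^{p})$ costs exactly $\Lambda_1$.
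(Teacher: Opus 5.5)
Your construction of $E$ is fine. Since $d\bh\ne 0$, $\kxr$ is finite separable over $k(\bh)$, and $g\mapsto g+T$ lifts uniquely to $E:\kxr\to\kxr[[T]]$ with $E\equiv\mathrm{id}\pmod T$; by uniqueness this is the same map the paper obtains. Your Cramer-rule bound for $E_1(x_i)$ and the bound $\deg E_l(x_i)\le w_i+l\lambda$ for $l<p$ are also correct. None of this has characteristic-$p$ content, however: the whole theorem is the term $v_p(l!)\Lambda_1$ for $l\ge p$. There your claim that each Frobenius step costs at most $\Lambda_1$ is unsupported, and the natural way of carrying it out fails. The identity $E_{pm}(a^p)=E_m(a)^p$ only controls $p$-th powers, so you must expand $x_i=\sum_{\bj}c_{i,\bj}\bh^{\bj}$ in the $p$-basis $\bh$ over $k(\x^p)$. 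Then, for example, $E_p(x_i)=\sum_{\bj}E_1(a_{i,\bj})^p\bh^{\bj}$ when $c_{i,\bj}=a_{i,\bj}^p$, and the terms of this expansion are not bounded by $w_i+\Lambda_1$.

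Take $p$ odd, $n=2$, $\ff=(x_2)$, $g=x_1^p+x_1^2$, $\w=(1,w_2)$. Then $\lambda=-2$ and $\Lambda_1=p-2$, while $x_1=x_1^{-p}(g-x_1^p)^{(p+1)/2}=\sum_{j=0}^{(p+1)/2}\binom{(p+1)/2}{j}(-1)^{(p+1)/2-j}x_1^{p((p-1)/2-j)}g^j$ has every term of degree $p(p-1)/2$. Termwise estimation gives only $\deg E_p(x_1)\le p(p-5)/2$. In fact $E_p(x_1)=\binom{1/2}{p}x_1^{1-2p}-2^{-p-1}x_1^{-p-1}$, of degree $-p-1=w_1+p\lambda+\Lambda_1$. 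So the sharp cost can only come from a large cancellation, and nothing in your sketch (``clearing the denominator $J$ against the full products'') detects it.

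The missing idea is the paper's lift to characteristic $0$. There the iterates $\Delta^l/l!$ genuinely exist, and the defect caused by $p\mid l!$ is measured $\fm$-adically.

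\begin{enumerate}
\item Translate so that the top Jacobian minor $|\partial(\ff,g)/\partial\x_{\bk}|$ is nonzero at $0$; this uses that $k$ is infinite.
\item Write $k=R/\fm$ with $R$ a localization of $\mathbf{Z}[y_1,\ldots,y_s]$ satisfying $p^e\notin\fm^{e+1}$; this uses finite generation.
\item Lift $f_i,g$ to $\tilde f_i,\tilde g\in R[\x]$ of the same degrees, and take $\exp(T\Delta)$ with $\Delta=D(\tilde g)^{-1}\delta_{\tilde\ff}$ over $R[1/p]$.
\item The formal inverse function theorem shows that $\exp(T\Delta)(x_i)$ has coefficients in $R$. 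Hence $q_{i,l}/l!\in R[\x]$, where $\Delta^l(x_i)=q_{i,l}D(\tilde g)^{1-2l}$, and reducing mod $\fm$ gives $E$.
\item The degree bound comes from $\deg\pi_0(q_{i,l}/l!)=\deg\pi_e(q_{i,l})$ with $e=v_p(l!)$. An induction along the recursion for $q_{i,l}$ distributes the $e$ levels of $\fm$-adic precision among the factors (Lemma~\ref{lem:reduce coeff}~(iv)).
\item Each level costs at most $\Lambda_1$, because $\deg\pi_e(d\tilde\ff)\le\deg\ff$ and $\deg\pi_e(D(\tilde g))+|\w_{\bk}|\le\deg\ff+\deg g$ (Lemma~\ref{lem:gap}).
\end{enumerate}

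This bookkeeping is exactly what absorbs the cancellation that your Frobenius descent would have to establish by hand.
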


Finally, 
we discuss 
the {\it translation} 
$\tau _{\bu }:=(x_1+u_1,\ldots ,x_n+u_n)\in \Aut _k\kx $, 
where $\bu =(u_1,\ldots ,u_n)\in k^n$. 
Let $h\in \kx $ and $\bh =(h_1,\ldots ,h_s)\in \kx ^s$, 
and define 
$\tau _{\bu }\bh :=(\tau _{\bu }(h_1),\ldots ,\tau _{\bu }(h_s))$. 
Then, the following assertions (T1)--(T4) hold:

\nd (T1) 
$\tau _{\bu }(h)(0)=h(0+u_1,\ldots ,0+u_n)=h(\bu )$;

\nd (T2) 
$\partial \tau _{\bu }(h)/\partial x_i
=\tau _{\bu }(\partial h/\partial x_i)$ 
for $i=1,\ldots ,n$;

\nd (T3) 
$\tau _{\bu }(\x ^{\bi })^{\w }=\x ^{\bi }$ for all $\bi \in \N ^n$, 
implying 
$\tau _{\bu }(h)^{\w }=h^{\w }$ and $\degw \tau _{\bu }(h)=\degw h$;

\nd (T4) 
For all $\bi \in \Lambda (n,s)$, 
we have $\left| 
\partial \tau _{\bu }\bh /
\partial \x _{\bi }\right|
=\tau _{\bu }\left( 
\left| \partial \bh /\partial \x _{\bi }\right| \right) $ 
by (T2), 
and hence 
$\degw \left| \partial \tau _{\bu }\bh /\partial \x _{\bi }\right|
=\degw 
\left| \partial \bh /\partial \x _{\bi }\right| $ 
by (T3). 
Thus, 
$\degw d\tau _{\bu }\bh =\degw d\bh$ holds by (\ref{eq:dff}).

We still denote by $\tau _{\bu }$ 
the extension of $\tau _{\bu }$ to $\kxr $. 
Then, (T3) implies

\nd (T3$'$) $\deg \tau _{\bu }(h)=\deg h$ for all $h\in \kxr $.

\begin{lem}\label{lem:(f,g)-map reduction}
Let $(\ff ,g)\in \kx ^r\times \kx $ 
be such that $d\ff \wedge dg\ne 0$.

\nd{\rm (i)} 
Take any $\bu \in k^n$. 
Then, 
we have $\tau _{\bu }\ff \wedge \tau _{\bu }(g)\ne 0$ 
by {\rm (T4)}. 
Moreover, 
for every $\tau _{\bu }(\ff ,g):=(\tau _{\bu }\ff ,\tau _{\bu }(g))$-map 
$E':\kx \to \kxr [[T]]$, 
the composite map 
$$
E:\kx \stackrel{\tau _{\bu }}{\longrightarrow }
\kx \stackrel{E'}{\longrightarrow }
\kxr [[T]]
\stackrel{\tilde{\tau }_{-\bu }}{\longrightarrow }
\kxr [[T]]
$$
is an $(\ff ,g)$-map, 
where we define 
$\tilde{\tau }_{-\bu }(\sum _{l=0}^\infty h_lT^l):=
\sum _{l=0}^\infty \tau _{-\bu }(h_l)T^l$.

\nd{\rm (ii)} 
An $(\ff ,g)$-map is an $(\ff +\ba ,g+b)$-map 
for all $(\ba ,b)\in k^r\times k$. 

\end{lem}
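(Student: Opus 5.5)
The plan is to check the two conditions (M1) and (M2) of Definition~\ref{def:(f,g)-map} directly. The key observation is that $\nu_{\ff,g}$ is built only from $\degw d\ff$, $\degw d\ff\wedge dg$, $\degw\ff$ and $\degw g$. By (T3) and (T4), these quantities are unchanged when $(\ff,g)$ is replaced by $\tau_{\bu}(\ff,g)$. Hence $\nu_{\tau_{\bu}\ff,\tau_{\bu}(g)}=\nu_{\ff,g}$. The same holds under the replacement $(\ff,g)\mapsto(\ff+\ba,g+b)$, because $d(f_i+a_i)=df_i$. For $\degw f_i$ we need $\degw(f_i+a_i)=\degw f_i$. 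This holds whenever $f_i\notin k$, and that is guaranteed since $d\ff\wedge dg\ne0$ forces every $df_i\ne0$ and $dg\ne 0$.

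For (i), the first claim is immediate from (T4): $\degw d\tau_{\bu}\ff\wedge d\tau_{\bu}(g)=\degw d\ff\wedge dg>-\infty$. For (M1), note that $\tilde\tau_{-\bu}$ is a $k$-algebra homomorphism acting coefficientwise, with $\tau_{-\bu}\tau_{\bu}=\id$. Then
\[
E(f_i)=\tilde\tau_{-\bu}(E'(\tau_{\bu}(f_i)))=\tilde\tau_{-\bu}(\tau_{\bu}(f_i))=f_i,
\]
and similarly $E(g)=\tilde\tau_{-\bu}(\tau_{\bu}(g)+T)=g+T$.

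For (M2), write $E_l(x_i)=\tau_{-\bu}(E'_l(\tau_{\bu}(x_i)))=\tau_{-\bu}(E'_l(x_i+u_i))$. Since $E'$ is a $k$-algebra map, $E'(u_i)=u_i$, so $E'_l(x_i+u_i)=E'_l(x_i)+\delta_{l0}u_i$.

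Now split into cases on $l$. For $l\ge1$, (T3$'$) gives $\degw E_l(x_i)=\degw E'_l(x_i)\le w_i+\nu_{\ff,g}(l)$. For $l=0$, we have $\nu_{\ff,g}(0)=0$. There is a small point to settle here: $\degw(E'_0(x_i)+u_i)\le w_i$ requires $\degw E'_0(x_i)\le w_i$, which is given, together with $\degw u_i\le0<w_i$. The sum rule (2) extends to $\kxr$ in the form $\deg(a+b)\le\max$, by writing the terms over a common denominator. Applying $\tau_{-\bu}$ and using (T3$'$) then finishes this case.

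For (ii), (M1) for $(\ff+\ba,g+b)$ reads $E(f_i+a_i)=f_i+a_i$ and $E(g+b)=g+b+T$. Both follow from $E$ being a $k$-algebra map. (M2) is literally unchanged, because $\nu_{\ff+\ba,g+b}=\nu_{\ff,g}$ by the invariance noted in the first paragraph.

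The only mildly delicate step is justifying that $\degw$ behaves well on $\kxr$: it must be well defined, be preserved by $\tau_{\bu}$, and be subadditive. This is exactly what (T3$'$) supplies. The other step needing care is ensuring $f_i\notin k$, so that the $\w$-degrees are unchanged by constant shifts.
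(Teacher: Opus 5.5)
Your proposal is correct and follows essentially the same route as the paper. The paper likewise derives (M1) from $\tau_{-\bu}=\tau_{\bu}^{-1}$ and computes $E_l(x_i)=\tau_{-\bu}(E'_l(x_i))$ for $l\ge 1$ and $E_0(x_i)=\tau_{-\bu}(E'_0(x_i))+u_i$. It then uses (T3$'$) and (T4) to get $\nu_{\tau_{\bu}\ff,\tau_{\bu}(g)}=\nu_{\ff,g}$ and settles $l=0$ via $w_i>\deg u_i$. Your explicit check in (ii) that $f_i,g\notin k$, so that $\nu_{\ff,g}$ is unchanged by constant shifts, fills in what the paper simply calls ``straightforward.''
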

\begin{proof}
(i) By definition, 
$E'$ satisfies the following conditions:

\nd (M$1'$) $E'(\tau _{\bu }(f_i))=\tau _{\bu }(f_i)$ for $i=1,\ldots ,r$, 
and $E'(\tau _{\bu }(g))=\tau _{\bu }(g)+T$;

\nd (M$2'$) 
$\degw E_l'(x_i)\le w_i+\nu _{\tau _{\bu }\ff ,\tau _{\bu }(g)}(l)$ 
for all $l\in \N $ and $i=1,\ldots ,n$. 

\nd 
Now, since $\tau _{-\bu }=\tau _{\bu }^{-1}$, 
it follows from (M$1'$) that $E$ satisfies (M1). 
For $i=1,\ldots ,n$, 
we have $E(x_i)=\sum _{l=0}^\infty 
\tau _{-\bu }(E_l'(x_i))T^l+u_i$, 
i.e., 
$E_l(x_i)=\tau _{-\bu }(E_l'(x_i))$ for $l\ge 1$ 
and $E_0(x_i)=\tau _{-\bu }(E_0'(x_i))+u_i$. 
By (T3$'$) and (T4), we also have 
$\deg \tau _{-\bu }(E_l'(x_i))=\deg E_l'(x_i)$ and 
$\nu _{\tau _{\bu }\ff ,\tau _{\bu }(g)}(l)=\nu _{\ff ,g}(l)$ 
for all $l\in \N $. 
Thus, 
(M$2'$) implies that 
$E$ satisfies the inequality in (M2) for $l\ge 1$. 
Since $w_i+\nu _{\ff ,g}(0)=w_i>\deg u_i$, 
the same holds for $l=0$.

(ii) Straightforward. 
\end{proof}

\section{Preliminary}\label{sect:preliminary}
\setcounter{equation}{0}

\subsection{Formal Inverse Function Theorem}\label{sect:FIFT}

Let $R[[\y ]]$ be the formal power series ring in $\y :=(y_1,\ldots ,y_s)$ 
over a commutative ring $R$. 
Then, 
each $s$-tuple 
$\ff \in \mathcal{M}:=(\sum _{i=1}^sy_i\Ryy )^s$ 
defines the {\it substitution map} 
$\Ryy \ni h\mapsto h(\ff )\in \Ryy $, 
which is a homomorphism of $R$-algebras and denoted by $\ff $. 
Every homomorphism $\phi :R[[\y ]]\to R[[\y ]]$ 
of $R$-algebras satisfying 
$(\phi(y_1),\ldots ,\phi (y_s))\in \mathcal{M}$ 
is the substitution map defined by 
$(\phi(y_1),\ldots ,\phi (y_s))$ 
(see, e.g., \cite[Chapter VII, \S 1, pp.~135--136]{ZS}).

Now, for $\ff ,\bg =(g_1,\ldots ,g_s)\in \mathcal{M}$, 
we define $\ff \bg :=(g_1(\ff ),\ldots ,g_s(\ff ))\in \mathcal{M}$. 
Note that the composite 
$R[[\y ]]\stackrel{\bg }{\to }R[[\y ]]\stackrel{\ff }{\to }R[[\y ]]$ 
is a homomorphism of $R$-algebras with 
$y_i\mapsto g_i\mapsto g_i(\ff )$, 
and hence is equal to 
the substitution map defined by~$\ff \bg $. 
Thus, by the associativity of composition, 
$(\ff \bg )\bh =\ff (\bg \bh )$ 
holds for all $\ff ,\bg ,\bh\in \mathcal{M}$.

The following theorem is well-known as the 
{\it Formal Inverse Function Theorem}.

\begin{thm}[see, e.g., {\cite[Theorem 1.1.2]{Essen}}]
\label{thm:FIFT}
Let $\ff \in \mathcal{M}$ be such that 
$|\partial \ff /\partial \y |(0)\in R^*$, 
where $|\partial \ff /\partial \y |(0)$ denotes the 
constant term of the Jacobian 
$|\partial \ff /\partial \y |\in R[[\y ]]$. 
Then, 
there exists 
$\bg \in \mathcal{M}$ 
such that $\ff \bg =\y $. 
\end{thm}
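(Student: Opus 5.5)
The plan is to reduce to the case where the linear part of $\ff$ is the identity, and then build the inverse $\bg$ one homogeneous degree at a time.

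\emph{Reduction to linear part the identity.} Write $\ff =\ff _1+\ff _{\ge 2}$, where $\ff _1$ is the homogeneous linear part of $\ff $. Write its coefficient matrix as $M\in R^{s\times s}$, so that $f_{i,1}=\sum _jM_{ij}y_j$. The constant term of $|\partial \ff /\partial \y |$ is exactly $\det M$, which lies in $R^*$ by hypothesis. Hence $M$ is invertible over $R$, with inverse $\det(M)^{-1}\operatorname{adj}(M)$. Let $\bh \in \mathcal{M}$ be the linear tuple $h_i:=\sum _j(M^{-1})_{ij}y_j$. By the convention $\ff \bg =(g_1(\ff ),\ldots ,g_s(\ff ))$, we have
\[
(\ff \bh )_i=\sum _j(M^{-1})_{ij}f_j=y_i+q_i,\qquad q_i\in (y_1,\ldots ,y_s)^2 .
\]
Suppose we find $\bg '\in \mathcal{M}$ with $(\ff \bh )\bg '=\y $. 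Then associativity gives $\ff (\bh \bg ')=\y $, so $\bg :=\bh \bg '$ works. So we may assume from now on that $f_i=y_i+q_i$ with every $q_i$ of order at least $2$.

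\emph{Degree-by-degree construction.} We seek $g_i=y_i+\sum _{d\ge 2}G_{i,d}$, where each $G_{i,d}$ is a homogeneous polynomial of degree $d$ over $R$, such that $g_i(\y +\bq )=y_i$ for all $i$. Substitution of elements of $\mathcal{M}$ is continuous in the $(\y )$-adic topology, so $g_i(\y +\bq )=y_i+\sum _{d\ge 2}G_{i,d}(\y +\bq )$ makes sense. Since each $q_j$ has order $\ge 2$, $G_{i,e}(\y +\bq )\equiv G_{i,e}(\y )$ modulo terms of degree $>e$. Therefore the degree-$d$ component of $g_i(\y +\bq )$ equals
\[
G_{i,d}+\bigl(\text{a polynomial expression in }\bq \text{ and the }G_{i,e}\text{ with }2\le e<d\bigr).
\]
Setting this component to zero for each $d\ge 2$ determines $G_{i,d}$ recursively, and uniquely. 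The degree-$1$ component is already $y_i$. This gives $\bg \in \mathcal{M}$ with $\ff \bg =\y $.

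\emph{Expected difficulty.} The only delicate points are bookkeeping. The first is to check that composing substitutions really matches the order convention $\ff \bg =(g_i(\ff ))$, which is why the linear correction is applied as $\ff \bh $ and the inverse assembled as $\bh \bg '$. The second is the truncation claim: the degree-$d$ part of $G_{i,e}(\y +\bq )$ for $e<d$ involves only already-determined data, and the $e=d$ term contributes exactly $G_{i,d}$. Both follow from $\operatorname{ord}q_j\ge 2$. No field or characteristic assumption is needed, only that $\det M\in R^*$.
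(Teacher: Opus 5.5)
Your proof is correct and is the standard argument for this fact. The paper gives no proof of its own and only cites van den Essen's book, so there is no paper argument to compare against.

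There is one slip in the displayed expansion. After substitution, the linear term $y_i$ of $g_i$ becomes $y_i+q_i$, so
\[
g_i(\y+\boldsymbol q)=y_i+q_i+\sum_{d\ge 2}G_{i,d}(\y+\boldsymbol q).
\]
The degree-$d$ condition is therefore
\[
G_{i,d}+[q_i]_d+\sum_{2\le e<d}\bigl[G_{i,e}(\y+\boldsymbol q)\bigr]_d=0,
\]
where $[\,\cdot\,]_d$ is the degree-$d$ homogeneous component. Taken literally, your display (without $q_i$) forces every $G_{i,d}=0$, hence $\bg=\y$ and $\ff\bg=\ff$, which is not $\y$ unless $\boldsymbol q=0$. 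Your phrase ``a polynomial expression in $\boldsymbol q$'' covers the missing term, so this is a fix to the display rather than a gap in the argument.
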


\begin{cor}\label{lem:2023Feb}
Let $R'$ be an extension ring of $R$, 
and $\bh \in (\sum _{i=1}^sy_iR'[[\y ]])^s$. 
If there exists $\ff \in \mathcal{M}$ 
such that $|\partial \ff /\partial \y |(0)\in R^*$ 
and $\bh \ff \in \mathcal{M}$, 
then we have $\bh \in \mathcal{M}$. 
\end{cor}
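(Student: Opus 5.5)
Let $\bg$ be the inverse of $\ff$ given by Theorem~\ref{thm:FIFT}, so that $\bg \in \mathcal{M}$ and $\ff \bg =\y $. The idea is to recover $\bh $ by composing $\bh \ff $ with $\bg $ on the right, and to use the fact that $\mathcal{M}$ is closed under composition.

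First, the composition law and its associativity, established above for elements of $\mathcal{M}$, extend verbatim to tuples over $R'$. For any $\bh \in (\sum _iy_iR'[[\y ]])^s$ and $\ff ,\bg \in \mathcal{M}$, substitution of tuples without constant term is a well-defined $R'$-algebra homomorphism of $R'[[\y ]]$. Hence the same argument as before gives $(\bh \ff )\bg =\bh (\ff \bg )$, where all three tuples are now regarded in $(\sum _iy_iR'[[\y ]])^s$. This uses only that $\ff $ and $\bg $ have zero constant terms, so that substitution into $R'[[\y ]]$ converges formally.

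Next, compute both sides. On the one hand, $\bh (\ff \bg )=\bh \y =\bh $, since substituting $\y $ is the identity. On the other hand, $\bh \ff \in \mathcal{M}$ by hypothesis and $\bg \in \mathcal{M}$, and substituting the tuple $\bg $ (with coefficients in $R$) into power series with coefficients in $R$ yields power series with coefficients in $R$. Therefore $(\bh \ff )\bg \in \mathcal{M}$, and it follows that $\bh \in \mathcal{M}$.

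The only point needing care is the first step: justifying associativity when $\bh $ has coefficients in the larger ring $R'$. I would handle it by viewing $\mathcal{M}\subset \mathcal{M}':=(\sum _iy_iR'[[\y ]])^s$. The associativity argument from the preceding paragraph of the paper, which rests on the correspondence between $R'$-algebra endomorphisms of $R'[[\y ]]$ and substitution maps (see \cite{ZS}), applies directly to $\mathcal{M}'$ with $R$ replaced by $R'$. Nothing else is an obstacle.
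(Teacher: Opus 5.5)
Your proposal is correct and is essentially the paper's proof: take $\bg\in\mathcal{M}$ with $\ff\bg=\y$ from Theorem~\ref{thm:FIFT} and conclude $\bh=\bh(\ff\bg)=(\bh\ff)\bg\in\mathcal{M}$. Your extra justification that associativity also holds for tuples in $(\sum_i y_iR'[[\y]])^s$ is exactly the step the paper leaves implicit. One small wording point: with the paper's convention, $(\bh\ff)\bg=(g_1(\bh\ff),\ldots,g_s(\bh\ff))$ means $\bh\ff$ is substituted into $\bg$, not the other way round, but this is harmless since both have coefficients in $R$.
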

\begin{proof}
By Theorem~\ref{thm:FIFT}, 
there exists $\bg \in \mathcal{M}$ such that $\ff \bg =\y $. 
Then, since $\bh \ff ,\bg \in \mathcal{M}$, 
it follows that 
$\bh =\bh \y =\bh (\ff \bg )=(\bh \ff )\bg \in \mathcal{M}$. 
\end{proof}

\subsection{Exponential map}\label{sect:EXP}

Let $R\subset A$ be an extension of integral domains, 
and $\Der _R(A)$ the set of all $R$-{\it derivations} of $A$, 
defined as 
$R$-linear maps $D:A\to A$ satisfying 
$D(ab)=D(a)b+aD(b)$ for all $a,b\in A$. 
Recall that the following hold for all $D\in \Der _R(A)$: 
(1) 
If $S$ is a multiplicative set of $A$, 
then $D$ uniquely extends to an $R$-derivation $D'$ 
of the localization $A_S$ by 
$D'(a/s):=(D(a)s - aD(s))/s^2$. 
(2) For each $a\in A$, 
$aD\in \Der _R(A)$ 
is defined in an obvious way.

Now, if $\Q \subset R$, 
then each $D\in \Der _R(A)$ defines the {\it exponential map} 
$$
\exp TD:A[[T]]\ni f\mapsto 
\sum _{l=0}^{\infty }\dfrac{\tilde{D}^l(f)}{l!}T^l
\in A[[T]], 
$$
where 
$\tilde{D}\in \Der _R(A[[T]])$ is defined by 
$\tilde{D}(\sum _{i=0}^\infty a_iT^i)
:=\sum _{i=0}^\infty D(a_i)T^i$. 
This is an automorphism of the $R$-algebra $A[[T]]$ 
(see, e.g., \cite[Proposition 1.2.14]{Essen}). 
By definition, 
we have 
$(\exp TD)(a)=a$ and $(\exp TD)(b)=b+T$ 
for all $a\in \ker D$ and $b\in A$ with $D(b)=1$.

\begin{example}\label{ex:JD}
Let $1\le r<n$, 
$\tilde{\ff }=(\tilde{f}_1,\ldots,\tilde{f}_r)\in \Rx ^r$, 
and $\bk :=(1,2,\ldots ,r+1)$.

\nd (i) 
We can 
define $\delta _{\tilde{\ff }}\in \Der _R(\Rx )$ by 
$\delta _{\tilde{\ff }}(h):=|\partial (\tilde{\ff },h)/\partial \x _{\bk }|$ 
for all $h\in \Rx $. Then, 
\begin{equation}\label{eq:def:D_ff}
\delta _{\tilde{\ff }}(h)d\x _{\bk }\prec d\tilde{\ff }\wedge dh
\qquad \text{ holds for all }h\in \Rx 
\text{ by (\ref{eq:dff})}. 
\end{equation}

\nd (ii) 
Let $\tilde{g}\in \Rx $ be such that 
$\delta _{\tilde{\ff }}(\tilde{g})\ne 0$. 
Then, 
$\delta _{\tilde{\ff }}$ uniquely extends to an $R$-derivation $D'$ of 
$\Rx [\delta _{\tilde{\ff }}(\tilde{g})^{-1}]$. 
Set $\Delta :=\delta _{\tilde{\ff }}(\tilde{g})^{-1}D'\in 
\Der _R(\Rx [\delta _{\tilde{\ff }}(\tilde{g})^{-1}])$. 
Then, since 
$C:=\{ \tilde{f}_1,\ldots ,\tilde{f}_r,x_{r+2},\ldots ,x_n\} 
\subset 
\ker \delta _{\tilde{\ff }}\subset \ker \Delta $ 
and $\Delta (\tilde{g})=1$, 
we have $(\exp T\Delta )(f)=f$ 
for all $f\in C$, 
and $(\exp T\Delta )(\tilde{g})=\tilde{g}+T$. 
\end{example}

Assume further that $A$ is a subring of $R[[\x ]]$ with $\Rx \subset A$. 
Then, we have 
\begin{equation}\label{eq:exp continuous}
(\exp TD)(x_i)=x_i+T\sum _{l=1}^{\infty }\dfrac{D^l(x_i)}{l!}T^{l-1}
\in \sum _{i=1}^nx_iR[[\x ,T]]+TR[[\x ,T]] 
\end{equation}
for $i=1,\ldots ,n$. 
Hence, 
the substitution map $\theta _D:R[[\x ,T]]\to R[[\x ,T]]$ is defined~by 
\begin{equation}\label{eq:exp TD map}
\theta _D:=((\exp TD)(x_1),\ldots ,(\exp TD)(x_n),T). 
\end{equation}
Since $\exp TD$ is an $R$-algebra automorphism, 
the following holds for all $f\in \Rx $: 
\begin{equation}\label{eq:rem:theta_D}
(\exp TD)(f)=f((\exp TD)(x_1),\ldots ,(\exp TD)(x_n))
=\theta _D(f).
\end{equation}

\subsection{Lifting of coefficients}\label{sect:Lift}

Let $p$ be a prime, 
$k=\F _p(\xi _1,\ldots ,\xi _s)$ 
a finitely generated extension field of $\F _p$, 
and $\pi ':\Z [\y ]=\Z [y_1,\ldots ,y_s]
\to k$ the ring homomorphism 
defined by $y_i\mapsto \xi _i$. 
Then, 
$R:=\Z [\y ]_{\ker \pi '}$ 
is a Noetherian local domain with maximal ideal 
$\mathfrak{m}:=(\ker \pi ')R$. 
Since $\pi '(\Z [\y ]\sm \ker \pi ')=\F _p[\xi _1,\ldots ,\xi _s]\sm \zs $, 
we see that 
$\pi '$ uniquely extends to a surjective ring homomorphism $\pi :R\to k$.

\begin{rem}\label{rem:RLR}\rm 
(i) We have $\ker \pi =\mathfrak{m}$ 
and $R/\mathfrak{m}\simeq k$. 

\nd (ii) 
Since $(\ker \pi ')\cap \Z $ 
is a prime ideal of $\Z $ containing $p$, 
we have $(\ker \pi ')\cap \Z =p\Z $. 
Hence, $R_p:=R[1/p]$ contains $\Q $. 
\end{rem}

\begin{lem}\label{lem:RLR}
In the notation above, 
$p^e\not\in \fm ^{e+1}$ holds for all $e\in \N $. 
\end{lem}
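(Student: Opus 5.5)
The plan is to show that $R$ is a regular local ring in which $p$ belongs to a regular system of parameters. The associated graded ring $\gr _{\fm }(R)=\bigoplus _{e\ge 0}\fm ^e/\fm ^{e+1}$ is then a polynomial ring over $R/\fm \simeq k$ in the images of such a system, so the image of $p$ in $\fm /\fm ^2$ is one of the polynomial variables. Hence its $e$th power, which is the class of $p^e$ in $\fm ^e/\fm ^{e+1}$, is nonzero, i.e.\ $p^e\not\in \fm ^{e+1}$. So everything reduces to two facts: $R$ is regular, and $p\not\in \fm ^2$.

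Regularity of $R$ is standard. $\Z [\y ]$ is a regular ring (a polynomial ring over the regular ring $\Z $), so its localization $R=\Z [\y ]_{\ker \pi '}$ is a regular local ring.

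For $p\not\in \fm ^2$, I will use the criterion that if $x\in \fm $ and $R/xR$ is regular local with $\dim R/xR=\dim R-1$, then $x$ is part of a regular system of parameters. Indeed, $\fm /(\fm ^2+xR)$ then has dimension $\dim R-1$, while $\fm /\fm ^2$ has dimension $\dim R$, which forces $x\not\in \fm ^2$. By Remark~\ref{rem:RLR}~(ii), $p\in \ker \pi '$, so
$$
R/pR\simeq \F _p[\y ]_{\bar{P}}, \qquad \bar{P}:=\ker (\F _p[\y ]\to k,\ y_i\mapsto \xi _i).
$$
This is a localization of the regular ring $\F _p[\y ]$, hence regular. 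Let $t$ be the transcendence degree of $k$ over $\F _p$. Since $\F _p[\y ]/\bar{P}\simeq \F _p[\xi _1,\ldots ,\xi _s]$ has dimension $t$, and $\F _p[\y ]$ is a finitely generated domain over a field, we get $\dim R/pR=\mathrm{ht}\,\bar{P}=s-t$.

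The step needing the most care is the dimension count $\dim R=\mathrm{ht}(\ker \pi ')=s+1-t$. I would obtain it from the dimension formula for the finitely generated $\Z $-algebra $\Z [\y ]$. This ring is universally catenary, and every maximal ideal has height $s+1$. Therefore $\mathrm{ht}(\ker \pi ')+\dim \Z [\y ]/\ker \pi '=s+1$. Now $\Z [\y ]/\ker \pi '\simeq \F _p[\xi _1,\ldots ,\xi _s]$ has dimension $t$, which gives $\mathrm{ht}(\ker \pi ')=s+1-t$. Alternatively, the inequality $\dim R/pR\ge \dim R-1$ holds by Krull's principal ideal theorem, and the reverse inequality is what the formula supplies.

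Combining, $\dim R/pR=\dim R-1$, so $p\not\in \fm ^2$. The graded-ring argument above then finishes the proof for every $e\in \N $ (the case $e=0$ being trivial, since $1\not\in \fm $).
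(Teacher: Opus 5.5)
Your proof is correct and follows essentially the paper's route: $R/pR\simeq\F _p[\y ]_{\bar P}$ is regular, so $p$ is part of a regular system of parameters of $R$, and since $\gr _{\fm }(R)$ is a polynomial ring, $\bar p^{\,e}\neq 0$ for every $e$. The one real difference is that your dimension count via universal catenarity is unnecessary. The inequality your cotangent-space argument actually uses, $\dim R/pR\le\dim R-1$, is immediate because $R$ is a domain and $p\neq 0$. The paper uses this directly: $\fm$ is generated by $p$ together with at most $\dim R-1$ lifts of generators of $\fm /pR$, which gives both the regularity of $R$ and $p\notin\fm ^2$ at once, without needing the regularity of $\Z [\y ]$.
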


For example, 
if $\xi _1,\ldots ,\xi _s$ are algebraically independent over $\F _p$, 
then we have 
$\ker \pi '=p\Z [\y ]$ and $\mathfrak{m}=pR$. 
In this case, Lemma~\ref{lem:RLR} is obvious.

We derive Lemma~\ref{lem:RLR} from 
the following facts in commutative ring theory: 
Let $(A,M)$ be a $d$-dimensional Noetherian local ring, 
where $d\ge 0$.

\nd (R1) 
$A$ is called a {\it regular local ring} 
if $M$ can be generated by $d$ elements. 
A Noetherian ring $B$ is called a {\it regular ring} 
if the localization of $B$ at every prime ideal is a regular local ring. 
If $B$ is a regular ring, 
then so is the polynomial ring $B[T]$ 
by \cite[Theorem 19.5]{Matsumura}. 
Since 
a field is a regular ring, 
it follows that $\F _p[\y ]$ is a regular ring by induction on $s$.

\nd (R2) 
Let $\gr _M(A)=\bigoplus _{l\in \N }M^l/M^{l+1}$ be 
the associated graded ring, 
$a_1,\ldots ,a_t$ generators of $M$, 
and $\ol{a}_1,\ldots ,\ol{a}_t$ their images in $M/M^2$. 
Then, we have $\gr _M(A)=(A/M)[\ol{a}_1,\ldots ,\ol{a}_t]$, 
and $\ol{a}_1^e\in \gr _M(A)$ is equal to 
the image of $a_1^e$ in $M^e/M^{e+1}$ for all $e\in \N $ 
(see \cite[\S~13]{Matsumura}). 
If $t=d$, 
then $A$ is a regular local ring by definition, 
implying that $\gr _M(A)$ is a polynomial ring in $d$ variables over $A/M$ 
by \cite[Theorem~14.4]{Matsumura}. 
This implies that 
$a_1^e\not\in M^{e+1}$ for all $e\in \N $, 
since $\ol{a}_1^e\ne 0$.

\begin{proof}[Proof of Lemma~{\rm \ref{lem:RLR}}]
Factor $\pi '$ as 
$\Z [\y ]\stackrel{\sigma }{\to }
\F _p[\y ]\stackrel{\psi }{\to }k$, 
where $\sigma $ is the natural surjection 
and $\psi $ is the ring homomorphism defined by $y_i\mapsto \xi _i$. 
Then, 
we have $\sigma (\Z [\y ]\sm \ker \pi ')=\F _p[\y ]\sm \ker \psi $, 
so $\sigma $ uniquely extends to a surjective ring homomorphism 
$\tilde{\sigma }:R\to \F _p[\y ]_{\ker \psi }$. 
It induces a ring isomorphism 
$R/pR=R/\ker \tilde{\sigma }\to \F _p[\y ]_{\ker \psi }$. 
Since $\F _p[\y ]_{\ker \psi }$ 
is a regular local ring by (R1), 
it follows that so is $R/pR$. 
Now, set $d:=\dim R$. 
Since $\dim R/pR\le d-1$, 
the maximal ideal $\fm /pR$ of $R/pR$ 
is generated by the images of some 
$a_2,\ldots ,a_d\in \fm $. 
Then, 
$\fm $ is generated by $p$, $a_2,\ldots ,a_d$. 
Therefore, the lemma follows from (R2). 
\end{proof}

\subsection{Reduction modulo $\fm ^{e+1}$}\label{sect:Reduce}

Let $(R,\fm )$ be the local ring defined in \S~\ref{sect:Lift}. 
For each $e\in \N $, 
we define the natural surjection 
\begin{equation}\label{eq:pi_e}
\pi _e:\Rx \to R_e[\x ], 
\text{ \ where \ }R_e:=R/\fm ^{e+1}. 
\end{equation}

\begin{lem}\label{lem:reduce coeff}
Let $e\in \N $, 
and 
$h=\sum _{\bi }c_{\bi }\x ^{\bi },
h'=\sum _{\bj }c_{\bj }'\x ^{\bj }\in \Rx $, 
where $c_{\bi },c_{\bj }'\in R$.

\nd {\rm (i)} 
$\deg \pi _e(ah)=\deg \pi _e(h)$ for all $a\in R^*$.

\nd {\rm (ii)} 
$\degw \pi _d(h)\le \degw \pi _e(h)\le \degw h$ 
for all $0\le d\le e$.

\nd {\rm (iii)} 
$\degw \pi _0(h)=\degw \pi _e(p^eh)$.

\nd {\rm (iv)} 
$\degw \pi _e(hh')
\le \max \{ \degw \pi _{d}(h)+\degw \pi _{e-d}(h')\mid 
0\le d\le e\} $. 
\end{lem}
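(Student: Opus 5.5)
The four claims reduce to bookkeeping with the $R$-adic valuation of coefficients together with Lemma~\ref{lem:RLR}. First recall what $\degw \pi _e(h)$ means: it is the maximum of $\bi \cdot \w $ over the support of $\pi _e(h)$, where $\bi $ lies in that support exactly when $c_{\bi }\notin \fm ^{e+1}$. This reformulation underlies all four parts.

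For (i), multiplication by a unit $a\in R^*$ preserves each ideal $\fm ^{e+1}$, so $ac_{\bi }\in \fm ^{e+1}$ if and only if $c_{\bi }\in \fm ^{e+1}$. Hence the supports of $\pi _e(ah)$ and $\pi _e(h)$ coincide, and so do their degrees.

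For (ii), the chain $\fm ^{e+1}\subset \fm ^{d+1}\subset R$ for $d\le e$ gives $\supp \pi _d(h)\subset \supp \pi _e(h)\subset \supp h$. Taking maxima of $\bi \cdot \w $ over these nested sets yields the inequalities.

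Part (iii) is the only step needing a real input, namely that $p^ec\in \fm ^{e+1}$ if and only if $c\in \fm $. One direction is immediate: if $c\in \fm $, then $p^ec\in \fm ^e\fm =\fm ^{e+1}$, since $p\in \fm $ by Remark~\ref{rem:RLR}. For the converse, suppose $c\notin \fm $. Because $R$ is local, $c$ is a unit, so $p^ec\in \fm ^{e+1}$ would force $p^e\in \fm ^{e+1}$. This contradicts Lemma~\ref{lem:RLR}. Consequently $\supp \pi _e(p^eh)=\supp \pi _0(h)$, and the degrees agree.

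For (iv), expand $hh'=\sum _{\bk }\bigl(\sum _{\bi +\bj =\bk }c_{\bi }c'_{\bj }\bigr)\x ^{\bk }$. Suppose $\bk \in \supp \pi _e(hh')$. Then some product $c_{\bi }c'_{\bj }$ with $\bi +\bj =\bk $ must lie outside $\fm ^{e+1}$. For such a pair, define $a:=\max \{ t\le e\mid c_{\bi }\in \fm ^t\} $ and $b:=\max \{ t\le e\mid c'_{\bj }\in \fm ^t\} $. If $a+b\ge e+1$, then $c_{\bi }c'_{\bj }\in \fm ^{a+b}\subset \fm ^{e+1}$, which is impossible; hence $a+b\le e$.

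Now put $d:=a$. Then $c_{\bi }\notin \fm ^{d+1}$: either $d<e$, in which case this holds by maximality of $a$, or $d=e$, which is excluded because then $c_{\bi }c'_{\bj }\in \fm ^{e+1}$. So $\bi \in \supp \pi _d(h)$. Similarly $c'_{\bj }\notin \fm ^{b+1}\supset \fm ^{e-d+1}$, because $b\le e-d$; so $\bj \in \supp \pi _{e-d}(h')$. Therefore
\[
\bk \cdot \w =\bi \cdot \w +\bj \cdot \w \le \degw \pi _d(h)+\degw \pi _{e-d}(h').
\]
Maximizing over $\bk $ gives (iv).

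The main subtlety is the bookkeeping of orders in (iv), specifically the edge case $a=e$, and the use of Lemma~\ref{lem:RLR} in (iii). Everything else is routine.
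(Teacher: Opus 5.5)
Your proof is correct and matches the paper's argument: you describe the support of $\pi_e(h)$ by $c_{\bi}\notin\fm^{e+1}$ for (i)--(ii), use the equivalence $c\in R^*\iff p^ec\notin\fm^{e+1}$ via Lemma~\ref{lem:RLR} for (iii), and choose $d$ with $c_{\bi}\in\fm^d\setminus\fm^{d+1}$ for (iv). Two small remarks on (iv): in the case $d=e$, what is excluded is $c_{\bi}\in\fm^{e+1}$, not the case $d=e$ itself, which can occur. Also, the paper avoids your auxiliary exponent $b$, since $c_{\bi}\in\fm^d$ and $c_{\bi}c'_{\bj}\notin\fm^{e+1}$ already force $c'_{\bj}\notin\fm^{e-d+1}$.
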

\begin{proof}
Since 
$\supp \pi _e(h)=\{ \bi \in \N ^n \mid c_{\bi }\not\in \fm ^{e+1}\} $, 
(i) and (ii) are obvious.

(iii) 
$c\in R$ belongs to $R\sm \fm =R^*$ 
if and only if $p^ec\not\in \fm ^{e+1}$, 
where the ``if" part is clear since $p^e\in \fm ^e$, 
and the ``only if" part is due to Lemma~\ref{lem:RLR}. 
This implies that $\supp \pi _0(h)=\supp \pi _e(p^eh)$, 
and hence $\degw \pi _0(h)=\degw \pi _e(p^eh)$.

(iv) 
Pick $\bk \in \supp \pi _e(hh')$ 
with $\degw \pi _e(hh')=\bk \cdot \w $. 
Then, 
the coefficient $c$ of $\x ^{\bk }$ in $hh'$ 
is not in $\fm ^{e+1}$. 
Since $c=\sum _{\bi +\bj =\bk }c_{\bi }c_{\bj }'$, 
it follows that 
$c_{\bi }c_{\bj }'\not\in \fm ^{e+1}$ for some 
$\bi $, $\bj $ 
with $\bi +\bj =\bk $. 
Clearly, 
$c_{\bi }\not\in \fm ^{e+1}$. 
Choose $0\le d\le e$ so that 
$c_{\bi }\not\in \fm ^{d+1}$ and $c_{\bi }\in \fm ^d$. 
Then, 
$c_{\bj }'\not\in \fm ^{e-d+1}$. 
Thus, 
we have 
$\bi \in \supp \pi _d(h)$ and $\bj \in \supp \pi _{e-d}(h')$, 
implying that $\bi \cdot \w \le \degw \pi _d(h)$ and 
$\bj \cdot \w \le \degw \pi _{e-d}(h')$. 
This yields 
$\degw \pi _e(hh')=\bk \cdot \w =\bi \cdot \w +\bj \cdot \w 
\le \degw \pi _d(h)+\degw \pi _{e-d}(h')$. 
\end{proof}

For each $e\in \N $, 
$R$-linear maps 
$\Rx ^s\to R_e[\x ]^s$ and 
$\bigwedge ^s\Omega _{\Rx /R}\to \bigwedge ^s\Omega _{R_e[\x ]/R_e}$ 
are defined by sending 
$\bh =(h_1,\ldots ,h_s)\in \Rx ^s$ and 
$\omega =\sum _{\bi }f_{\bi }d\x _{\bi }
\in \bigwedge ^s\Omega _{\Rx /R}$ 
to 
$\pi _e\bh :=(\pi _e(h_1),\ldots ,\pi _e(h_s))$ 
and 
$\pi _e(\omega ):=\sum _{\bi }\pi _e(f_{\bi })d\x _{\bi }$, 
respectively.

\begin{rem}\label{rem:reduce jacobian}\rm 
Let $e\in \N $, $h\in \Rx $, 
and $\bh \in \Rx ^s$. 
Then, 
$\pi _e(\partial h/\partial x_l)=\partial \pi _e(h)/\partial x_l$ 
holds for $l=1,\ldots ,n$. 
It follows that 
$\pi _e(|\partial \bh/\partial \x _{\bi }|)
=|\partial \pi _e\bh /\partial \x _{\bi }|$ 
for all $\bi \in \Lambda (n,s)$. 
Hence, 
we have $\pi _e(d\bh)=d\pi _e\bh $ by (\ref{eq:dff}). 
\end{rem}

\begin{lem}\label{lem:reduce coeff2}
Let $e\in \N $, 
$\omega ,\omega '\in \bigwedge ^s\Omega _{\Rx /R}$, 
and $h,h'\in \Rx $. 

\nd{\rm (i)} 
If $\omega '\prec \omega $, 
then $\pi _e(\omega ')\prec \pi _e(\omega )$ and 
$\deg \pi _e(\omega ')\le \deg \pi _e(\omega )$.

\nd{\rm (ii)} 
$\deg \pi _e(\omega +\omega ')
=\deg (\pi _e(\omega )+\pi _e(\omega '))
\le \max \{ \degw \pi _e(\omega ),\degw \pi _e(\omega ')\} $.

\nd{\rm (iii)} 
$\deg \pi _e(b\omega )\le \deg \pi _e(\omega )$ 
for all $b\in R$.

\nd{\rm (iv)} 
$\deg \pi _d(\omega )\le \deg \pi _e(\omega )\le \deg \omega $ 
for all $0\le d\le e$.

\nd{\rm (v)} 
$\degw \pi _e(\omega \wedge dh)\le 
\max \{ \degw \pi _{d}(\omega )+\degw \pi _{e-d}(h)\mid 
0\le d\le e\} $.

\nd{\rm (vi)} 
$\degw \pi _e(h\omega )\le 
\max \{ \degw \pi _{d}(h)+\degw \pi _{e-d}(\omega )\mid 
0\le d\le e\} $.

\nd{\rm (vii)} 
$\degw \pi _e(h\omega \wedge dh')\le 
\max \{ \degw \pi _{i}(h)
+\degw \pi _{j}(\omega)
+\degw \pi _{l}(h')
\mid i+j+l=e\} $.

\nd{\rm (viii)} 
$\degw \pi _e(\omega \wedge dx_l)
\le \degw \pi _e(\omega )+w_l$ 
for $l=1,\ldots ,n$. 
\end{lem}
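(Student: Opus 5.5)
Every item reduces to the corresponding coefficientwise statement in Lemma~\ref{lem:reduce coeff}. The tool is the componentwise description of the degree of a form. Write $\omega =\sum _{\bi }f_{\bi }d\x _{\bi }$. Since $\pi _e(\omega )=\sum _{\bi }\pi _e(f_{\bi })d\x _{\bi }$, definition (\ref{eq:deg omega}) gives
$$\deg \pi _e(\omega )=\max _{\bi }\{ \deg \pi _e(f_{\bi })+|\w _{\bi }|\} .$$

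Items (i)--(iv) follow directly from this formula.

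\begin{itemize}
\item[(i)] If $\omega '=f_{\bi }d\x _{\bi }$, then $\pi _e(\omega ')=\pi _e(f_{\bi })d\x _{\bi }$ is one summand of $\pi _e(\omega )$, so it is $\prec \pi _e(\omega )$ and its degree is bounded by the maximum above.
\item[(ii)] The map $\pi _e$ is $R$-linear, which gives the equality. The inequality is property (2) of \S\ref{subsetc:w-degree}, applied over $R_e$; these definitions make sense over any commutative ring.
\item[(iii)] $\supp \pi _e(bf)\subset \supp \pi _e(f)$, because $c\in \fm ^{e+1}$ implies $bc\in \fm ^{e+1}$. Hence $\deg \pi _e(bf_{\bi })\le \deg \pi _e(f_{\bi })$ for each $\bi $.
\item[(iv)] Apply Lemma~\ref{lem:reduce coeff}~(ii) to each $f_{\bi }$.
\end{itemize}

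For (vi), write $h\omega =\sum _{\bi }hf_{\bi }d\x _{\bi }$. Lemma~\ref{lem:reduce coeff}~(iv) gives
$$\deg \pi _e(hf_{\bi })\le \max _d\{ \deg \pi _d(h)+\deg \pi _{e-d}(f_{\bi })\} .$$
Add $|\w _{\bi }|$ to both sides and use $\deg \pi _{e-d}(f_{\bi })+|\w _{\bi }|\le \deg \pi _{e-d}(\omega )$.

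For (viii), note $d\x _{\bi }\wedge dx_l$ is either $0$ or $\pm d\x _{\bi '}$, where $\bi '$ is $\bi $ with $l$ inserted, so $|\w _{\bi '}|=|\w _{\bi }|+w_l$. Distinct $\bi $ can give the same $\bi '$, so a coefficient of $\pi _e(\omega \wedge dx_l)$ is a signed sum of some $\pi _e(f_{\bi })$. Its degree is at most the maximum of $\deg \pi _e(f_{\bi })+|\w _{\bi }|+w_l$. (Equivalently, apply (ii) to $\sum _{\bi }\pi _e(f_{\bi }d\x _{\bi }\wedge dx_l)$.)

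For (v), expand $dh=\sum _l(\partial h/\partial x_l)dx_l$, so
$$\omega \wedge dh=\sum _l(\partial h/\partial x_l)(\omega \wedge dx_l).$$
By (ii) it suffices to bound each term, and (vi) followed by (viii) gives
$$\max _d\{ \deg \pi _d(\partial h/\partial x_l)+\deg \pi _{e-d}(\omega )+w_l\} .$$
It remains to show $\deg \pi _d(\partial h/\partial x_l)+w_l\le \deg \pi _d(h)$. By Remark~\ref{rem:reduce jacobian}, $\pi _d(\partial h/\partial x_l)=\partial \pi _d(h)/\partial x_l$. Differentiation sends the monomial $\x ^{\bi }$ to a multiple of $\x ^{\bi -\epsilon _l}$ and never enlarges the support beyond such shifts, where $\epsilon _l$ is the $l$th unit vector. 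If the derivative is $0$, its degree is $-\infty$ and the bound is trivial. This is the one point needing care: over $R_e$ differentiation can kill coefficients, but it can only lower the degree, so the inequality survives.

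For (vii), apply (v) to the form $h\omega $ and $h'$, then bound $\deg \pi _{e-l}(h\omega )$ by (vi) with indices $i+j=e-l$. This gives the maximum over $i+j+l=e$.

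I expect no real obstacle. The only things to watch are that the bounds over $R_e$ rest on support inclusions, never on equalities, and that (viii) must handle several $\bi $ merging into one $\bi '$.
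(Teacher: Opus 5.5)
Your proof is correct and essentially the paper's: both reduce everything componentwise to Lemma~\ref{lem:reduce coeff}, isolate a dominant term via (ii), and use $\deg \partial \pi _d(h)/\partial x_l\le \deg \pi _d(h)-w_l$. The only difference is the order of (v) and (viii): the paper proves (v) directly on the terms $f_{\bi }h_j\,d\x _{\bi }\wedge dx_j$ and gets (viii) as the case $h=x_l$ of (v) combined with (iv), while you prove (viii) first and assemble (v) from (ii), (vi) and (viii).

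One harmless inaccuracy: in (viii), distinct $\bi $ cannot give the same $\bi '$, because $\bi $ is recovered from $\bi '$ by deleting $l$. Such merging genuinely occurs only across different $l$ in (v), and your appeal to (ii) there already handles it.
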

\begin{proof}
(i)--(iv) are obvious. 
For (v), 
write $\omega =\sum _{\bi }f_{\bi }d\x _{\bi }$ with $f_{\bi }\in \Rx $. 
Then, 
we have 
$\omega \wedge dh
=\sum _{\bi ,j}f_{\bi }h_jd\x _{\bi }\wedge dx_j$, 
where 
$h_j:=\partial h/\partial x_j$. 
By (ii), 
there exist $\bi $ and $j$ such that 
$\deg \pi _e(\omega \wedge dh)
\le \deg \pi _e(f_{\bi }h_jd\x _{\bi }\wedge dx_j)
\le \deg \pi _e(f_{\bi }h_j)+|\w _{\bi }|+w_j$. 
Thanks to Lemma~\ref{lem:reduce coeff} (iv), 
this is at most 
$\deg \pi _{d}(f_{\bi })d\x _{\bi }+\deg \pi _{e-d}(h_j)+w_j$ 
for some $0\le d\le e$. 
Since $\pi _{d}(f_{\bi })d\x _{\bi }\prec \pi _{d}(\omega )$ 
and 
$\deg \pi _{e-d}(h_j)
=\deg (\partial \pi _{e-d}(h)/\partial x_j)\le \deg \pi _{e-d}(h)-w_j$, 
it follows that $\degw \pi _e(\omega \wedge dh)
\le \degw \pi _{d}(\omega )+\degw \pi _{e-d}(h)$, 
proving (v). 
Since 
$\degw \pi _e(h\omega )=\degw \pi _e(hf_{\bi })d\x _{\bi }$ 
for some $\bi $ by definition, 
(vi) follows from Lemma~\ref{lem:reduce coeff}~(iv) similarly. 
Combining (v) and (vi) yields (vii). 
(viii) follows from (v) and (iv), 
since $\deg \pi _{e-d}(x_l)=w_l$ for all $0\le d\le e$. 
\end{proof}

\section{Proof of Theorem~\ref{thm:key}}\label{sect:Proof of pSU}
\setcounter{equation}{0}

We now prove Theorem~\ref{thm:key} 
and thereby Theorem~\ref{thm:main}.

\nd 
{\bf Step 1. Translation.} 
By (\ref{eq:dff}) and (\ref{eq:deg omega}), 
there exists $\bk \in \Lambda (n,r+1)$ such that 
\begin{equation}\label{eq:pf1}
\degw d\ff \wedge dg=
\degw 
\left|
\dfrac{\partial (\ff ,g)}{\partial \x _{\bk }}
\right|+|\w _{\bk }|. 
\end{equation}
By simultaneously changing the indices of 
$w_1,\ldots ,w_n$ and $x_1,\ldots ,x_n$, 
we may assume that 
$\bk =(1,2,\ldots ,r+1)$. 
Since $d\ff \wedge dg\ne 0$ by assumption, 
(\ref{eq:pf1}) implies that 
$|\partial (\ff ,g)/\partial \x _{\bk }|\ne 0$. 
Hence, by the assumption that $k$ is an infinite field, 
there exists $\bu \in k^n$ such that 
$|\partial (\ff ,g)/\partial \x _{\bk }|(\bu )\ne 0$. 
Then, 
(T4) and (T1) yield 
$|\partial \tau _{\bu }(\ff ,g)/\partial \x _{\bk }|(0)
=\tau _{\bu }(|\partial (\ff ,g)/\partial \x _{\bk }|)(0)
=|\partial (\ff ,g)/\partial \x _{\bk }|(\bu )\ne 0$. 
Since the existence of a $\tau _{\bu }(\ff ,g)$-map implies 
that of an $(\ff ,g)$-map by Lemma~\ref{lem:(f,g)-map reduction}~(i), 
by replacing $(\ff ,g)$ with $\tau _{\bu }(\ff ,g)$, 
we may assume that 
\begin{equation}\label{eq:pf2}
\left|
\dfrac{\partial (\ff ,g)}{\partial \x _{\bk }}
\right|(0)\ne 0. 
\end{equation}
In view of (T4), 
this change preserves (\ref{eq:pf1}). 
By Lemma~\ref{lem:(f,g)-map reduction} (ii), 
we can also assume that $f_1(0)=\cdots =f_r(0)=g(0)=0$ 
without violating (\ref{eq:pf1}) or (\ref{eq:pf2}).

\nd {\bf Step 2. Lift.} 
Since $k$ is finitely generated over its prime field by assumption, 
we can define the local ring $(R,\fm )$ as in \S~\ref{sect:Lift}. 
We define $\pi _e$ as in (\ref{eq:pi_e}). 
For $f\in \kx $, 
we call $\tilde{f}\in \Rx $ a {\it lift} of $f$ 
if $\pi _0(\tilde{f})=f$ and $\degw \tilde{f}=\degw f$, 
where we identify $R/\fm $ with $k$. 
Each $f\in \kx $ has a 
lift $\tilde{f}\in \Rx $ with $\supp \tilde{f}=\supp f$. 
Hence, 
there exist lifts 
$\tilde{f}_1,\ldots ,\tilde{f}_r,\tilde{g}\in \Rx $ 
of $f_1,\ldots ,f_r$, $g$ 
with $\tilde{f}_1(0)=\cdots =\tilde{f}_r(0)=\tilde{g}(0)=0$, 
respectively. 
We set $\tilde{\ff }:=(\tilde{f}_1,\ldots ,\tilde{f}_r)$.

Now, 
define $D:=\delta _{\tilde{\ff }}\in \Der _{R_p}(R_p[\x ])$ 
as in Example~\ref{ex:JD} (i). 
Since $\tilde{\ff }\in \Rx ^r$, 
we have $D(\Rx )\subset \Rx $. 
Hence, $D(\tilde{g})$ belongs to $\Rx $. 
Note that 
\begin{equation}\label{eq:pf3}
\pi _0(D(\tilde{g}))=\pi _0
\left( 
\left|
\dfrac{\partial (\tilde{\ff },\tilde{g})}{\partial \x _{\bk }}
\right|
\right)
= \left|
\dfrac{\partial (\ff ,g)}{\partial \x _{\bk }}
\right| 
\end{equation}
by the definition of $D$ and Remark~\ref{rem:reduce jacobian}. 
Hence, it follows from (\ref{eq:pf2}) that 
\begin{equation}\label{eq:pf4}
D(\tilde{g})(0)=\left|
\dfrac{\partial (\tilde{\ff },\tilde{g})}{\partial \x _{\bk }}
\right|(0)
\in \{ a\in R\mid \pi _0(a)\ne 0\} =R\sm \fm =R^*. 
\end{equation}
This implies that $D(\tilde{g})\in R[[\x ]]^*$. 
Thus, we have $A':=R_p[\x ][D(\tilde{g})^{-1}]\subset R_p[[\x ]]$.

Set $\Delta :=D(\tilde{g})^{-1}D'\in \Der _{R_p}(A')$, 
where $D'\in \Der _{R_p}(A')$ is the unique extension of $D$. 
Then, 
since $\Q \subset R_p$ by Remark~\ref{rem:RLR} (ii), 
the substitution map 
$\theta _{\Delta }:R_p[[\x ,T]]\to R_p[[\x ,T]]$ 
is defined 
as in (\ref{eq:exp TD map}). 
For $i=1,\ldots ,r+1$, 
we define $\{ q_{i,l}\} _{l=1}^{\infty }$ as follows, 
which is a sequence in $\Rx $ since $D(\Rx )\subset \Rx $: 
\begin{equation}\label{eq:def:hat{h}}
q_{i,1}:=D(x_i)\text{ \ and \ }
q_{i,l}:=D(\tilde{g})D(q_{i,l-1})+(3-2l)q_{i,l-1}D^2(\tilde{g})
\text{ \ for \ } l\ge 2. 
\end{equation}

\begin{lem}\label{lem:q_{i,l}}

\nd {\rm (i)} 
$\theta _{\Delta }(f)=f$ 
for all 
$f\in \{ \tilde{f}_1,\ldots ,\tilde{f}_r,x_{r+2},\ldots ,x_n\} $ 
and $\theta _{\Delta }(\tilde{g})=\tilde{g}+T$. 

\nd {\rm (ii)} 
$\theta _{\Delta }$ belongs to $\mathcal{M}
:=(\sum _{i=1}^nx_iR[[\x ,T]]+TR[[\x ,T]])^{n+1}$.

\nd {\rm (iii)} 
The following $(1)$--$(3)$ 
hold for all $i=1,\ldots ,r+1$ and $l\in \N _+$$:$

$(1)$ $\Delta ^l(x_i)=q_{i,l}D(\tilde{g})^{1-2l}$$;$ \quad 
$(2)$ $q_{i,l}/l!\in \Rx $$;$

$(3)$ $\degw \pi _0(q_{i,l}/l!)=\degw \pi _e(q_{i,l})$, 
where $e:=v_p(l!)$. 

\end{lem}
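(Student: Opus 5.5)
We prove the four parts in the order (i), (ii), (iii)(1), (iii)(2), (iii)(3). The key point is that (ii) does not use (iii): we get it from the Formal Inverse Function Theorem via Corollary~\ref{lem:2023Feb}. Then (iii)(2) follows from (ii), instead of from a direct $p$-adic analysis of the recursion (\ref{eq:def:hat{h}}).

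For (i), we combine Example~\ref{ex:JD}~(ii), with $R_p$ in place of $R$, and (\ref{eq:rem:theta_D}). The set $C$ there lies in $\ker \Delta$ and $\Delta (\tilde{g})=1$. So $\exp T\Delta $ fixes $\tilde{f}_1,\ldots ,\tilde{f}_r,x_{r+2},\ldots ,x_n$ and sends $\tilde{g}$ to $\tilde{g}+T$. Since these elements lie in $R_p[\x ]$, (\ref{eq:rem:theta_D}) says $\theta _\Delta $ acts on them in the same way.

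For (ii), we first note from (\ref{eq:exp continuous}) that $\theta _\Delta \in (\sum _ix_iR_p[[\x ,T]]+TR_p[[\x ,T]])^{n+1}$; the issue is only that its coefficients should lie in $R$. Set
$$\ff :=(\tilde{f}_1,\ldots ,\tilde{f}_r,\tilde{g},x_{r+2},\ldots ,x_n,T)\in \mathcal{M},$$
which lies in $\mathcal{M}$ because $\tilde{f}_j(0)=\tilde{g}(0)=0$. With respect to $(x_1,\ldots ,x_n,T)$, its Jacobian is $D(\tilde{g})=|\partial (\tilde{\ff },\tilde{g})/\partial \x _{\bk }|$, with $\bk =(1,\ldots ,r+1)$. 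By (\ref{eq:pf4}), this has constant term in $R^*$. By the composition convention, $\theta _\Delta \ff $ is the tuple of the $\theta _\Delta$-images of the entries of $\ff $. By (i) this equals
$$(\tilde{f}_1,\ldots ,\tilde{f}_r,\tilde{g}+T,x_{r+2},\ldots ,x_n,T)\in \mathcal{M}.$$
Corollary~\ref{lem:2023Feb}, applied with $R'=R_p$, then gives $\theta _\Delta \in \mathcal{M}$.

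For (iii)(1), we induct on $l$. The case $l=1$ is $\Delta (x_i)=D(x_i)/D(\tilde{g})=q_{i,1}D(\tilde{g})^{-1}$. For the step, we apply $\Delta =D(\tilde{g})^{-1}D'$ to $q_{i,l}D(\tilde{g})^{1-2l}$ using the quotient rule:
$$\Delta \bigl(q_{i,l}D(\tilde{g})^{1-2l}\bigr)=\bigl(D(\tilde{g})D(q_{i,l})+(1-2l)q_{i,l}D^2(\tilde{g})\bigr)D(\tilde{g})^{-1-2l}.$$
Since $3-2(l+1)=1-2l$, the bracket is $q_{i,l+1}$, which is the claim for $l+1$.

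For (iii)(2), by (\ref{eq:exp continuous}) the coefficient of $T^l$ in $\theta _\Delta (x_i)$ is $\Delta ^l(x_i)/l!$. By (ii) this lies in $R[[\x ]]$. By (1),
$$q_{i,l}/l!=\bigl(\Delta ^l(x_i)/l!\bigr)\,D(\tilde{g})^{2l-1}\in R[[\x ]].$$
Also $q_{i,l}/l!\in R_p[\x ]$, since $q_{i,l}\in \Rx $ and $l!$ is invertible in $R_p$. Hence $q_{i,l}/l!\in R[[\x ]]\cap R_p[\x ]=\Rx $.

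For (iii)(3), write $l!=p^eu$ with $u\in \Z $ prime to $p$. Since $\fm \cap \Z =p\Z $ (Remark~\ref{rem:RLR}~(ii)), we have $u\notin \fm $, so $u\in R^*$. Put $h:=q_{i,l}/l!\in \Rx $. Then $q_{i,l}=p^e(uh)$, and Lemma~\ref{lem:reduce coeff}~(iii) and (i) give
$$\deg \pi _e(q_{i,l})=\deg \pi _0(uh)=\deg \pi _0(h).$$
The main obstacle is (ii), that is, controlling denominators in $\exp T\Delta $; the inverse-function trick above is what handles it.
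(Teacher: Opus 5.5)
Your proposal is correct and follows the paper's own proof essentially step for step. Part (ii) goes through Corollary~\ref{lem:2023Feb} applied to $(\tilde{\ff},\tilde g,x_{r+2},\ldots,x_n,T)$; (iii)(1) uses the same induction; (iii)(2) reads off the $T^l$-coefficients of $\theta_\Delta(x_i)\in R[[\x,T]]$; and (iii)(3) follows from Lemma~\ref{lem:reduce coeff}~(iii) and~(i) because $l!/p^e$ is a unit in $R$. The only cosmetic difference is that you multiply by $D(\tilde g)^{2l-1}\in\Rx$ where the paper invokes $D(\tilde g)\in R[[\x]]^*$, and either works.
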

\begin{proof}
(i) 
The assertion follows from 
(\ref{eq:rem:theta_D}) 
and Example~\ref{ex:JD} (ii).

(ii) 
Set $\frak{F}:=(\tilde{\ff} ,\tilde{g},x_{r+2},\ldots ,x_n,T)$. 
Then, 
since $\tilde{f}_1(0)=\cdots =\tilde{f}_r(0)=\tilde{g}(0)=0$, 
we have $\frak{F}\in \mathcal{M}$. 
In addition, 
(\ref{eq:pf4}) gives $|\partial \frak{F}/\partial (\x ,T)|(0)=
|\partial (\tilde{\ff },\tilde{g})/\partial \x _{\bk }|(0)
\in R^*$. 
Since $\theta _{\Delta }\in 
(\sum _{i=1}^nx_iR_p[[\x ,T]]+TR_p[[\x ,T]])^{n+1}$ 
by definition, 
and 
$\theta _{\Delta }\mathfrak{F}
=(\tilde{\ff} ,\tilde{g}+T,x_{r+2},\ldots ,x_n,T)\in \mathcal{M}$ 
by (i), 
it follows from Corollary~\ref{lem:2023Feb} 
that $\theta _{\Delta }\in \mathcal{M}$.

(iii) 
We prove (1) by induction on $l$. 
The case $l=1$ is clear. 
Assume that $l\ge 2$ and 
$\Delta ^{l-1}(x_i)=q_{i,l-1}D(\tilde{g})^{3-2l}$ 
holds true. 
Then, 
we have
\begin{align*}
&\Delta ^l(x_i)
=\Delta (\Delta ^{l-1}(x_i))
=\Delta (q_{i,l-1}D(\tilde{g})^{3-2l})
=D(\tilde{g})^{-1}D'(q_{i,l-1}D(\tilde{g})^{3-2l}) \\
&\quad 
=D(\tilde{g})^{-1}
\bigl(D(q_{i,l-1})D(\tilde{g})^{3-2l}
+(3-2l)q_{i,l-1}D(\tilde{g})^{2-2l}
D(D(\tilde{g}))\bigr) \\
&\quad =
\bigl( 
D(q_{i,l-1})D(\tilde{g})+(3-2l)
q_{i,l-1}D^2(\tilde{g})
\bigr) D(\tilde{g})^{1-2l}
=q_{i,l}D(\tilde{g})^{1-2l}. 
\end{align*}
Next, by (ii), 
$(\exp T\Delta )(x_i)\in R[[\x ,T]]$ holds for all $i$. 
By (1), we also have 
\begin{equation}\label{eq:pf6}
(\exp T\Delta )(x_i)
=\sum _{l=0}^\infty \dfrac{\Delta ^l(x_i)}{l!}T^l
=x_i+\sum _{l=1}^\infty \dfrac{q_{i,l}}{l!}D(\tilde{g})^{1-2l}T^l 
\text{ \ for \ }i=1,\ldots ,r+1. 
\end{equation}
Thus, $(q_{i,l}/l!)D(\tilde{g})^{1-2l}\in R[[\x ]]$ holds. 
Since 
$D(\tilde{g})\in R[[\x ]]^*$, 
we get $q_{i,l}/l!\in R[[\x ]]$. 
This together with $q_{i,l}\in \Rx $ implies (2). 
Then, 
since $p^e/l!\in R^*$, 
parts (iii) and (i) of Lemma~\ref{lem:reduce coeff} yield 
$\degw \pi _0(q_{i,l}/l!)=\degw \pi _e(p^eq_{i,l}/l!)
=\degw \pi _e(q_{i,l})$, 
proving (3). 
\end{proof}

\nd {\bf Step 3. $(\ff ,g)$-map.} 
By (\ref{eq:pf3}), 
we have $\pi _0(D(\tilde{g}))\in \kx \sm \zs $. 
Hence, 
by Lemma~\ref{lem:q_{i,l}} (iii) (2), we can 
define a homomorphism $E:\kx \to \kxr [[T]]$ of $k$-algebras by 
\begin{equation}\label{eq:pf7}
E(x_i):=x_i+\sum _{l=1}^\infty 
\pi _0\left(\dfrac{q_{i,l}}{l!}\right)
\pi _0(D(\tilde{g}))^{1-2l}T^l
\text{ \ \ for \ \ }i=1,\ldots ,r+1 
\end{equation}
and $E(x_j):=x_j$ for $j=r+2,\ldots ,n$. 
Below, 
we show that $E$ is an $(\ff ,g)$-map.

Set $A:=\Rx [D(\tilde{g})^{-1}]$. 
We first assert that $\theta _\Delta (\Rx )\subset A[[T]]$. 
Indeed, 
(\ref{eq:pf6}) and Lemma~\ref{lem:q_{i,l}} (iii) (2) show that 
$\theta _{\Delta }(x_i)=(\exp T\Delta )(x_i)\in A[[T]]$ 
for $i=1,\ldots ,r+1$, 
and Lemma~\ref{lem:q_{i,l}} (i) gives 
$\theta _{\Delta }(x_j)=x_j\in A$ for $j=r+2,\ldots ,n$. 
Next, since $\pi _0(D(\tilde{g}))\in \kxr ^*$, 
the homomorphism 
$\Rx \stackrel{\pi _0}{\to }(R/\fm )[\x ]=\kx \hookrightarrow \kxr $ 
of $R$-algebras uniquely extends to $A\to \kxr $, 
inducing a natural homomorphism 
$\tilde{\pi }_0:A[[T]]\to \kxr [[T]]$ of $R$-algebras. 
Then, 
the diagram (A) below commutes: 
$$
{\rm (A)}\begin{array}{l}
\xymatrix@C=10mm@R=5mm
{
\Rx  \ar[d]_{\pi _0} \ar[r]^(.45){\theta _\Delta } 
& A[[T]]
 \ar[d]^{\tilde{\pi }_0} \\ 
\kx  \ar[r]^(.41){E} 
& \kxr [[T]].}
\end{array}
\quad 
\begin{array}{ll}
\multirow{2}{*}{\text{(B)}} \!\!& 
\text{\small $\theta _\Delta (\tilde{f}_i)=\tilde{f}_i$ for $i=1,\ldots ,r$ 
and $\theta _\Delta (\tilde{g})=\tilde{g}+T$} \\ 
\smallskip 
 & \text{\small by Lemma~\ref{lem:q_{i,l}}~(i).} \\ 
\multirow{2}{*}{\text{(C)}} \!\!& \small \text{$\pi _0(\tilde{f}_i)=f_i$ for $i=1,\ldots ,r$ and $\pi _0(\tilde{g})=g$ }\\
& \text{\small by the definition of lifts.}
\end{array}
$$
This together with (B) and (C) implies that $E$ satisfies~(M1).

For (M2), we first prove two lemmas. 
Note that (\ref{eq:pf1}) and (\ref{eq:pf3}) imply 
\begin{equation}\label{eq:pf1'}
\degw \pi _0(D(\tilde{g}))+|\w _{\bk }|=\deg d\ff \wedge dg. 
\end{equation}

\begin{lem}\label{lem:gap}
For all $e\in \N $, we have 
$\degw \pi_e(d\tilde{\ff })-\degw d\ff \le e\Lambda _1$ 
and 
$\degw \pi_e(D(\tilde{g}))-\degw \pi _0(D(\tilde{g}))\le e\Lambda _1$. 
\end{lem}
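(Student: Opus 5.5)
The plan is to treat the cases $e=0$ and $e\ge 1$ separately. For $e\ge 1$ it suffices to show that each left-hand side is at most $\Lambda _1$, since $\Lambda _1\ge 0$ gives $\Lambda _1\le e\Lambda _1$. For $e=0$ both inequalities are equalities with left-hand side $0$. Indeed, Remark~\ref{rem:reduce jacobian} and the definition of lifts give $\pi _0(d\tilde{\ff })=d\pi _0\tilde{\ff }=d\ff $. The second inequality at $e=0$ reads $0\le 0$.

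For the first inequality with $e\ge 1$, I will bound $\degw \pi _e(d\tilde{\ff })$ from above by $\degw \ff $. By Remark~\ref{rem:reduce jacobian}, $\pi _e(d\tilde{\ff })=d\pi _e\tilde{\ff }$. Write this as $\pi _e(d\tilde{f}_1\wedge \cdots \wedge d\tilde{f}_r)$ and apply Lemma~\ref{lem:reduce coeff2}~(v) repeatedly, peeling off one $d\tilde{f}_i$ at a time. Together with Lemma~\ref{lem:reduce coeff}~(ii) and Lemma~\ref{lem:reduce coeff2}~(iv), this bounds the degree by $\sum _i\max _{d\le e}\degw \pi _d(\tilde{f}_i)\le \sum _i\degw \tilde{f}_i$. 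Since lifts satisfy $\degw \tilde{f}_i=\degw f_i$, this sum equals $\degw \ff $. It then remains to check that $\degw \ff -\degw d\ff \le \Lambda _1$. By the definition (\ref{eq:Lambdas}) of $\Lambda _1$, this is equivalent to $\degw d\ff \wedge dg\le \degw d\ff +\degw g$. That inequality follows from property (3) of $\degw $ on forms and Remark~\ref{rem:independence}~(ii) ($\degw dg\le \degw g$).

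For the second inequality with $e\ge 1$, recall from (\ref{eq:def:D_ff}) that $D(\tilde{g})d\x _{\bk }\prec d\tilde{\ff }\wedge d\tilde{g}$. By Lemma~\ref{lem:reduce coeff2}~(i), $\degw \pi _e(D(\tilde{g}))+|\w _{\bk }|\le \degw \pi _e(d\tilde{\ff }\wedge d\tilde{g})$. By the same repeated use of Lemma~\ref{lem:reduce coeff2}~(v), the right-hand side is at most $\degw \ff +\degw g$. On the other side, (\ref{eq:pf1'}) gives $\degw \pi _0(D(\tilde{g}))+|\w _{\bk }|=\degw d\ff \wedge dg$. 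Subtracting yields $\degw \pi _e(D(\tilde{g}))-\degw \pi _0(D(\tilde{g}))\le \degw \ff +\degw g-\degw d\ff \wedge dg=\Lambda _1$.

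There is no serious obstacle in this argument. The only point needing care is the iterated application of Lemma~\ref{lem:reduce coeff2}~(v) to a wedge of $r$ (or $r+1$) differentials. I would do this by induction on the number of factors, using Lemma~\ref{lem:reduce coeff2}~(iv) and Lemma~\ref{lem:reduce coeff}~(ii) to replace each $\pi _d$ by the unreduced degree. For the base factor $\pi _d(d\tilde{f}_1)$ I would use Lemma~\ref{lem:reduce coeff2}~(v) with $\omega $ replaced by $dx_l$-type terms, or simply $\degw d\pi _d(\tilde{f}_1)\le \degw \pi _d(\tilde{f}_1)$ by Remark~\ref{rem:independence}~(ii) applied over $R_d$.
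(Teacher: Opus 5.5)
Your proof is correct and follows the paper's argument. It uses the same split into $e=0$ and $e\ge 1$, the bound $\Lambda _1\le e\Lambda _1$, the relation $D(\tilde{g})d\x _{\bk }\prec d\tilde{\ff }\wedge d\tilde{g}$, and subtraction of (\ref{eq:pf1'}); the only difference is that the paper skips the iterated use of Lemma~\ref{lem:reduce coeff2}~(v) by first applying $\degw \pi _e(\omega )\le \degw \omega $ (Lemma~\ref{lem:reduce coeff2}~(iv), resp.\ Lemma~\ref{lem:reduce coeff}~(ii)) and then Remark~\ref{rem:independence}~(iii) to $d\tilde{\ff }$ and $d\tilde{\ff }\wedge d\tilde{g}$ over $R$.
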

\begin{proof}
By the definition of lifts, 
we have 
$\pi _0\tilde{\ff }=\ff $, 
$\deg \tilde{\ff }=\deg \ff $, 
and $\deg \tilde{g}=\deg g$. 
Now, 
since $\pi _0(d\tilde{\ff })=d\pi _0\tilde{\ff }=d\ff $ 
by Remark~\ref{rem:reduce jacobian}, 
the assertions hold for $e=0$. 
For $e\ge 1$, 
we have $\Lambda _1\le e\Lambda _1$. 
Since 
$\deg \pi _e(d\tilde{\ff })\le 
\deg d\tilde{\ff }\le 
\deg \tilde{\ff }=\deg \ff $ 
by Lemma~\ref{lem:reduce coeff2}~(iv) and 
Remark~\ref{rem:independence} (iii), 
it follows that 
\begin{equation}\label{eq:gap1}
\deg \pi_e(d\tilde{\ff })-\deg d\ff \le \deg \ff -\deg d\ff 
=\Lambda _1-\Lambda _0\le \Lambda _1\le e\Lambda _1. 
\end{equation}
Next, 
we claim that 
$\degw \pi _e(D(\tilde{g}))+|\w _{\bk }|\le \deg \ff +\deg g$ ($\dag $). 
Indeed, 
Lemma~\ref{lem:reduce coeff}~(ii) implies that 
$\degw \pi _e(D(\tilde{g}))+|\w _{\bk }|\le \degw D(\tilde{g})d\x _{\bk }$. 
By (\ref{eq:def:D_ff}), 
we have $D(\tilde{g})d\x _{\bk }\prec d\tilde{\ff }\wedge d\tilde{g}$, 
and so 
$\degw D(\tilde{g})d\x _{\bk }\le \degw d\tilde{\ff }\wedge d\tilde{g}$. 
Moreover, 
Remark~\ref{rem:independence} (iii) gives 
\begin{equation}\label{eq:gap2}
\degw d\tilde{\ff }\wedge d\tilde{g}\le 
\deg \tilde{\ff }+\deg \tilde{g}=\deg \ff +\deg g. 
\end{equation}
Subtracting (\ref{eq:pf1'}) from ($\dag $) 
now yields 
$\degw \pi_e(D(\tilde{g}))-\degw \pi _0(D(\tilde{g}))
\le \Lambda _1\le e\Lambda _1$. 
\end{proof}

\begin{lem}\label{lem:estimation}
Set 
$\Lambda _2:=\degw d\ff +\degw \pi _0(D(\tilde{g}))-|\w _{\bk }|$ 
$(\ddag )$. 
Then, for all $l\in \N _+$, $e\in \N $, and $i=1,\ldots ,r+1$, 
we have 
$\degw \pi _e(q_{i,l})
\le w_i+l\Lambda _2+e\Lambda _1-\degw \pi _0(D(\tilde{g}))$. 
\end{lem}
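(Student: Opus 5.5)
Induction on $l$, using the recursion (\ref{eq:def:hat{h}}) together with Lemma~\ref{lem:reduce coeff}~(iv) and Lemma~\ref{lem:reduce coeff2}, is how I would prove the estimate for all $e\in \N $ simultaneously. The key device is to read $D(h)$ as a coefficient of a differential form: by (\ref{eq:def:D_ff}), $D(h)d\x _{\bk }\prec d\tilde{\ff }\wedge dh$. Lemma~\ref{lem:reduce coeff2}~(i),(v) then gives
$$
\degw \pi _e(D(h))+|\w _{\bk }|\le \max _{0\le d\le e}\{ \degw \pi _d(d\tilde{\ff })+\degw \pi _{e-d}(h)\} .
$$
Combining this with Lemma~\ref{lem:gap}, which gives $\degw \pi _d(d\tilde{\ff })\le \degw d\ff +d\Lambda _1$, and with the definition $(\ddag )$ of $\Lambda _2$, we obtain the basic estimate
$$
\degw \pi _e(D(h))\le \max _{0\le d\le e}\{ \degw \pi _{e-d}(h)+d\Lambda _1\} +\Lambda _2-\degw \pi _0(D(\tilde{g})).\qquad (*)
$$

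\emph{Base case $l=1$.} Here $q_{i,1}=D(x_i)$. Since $\degw \pi _{e-d}(x_i)=w_i$, $(*)$ gives $\degw \pi _e(q_{i,1})\le w_i+e\Lambda _1+\Lambda _2-\degw \pi _0(D(\tilde{g}))$, which is the claim (the maximum is attained at $d=e$ because $\Lambda _1\ge 0$). Equivalently, one can use Lemma~\ref{lem:reduce coeff2}~(viii) directly.

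\emph{Inductive step $l\ge 2$.} Assume the bound for $q_{i,l-1}$ and all $e'$. Write $\mu :=\degw \pi _0(D(\tilde{g}))$. The recursion has two terms.

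For the first term, $D(\tilde{g})D(q_{i,l-1})$, Lemma~\ref{lem:reduce coeff}~(iv) bounds $\degw \pi _e$ of the product by $\degw \pi _a(D(\tilde{g}))+\degw \pi _{e-a}(D(q_{i,l-1}))$ for some $0\le a\le e$. Lemma~\ref{lem:gap} gives $\degw \pi _a(D(\tilde{g}))\le \mu +a\Lambda _1$. Applying $(*)$ with $h=q_{i,l-1}$ and then the induction hypothesis, the second summand is at most $w_i+(l-1)\Lambda _2+(e-a)\Lambda _1-\mu +\Lambda _2-\mu $. The two summands add to the required bound $w_i+l\Lambda _2+e\Lambda _1-\mu $.

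For the second term, $(3-2l)q_{i,l-1}D^2(\tilde{g})$, first drop the integer factor using Lemma~\ref{lem:reduce coeff2}~(iii) (or its polynomial analogue for $\pi _e$), so it suffices to bound $q_{i,l-1}D(D(\tilde{g}))$. Applying $(*)$ with $h=D(\tilde{g})$ and then Lemma~\ref{lem:gap} gives $\degw \pi _b(D^2(\tilde{g}))\le \mu +b\Lambda _1+\Lambda _2-\mu =\Lambda _2+b\Lambda _1$. Splitting with Lemma~\ref{lem:reduce coeff}~(iv) and using the induction hypothesis on the other factor, the degrees again sum to $w_i+l\Lambda _2+e\Lambda _1-\mu $.

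The $\pi _e$-degree of a sum is at most the maximum of the two terms' degrees, which closes the induction.

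The main obstacle is the bookkeeping of the filtration index $e$. Each multiplication or application of $D$ splits $e$ into pieces, and we need the $\Lambda _1$ contributions to add linearly. This works only because Lemma~\ref{lem:gap} supplies the linear bound $e\Lambda _1$ and because $\Lambda _1\ge 0$, which lets the maxima be absorbed. I would also check carefully that $(*)$ is valid for $h$ an arbitrary element of $\Rx $. This needs Lemma~\ref{lem:reduce coeff2}~(v) applied with $\omega =d\tilde{\ff }$, together with the fact that $\pi _e$ commutes with $d$ (Remark~\ref{rem:reduce jacobian}).
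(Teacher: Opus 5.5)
Your proof is correct and is essentially the paper's argument. Both use induction on $l$ for all $e$ simultaneously, read $D(h)\,d\x _{\bk }\prec d\tilde{\ff }\wedge dh$, split the filtration index with the reduction lemmas, and absorb the pieces via Lemma~\ref{lem:gap} and $\Lambda _1\ge 0$. The only difference is packaging: you isolate the one-step bound $(*)$ and split products with Lemma~\ref{lem:reduce coeff}~(iv), whereas the paper bounds each term directly by the forms $D(\tilde{g})\,d\tilde{\ff }\wedge dq_{i,l-1}$ and $(3-2l)q_{i,l-1}\,d\tilde{\ff }\wedge dD(\tilde{g})$ and applies Lemma~\ref{lem:reduce coeff2}~(vii) in one step.
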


\begin{proof}
The proof is by induction on $l$. 
For $l=1$, 
by (\ref{eq:def:hat{h}}) and (\ref{eq:def:D_ff}), 
we have 
$q_{i,1}d\x _{\bk }=D(x_i)d\x _{\bk }\prec d\tilde{\ff }\wedge dx_i$. 
Hence, 
Lemma~\ref{lem:reduce coeff2} (i) gives 
$\deg \pi _e(q_{i,1})+|\w _{\bk }|
=\deg \pi _e(q_{i,1}d\x _{\bk })
\le \deg \pi _e(d\tilde{\ff }\wedge dx_i)$. 
By Lemmas~\ref{lem:reduce coeff2}~(viii) and \ref{lem:gap} 
and ($\ddag $), 
we also have 
$\deg \pi _e(d\tilde{\ff }\wedge dx_i)
\le \deg \pi _{e}(d\tilde{\ff })+w_i
\le e\Lambda _1+\degw d\ff +w_i 
=e\Lambda _1+(\Lambda _2-\degw \pi _0(D(\tilde{g}))+|\w _{\bk }|)+w_i$, 
yielding 
$\deg \pi _e(q_{i,1})\le 
w_i+\Lambda _2+e\Lambda _1-\degw \pi _0(D(\tilde{g}))$.

Next, assume that $l\ge 2$. 
By (\ref{eq:def:hat{h}}),  
we can write $q_{i,l}d\x _{\bk }=\omega _1+\omega _2$, 
where 
$$
\omega _1:=D(\tilde{g})\cdot D(q_{i,l-1})d\x _{\bk }
\text{ \ and \ }
\omega _2:=(3-2l)q_{i,l-1}\cdot D^2(\tilde{g})d\x _{\bk }. 
$$
By (\ref{eq:def:D_ff}), 
we have 
$\omega _1\prec D(\tilde{g})d\tilde{\ff }\wedge dq_{i,l-1}=:\eta _1$ 
and 
$\omega _2\prec (3-2l)q_{i,l-1}d\tilde{\ff }\wedge dD(\tilde{g})=:\eta _2$. 
Hence, by Lemma~\ref{lem:reduce coeff2} (ii) and (i), 
there exists $j\in \{ 1,2\} $ such that 
$$
\degw \pi _e(q_{i,l})+|\w _{\bk }|
=\degw \pi _e(q_{i,l}d\x _{\bk })
=\degw \pi _e(\omega _1+\omega _2)
\le \deg \pi _e(\omega _j)\le \deg \pi _e(\eta _j). 
$$ 
By Lemma~\ref{lem:reduce coeff2} (vii) and (iii), 
in both cases $j=1$ and $j=2$, 
there exist $e_1,e_2,e_3\in \N $ such that $e_1+e_2+e_3=e$~and 
\begin{equation}\label{eq:upper bound2}
\deg \pi _e(\eta _j) \le 
\deg \pi _{e_1}(q_{i,l-1})
+\deg \pi _{e_2}(d\tilde{\ff })
+\deg \pi _{e_3}(D(\tilde{g})). 
\end{equation}
Then, 
the induction hypothesis and Lemma~\ref{lem:gap} give 
$$
\deg \pi _{e_1}(q_{i,l-1})
\le w_i+(l-1)\Lambda _2+e_1\Lambda _1-\degw \pi _0(D(\tilde{g})), 
$$
$\degw \pi _{e_2}(d\tilde{\ff })\le e_2\Lambda _1+\degw d\ff$, 
and 
$\degw \pi _{e_3}(D(\tilde{g}))\le e_3\Lambda _1+\degw \pi _0(D(\tilde{g}))$. 
Therefore, 
the right-hand side of (\ref{eq:upper bound2}) is at most 
\begin{align*}
&w_i+(l-1)\Lambda _2 +(e_1+e_2+e_3)\Lambda _1-\degw \pi _0(D(\tilde{g})) 
+\degw d\ff +\degw \pi _0(D(\tilde{g})) \\
&\quad =w_i+l\Lambda _2+e\Lambda _1-\degw \pi _0(D(\tilde{g}))+|\w _{\bk }| 
\quad (\text{by }(\ddag)).
\end{align*}
This proves the required inequality. 
\end{proof}

We now prove (M2). 
For all $i$, 
we have $\deg E_0(x_i)=\deg x_i=w_i=w_i+\nu _{\ff ,g}(0)$. 
Assume that $l\ge 1$, 
and set $e:=v_p(l!)$. 
Then, 
for $i=1,\ldots ,r+1$, we have 
\begin{align*}
&\degw E_l(x_i)=\degw \pi _e(q_{i,l})
+(1-2l)\degw \pi _0(D(\tilde{g})) \quad 
\text{(by (\ref{eq:pf7}), Lemma~\ref{lem:q_{i,l}} (iii) (3))} \\
&\quad 
\le w_i+l\Lambda _2+e\Lambda _1-2l\deg \pi _0(D(\tilde{g}))=
w_i+\nu _{\ff ,g}(l) 
\quad 
\text{(by Lemma~\ref{lem:estimation}, (\ref{eq:pf1'}))}. 
\end{align*}
For $i=r+2,\ldots ,n$, 
we have $\degw E_l(x_i)=\degw 0<w_i+\nu _{\ff ,g}(l)$, 
proving (M2).

This completes the proof of Theorem~\ref{thm:key}, 
and hence of Theorem~\ref{thm:main}.

\begin{rem}\label{rem:slight generalization}\rm

In some special cases, 
we can improve Theorem~\ref{thm:main} 
by taking a smaller constant $\Lambda _1$, 
though this is not required for the rest of the paper.

For example, assume that $\Lambda _0>0$, 
and $f_1^{\w }\approx h^a$ and $g^{\w }\approx h^b$ 
for some $h\in \kx $ and $a,b\in \N _+$ ($*$). 
Due to (T3) and (T4), 
changes to $(\ff ,g)$ in Step 1 preserve ($*$). 
Hence, in Step 2, 
we can choose $\tilde{f}_1$ and $\tilde{g}$ so that 
$\tilde{f}_1^{\w }\in R^*\tilde{h}^a$ and $\tilde{g}^{\w }\in R^*\tilde{h}^b$, 
where $\tilde{h}\in \Rx $ is a lift of $h$. 
By Remark~\ref{rem:independence}~(iv), 
this renders the inequality (\ref{eq:gap2}) strict. 
Since $\Lambda _0>0$ by ($*$), 
the second inequality in (\ref{eq:gap1}) is also strict. 
Then, Lemma~\ref{lem:gap} 
holds with $\Lambda _1$ replaced by 
$\Lambda _1-\ep $ for some $\ep \in \Gamma _+$. 
In this case, 
Theorem~\ref{thm:main} 
holds with $\Lambda _1$ replaced by 
$\Lambda _1-\ep $.

It is noted that ($*$) holds 
if $r=1$ 
and $(f_1^{\w })^b\approx (g^{\w })^a$ for some 
$a,b\in \N _+$ with $\gcd (a,b)=1$ and $p\nmid a $. 
In fact, 
$f_1^{\w }\approx h^a$ and $g^{\w }\approx h^b$ hold for 
$h:=(f_1^{\w })^u(g^{\w })^v$, 
where $u,v\in \Z $ are such that $au+bv=1$. 
We also have $\Lambda _0>0$, 
since $\deg df_1+\deg g>\deg df_1+\deg dg\ge \deg df_1\wedge dg$ 
if $dh=0$, 
and 
$\deg df_1+\deg g=\deg ah^{a-1}dh+\deg g=\deg f_1+\deg g>\deg df_1\wedge dg$ 
otherwise by Remark~\ref{rem:independence} (iv). 
\end{rem}

\section{Consequences of Theorem~\ref{thm:main}}\label{sect:consequence}
\setcounter{equation}{0}

\subsection{Basic concepts}\label{subsect:basic concepts}

Throughout \S ~\ref{sect:consequence}, 
we assume that $p>0$ except for \S ~\ref{sect:cofactor}, 
and let $f,g\in \kx $ satisfy 
$df\wedge dg\ne 0$ 
unless otherwise stated. 
By Remark~\ref{rem:independence} (i), 
$f$ and $g$ are algebraically independent over $k$. 
Set $\bv :=(\degw f,\degw g)\in (\Gamma _+)^2$.

Now, take $\phi \in k[f,g]\sm \zs $. 
Then, 
there exists a unique 
$\theta =\sum _{i,j}c_{i,j}x^iy^j\in k[x,y]$ 
such that 
$\phi =\theta (f,g)$, 
where $c_{i,j}\in k$, 
and $x$ and $y$ are variables. 
We define 
\begin{equation}\label{eq:apparent degree}
\degw ^{f,g}\phi :=\deg _{\bv }\theta 
=\max \{ \degw f^ig^j\mid i,j\in \N ,\ 
c_{i,j}\ne 0\} \ge \deg \phi . 
\end{equation}

\begin{rem}\label{rem:deg^fg}\rm 
$\degw ^{f,g}\phi $ is at least 
$\min \{ \deg f,\deg g\} $ if $\phi \not\in k$, 
and $\deg f$ if $\phi \not\in k[g]$. 
\end{rem}

Write $\phi =P(g)=Q(f)$, where 
\begin{equation}\label{eq:description of q}
P:=\theta (f,T)\in k[f][T]\text{ \ and \ }Q:=\theta (T,g)\in k[g][T]. 
\end{equation}

\begin{rem}\label{rem:chain rule}\rm 
The chain rule yields 

\nd{\rm (i)} 
$d\phi =d\theta (f,g)
=\theta _x(f,g)df+\theta _y(f,g)dg=Q^{(1)}(f)df+P^{(1)}(g)dg$, 
and so

\nd (ii) $df\wedge d\phi =P^{(1)}(g)df\wedge dg$. 
\end{rem}

\begin{lem}\label{lem:PQ}
\nd{\rm (i)} We have $\deg _{\w _g}P=\deg _{\w _f}Q=\degw ^{f,g}\phi $.

\nd{\rm (ii)} 
We have $P^{\w _g}=\theta ^{\bv }(f^{\w },T)$ and 
$Q^{\w _f}=\theta ^{\bv }(T,g^{\w })$.  
\end{lem}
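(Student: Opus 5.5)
Both parts reduce to unpacking definitions. The one substantive point is that distinct monomials $x^iy^j$ map to monomials $f^iT^j$ (resp.\ $T^ig^j$) with disjoint supports in $\kx[T]$, so no cancellation occurs when $\theta$ is substituted. Write $\theta=\sum_{i,j}c_{i,j}x^iy^j$. Then
$$P=\theta(f,T)=\sum_j\Bigl(\sum_i c_{i,j}f^i\Bigr)T^j,$$
so $p_j=\sum_i c_{i,j}f^i$.

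For (i), I use (\ref{eq:degw_gP}): $\deg_{\w_g}P=\max_j\degw p_jg^j$. Since $f$ is transcendental over $k$ (because $df\ne0$), the powers $f^i$ have pairwise distinct $\w$-degrees $i\degw f$. Hence no cancellation of leading forms occurs among them, and $\degw p_j=\max\{i\degw f\mid c_{i,j}\ne0\}$. Therefore
$$\deg_{\w_g}P=\max\{i\degw f+j\degw g\mid c_{i,j}\ne0\}=\deg_{\bv}\theta=\degw^{f,g}\phi .$$
The same argument with the roles of $f$ and $g$ swapped gives $\deg_{\w_f}Q=\degw^{f,g}\phi$. Equivalently, one can view the substitution $x\mapsto f$, $y\mapsto T$ as a map sending $x^iy^j$ to $f^iT^j$ with $\deg_{\w_g}f^iT^j=\bv$-degree of $x^iy^j$, and note that the leading forms $(f^{\w})^iT^j$ of distinct monomials are distinct in $\kx[T]$ (distinct $T$-powers or distinct powers of $f^{\w}$), so they cannot cancel.

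For (ii), I apply (\ref{eq:P(T)^w_g1}): $P^{\w_g}=\sum_{j\in I}p_j^{\w}T^j$ with $I=\{j\mid \degw p_jg^j=\deg_{\w_g}P\}$. From the computation above, $p_j^{\w}$ is the sum of $c_{i,j}(f^{\w})^i$ over those $i$ with $c_{i,j}\ne0$ attaining the maximum of $i\degw f$. There is at most one such $i$, namely the largest $i$ with $c_{i,j}\neq 0$, but writing it as a sum is harmless. Combining this with the condition $j\in I$ shows that $P^{\w_g}$ is the sum of $c_{i,j}(f^{\w})^iT^j$ over the pairs $(i,j)$ with $c_{i,j}\ne0$ and $i\degw f+j\degw g=\deg_{\bv}\theta$. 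By definition of $\theta^{\bv}$ this sum is exactly $\theta^{\bv}(f^{\w},T)$. The identity $Q^{\w_f}=\theta^{\bv}(T,g^{\w})$ follows symmetrically.

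The only step needing care is the no-cancellation claim. It rests on $(f^i)^{\w}=(f^{\w})^i\ne0$ with degrees strictly increasing in $i$, which follows from multiplicativity of leading forms and from $\degw f>0$. The latter holds since $f\notin k$, a consequence of $df\wedge dg\neq 0$. I expect no real obstacle beyond this.
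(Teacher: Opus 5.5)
Your proposal is correct and follows the paper's proof essentially step for step. You expand $P=\sum_j p_jT^j$ with $p_j=\sum_i c_{i,j}f^i$, and use that the $f^i$ have pairwise distinct $\w$-degrees to get $\deg p_j=\max\{i\deg f\mid c_{i,j}\ne 0\}$ in (\ref{eq:degw_gP}). You then identify the index set in (\ref{eq:P(T)^w_g1}) with the pairs in $\supp\theta$ attaining $\deg_{\bv}\theta$, exactly as the paper does with its set $K$.
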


\begin{proof}
Write $P=\sum _jp_jT^j$, 
where $p_j:=\sum _ic_{i,j}f^i$. 
Then, 
(\ref{eq:degw_gP}) yields 
$\deg _{\w _g}P
=\max \{ \deg p_jg^j\mid j\in \N \} 
=\max \{ \max \{ \deg f^ig^j\mid i\in \N ,\,
c_{i,j}\ne 0\} \mid j\in \N \} 
=\degw ^{f,g}\phi $. 
Moreover, 
(\ref{eq:P(T)^w_g1}) gives 
$P^{\w _g}=\sum _{j\in I}p_j^{\w }T^j
=\sum _{j\in I}c_{l_j,j}(f^{\w })^{l_j}T^j$, 
where $l_j\in \N $ satisfies 
$\deg f^{l_j}=\deg p_j$. 
Observe that 
$K:=\{ (l_j,j)\mid j\in I\} 
=\{ (i,j)\in \supp \theta \mid \deg f^ig^j=\deg _{\w _g}P\} $, 
in which the condition 
$\deg f^ig^j=\deg _{\w _g}P$ is equivalent to 
$\deg _{\bv }x^iy^j=\deg _{\bv }\theta $, 
since $\deg _{\w _g}P=\degw ^{f,g}\phi =\deg _{\bv }\theta $ 
as shown above. 
Therefore, we have 
$P^{\w _g}=\sum _{(i,j)\in K}c_{i,j}(f^{\w })^iT^j=\theta ^{\bv }(f^{\w },T)$.

We can prove 
$\deg _{\w _f}Q=\degw ^{f,g}\phi $ and 
$Q^{\w _f}=\theta ^{\bv }(T,g^{\w })$ similarly. 
\end{proof}

We define 
$m_{\w }^{f,g}(\phi ):=\min \{ m_{\w }^g(P),m_{\w }^f(Q)\} $, 
the {\it multiplicity} of $\phi $; 
\begin{align*}
\mathcal{D}(f,g)&:=\{ 
\phi \in k[f,g]\sm \zs \mid 
m_{\w }^{f,g}(\phi )\ge 1\} ; \\
\mathcal{P}(f,g,m)&:=\{ \phi \in k[f,g]\sm \zs \mid 
m_{\w }^{f,g} (\phi )=m,\ \deg \phi \le \deg g\}  
\text{ \ for each \ }m\in \N . 
\end{align*}

\begin{example}
Let $\phi :=P(g)$ in Example~\ref{ex:setting}. 
Then, 
since $\theta =y^4-2(x^3+x^2)y^2+x^4y+x^6$ and $\bv =(2,3)$, 
we have $\theta ^{\bv }=(x^3-y^2)^2$. 
Hence, Lemma~\ref{lem:PQ} (ii) gives 
$Q^{\w _f}=\theta ^{\bv }(T,g^{\w })=(T^3-x_1^6)^2$. 
Thus, we get $m_{\w }^f(Q)=6$ if $p=3$, 
and $m_{\w }^f(Q)=2$ otherwise. 
Together with Example~\ref{ex:setting}, 
we conclude that 
$m_{\w }^{f,g}(\phi )=2$ in all cases. 
\end{example}

\begin{rem}\label{rem:PP}
For $c\in k$ and $\lambda (x)\in k[x]$ with $\deg \lambda (f)<\deg g$, 
set $\hat{f}:=f+c$ and $\hat{g}:=g+\lambda (f)$. 
Note that 
$\hat{f}^{\w }=f^{\w }$, $\hat{g}^{\w }=g^{\w }$, 
and $\hat{\bv }:=(\deg \hat{f},\deg \hat{g})=\bv $ ($*$).

\nd (i) 
Take $0\ne \phi \in k[f,g]=k[\hat{f},\hat{g}]$, 
and $\theta ,\hat{\theta }\in k[x,y]$ 
with $\phi =\theta (f,g)=\hat{\theta }(\hat{f},\hat{g})$. 
Then, 
since $\deg _{\bv }\lambda (x)<\deg _{\bv }y$, 
we have $\theta ^{\bv }=\hat{\theta }(x+c,y+\lambda (x))^{\bv }
=\hat{\theta }^{\bv }$. 
This together with Lemma~\ref{lem:PQ}~(ii) and ($*$) implies that 
$m_{\w }^{f,g}(\phi )=m_{\w }^{\hat{f},\hat{g}}(\phi )$.

\nd (ii) 
Since $\deg g=\deg \hat{g}$, 
(i) implies that 
$\mathcal{P}(f,g,m)=\mathcal{P}(\hat{f},\hat{g},m)$ 
for all $m\in \N $. 
\end{rem}

\begin{rem}\label{rem:(1.3.7)} 
Take $\phi \in k[f,g]\sm \zs $, 
and $\theta \in k[x,y]$ with $\phi =\theta (f,g)$. 
Then, in view of Lemma~\ref{lem:PQ}, 
it follows from Remark~\ref{rem:m(P)=0} that 
\begin{equation}\label{eq:1.3.7}
m_{\w }^{f,g}(\phi )=0
\Leftrightarrow 
\theta ^{\bv }(f^{\w },g^{\w })\ne 0
\Leftrightarrow 
\deg \phi =\degw ^{f,g}\phi 
\Rightarrow 
\phi ^{\w }=\theta ^{\bv }(f^{\w },g^{\w }). 
\end{equation}
This yields the following assertions:

\nd{\rm (i)} 
We have $\phi \in \mathcal{D}(f,g)$ 
if and only if $\degw \phi <\degw ^{f,g}\phi $. 
In this case, 
$f^{\w }$ and $g^{\w }$ 
are algebraically dependent over $k$, 
since $\theta ^{\bv }(f^{\w },g^{\w })=0$.

\nd{\rm (ii)} 
If $\phi ^{\w }\not\in k[f^{\w },g^{\w }]$, 
then we have $\phi \in \mathcal{D}(f,g)$, 
since $\phi ^{\w }\ne \theta ^{\bv }(f^{\w },g^{\w })$. 
\end{rem}

The following well-known lemma holds for arbitrary $f,g\in \kx \sm k$.

\begin{lem}[see, e.g., 
{\rm \cite[Proposition 1.1.4]{JC}}]\label{lem:dependence}
If $f^{\w }$ and $g^{\w }$ are algebraically dependent over $k$, 
then there exist $a,b\in \N _+$ such that 
$\gcd (a,b)=1$ and $(g^{\w })^a\approx (f^{\w })^b$. 
\end{lem}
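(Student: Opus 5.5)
The statement is Lemma~\ref{lem:dependence}: if $f^{\w }$ and $g^{\w }$ are algebraically dependent, then $(g^{\w })^a\approx (f^{\w })^b$ for some coprime $a,b\in \N _+$. The plan is to take a nonzero dependence relation, pass to its leading weighted part, and factor that part.

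Set $F:=f^{\w }$ and $G:=g^{\w }$. These are nonzero and $\w $-homogeneous, of degrees $\alpha :=\deg f>0$ and $\beta :=\deg g>0$, since $f,g\notin k$ and all weights are positive. Choose a nonzero $\theta \in k[x,y]$ with $\theta (F,G)=0$. Put $\bv :=(\alpha ,\beta )$ and consider the $\bv $-homogeneous components of $\theta $. Each component $\theta _\gamma $ of $\bv $-degree $\gamma $ satisfies $\theta _\gamma (F,G)\in \kx $ is $\w $-homogeneous of degree $\gamma $. Since these pieces lie in distinct degrees, $\theta (F,G)=0$ forces $\theta _\gamma (F,G)=0$ for each $\gamma $. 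Hence we may assume $\theta $ is $\bv $-homogeneous and nonzero.

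Next, describe the $\bv $-homogeneous polynomials. Because $\Gamma $ is a $\Q $-vector space and $\alpha ,\beta >0$, the set $\{(i,j)\in \Z ^2\mid i\alpha +j\beta =0\}$ is a subgroup of $\Z ^2$ of rank at most $1$. If it is $\{0\}$, every $\bv $-homogeneous polynomial is a scalar times a monomial $x^iy^j$. Then $F^iG^j=0$, which is impossible in a domain, so the rank is $1$. A generator $(b,-a)$ can then be chosen with $a,b$ coprime, and positivity of $\alpha ,\beta $ forces $a,b\in \N _+$ after adjusting the sign, giving $b\alpha =a\beta $.

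Every $\bv $-homogeneous $\theta $ then has the form $x^{i_0}y^{j_0}\sum _{t=0}^{N}c_t x^{bt}y^{a(N-t)}=x^{i_0}y^{j_0}\psi (x^b,y^a)$, where $\psi (u,v)=\sum _t c_t u^tv^{N-t}$ is a binary form with $\psi \ne 0$. Dividing out the monomial factor, which is nonzero on $(F,G)$, we obtain $\psi (F^b,G^a)=0$.

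This last step needs care, because $k$ need not be algebraically closed, so $\psi $ need not split into linear factors over $k$. Set $z:=F^b/G^a\in \kxr $. Then $\psi (z,1)=0$ and $\psi (u,1)\ne 0$, so $z$ is algebraic over $k$. It remains to show $z\in k$. Since $k$ is algebraically closed in the rational function field $\kxr $ (a purely transcendental extension), it follows that $z\in k^*$. Therefore $F^b\approx G^a$, that is, $(g^{\w })^a\approx (f^{\w })^b$, which is the statement.

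The main obstacle I expect is justifying that $k$ is algebraically closed in $\kxr $. If that fact is not taken as standard, an alternative is a unique-factorization argument in $\kx $: $F^b$ and $G^a$ are $\w $-homogeneous polynomials of the same degree whose ratio is algebraic over $k$. Writing $F^b/G^a=h_1/h_2$ in lowest terms, a monic minimal polynomial relation with coefficients in $k$ forces $h_2$ to divide a power of $h_1$, hence $h_2\in k^*$. Symmetrically, using that $z^{-1}$ is also algebraic, $h_1\in k^*$.
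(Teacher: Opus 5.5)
Your proof is correct and complete. You reduce to a $\bv$-homogeneous relation, write it as $x^{i_0}y^{j_0}\psi(x^b,y^a)$ with $(b,-a)$ generating the relation lattice, and conclude from $F^b/G^a$ being algebraic over $k$, hence in $k^*$, via algebraic closedness of $k$ in $\kxr$ or your UFD argument. This is the standard argument; the paper gives no proof of its own and only cites \cite[Proposition 1.1.4]{JC}, so there is no different route to compare against. The one step you should state explicitly is why the generator $(b,-a)$ is primitive: $\Gamma$ is torsion-free, so the lattice is saturated, which is what gives $\gcd(a,b)=1$.
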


\begin{rem}\label{rem:dependence}\rm 
Lemma~\ref{lem:dependence} implies that, 
if $\degw f=\degw g$ and $f^{\w }\not\approx g^{\w }$, 
then $f^{\w }$ and $g^{\w }$ are 
algebraically independent over $k$. 
\end{rem}

By Remark~\ref{rem:(1.3.7)}~(i), 
if $\mathcal{D}(f,g)\ne \emptyset $, 
then 
$a,b\in \N _+$ are determined as in Lemma~\ref{lem:dependence}. 
In 
the remainder of 
\S \ \ref{sect:consequence}, 
we prove various key results in the following setting (S): 

\smallskip

\nd 
(S) {\it 
Assume that $\mathcal{D}(f,g)\ne \emptyset $. 
Let $\phi \in \mathcal{D}(f,g)$, 
and let $a,b\in \N _+$ be as in~Lemma~{\rm \ref{lem:dependence}}. 
Set $m:=m_{\w }^{f,g}(\phi )\ge 1$, 
and write~$(\deg f,\deg g)=(a\delta ,b\delta )$, 
where $\delta :=(1/a)\degw f$. 
}

\smallskip

We remark that 
$p\nmid a$ or $p\nmid b$, since $\gcd (a,b)=1$; 
$g^{\w }\not\in k[f^{\w }]$ (resp.\ $f^{\w }\not\in k[g^{\w }]$) 
if and only if $a\ge 2$ (resp.\ $b\ge 2$); 
and $\deg \phi >0$, i.e., $\phi \not\in k$, 
since $\deg \phi <\deg ^{f,g}\phi $.

\begin{lem}\label{lem:lcm}
Assume {\rm (S)}. 
Let $\theta $, $P$, and $Q$ be as above, 
and $\kappa :=(g^{\w })^a/(f^{\w })^b\in k^*$.

\nd{\rm (i)} 
If $p\nmid a$, then 
we have $m_{\w }^f(Q)=p^{v_p(b)}m_{\w }^g(P)$, 
and hence $m=m_{\w }^g(P)$.

\nd{\rm (ii)} 
We have 
$\deg ^{f,g}\phi \ge mab\delta $. 
Moreover, 
exactly one of the following holds$:$

\nd 
$(1)$ 
$\deg ^{f,g}\phi =mab\delta $, 
and 
$\phi \approx (g^a-\kappa f^b)^m+\psi $ 
for some $\psi \in k[f,g]$ with $\deg ^{f,g}\psi <mab\delta $.

\nd 
$(2)$ 
$\degw ^{f,g}\phi \ge (mab+\min \{ a,b\} )\delta $.

\end{lem}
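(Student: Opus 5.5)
The plan is to reduce everything to the $\bv$-homogeneous polynomial $\theta ^{\bv }\in k[x,y]$, using Lemma~\ref{lem:PQ}~(ii): $P^{\w _g}=\theta ^{\bv }(f^{\w },T)$ and $Q^{\w _f}=\theta ^{\bv }(T,g^{\w })$. Since $\bv =(a\delta ,b\delta )$ with $\gcd (a,b)=1$, any $\bv $-homogeneous polynomial factors as
$$
\theta ^{\bv }=c\,x^{\alpha }y^{\beta }\prod _{j}(y^a-\lambda _jx^b)
$$
over an algebraic closure $\bar{k}$, with $c\in k^*$ and $\lambda _j\in \bar{k}^*$. (Dehomogenize via $t=y^a/x^b$.) Let $\mu $ be the number of indices $j$ with $\lambda _j=\kappa $.

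For part (i), I substitute $x=f^{\w }$ and $y=T$. The factors $x^{\alpha }$ and $y^{\beta }$ do not vanish at $T=g^{\w }$. Neither does a factor $T^a-\lambda _j(f^{\w })^b$ with $\lambda _j\ne \kappa $, since $(g^{\w })^a=\kappa (f^{\w })^b$. The factor $T^a-\kappa (f^{\w })^b$ has $g^{\w }$ as a root of multiplicity exactly one when $p\nmid a$: its $T$-derivative $aT^{a-1}$ does not vanish at $g^{\w }$, and the roots are the $\zeta g^{\w }$. Hence $m_{\w }^g(P)=\mu $.

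Symmetrically, in $Q^{\w _f}$ the factor $(g^{\w })^a-\kappa T^b$ vanishes at $f^{\w }$. Write $b=p^{v}b'$ with $p\nmid b'$ and $v=v_p(b)$. Then $\kappa T^b-\kappa (f^{\w })^b=\kappa (T^{b'}-(f^{\w })^{b'})^{p^{v}}$, and $f^{\w }$ is a simple root of $T^{b'}-(f^{\w })^{b'}$. So $m_{\w }^f(Q)=p^{v}\mu $, hence $m=\min =\mu =m_{\w }^g(P)$.

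For part (ii), the case $p\mid a$ still has to be handled, and there $p\nmid b$, so the roles of $f$ and $g$ swap. In all cases $m=\mu $, with one of the two multiplicities equal to $\mu $ and the other a $p$-power multiple of it.

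For the degree bound, $\deg _{\bv }\theta ^{\bv }=\deg ^{f,g}\phi $ by Lemma~\ref{lem:PQ}~(i). This equals $(\alpha a+\beta b)\delta +N ab\delta $, where $N\ge \mu =m$ counts all factors. So $\deg ^{f,g}\phi \ge mab\delta $, with equality iff $\alpha =\beta =0$ and all $\lambda _j=\kappa $, i.e.\ $\theta ^{\bv }=c(y^a-\kappa x^b)^m$. That is case (1): put $\psi :=\phi -c(g^a-\kappa f^b)^m$. Since $\theta -c(y^a-\kappa x^b)^m$ has no terms of $\bv $-degree $mab\delta $, we get $\deg ^{f,g}\psi <mab\delta $. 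Rescaling $\phi $ by $c^{-1}$ is harmless up to $\approx $.

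Otherwise $\alpha +\beta \ge 1$ or $N\ge m+1$. The extra degree is then at least $\min \{ a\delta ,b\delta ,ab\delta \} =\min \{ a,b\} \delta $, giving case (2). The two cases are exclusive because $\min \{ a,b\} \ge 1$.

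The main obstacle I expect is the factorization and the multiplicity count over $\bar{k}$ in characteristic $p$. Specifically, I must ensure that no other factor $y^a-\lambda _jx^b$ with $\lambda _j\ne \kappa $ contributes vanishing at $(f^{\w },g^{\w })$. This uses that $f^{\w }$ and $g^{\w }$ are nonzero and that $(g^{\w })^a/(f^{\w })^b=\kappa $ is a constant. I also have to carefully separate the inseparable factor $T^b-(f^{\w })^b$ when $p\mid b$.
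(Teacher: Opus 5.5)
Your proof is correct, and its skeleton matches the paper's. Both proofs do the following:
\begin{itemize}
\item reduce to $\theta^{\bv}$ via Lemma~\ref{lem:PQ}~(ii);
\item split $\theta^{\bv}$ as $(y^a-\kappa x^b)^m$ times a cofactor that does not vanish at $(f^{\w},g^{\w})$;
\item read off $m_{\w}^f(Q)$ from the fact that $(f^{\w})^b-T^b$ is a $p^{v_p(b)}$-th power of a separable polynomial;
\item finish with a $\bv$-degree count on the cofactor.
\end{itemize}

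The difference is how the splitting is obtained. The paper stays over $k$. It observes that $T^a-\kappa(f^{\w})^b$ is the minimal polynomial of $g^{\w}$ over $k(f^{\w})$ (this uses $\gcd(a,b)=1$), and that it is separable because $p\nmid a$. It then divides this polynomial out of $\theta^{\bv}(f^{\w},T)$ exactly $m_{\w}^g(P)$ times, using that a monic minimal polynomial lying in $R[T]$ divides every element of $R[T]$ vanishing at the root. Reading the result back in $k[x,y]$ gives $\theta^{\bv}=(y^a-\kappa x^b)^m\theta_1$ with $\theta_1(f^{\w},g^{\w})\ne 0$.

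You instead use the explicit shape $c\,x^{\alpha}y^{\beta}\prod_j(y^a-\lambda_jx^b)$ of a $\bv$-homogeneous polynomial over $\bar{k}$. This buys two things:
\begin{itemize}
\item You never need irreducibility of $T^a-\kappa(f^{\w})^b$, since each factor with $\lambda_j\ne\kappa$ is visibly nonzero at $g^{\w}$.
\item The degree count $(\alpha a+\beta b+Nab)\delta$ and the dichotomy (1)/(2) become completely transparent.
\end{itemize}
The price is working in $\bar{k}[\x][T]$. You should state explicitly that the multiplicity of the root $g^{\w}$ (resp.\ $f^{\w}$) does not change under this base change. You should also note that multiplicity is additive over products, because $\bar{k}[\x]$ is a domain.

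Finally, your expression for $(g^{\w})^a-\kappa T^b$ has the wrong sign. It should be $-\kappa\bigl(T^{b'}-(f^{\w})^{b'}\bigr)^{p^{v}}$, but this does not affect the multiplicity.
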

\begin{proof}
By symmetry, 
we may assume that $p\nmid a$ for (ii) as well. 
We first remark the following: 
Let $R$ be an integral domain, 
$L$ an extension field of $Q(R)$, 
and $\mu (T)$ the minimal polynomial of $\alpha \in L$ over $Q(R)$. 
If $\mu (T)$ belongs to $R[T]$, 
then we have 
$\{ \xi (T)\in R[T]\mid \xi (\alpha )=0\} \subset \mu (T)R[T]$. 
In fact, 
dividing $\xi (T)\in R[T]$ by $\mu (T)$ 
yields a quotient and remainder in $R[T]$, 
with zero remainder if $\xi (\alpha )=0$.

Now, 
set $R:=k[f^{\w }]$. 
Then, $T^a-\kappa (f^{\w })^b\in R[T]$ is 
the minimal polynomial of $g^{\w }$ over $Q(R)$, 
since $\gcd (a,b)=1$. 
Since $p\nmid a$, it is separable. 
In addition, 
$P^{\w _g}=\theta ^{\bv }(f^{\w },T)\in R[T]$ 
has the root $g^{\w }$ with multiplicity $m_{\w }^g(P)$ 
by definition. 
Thus, repeatedly applying the remark above, we can write 
$\theta ^{\bv }(f^{\w },T)
=(T^a-\kappa (f^{\w })^b)^{m_{\w }^g(P)}\theta_1(f^{\w },T)$, 
where $\theta _1\in k[x,y]$ with $\theta _1(f^{\w },g^{\w })\ne 0$. 
Then, Lemma~\ref{lem:PQ} (ii) yields 
$Q^{\w _f}=\theta ^{\bv }(T,g^{\w })
=((g^{\w })^a-\kappa T^b)^{m_{\w }^g(P)}\theta _1(T,g^{\w })$. 
This shows that the multiplicity of the root 
$f^{\w }$ in $Q^{\w _f}$ is $p^{v_p(b)}m_{\w }^g(P)$, 
which implies (i). 
For (ii), 
since $m_{\w }^g(P)=m$ by (i), 
we have 
$\theta ^{\bv }=(y^a-\kappa x^b)^m\theta _1$. 
This gives 
$\degw ^{f,g}\phi 
=\deg _{\bv }\theta ^{\bv }
=mab\delta +\deg _{\bv }\theta _1\ge mab\delta $. 
If $\theta _1\in k^*$, 
then (1) holds with $\psi :=\theta ^\circ (f,g)$, 
where $\theta ^\circ :=\theta _1^{-1}\cdot (\theta -\theta ^{\bv })$, 
since 
$\theta _1^{-1}\cdot \theta =(y^a-\kappa x^b)^m+\theta ^\circ $ 
and $\deg _{\bv }\theta ^\circ <\deg _{\bv }\theta =mab\delta $. 
Otherwise, (2) holds since 
$\deg _{\bv }\theta _1\ge 
\min \{ \deg _{\bv }x,\deg _{\bv }y\} =\min \{ a,b\} \delta $. 
\end{proof}

\begin{lem}\label{lem:p nmid m(P)}

Let $g\in \kx \sm k$ and $P\in \kx [T]\sm \zs $.

\nd{\rm (i)} 
If $(P^{\w _g})^{(1)}\ne 0$, then we have 
$\deg _{\w _g}P^{(1)}=\deg _{\w _g}P-\deg g$ 
and $(P^{(1)})^{\w _g}=(P^{\w _g})^{(1)}$. 

\nd{\rm (ii)} 
If $p\nmid m_{\w }^g(P)$, then we have 
$(P^{\w _g})^{(1)}\ne 0$ 
and $m_{\w }^g(P^{(1)})=m_{\w }^g(P)-1$. 
\end{lem}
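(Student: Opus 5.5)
Write $P=\sum_{i\ge 0}p_iT^i$ as in (\ref{eq:Pick any P}), so that $P^{(1)}=\sum_{i\ge 1}ip_iT^{i-1}$, and let $I$ be as in (\ref{eq:P(T)^w_g1}), so $P^{\w_g}=\sum_{i\in I}p_i^{\w}T^i$. For (i), the plan is to compare term by term. Each term $ip_iT^{i-1}$ with $i\not\equiv 0 \pmod p$ has $\w_g$-degree $\deg p_ig^i-\deg g$, while terms with $p\mid i$ vanish. Hence
\[
\deg_{\w_g}P^{(1)}\le \deg_{\w_g}P-\deg g .
\]
The terms attaining this bound are exactly the $ip_iT^{i-1}$ with $i\in I$ and $p\nmid i$. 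Their leading forms $ip_i^{\w}T^{i-1}$ sum to $(P^{\w_g})^{(1)}$. If this sum is nonzero, Remark~\ref{rem:principle} applied to the tuple $(ip_iT^{i-1})_i$ in $\kx[T]$ with weight $\w_g$ gives both equality of degrees and $(P^{(1)})^{\w_g}=(P^{\w_g})^{(1)}$. Note that $\w_g$-leading forms commute with multiplication by the nonzero scalar $i$ and with division by $T$, since $T$ is homogeneous of weight $\deg g$.

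For (ii), set $m:=m_{\w}^g(P)$ and write $P^{\w_g}=(T-g^{\w})^mP_0$ with $P_0(g^{\w})\ne 0$, as in (\ref{eq:P(T)^w_g2}). Differentiating gives
\[
(P^{\w_g})^{(1)}=(T-g^{\w})^{m-1}\bigl(mP_0+(T-g^{\w})P_0^{(1)}\bigr).
\]
The second factor, evaluated at $T=g^{\w}$, equals $mP_0(g^{\w})$, which is nonzero because $p\nmid m$ and $k[\x]$ is a domain. So $(P^{\w_g})^{(1)}\ne 0$, and (i) then gives $(P^{(1)})^{\w_g}=(P^{\w_g})^{(1)}$. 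From the displayed factorization, this has root $g^{\w}$ with multiplicity exactly $m-1$, which gives $m_{\w}^g(P^{(1)})=m-1$.

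The case $m=0$ needs separate attention, since then $p\nmid 0$ is false and $m=0$ is excluded by hypothesis. There is no real obstacle; the only care needed is applying Remark~\ref{rem:principle} correctly, given that terms of $P^{(1)}$ with $p\mid i$ drop out. The degree inequality still holds for those terms, so they cannot exceed the bound.
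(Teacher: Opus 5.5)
Your proof is correct and follows the paper's argument essentially step for step: part (i) applies Remark~\ref{rem:principle} to the tuple $(ip_iT^{i-1})_{i}$ with weight $\w_g$, and part (ii) differentiates $P^{\w_g}=(T-g^{\w})^mP_0$ and observes that the cofactor $mP_0+(T-g^{\w})P_0^{(1)}$ takes the nonzero value $mP_0(g^{\w})$ at $g^{\w}$. Your closing remark about $m=0$ is not a separate case at all, since $p\mid 0$ means the hypothesis $p\nmid m$ already forces $m\ge 1$.
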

\begin{proof}
\!(i) 
Differentiating (\ref{eq:P(T)^w_g1}) gives 
$(P^{\w _g})^{(1)}=\sum _{i\in I}ip_i^{\w }T^{i-1}
=\sum _{i\in K}(ip_iT^{i-1})^{\w _g}$, 
where $K:=
\{ i\in \N \mid \deg _{\w _g}ip_iT^{i-1}=\deg _{\w _g}P-\deg g\} $. 
Since $(P^{\w _g})^{(1)}\ne 0$, 
we have $K\ne \emptyset $. 
Then, 
since 
$\max \{ \deg _{\w _g}ip_iT^{i-1}\mid i\in \N \} =\deg _{\w _g}P-\deg g$ 
by (\ref{eq:degw_gP}), 
applying Remark~\ref{rem:principle} to $(ip_iT^{i-1})_{i\ge 0}$ proves (i).

(ii) 
Differentiating (\ref{eq:P(T)^w_g2}) yields 
\begin{equation}\label{eq:P_1}
(P^{\w _g})^{(1)}=(T-g^{\w })^{m_{\w }^g(P)-1}P_1, 
\text{ where }
P_1:=m_{\w }^g(P)P_0+(T-g^{\w })P_0^{(1)}. 
\end{equation}
From $P_1(g^{\w })=m_{\w }^g(P)P_0(g^{\w })\ne 0$, 
we see that 
$P_1\ne 0$, 
and so $(P^{\w _g})^{(1)}\ne 0$ by (\ref{eq:P_1}). 
This implies that $(P^{(1)})^{\w _g}=(P^{\w _g})^{(1)}$ by (i), 
which combined with (\ref{eq:P_1}) and $P_1(g^{\w })\ne 0$ 
proves $m_{\w }^g(P^{(1)})=m_{\w }^g(P)-1$. 
\end{proof}

\subsection{First applications}\label{sect:1st conseq}
By (\ref{eq:Legendre}), 
$m+v_p(m!)=\frac{pm-\ol{m}}{p-1}$ 
holds for all $m\in \N $, 
which equals $m$ if $0\le m<p$. 
For $f_1,\ldots ,f_r\in \{ h\in \kx \mid dh\ne 0\} $, 
we define 
$$
\ep (f_1,\ldots ,f_r):=
\max \{ \deg f_i-\deg df_i\mid i=1,\ldots ,r\} \ge 0. 
$$
Then, 
the case $r=1$ of Theorem~\ref{thm:main} 
can be stated as follows.

\begin{thm}\label{thm:pSUineq0}
For every $P\in k[f][T]\sm \zs $, we have 
$$
\deg P(g)\ge \deg _{\w _g}P
+\frac{pm_{\w }^g(P)-\ol{m_{\w }^g(P)}}{p-1}
(\deg df\wedge dg-\deg f-\deg g)+m_{\w }^g(P)\ep (f).
$$
\end{thm}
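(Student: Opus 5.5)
This is Theorem~\ref{thm:main} with $r=1$ and $\ff=(f)$, followed by an algebraic rewriting of the right-hand side. We have $df\wedge dg\ne 0$ by the standing assumption of \S~\ref{sect:consequence}, and $P\in k[f][T]\sm\zs$. So Theorem~\ref{thm:main} applies directly and gives
$$
\deg P(g)\ge \deg_{\w_g}P-m\Lambda_0-v_p(m!)\Lambda_1,
$$
where $m:=m_{\w}^g(P)$, $\Lambda_0=\deg df+\deg g-\deg df\wedge dg$ and $\Lambda_1=\deg f+\deg g-\deg df\wedge dg$.

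The only work is to check that $-m\Lambda_0-v_p(m!)\Lambda_1$ equals the stated correction term. Since $df\ne 0$, we have $\ep(f)=\deg f-\deg df$, so $\Lambda_0=\Lambda_1-\ep(f)$. Then
$$
-m\Lambda_0-v_p(m!)\Lambda_1=-(m+v_p(m!))\Lambda_1+m\ep(f).
$$
By Legendre's formula (\ref{eq:Legendre}), as already noted at the start of \S~\ref{sect:1st conseq}, $m+v_p(m!)=(pm-\ol{m})/(p-1)$. Also $-\Lambda_1=\deg df\wedge dg-\deg f-\deg g$. Substituting both gives exactly the displayed inequality.

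One formal point deserves care. The coefficient $(pm-\ol{m})/(p-1)$ is a nonnegative integer, because it equals $m+v_p(m!)$. Hence multiplying an element of $\Gamma$ by it is meaningful, and everything stays in the $\Q$-vector space $\Gamma$.

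There is no real obstacle here. The step most likely to cause error is confirming that $df\ne 0$, so that $\ep(f)$ is defined; this follows from $df\wedge dg\ne 0$.
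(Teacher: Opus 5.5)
Your proposal is correct and is essentially the paper's argument: the paper introduces this theorem simply as the case $r=1$ of Theorem~\ref{thm:main}. The rewriting $\Lambda_0=\Lambda_1-\ep(f)$ together with $m+v_p(m!)=(pm-\ol{m})/(p-1)$ from Legendre's formula is exactly what that restatement uses. The hypothesis $p>0$ needed for Theorem~\ref{thm:main} is part of the same standing assumption of \S~\ref{sect:consequence}.
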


In this subsection, 
we derive various results from Theorem~\ref{thm:pSUineq0}. 
To this end, 
for $a,b,m\in \N _+$, 
we define $\lambda _p(a,b):=pab-ab-pa-pb$ 
and 
$$
\xi _p(a,b,m):=
mab-\dfrac{pm-\ol{m}}{p-1}(a+b)
=\dfrac{m\lambda _p(a,b)+\ol{m}(a+b)}{p-1}.
$$

\begin{thm}\label{thm:pSUineq}

Under the setting {\rm (S)},
we have 
\begin{equation}\label{eq:thm:pSUineq}
\deg \phi \ge 
\xi _p(a,b,m)\delta 
+\frac{pm-\ol{m}}{p-1}\deg df\wedge dg
+\left\{ \!
\begin{array}{ll}
0& \text{\rm always}\\
m\ep (f)& \text{\rm if $p\nmid a$}\\
m\ep (f,g)& \text{\rm if $p\nmid a$ and $p\nmid b$}. 
\end{array}
\right.
\end{equation}
\end{thm}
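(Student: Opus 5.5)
The plan is to deduce (\ref{eq:thm:pSUineq}) directly from Theorem~\ref{thm:pSUineq0}, using Lemma~\ref{lem:lcm} to identify the relevant multiplicity with $m$ and to bound $\degw ^{f,g}\phi $ from below.

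Write $\phi =P(g)=Q(f)$ with $P\in k[f][T]$ and $Q\in k[g][T]$ as in (\ref{eq:description of q}). Since $\gcd (a,b)=1$, at least one of $p\nmid a$, $p\nmid b$ holds.

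\emph{Case $p\nmid a$.} By Lemma~\ref{lem:lcm}~(i) we have $m=m_{\w }^g(P)$. Apply Theorem~\ref{thm:pSUineq0} to $P$, and set $c:=(pm-\ol{m})/(p-1)$. This gives
$$
\deg \phi \ge \deg _{\w _g}P+c\,(\deg df\wedge dg-\deg f-\deg g)+m\ep (f).
$$
By Lemma~\ref{lem:PQ}~(i), $\deg _{\w _g}P=\degw ^{f,g}\phi $. By Lemma~\ref{lem:lcm}~(ii), $\degw ^{f,g}\phi \ge mab\delta $. Finally $\deg f+\deg g=(a+b)\delta $. Substituting these, the right-hand side is at least
$$
\bigl(mab-c(a+b)\bigr)\delta +c\deg df\wedge dg+m\ep (f)=\xi _p(a,b,m)\delta +c\deg df\wedge dg+m\ep (f).
$$
This is the claimed bound with the term $m\ep (f)$.

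\emph{Case $p\nmid b$.} The setting is symmetric in $f$ and $g$: the pair $(b,a)$ plays the role of $(a,b)$, and $\deg df\wedge dg=\deg dg\wedge df$. Lemma~\ref{lem:lcm}~(i) with the roles exchanged gives $m_{\w }^g(P)=p^{v_p(a)}m_{\w }^f(Q)$, hence $m=m_{\w }^f(Q)$. Applying Theorem~\ref{thm:pSUineq0} to $Q$, with $f$ and $g$ swapped, yields the same bound with $m\ep (g)$ in place of $m\ep (f)$. Here we use that $\xi _p(a,b,m)$ is symmetric in $a$ and $b$.

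\emph{Combining the cases.} Since $\ep \ge 0$, either case gives the ``always'' line with $0$. The first case gives the line with $m\ep (f)$. When $p\nmid a$ and $p\nmid b$, both cases apply, so $\deg \phi $ is at least the larger of the two bounds, which is the bound with $m\max \{ \ep (f),\ep (g)\} =m\ep (f,g)$.

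The main point needing care is identifying the multiplicity appearing in Theorem~\ref{thm:pSUineq0} with $m=\min \{ m_{\w }^g(P),m_{\w }^f(Q)\} $. This matters because the coefficient $(pm-\ol{m})/(p-1)$ multiplies the nonpositive quantity $\deg df\wedge dg-\deg f-\deg g$, so we must not use a larger multiplicity. Lemma~\ref{lem:lcm}~(i) handles this exactly, which is why we choose $P$ or $Q$ according to which of $a$, $b$ is prime to $p$. The inequality $\degw ^{f,g}\phi \ge mab\delta $ is then only used with a positive coefficient, so it goes in the right direction.
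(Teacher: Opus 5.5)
Your proposal is correct and follows the paper's proof essentially verbatim: apply Theorem~\ref{thm:pSUineq0} to $P$ when $p\nmid a$, using Lemmas~\ref{lem:PQ}~(i) and \ref{lem:lcm} for $\deg _{\w _g}P=\degw ^{f,g}\phi \ge mab\delta $ and $m_{\w }^g(P)=m$. Then apply it symmetrically to $Q$ when $p\nmid b$, and combine the two cases using $\ep (f),\ep (g)\ge 0$ and $\max \{ \ep (f),\ep (g)\} =\ep (f,g)$.
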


\begin{proof}
The case $p\nmid a$ follows from Theorem~\ref{thm:pSUineq0} applied 
to $P$ in (\ref{eq:description of q}), 
since 
$\deg _{\w _g}P=\deg ^{f,g}\phi \ge mab\delta $ and $m_{\w }^g(P)=m$ 
by Lemmas~\ref{lem:PQ} (i) and \ref{lem:lcm}. 
When $p\nmid b$, 
the same result with $\ep (f)$ replaced by $\ep (g)$ holds. 
Then, 
the remaining cases follow, 
since $p\nmid a$ or $p\nmid b$, 
$\ep (f)\ge 0$ and $\ep (g)\ge 0$, 
and $\max \{ \ep (f),\ep (g)\} =\ep (f,g)$. 
\end{proof}

\begin{example}\label{cor:nonsingular}\rm

Assume (S) and $a=2$, 
and consider the following four cases: 
\begin{equation}\label{eq:a--d}
\begin{aligned}
&\text{\rm (a) 
$b\ge 3$, $b$ is odd, and $m=1$,}\quad & &
\text{\rm (b) $(b,m)=(3,2)$ and $p\ge 3$,} \\
&\text{\rm (c) $(b,p,m)=(3,7,7)$,} & &
\text{\rm (d) $(b,p,m)=(7,3,3)$}.
\end{aligned}
\end{equation}
Then, Theorem~\ref{thm:pSUineq} 
yields the following inequalities:

\nd 
(a) $\deg \phi \ge (b-2)\delta +\deg df\wedge dg 
+\left\{ \!
\begin{array}{ll}
\ep (g)& \text{\rm if $p=2$}\\
\ep (f)& \text{\rm if $p\ge 3$}\\
\ep (f,g)& \text{\rm if $p\ge 3$ and $p\nmid b$}. 
\end{array}
\right.$

\nd 
(b) $\deg \phi\ge 2(\delta +\deg df\wedge dg)
+\left\{ \!
\begin{array}{ll}
2\ep (f)& \! \text{\rm if $p=3$}\\
2\ep (f,g)& \! \text{\rm if $p\ge 5$}. 
\end{array}
\right.$

\nd 
(c) $\deg \phi\ge 2\delta +8\deg df\wedge dg+7\ep (f,g)$.

\nd 
(d) $\deg \phi\ge 6\delta +4\deg df\wedge dg+3\ep (f,g)$. 
\end{example}

\begin{lem}\label{lem:nonsingular}
Assume {\rm (S)}, 
$a=2$, 
and one of {\rm (a)}--{\rm (d)} in {\rm (\ref{eq:a--d})} holds. 
If $\deg \phi \le b\delta $, i.e., 
$\phi \in \mathcal{P}(f,g,m)$, 
then {\rm (1)} in Lemma~{\rm \ref{lem:lcm} (ii)} holds for $\phi $. 
\end{lem}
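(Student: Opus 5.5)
The argument is by contradiction. Assume (S), $a=2$, one of (a)--(d), and $\deg \phi \le b\delta $, and suppose that (1) of Lemma~\ref{lem:lcm}~(ii) fails. Then alternative (2) holds, so
$$\degw ^{f,g}\phi \ge (mab+\min\{a,b\})\delta =(2mb+2)\delta ,$$
because $b\ge 3>2=a$ in every case. Theorem~\ref{thm:pSUineq} alone does not contradict $\deg\phi\le b\delta$: by Example~\ref{cor:nonsingular}, in case (a) it only gives $\deg\phi\ge (b-2)\delta+\deg df\wedge dg+\cdots$. So the plan is to rerun the proof of Theorem~\ref{thm:pSUineq}, which comes from Theorem~\ref{thm:pSUineq0}, keeping the exact value of $\deg _{\w _g}P$ rather than the lower bound $mab\delta$. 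Since $p\nmid a$ or $p\nmid b$, choose the variable accordingly; say $p\nmid a$ and use $P$ with $m_{\w }^g(P)=m$ (Lemma~\ref{lem:lcm}~(i)), and symmetrically use $Q$ if $p\mid a$, which is possible only when $p=2$ and hence only in case (a).

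With $\deg _{\w _g}P=\degw ^{f,g}\phi\ge (2mb+2)\delta$, the inequality of Example~\ref{cor:nonsingular} improves by exactly $2\delta$. Case (a) becomes
$$\deg\phi\ge b\delta+\deg df\wedge dg+\ep(\cdot).$$
Case (b) becomes $\deg\phi\ge 4\delta+2\deg df\wedge dg=b\delta+\delta+2\deg df\wedge dg$. Case (c) becomes $\deg\phi\ge 4\delta+8\deg df\wedge dg$, and case (d) becomes $\deg\phi\ge 8\delta+4\deg df\wedge dg$. Each of these bounds strictly exceeds $b\delta$, since $\deg df\wedge dg>0$ (because $df\wedge dg\ne 0$) and $\ep\ge 0$. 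For (c), which has $b=3$, we get $4\delta>3\delta$. For (d), which has $b=7$, we get $8\delta>7\delta$. Each case therefore contradicts $\deg\phi\le b\delta$, so (1) must hold.

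The main thing to check is the coefficient bookkeeping in Theorem~\ref{thm:pSUineq0}, namely that replacing $mab\delta$ by $\deg_{\w_g}P$ shifts the bound additively and nothing else. The term $\frac{pm-\ol m}{p-1}(\deg df\wedge dg-\deg f-\deg g)$ does not depend on $\deg_{\w_g}P$, so the shift is purely additive. One should also recheck the constants in Example~\ref{cor:nonsingular}. In (c), $\frac{pm-\ol m}{p-1}=8$ since $\ol 7=1$ in base $7$, giving $\xi_7(2,3,7)=42-40=2$. In (d), $\ol 3=1$ in base $3$, giving the factor $4$ and $\xi_3(2,7,3)=42-36=6$. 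Both match the example.

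In the case $p=2$, where $Q$ is used with $m_{\w }^f(Q)=m=1$ (case (a), $b$ odd), $\deg_{\w_f}Q=\degw^{f,g}\phi$ by Lemma~\ref{lem:PQ}~(i). Since $m=1$, the factor is $1$, and the same additive improvement gives $\deg\phi\ge b\delta+\deg df\wedge dg>b\delta$.
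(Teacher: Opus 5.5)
Your proof is correct and is essentially the paper's argument. The paper likewise argues that if alternative (2) of Lemma~\ref{lem:lcm}~(ii) held, the proof of Theorem~\ref{thm:pSUineq} would yield each inequality of Example~\ref{cor:nonsingular} with $\min\{a,b\}\delta=2\delta$ added to the right-hand side, and each such bound exceeds $b\delta\ge\deg\phi$. Your explicit case-by-case check, including the use of $Q$ when $p=2$ in case (a), spells out what the paper leaves implicit.
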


\begin{proof}
The proof of Theorem~\ref{thm:pSUineq} shows that, 
if (2) in Lemma~\ref{lem:lcm} (ii) holds for $\phi $, 
then the inequality (\ref{eq:thm:pSUineq}) 
with $\min \{ a,b\} \delta $ added to the right-hand side still holds.

Now, 
since $\deg \phi \le b\delta $ by assumption, 
none of the inequalities in Example~\ref{cor:nonsingular} 
remain valid if $\min \{ a,b\} \delta =2\delta $ 
is added to their right-hand sides. 
Hence, 
(1) in Lemma~\ref{lem:lcm} (ii) must hold for $\phi $. 
\end{proof}

\begin{rem}\label{rem:1/7}\rm 
Assume that 
$(\deg f,\deg g)=(2\delta ,3\delta )$ 
for some $\delta \in \Gamma _+$.

\nd 
(i) 
If $p=7$ and $\mathcal{P}(f,g,7)\ne \emptyset $, 
then 
we have 
$\deg df\wedge dg\le (1/8)\delta $ and 
$\ep (f,g)<(1/7)\delta $. 
In fact, 
applying Example~\ref{cor:nonsingular} (c) to $\phi \in \mathcal{P}(f,g,7)$ 
yields 
$8\deg df\wedge dg+7\ep (f,g)\le \deg \phi -2\delta \le \delta $, 
since $\deg \phi \le \deg g=3\delta $. 
This implies the claimed inequalities, 
as $\ep (f,g)\ge 0$ and $\deg df\wedge dg>0$.

\nd (ii) 
Assume that $\mathcal{P}(f,g,2)\ne \emptyset $. 
Then, 
similarly to (i), 
it follows from Example~\ref{cor:nonsingular}~(b) that 
$\deg df\wedge dg\le (1/2)\delta $ if $p\ge 3$, 
and $\ep (f,g)<(1/2)\delta $ if $p\ge 5$. 
\end{rem}

Next, for $u,v\in \R $, 
we define 
$I(u,v):=\{ (a,b)\in \R ^2\mid a\ge u,\ b\ge v\}$.

\begin{rem}\label{rem:increasing function}\rm 
Let $\alpha ,u,v\in \R $ with $\alpha >0$. 
Then, the following polynomial function 
$\varphi (x,y)$ defined on $I(u,v)$ is non-decreasing in $x$ and $y$: 
$$
\varphi (x,y)=\alpha (xy-vx-uy)=\alpha (x-u)(y-v)-\alpha uv. 
$$
\end{rem}

\begin{cor}\label{cor:pSU0}

Under the setting {\rm (S)},
the following assertions hold.

\nd{\rm (i)} 
We have $\deg \phi \ge 
(ab-a-b)\delta +\deg df\wedge dg=:M_0$ 
if $1\le m<p$, 
and $\deg \phi >pM_1$ if $m\ge p$, 
where 
\begin{align*}
M_1:=\left(ab-\dfrac{p}{p-1}(a+b)\right) \delta 
+\dfrac{p}{p-1}\deg df\wedge dg. 
\end{align*}

\nd{\rm (ii)} 
If $c\in \Q $ satisfies $\deg \phi \le c\delta $, 
then $\zeta _i(a,b,c)<0$ holds for some $i\in \{ 0,1\} $, 
where $\zeta _0(a,b,c):=ab-a-b-c$ and 
\begin{equation}\label{eq:zeta}
\zeta _1(a,b,c):=ab-\dfrac{p}{p-1}(a+b)-\dfrac{c}{p}
=\zeta _0(a,b,c)-\dfrac{a+b}{p-1}+\dfrac{(p-1)c}{p}. 
\end{equation}

\nd{\rm (iii)} 
If $p\ge 7$, 
and $f^{\w }\not\in k[g^{\w }]$ and $g^{\w }\not\in k[f^{\w }]$, 
then we have $\deg \phi >\deg df\wedge dg$. 
\end{cor}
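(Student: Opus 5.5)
We prove (i) from Theorem~\ref{thm:pSUineq}, using only its unconditional ``$0$'' branch:
$$
\deg \phi \ge mab\delta +N X,\qquad
N:=\frac{pm-\ol{m}}{p-1},\quad X:=\deg df\wedge dg-(a+b)\delta .
$$
The first observation is that $X<0$ in setting (S). By Remark~\ref{rem:(1.3.7)}~(i), $f^{\w }$ and $g^{\w }$ are algebraically dependent. Hence Remark~\ref{rem:independence}~(iv) gives $\deg df\wedge dg<\deg f+\deg g=(a+b)\delta $. We shall also use $\deg \phi >0$ and $\deg df\wedge dg>0$.

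\emph{Case $1\le m<p$.} Here $\ol{m}=m$, so $N=m$. The right-hand side above equals $m\bigl((ab-a-b)\delta +\deg df\wedge dg\bigr)=mM_0$. If $M_0\ge 0$, then $mM_0\ge M_0$. If $M_0<0$, then $\deg \phi >0>M_0$ trivially.

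\emph{Case $m\ge p$.} We split
$$
NX=\tfrac{pm}{p-1}X-\tfrac{\ol{m}}{p-1}X .
$$
Since $X<0$ and $\ol{m}\ge 1$, the second term is strictly positive. Hence $\deg \phi >m\bigl(ab\delta +\tfrac{p}{p-1}X\bigr)=mM_1$. If $M_1\ge 0$, then $mM_1\ge pM_1$ because $m\ge p$. If $M_1<0$, then $pM_1<0<\deg \phi $. In both subcases $\deg \phi >pM_1$. The only delicate point in (i) is recognizing that the sign of $X$ is forced by the dependence of $f^{\w },g^{\w }$; this is what makes the $\ol{m}$-term help rather than hurt.

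For (ii), suppose $\deg \phi \le c\delta $.
\begin{itemize}
\item If $m<p$, then (i) gives $M_0\le c\delta $. So $(ab-a-b-c)\delta \le -\deg df\wedge dg<0$, i.e.\ $\zeta _0(a,b,c)<0$.
\item If $m\ge p$, then (i) gives $pM_1<c\delta $. Dividing by $p$ and discarding the positive term $\tfrac{1}{p-1}\deg df\wedge dg$ yields $\zeta _1(a,b,c)\delta <0$.
\end{itemize}
The rewriting of $\zeta _1$ in (\ref{eq:zeta}) is direct algebra.

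For (iii), the hypotheses $f^{\w }\not\in k[g^{\w }]$ and $g^{\w }\not\in k[f^{\w }]$ mean $a,b\ge 2$. Since $\gcd (a,b)=1$, we may assume by symmetry that $a\ge 2$ and $b\ge 3$.
\begin{itemize}
\item If $m<p$: $ab-a-b\ge 1$ on $I(2,3)$ by Remark~\ref{rem:increasing function}, with the minimum at $(2,3)$. So $\deg \phi \ge M_0\ge \delta +\deg df\wedge dg>\deg df\wedge dg$.
\item If $m\ge p\ge 7$: we have $\tfrac{p}{p-1}\le \tfrac76$. Applying Remark~\ref{rem:increasing function} with $u=v=\tfrac76$ gives $ab-\tfrac{p}{p-1}(a+b)\ge 6-\tfrac{35}{6}=\tfrac16>0$ on $I(2,3)$. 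Therefore $pM_1>p\cdot \tfrac{p}{p-1}\deg df\wedge dg>\deg df\wedge dg$. By (i), $\deg \phi >pM_1>\deg df\wedge dg$.
\end{itemize}
This step is where $p\ge 7$ is used; for $p=5$ the bound at $(2,3)$ would be $6-\tfrac{25}{4}<0$.
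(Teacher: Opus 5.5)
Your proof is correct and follows the paper's argument: you derive (i) from the unconditional branch of Theorem~\ref{thm:pSUineq} using $(a+b)\delta>\deg df\wedge dg$ (Remark~\ref{rem:independence}~(iv)), deduce (ii) from (i) and $\deg df\wedge dg>0$, and get (iii) from Remark~\ref{rem:increasing function} on $I(2,3)\cup I(3,2)$. One trivial slip: in (ii), the positive term you discard after dividing $pM_1<c\delta$ by $p$ is $\tfrac{p}{p-1}\deg df\wedge dg$, not $\tfrac{1}{p-1}\deg df\wedge dg$, which does not affect the argument.
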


\begin{proof}
(i) 
Since $\deg \phi >0$ by (S), 
we may assume that $M_0>0$ if $1\le m<p$, 
and $M_1>0$ if $m>p$. 
Then, Theorem~\ref{thm:pSUineq} yields 
$\deg \phi \ge mM_0\ge M_0$ if $1\le m<p$ 
since $\frac{pm-\ol{m}}{p-1}=m$, 
and $\deg \phi \ge 
mM_1+\frac{\ol{m}}{p-1}((a+b)\delta -\deg df\wedge dg)>mM_1\ge pM_1$ 
if $m\ge p$ 
since $(a+b)\delta =\deg f+\deg g>\deg df\wedge dg$ by 
Remark~\ref{rem:independence} (iv).

(ii) 
Since $\deg df\wedge dg>0$, 
the assertion follows from (i).

(iii) 
$f^{\w }\not\in k[g^{\w }]$, $g^{\w }\not\in k[f^{\w }]$, 
and $\gcd (a,b)=1$ imply that $(a,b)\in I(2,3)\cup I(3,2)$. 
Since 
$I(2,3)\cup I(3,2)
\subset I(\frac{p}{p-1},\frac{p}{p-1})\subset I(1,1)$ 
and $p\ge 7$, 
it follows from 
Remark~\ref{rem:increasing function} that 
$ab-\frac{p}{p-1}(a+b)\ge 6-\frac{5p}{p-1}
\ge 6-\frac{5\cdot 7}{6}>0$ 
and $ab-a-b\ge 6-5>0$. 
Hence, the assertion follows from (i). 
\end{proof}

\begin{lem}\label{lem:increasing function}
Let $\xi _0$ and $\xi _1$ be as in 
Corollary~{\rm \ref{cor:pSU0}~(ii)}, 
and $i\in \{ 0,1\} $.

\nd{\rm (i)} 
Let $(u,v)\in \{ (4,3),(5,2)\} $. 
If $p\ge 5$, 
then $\zeta _i(x,y,4y/3)>0$ holds on $I(u,v)$.

\nd{\rm (ii)} 
Let $(u,v,z)\in \{ (7,3,2x/s),(7,3,x+y),(3,4,3x/2)\} $, 
where $s \ge 3$. 
If $p\ge 7$, 
then $\zeta _i(x,y,z)>0$ holds on $I(u,v)$. 
\end{lem}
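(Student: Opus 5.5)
The plan is to reduce every case to Remark~\ref{rem:increasing function}. Once $z$ is substituted as the given linear form in $x,y$, each $\zeta _i(x,y,z)$ becomes a function of the shape $xy-\alpha x-\beta y$ with $\alpha ,\beta >0$. By Remark~\ref{rem:increasing function} (with the roles of $u,v$ played by $\beta ,\alpha $), such a function is non-decreasing in $x$ and $y$ on $I(\beta ,\alpha )$. So for each case I will do three things: compute $\alpha ,\beta $; check that $I(u,v)\subset I(\beta ,\alpha )$, i.e.\ $u\ge \beta $ and $v\ge \alpha $; and check that the value at the corner $(u,v)$ is positive. Positivity then propagates to all of $I(u,v)$.

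For $\zeta _1$ the coefficients depend on $p$. They have the form $\frac{p}{p-1}+(\text{const})/p$, which decreases as $p$ grows. Hence $\zeta _1$ is non-decreasing in $p$ for fixed $(x,y)$, and it suffices to treat the smallest allowed prime: $p=5$ in (i) and $p=7$ in (ii). In case (ii) with $z=2x/s$, $z$ is non-increasing in $s$, so $\zeta _i$ is non-decreasing in $s$. I therefore replace $z$ by $2x/3$, which gives a lower bound for $\zeta _i$, and it suffices to take $s=3$.

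The corner checks I expect are as follows.

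In (i), $\zeta _0=xy-x-\frac73y$, which gives $1$ at $(4,3)$ and $\frac13$ at $(5,2)$. Also $\zeta _1=xy-\frac54x-(\frac54+\frac4{15})y$, which is about $2.45$ and $0.72$ at these two corners.

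In (ii), the three choices of $z$ give the following.
\begin{itemize}
\item For $z=2x/3$: $\zeta _0=xy-\frac53x-y$ gives $21-\frac{35}3-3>0$ at $(7,3)$, and $\zeta _1=xy-(\frac76+\frac2{21})x-\frac76y$ gives about $8.7$.
\item For $z=x+y$: $\zeta _0=xy-2x-2y$ gives $1$ at $(7,3)$, and $\zeta _1=xy-(\frac76+\frac17)(x+y)$ gives about $7.9$.
\item For $z=3x/2$: $\zeta _0=xy-\frac52x-y$ gives $\frac12$ at $(3,4)$, and $\zeta _1=xy-(\frac76+\frac3{14})x-\frac76y$ gives about $3.2$.
\end{itemize}
In every case the containment $u\ge \beta $, $v\ge \alpha $ holds, since all the coefficients are at most $\frac52\le u$ and $v\ge 2$ exceeds the $x$-coefficients.

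None of these steps is deep. The only thing requiring care is the bookkeeping of the monotonicity reductions, in $p$ and in $s$, and of the containment $I(u,v)\subset I(\beta ,\alpha )$ needed to apply Remark~\ref{rem:increasing function}. The tightest case is $\zeta _0$ in (i) at $(5,2)$, where the margin is only $\frac13$, so I would double-check that arithmetic.
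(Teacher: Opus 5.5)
Your proof is correct and is essentially the paper's argument. In both, Remark~\ref{rem:increasing function} reduces each case to positivity at the corner $(u,v)$, and your corner values and containments all check out. One phrase is loosely worded: the blanket claim that ``$v\ge 2$ exceeds the $x$-coefficients'' fails for $z=3x/2$, where the $x$-coefficient of $\zeta_0$ is $5/2$, but there $v=4$, so that containment still holds.

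The only difference is bookkeeping. You reduce to the smallest $p$ (and to $s=3$) and evaluate $\zeta_1$ numerically. The paper instead shows $\zeta_1\ge\zeta_0$ at the corner via (\ref{eq:zeta}). In the first two cases of (ii), it also replaces $2x/s$ by the larger $x+y$, so both $\zeta_i$ are bounded below by the single function $xy-2(x+y)$.
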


\begin{proof}
(i) 
By definition, we have 
$\zeta _0(x,y,\frac{4}{3}y)=xy-x-\frac{7}{3}y$ and 
$\zeta _1(x,y,\frac{4}{3}y)=xy-\frac{p}{p-1}x-(\frac{p}{p-1}+\frac{4}{3p})y$. 
Since 
$I(u,v)\subset 
I(\frac{7}{3},1)\cap I(\frac{p}{p-1}+\frac{4}{3p},\frac{p}{p-1})$, 
Remark~\ref{rem:increasing function} implies that 
$\zeta _i(x,y,\frac{4}{3}y)\ge \zeta _i(u,v,\frac{4}{3}v)$ 
on $I(u,v)$. 
Since 
$\frac{u+v}{p-1}\le \frac{7}{4}<\frac{32}{15}\le \frac{p-1}{p}\frac{4}{3}v$, 
(\ref{eq:zeta}) gives 
$\zeta _i(u,v,\frac{4}{3}v)\ge \zeta _0(u,v,\frac{4}{3}v)$. 
By computation, 
we have $\zeta _0(u,v,\frac{4}{3}v)>0$.

(ii) 
For the first two cases, 
take $(a,b)\in I(7,3)$. 
Then, 
since 
$\frac{2}{s}a<a+b$ and $\frac{p}{p-1}+\frac{1}{p}<2$, 
we have 
$\zeta _i(a,b,\frac{2}{s}a)>\zeta _i(a,b,a+b)\ge ab-2(a+b)$. 
Moreover, 
$ab-2(a+b)\ge 7\cdot 3-2(7+3)>0$ 
holds 
by Remark~\ref{rem:increasing function}. 
The last case can be verified as in (i), 
using 
$I(3,4)\subset 
I(1,\frac{5}{2})\cap I(\frac{p-1}{p},\frac{p-1}{p}+\frac{3}{2p})$ 
and 
$\frac{3+4}{p-1}\le \frac{7}{6}<\frac{27}{7}
\le \frac{p-1}{p}\frac{3\cdot 3}{2}$. 
\end{proof}

\begin{thm}\label{prop:Exc29}
Assume that $p\ge 5$, 
$f^{\w }\not\in k[g^{\w }]$, and 
$\deg f>\gamma $, where $\gamma :=(2/3)\deg g$. 
If there exists $\phi \in \mathcal{D}(f,g)$ 
such that $\deg \phi =2\gamma $, 
then we have either 
{\rm (a)} $\deg f=(9/4)\gamma $, 
or {\rm (b)} 
$\deg f=(9/8)\gamma $ and $\deg df\wedge dg+\ep (f,g)\le (1/8)\gamma $. 
\end{thm}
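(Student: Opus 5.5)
The plan is to put ourselves in setting (S), use Corollary~\ref{cor:pSU0}~(ii) together with Lemma~\ref{lem:increasing function}~(i) to reduce to a finite list of pairs $(a,b)$, and then use Theorem~\ref{thm:pSUineq} on each survivor.

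So let $\phi\in\mathcal{D}(f,g)$ with $\deg\phi=2\gamma$, and take $a,b,\delta,m$ as in (S), so $(\deg f,\deg g)=(a\delta,b\delta)$ with $\gcd(a,b)=1$. The hypotheses translate as follows.
\begin{itemize}
\item Since $f^{\w}\not\in k[g^{\w}]$, we have $b\ge 2$ (see the remark after (S)).
\item Since $\gamma=(2/3)\deg g=(2/3)b\delta$, the hypothesis $\deg f>\gamma$ gives $a>2b/3$.
\item The condition $\deg\phi=2\gamma$ reads $\deg\phi=(4b/3)\delta$.
\end{itemize}

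First, apply Corollary~\ref{cor:pSU0}~(ii) with $c:=4b/3$. It gives $\zeta_i(a,b,4b/3)<0$ for some $i\in\{0,1\}$. Since $p\ge 5$, Lemma~\ref{lem:increasing function}~(i) shows that $(a,b)\notin I(4,3)\cup I(5,2)$. Now enumerate using $b\ge 2$, $a>2b/3$ and $\gcd(a,b)=1$.
\begin{itemize}
\item If $b=2$, then $a$ is odd with $a\ge 2$, and $a<5$. Hence $a=3$.
\item If $b\ge 3$, then $a\le 3$. Together with $b<3a/2$ and $b\ge 3$, this forces $a=3$ and $b=4$ after using coprimality; the values $a=1,2$ leave no admissible $b$.
\end{itemize}
So $(a,b)\in\{(3,2),(3,4)\}$.

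For $(a,b)=(3,2)$ we get $\gamma=(4/3)\delta$, so $\deg f=3\delta=(9/4)\gamma$, which is case (a).

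For $(a,b)=(3,4)$ we get $\gamma=(8/3)\delta$, so $\deg f=(9/8)\gamma$, and it remains to prove $\deg df\wedge dg+\ep(f,g)\le(1/8)\gamma=\delta/3$. Because $p\ge5$, $p$ divides neither $a=3$ nor $b=4$, so the third line of Theorem~\ref{thm:pSUineq} applies:
\[
\deg\phi\ge \xi_p(3,4,m)\delta+\frac{pm-\ol{m}}{p-1}\deg df\wedge dg+m\ep(f,g).
\]
The coefficients $(pm-\ol{m})/(p-1)$ and $m$ are both at least $1$. We also have
\[
\xi_p(3,4,m)=\frac{m\lambda_p(3,4)+7\ol{m}}{p-1},\qquad \lambda_p(3,4)=5p-12>0.
\]
We check that $\xi_p(3,4,m)\ge 5$ for every $m\ge1$, with equality only when $m=1$.
\begin{itemize}
\item For $m<p$ we have $\ol{m}=m$, so $\xi_p(3,4,m)=5m$.
\item For $m\ge p$ we have $\xi_p(3,4,m)\ge p(5p-12)/(p-1)>5$.
\end{itemize}
Substituting $\deg\phi=(16/3)\delta$ gives $\deg df\wedge dg+\ep(f,g)\le(16/3-5)\delta=\delta/3$, which is (b). The same bound shows that $m=1$ is forced here, although we do not need this.

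The main obstacle is the case enumeration. We must check that Lemma~\ref{lem:increasing function}~(i) is stated on exactly the regions $I(4,3)$ and $I(5,2)$ needed, and that the constraints from $f^{\w}\not\in k[g^{\w}]$ and $\deg f>\gamma$ really leave only $(3,2)$ and $(3,4)$. The remaining arithmetic is routine.
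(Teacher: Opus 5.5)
Your proposal is correct and follows the paper's proof essentially step for step. You reduce to $(a,b)\in\{(3,2),(3,4)\}$ via Corollary~\ref{cor:pSU0}~(ii) and Lemma~\ref{lem:increasing function}~(i), and then apply Theorem~\ref{thm:pSUineq} in the case $(3,4)$. The only cosmetic difference is that the paper first forces $m=1$ and then substitutes, whereas you bound $\xi_p(3,4,m)\ge 5$ uniformly in $m$; this works equally well.
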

\begin{proof}
The setting (S) holds with $b\ge 2$, 
since $f^{\w }\not\in k[g^{\w }]$. 
Since $a\delta =\deg f>\gamma =(2/3)\deg g=(2b/3)\delta $, 
it follows that $3a>2b\ge 4$, 
so $a\ge 2$. 
We assert that $a\ge 3$. 
Indeed, if $a=2$, then $\gcd(a,b)=1$ implies $b\ge 3$, 
which contradicts $3a>2b$.

Now, since 
$\deg \phi =2\gamma =(4b/3)\delta $ 
by assumption, 
Corollary~\ref{cor:pSU0} (ii) yields 
$\zeta _i(a,b,4b/3)<0$ for some $i\in \{ 0,1\} $. 
Hence, 
we obtain $(a,b)\not\in I(4,3)\cup I(5,2)$ 
by Lemma~\ref{lem:increasing function} (i). 
Thus, together with $a\ge 3$, $b\ge 2$, $3a>2b$, and $\gcd (a,b)=1$, 
we conclude that $(a,b)\in \{ (3,2),(3,4)\} $, i.e., 
$\deg f=(3a/(2b))\gamma \in \{ (9/4)\gamma ,(9/8)\gamma \} $.

If $(a,b)=(3,4)$, 
then since $p\ge 5$, 
Theorem~\ref{thm:pSUineq} yields 
$(16/3)\delta =\deg \phi 
\ge (12m-7\frac{pm-\ol{m}}{p-1})\delta 
+\frac{pm-\ol{m}}{p-1}\deg df\wedge dg+m\ep (f,g)
>(12m-7\frac{pm}{p-1})\delta >3m\delta $ ($*$). 
This gives $m=1$. 
Then, ($*$) reduces to 
$\deg df\wedge dg+\ep (f,g)\le (16/3-5)\delta =(1/8)\gamma $. 
\end{proof}

The following result is also deduced from 
Theorem~\ref{thm:pSUineq0}.

\begin{lem}\label{lem:deg P'(g)}
Assume {\rm (S)}, 
$p\nmid a$, and $m\in \{ 1,\ldots ,p-1\} $, 
and let $P$ be as in~{\rm (\ref{eq:description of q})}. 

\nd{\rm (i)} 
We have $\deg P^{(1)}(g)\ge \eta (a,b,m)\delta $, 
where $\eta (a,b,m):=m(ab-a-b)+a$.

\nd{\rm (ii)} 
If $b>a\ge 2$, 
then we have $\deg P^{(1)}(g)\ge b\delta $ 
and $\deg df\wedge d\phi \ge b\delta +\deg df\wedge dg$.

\nd{\rm (iii)} 
If $b>a\ge 2$ and $p\nmid b$, 
and if $\deg \phi \le b\delta $ and $m=1$, 
i.e., $\phi \in \mathcal{P}(f,g,1)$, 
then we have 
$\deg f-\deg df=\deg g-\deg dg$. 
\end{lem}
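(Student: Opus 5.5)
For (i), I would apply Theorem~\ref{thm:pSUineq0} to $P^{(1)}$ instead of $P$. Since $p\nmid a$, Lemma~\ref{lem:lcm}~(i) gives $m_{\w }^g(P)=m$. Since $1\le m\le p-1$, we have $p\nmid m$, so Lemma~\ref{lem:p nmid m(P)}~(ii) yields $(P^{\w _g})^{(1)}\ne 0$; in particular $P^{(1)}\in k[f][T]\sm \zs $ and $m_{\w }^g(P^{(1)})=m-1$. Lemma~\ref{lem:p nmid m(P)}~(i) then gives $\deg _{\w _g}P^{(1)}=\deg _{\w _g}P-b\delta $. By Lemmas~\ref{lem:PQ}~(i) and \ref{lem:lcm}~(ii), $\deg _{\w _g}P=\deg ^{f,g}\phi \ge mab\delta $, so this is at least $(mab-b)\delta $. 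Because $m-1<p$, the coefficient $\frac{p(m-1)-\ol{m-1}}{p-1}$ equals $m-1$. Theorem~\ref{thm:pSUineq0} therefore gives
$$
\deg P^{(1)}(g)\ge (mab-b)\delta +(m-1)(\deg df\wedge dg-(a+b)\delta )+(m-1)\ep (f).
$$
Dropping the nonnegative terms $(m-1)\deg df\wedge dg$ and $(m-1)\ep (f)$ leaves $\bigl(m(ab-a-b)+a\bigr)\delta =\eta (a,b,m)\delta $.

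For (ii), I would use $b>a\ge 2$, which gives $ab-a-b\ge 2b-2-b=b-2\ge 0$. Hence $\eta (a,b,m)\ge ab-a-b+a=(a-1)b\ge b$, and (i) gives $\deg P^{(1)}(g)\ge b\delta $. Remark~\ref{rem:chain rule}~(ii) gives $df\wedge d\phi =P^{(1)}(g)\,df\wedge dg$. Since $\deg h\omega =\deg h+\deg \omega $, this yields $\deg df\wedge d\phi \ge b\delta +\deg df\wedge dg$.

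For (iii), I would exploit cancellation in $d\phi $. By Remark~\ref{rem:chain rule}~(i),
$$
d\phi =Q^{(1)}(f)\,df+P^{(1)}(g)\,dg .
$$
Set $D:=\deg ^{f,g}\phi $. Since $m=1$ and $p\nmid a$, the proof of (i) with $m_{\w }^g(P^{(1)})=0$ and Remark~\ref{rem:m(P)=0} gives $\deg P^{(1)}(g)=D-b\delta $ exactly. Since $p\nmid b$, the symmetric version of Lemma~\ref{lem:lcm}~(i) gives $m_{\w }^f(Q)=1$, and the same argument yields $\deg Q^{(1)}(f)=D-a\delta $. Thus the two summands have degrees $D-a\delta +\deg df$ and $D-b\delta +\deg dg$. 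On the other hand, $\deg d\phi \le \deg \phi \le b\delta $ by Remark~\ref{rem:independence}~(ii). Since $D\ge ab\delta $ and $ab-a\ge b+(b-2)>b$, the first summand has degree $D-a\delta +\deg df>b\delta \ge \deg d\phi $, because $\deg df>0$. So the top-degree parts must cancel, which forces the two summands to have equal degree. This gives $-a\delta +\deg df=-b\delta +\deg dg$, i.e.\ $\deg f-\deg df=\deg g-\deg dg$.

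The step needing most care is the exact degree computation $\deg P^{(1)}(g)=D-b\delta $ and $\deg Q^{(1)}(f)=D-a\delta $. This relies on multiplicity $0$ after one differentiation, which needs both $p\nmid a$ and $p\nmid b$ via Lemma~\ref{lem:lcm}~(i) and Lemma~\ref{lem:p nmid m(P)}.
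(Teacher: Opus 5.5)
Your proposal is correct and follows essentially the same route as the paper. Part (i) applies Theorem~\ref{thm:pSUineq0} to $P^{(1)}$ with $m_{\w }^g(P^{(1)})=m-1$, (ii) uses the chain rule, and (iii) uses the forced cancellation in $d\phi =Q^{(1)}(f)\,df+P^{(1)}(g)\,dg$ together with the exact degrees $\deg P^{(1)}(g)=\deg ^{f,g}\phi -\deg g$ and $\deg Q^{(1)}(f)=\deg ^{f,g}\phi -\deg f$. The only cosmetic difference is in (iii): you show the $df$-summand exceeds $\deg d\phi $ via $\deg ^{f,g}\phi \ge ab\delta $, whereas the paper shows the $dg$-summand does so via part (ii).
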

\begin{proof}
(i) Since $p\nmid a$, 
Lemma~\ref{lem:lcm} (i) gives $m_{\w }^g(P)=m$, 
so $p\nmid m_{\w }^g(P)$. 
Hence, 
Lemmas~\ref{lem:p nmid m(P)} (ii) and (i) and \ref{lem:lcm} (ii) yield 
$m_{\w }^g(P^{(1)})=m-1$ 
and $\deg _{\w _g}P^{(1)}=\deg _{\w _g}P-b\delta 
\ge (mab-b)\delta$. 
Thus, 
since $\frac{p(m-1)-\ol{m-1}}{p-1}=m-1$, 
it follows from Theorem~\ref{thm:pSUineq0} that 
$\deg P^{(1)}(g)
\ge (mab-b)\delta -(m-1)(\deg f+\deg g)
=\eta (a,b,m)\delta $.

(ii) 
By Remark~\ref{rem:increasing function}, 
we have $\eta (a,b,m)-b\ge \eta (2,3,m)-3=m-1\ge 0$, 
since $(a,b)\in I(2,3)\subset I(\frac{m+1}{m},\frac{m-1}{m})$. 
Hence, the first part follows from (i). 
The last part is due to Remark~\ref{rem:chain rule} (ii).

(iii) 
We have $\deg P^{(1)}(g)\ge b\delta $ by (ii), 
$b\delta \ge \deg \phi $ by assumption, 
and $\deg \phi \ge \deg d\phi $ by Remark~\ref{rem:independence} (ii), 
so we get $\deg P^{(1)}(g)dg>\deg d\phi $. 
Hence, it follows from 
Remark~\ref{rem:chain rule} (i) 
that $\deg P^{(1)}(g)dg=\deg Q^{(1)}(f)df$. 
Thus, 
it suffices to verify that 
$\deg P^{(1)}(g)g=\deg Q^{(1)}(f)f$. 
From $p\nmid a$ and $m=1$, 
we obtain $m_{\w }^g(P)=1$ via Lemma~\ref{lem:lcm} (i). 
Hence, 
parts (ii) and (i) of Lemma~\ref{lem:p nmid m(P)} yield 
$m_{\w }^g(P^{(1)})=m_{\w }^g(P)-1=0$ and 
$\deg _{\w _g}P^{(1)}+\deg g=\deg _{\w _g}P$. 
Since $m_{\w }^g(P^{(1)})=0$ 
implies that $\deg _{\w _g}P^{(1)}=\deg P^{(1)}(g)$ 
by Remark~\ref{rem:m(P)=0}, 
it follows that $\deg P^{(1)}(g)g=\deg _{\w _g}P$. 
Since $p\nmid b$ by assumption, 
$\deg Q^{(1)}(f)f=\deg _{\w _f}Q$ holds analogously. 
Then, 
the required equality follows from Lemma~\ref{lem:PQ} (i). 
\end{proof}

\subsection{Possible degrees and multiplicities}

We first analyze the set 
$$
\mathcal{S}_p:=\{ (a,b,m)\in (\N _+)^3\mid 
\gcd (a,b)=1,\ b>a\ge 2,\ 
\xi _p(a,b,m)<b\} . 
$$

\begin{definition}
Assume that $p\in \{ 2,3,5\} $. 
We say that $(a,b)$ is {\it singular} if 

$\bullet $ 
$(a,b)\in \{ (2,s)\mid s\ge 3\text{ and $s$ is odd}\} \cup 
\{ (3,4),(3,5)\} $ 
when $p=2$;  

$\bullet $ 
$(a,b)\in \{ (2,3),(2,5)\} $ 
when $p=3$; 

$\bullet $ 
$(a,b)=(2,3)$ when $p=5$. 
\end{definition}

If $(a,b)$ is singular, 
then $\lambda _p(a,b)$ is negative, 
since $\lambda _2(a,b)=(a-2)(b-2)-4$ and 
$\lambda _p(2,b)=(p-2)(b-2)-4$. 
This implies that 
$(a,b,p^e)\in \mathcal{S}_p$ for all $e\gg 0$.

\begin{prop}\label{thm:admissible}

Let $(a,b,m)$ be an element of $\mathcal{S}_p$.

\nd{\rm (i)} 
If $p=2$, 
then $(a,b)$ is singular.

\nd{\rm (ii)} 
If $p\ge 3$ or $m=1$, 
then $a=2$, $b\ge 3$, and $b$ is odd.

\nd{\rm (iii)} 
If $p\ge 3$ and $1\le m<p$, 
then we have $m=1$ or $(b,m)=(3,2)$.

\nd{\rm (iv)} 
Assume that $(a,b)$ is not singular. 
Then, 
we have $p\ge 3$ by {\rm (i)}, 
and so $a=2$, $b\ge 3$, and $b$ is odd by {\rm (ii)}. 
In each case, 
$m$ must be as follows.

$(1)$ 
If $p\ge 5$ and $b\ge 5$, or $p=3$ and $b\ge 9$, 
then $m=1$.

$(2)$ 
If $p=3$ and $b=7$, 
then $m\in \{ 1,3\} $.

$(3)$ 
If $p=7$ and $b=3$, 
then $m\in \{ 1,2,7\} $.

$(4)$ 
If $p\ge 11$ and $b=3$, 
then $m\in \{ 1,2\} $.

\begin{center}
{\rm Table 1\quad 
\begin{tabular}{|c|cc|c|c|}
\hline
 & $p=3$ & \multicolumn{1}{|c|}{$p=5$} & $p=7$ & $p\ge 11$ \\ \hline
$b=3$ & \multirow{2}{*}{singular}& &(3) $m\in \{ 1, 2, 7\} $ & 
(4) $m\in \{ 1, 2\} $ \\ \cline{1-1}\cline{3-5}
$b=5$ &  & \multicolumn{3}{|c|}{}\\ \cline{1-2}
$b=7$ &(2) $m\in \{ 1, 3\} $&  \multicolumn{3}{|c|}{(1) $m=1$}\\ \cline{1-2}
$b\ge 9$ & \multicolumn{4}{|c|}{}\\ \hline 
\end{tabular}

}

\end{center}

\nd 
{\rm Remark: (a)--(b) in (\ref{eq:a--d}) cover Cases (1)--(4).}

\end{prop}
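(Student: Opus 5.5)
This is a purely arithmetic statement about the inequality $\xi _p(a,b,m)<b$, i.e.
$$
m\lambda _p(a,b)+\ol{m}(a+b)<(p-1)b,\qquad \lambda _p(a,b)=(p-1)ab-p(a+b),
$$
with $\gcd (a,b)=1$ and $b>a\ge 2$. The plan is to bound $\ol{m}$ crudely from below ($\ol{m}\ge 1$, and $\ol{m}=m$ when $m<p$), which turns the condition into an explicit inequality in $(a,b,m)$. We then use the monotonicity of Remark~\ref{rem:increasing function} to reduce to finitely many small cases, which can be checked by hand.

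\emph{Step 1: parts (ii) and (iii) for $1\le m<p$.} Here $\ol{m}=m$, so the condition becomes $m(ab-a-b)<b$. If $a\ge 3$, then $b\ge 4$ and $ab-a-b\ge 2b-3\ge b$, which is impossible since $m\ge 1$. Hence $a=2$, and $\gcd(2,b)=1$ forces $b$ odd with $b\ge 3$. The condition now reads $m(b-2)<b$, i.e. $m<b/(b-2)$. This gives $m=1$, or $m=2$ with $b=3$. In particular the case $m=1$ of (ii) holds for every $p$ (since $1<p$), and (iii) follows.

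\emph{Step 2: part (ii) for $p\ge 3$ and $m\ge p$, and part (i).} Use $\ol{m}\ge 1$ and $m\ge p$.
\begin{itemize}
\item If $\lambda _p(a,b)\ge 0$, the condition gives $p\lambda _p(a,b)+(a+b)<(p-1)b$. I expect the left side to be at least $(p-1)b$ outside a finite list, and I will verify this directly.
\item If $\lambda _p(a,b)<0$, we are in the singular-type range. Writing $\lambda _p(a,b)=(p-1)(a-\frac{p}{p-1})(b-\frac{p}{p-1})-\frac{p^2}{p-1}$, negativity with $a\ge 3$, $p\ge 3$ would require $(a-\frac32)(b-\frac32)<\frac94$ at $p=3$, which fails for $b\ge 4$. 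So $a=2$, giving (ii).
\item For $p=2$, $\lambda _2=(a-2)(b-2)-4$. The condition with $\ol{m}\ge1$, $m\ge 2$ yields the singular list: $a=2$ with $b$ odd, or $(a-2)(b-2)<4$ with $a\ge3$, i.e. $(3,4),(3,5)$. The remaining nonnegative-$\lambda$ cases must be excluded by the crude bound, and I need to check that this suffices.
\end{itemize}

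\emph{Step 3: part (iv).} Take $a=2$ and $b$ odd, so $\lambda _p(2,b)=(p-2)(b-2)-4$. Nonsingularity for $p\ge3$ means exactly $\lambda _p(2,b)>0$, except that $p=3,b=7$ gives $\lambda=1$ and $p=7,b=3$ gives $\lambda=1$. The condition becomes $m\lambda+\ol{m}(b+2)<(p-1)b$.
\begin{itemize}
\item For $m<p$, Step 1 already gives $m\in\{1,2\}$, with $m=2$ only when $b=3$.
\item For $m\ge p$, write $m=\sum m_ip^i$ and use $\ol{m}\ge1$ together with $m\ge p$. Then $p\lambda+b+2\ge(p-1)b$ must fail except in the boundary cases.
\item When $\lambda=1$, the condition reads $m+\ol{m}(b+2)<(p-1)b$. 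For $p=3,b=7$ this is $m+9\ol{m}<14$, forcing $\ol{m}=1$ and $m<5$, so $m=3$. For $p=7,b=3$ it is $m+5\ol{m}<18$, forcing $\ol{m}=1$ and $m=7$ (since $m=49$ is too large).
\item For larger $\lambda$, e.g. $p=3,b\ge9$ with $\lambda\ge3$, the bound $3\lambda+b+2\ge2b$ holds, and similarly for $p\ge5$, $b\ge5$ and for $p\ge11$, $b=3$ (where $\lambda\ge5$).
\end{itemize}

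The main obstacle is bookkeeping in the $m\ge p$ regime: making sure the crude bound $\ol{m}\ge 1$ (with $m\ge p$ and monotonicity in $m$ when $\lambda>0$) genuinely excludes all non-listed $(a,b)$, especially for $p=2$ with $a\ge 3$. The linear-in-$m$ form lets us take $m=p$ as the worst case when $\lambda\ge0$.
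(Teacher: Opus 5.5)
Your proposal is correct, and the inequalities you leave as ``I expect'' do hold (see below). It is organized differently from the paper's proof.

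The paper introduces $\xi_p'(x,y,l)=\xi_p(x,y,l)-y$ and notes that, for fixed $p$ and $l$, it is non-decreasing on $I_{p,l}$. Each part then reduces to positivity at a single corner, with $l$ and $\ol{l}$ kept symbolic:
\begin{itemize}
\item for (i), $\xi_2'(3,7,l)=l+10\ol{l}-7$ and $\xi_2'(4,5,l)=2l+9\ol{l}-5$;
\item for (ii), $\xi_p'(3,4,m)\ge 5\ol{m}-4$, using $m\ge\ol{m}$;
\item for (iv), closed forms for $\xi_p'(2,5,l)$, $\xi_3'(2,b,l)$ and $\xi_p'(2,3,l)$.
\end{itemize}
You instead write the condition as $m\lambda_p(a,b)+\ol{m}(a+b)<(p-1)b$ and split on $m<p$ versus $m\ge p$. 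The range $m<p$, where $\ol{m}=m$ and the condition becomes $m(ab-a-b)<b$, is exactly the paper's argument for (iii). For $m\ge p$, positivity of $\lambda_p$ lets you replace $m$ by $p$ and $\ol{m}$ by $1$, so the base-$p$ digits matter only in the two cases with $\lambda_p(2,b)=1$.

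For (i) your route is genuinely shorter. Since $\ol{m}(a+b)\ge a+b>b$, membership in $\mathcal{S}_2$ forces $m\lambda_2(a,b)<0$, i.e.\ $(a-2)(b-2)<4$. That is precisely the singular list, with no split on $m$ and no corner evaluations, so the crude bound certainly suffices there. The paper's method is uniform and keeps the exact dependence on $(l,\ol{l})$. Yours makes visible that the exceptional entries of Table~1 come exactly from $\lambda_p\le 1$.

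Here is how to close the checks you deferred.
\begin{itemize}
\item For (ii) with $p\ge 3$ and $a\ge 3$ there is no exceptional list. Indeed $p\lambda_p(a,b)+a+b-(p-1)b=(p-1)\bigl(pab-(p+1)a-(p+2)b\bigr)$, which increases in $a$ and $b$ on $I(3,4)$ and equals $(p-1)(5p-11)>0$ at $(3,4)$.
\item Also $\lambda_p(a,b)>0$ on $I(3,4)$ for $p\ge 3$: $\lambda_p=p(ab-a-b)-ab$ increases with $p$, and $\lambda_3=2ab-3a-3b\ge 3$ there. So your negative-$\lambda_p$ bullet only concerns $a=2$.
\item For (iv), $p\lambda_p(2,b)+b+2-(p-1)b=(p-2)\bigl((p-1)b-2p\bigr)-4p+2$. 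This equals $2b-16$ when $p=3$, equals $(p-1)(p-8)$ when $b=3$, and is at least $3(p-1)(p-4)$ when $b\ge 5$. Hence among non-singular pairs it is negative exactly at $(p,b)=(3,7)$ and $(7,3)$, the two cases you treat by hand.
\item In the case $(7,3)$, $\ol{m}=2$ is excluded: with $m\ge 7$ the inequality $m+10<18$ forces $m=7$, whose digit sum is $1$.
\end{itemize}
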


\begin{proof}
We first remark the following: 
(1$^*$) For (i) and (ii), 
it suffices to verify that 
$(a,b)\not\in I(3,7)\cup I(4,5)$ 
and $(a,b)\not\in I(3,4)$, respectively, 
since $\gcd (a,b)=1$ and $b>a\ge 2$. 
(2$^*$) 
By Remark~\ref{rem:increasing function}, 
for fixed $p$ and $l\in \N _+$, 
the function $\xi _p'(x,y,l):=\xi _p(x,y,l)-y$ 
is non-decreasing in $x$ and $y$ on 
$I_{p,l}:=I(\frac{p}{p-1}+\frac{1}{l},\frac{p}{p-1})$,~since 
$$
\xi_p'(x,y,l)
=l(xy-vx-(v+l^{-1})y)
\text{ \ \ with \ \ }
v:=\dfrac{p-\ol{l}/l}{p-1}<
\dfrac{p}{p-1}. 
$$
(3$^*$) 
$a$, $b$, and $m$ satisfy $\xi _p'(a,b,m)<0$, 
since $\xi _p(a,b,m)<b$.

Now, 
observe that, for all $l\in \N _+$, we have 
$I(3,7)\cup I(4,5)\subset I_{2,l}$, 
$\xi _2'(3,7,l)=l+10\ol{l}-7>0$, 
and $\xi _2'(4,5,l)=2l+9\ol{l}-5>0$. 
Hence, 
(1$^*$)--(3$^*$) yield (i). 
If $p\ge 3$ or $m=1$, 
then 
we have 
$(5p-12)m\ge (5p-12)\ol{m}$, 
and so $\xi _p'(3,4,m)
=\frac{1}{p-1}((5p-12)m+7\ol{m})-4
\ge 5\ol{m}-4>0$. 
Since $I(3,4)\subset I_{p,m}$ also holds, 
(ii) follows from 
(1$^*$)--(3$^*$). 
For (iii), note that $a=2$ and $b\ge 3$ by (ii). 
Since $1\le m<p$ implies that 
$\xi _p'(2,b,m)=m(b-2)-b=(m-1)(b-2)-2$, 
we get $m=1$ or $(b,m)=(3,2)$ by (3$^*$). 
For the first case of (iv) (1), 
if $p\ge 5$ and $l\ge 2$, 
then we have 
$I(2,5)\subset I_{p,l}$ and 
$\xi _p'(2,5,l)=\frac{1}{p-1}((p-5)(3l-5)+5l+7\ol{l}-20)>0$. 
By (2$^*$), 
this implies that 
$\xi _p'(2,b,l)>0$ for all $p,b\ge 5$ and $l\ge 2$, 
proving $m=1$. 
For all $l\ge 2$, 
we have $I(2,9)\subset I_{3,l}$ and $\xi _3'(2,9,l)>0$, 
since 
$\xi _3'(2,b,l)=\frac{1}{2}((b-6)l+(b+2)\ol{l}-2b)$~($\dag $). 
Hence, 
the last case of (1) follows similarly. 
(2)--(4) are verified using ($\dag $) and 
$\xi _p'(2,3,l)=\frac{1}{p-1}((p-7)(l-3)+l+5\ol{l}-18)$. 
\end{proof}

We now present the main result of this subsection.

\begin{thm}\label{ex:ineq1}
If $p\ge 7$, 
$\deg f\le \deg g$, $g^{\w }\not\in k[f^{\w }]$, 
and $\mathcal{P}(f,g,m)\ne \emptyset $ for some $m\ge 1$, 
then there exists an odd $b\ge 3$ such that 
the following {\rm (i)}--{\rm (iii)}~hold$:$

\nd{\rm (i)} 
We have $(g^{\w })^2\approx (f^{\w })^b$.

\nd{\rm (ii)} 
Exactly one of $m=1$, $(b,m)=(3,2)$, or $(b,p,m)=(3,7,7)$ holds.

\nd{\rm (iii)} 
$\deg \phi \ge (b-2)\delta +\deg df\wedge dg +\ep (f)$ 
holds for all $\phi \in \mathcal{P}(f,g,m)$, 
where $\delta :=(1/2)\deg f$. 
\end{thm}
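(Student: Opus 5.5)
Pick $\phi \in \mathcal{P}(f,g,m)$. Since $\phi \in \mathcal{D}(f,g)$, we are in setting (S), with $a,b$ as in Lemma~\ref{lem:dependence} and $(\deg f,\deg g)=(a\delta ,b\delta )$. The hypothesis $g^{\w }\not\in k[f^{\w }]$ gives $a\ge 2$. The hypothesis $\deg f\le \deg g$ gives $a\le b$, and $\gcd (a,b)=1$ with $a\ge 2$ forces $b>a$. We also have $\deg \phi \le \deg g=b\delta $.

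The first goal is to show $(a,b,m)\in \mathcal{S}_p$, i.e.\ $\xi _p(a,b,m)<b$. Theorem~\ref{thm:pSUineq} (always-valid case) gives
$$\deg \phi \ge \xi _p(a,b,m)\delta +\tfrac{pm-\ol{m}}{p-1}\deg df\wedge dg .$$
Since $\deg df\wedge dg>0$ (because $df\wedge dg\ne 0$) and $pm-\ol{m}>0$ for $m\ge 1$, this yields $\xi _p(a,b,m)\delta <\deg \phi \le b\delta $, so $(a,b,m)\in \mathcal{S}_p$.

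Because $p\ge 7$, no pair is singular. Proposition~\ref{thm:admissible}~(ii) and (iv) then give $a=2$ with $b\ge 3$ odd. Hence $(g^{\w })^2\approx (f^{\w })^b$, which is (i), and $\delta =(1/2)\deg f$ as stated. For (ii), Proposition~\ref{thm:admissible}~(iv) applies case by case:
\begin{itemize}
\item case (1): for $b\ge 5$ we get $m=1$;
\item case (3): for $b=3$ and $p=7$ we get $m\in \{ 1,2,7\} $;
\item case (4): for $b=3$ and $p\ge 11$ we get $m\in \{ 1,2\} $.
\end{itemize}
The three alternatives in (ii) are mutually exclusive, since $m$ takes different values in each, so exactly one holds. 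Note that $b$ is determined by $f,g$ alone, so the same $b$ serves every $\phi $.

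For (iii), apply the computation above to an arbitrary $\phi \in \mathcal{P}(f,g,m)$, using $a=2$ and $p\nmid a$. Theorem~\ref{thm:pSUineq} then gives
$$\deg \phi \ge \xi _p(2,b,m)\delta +\tfrac{pm-\ol{m}}{p-1}\deg df\wedge dg+m\ep (f).$$
If $m=1$, this is exactly Example~\ref{cor:nonsingular}~(a) with $p\ge 3$: $\xi _p(2,b,1)=2b-2-b=b-2$, and the remaining coefficients are $1$.

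If $(b,m)=(3,2)$, Example~\ref{cor:nonsingular}~(b) gives $\deg \phi \ge 2\delta +2\deg df\wedge dg+2\ep (f)$. This dominates the target $\delta +\deg df\wedge dg+\ep (f)$, since all the terms are nonnegative.

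If $(b,p,m)=(3,7,7)$, Example~\ref{cor:nonsingular}~(c) gives $\deg \phi \ge 2\delta +8\deg df\wedge dg+7\ep (f,g)$. This is at least $\delta +\deg df\wedge dg+\ep (f)$, because $\ep (f,g)\ge \ep (f)\ge 0$.

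The main obstacle is the bookkeeping in the first step. One must check that Theorem~\ref{thm:pSUineq} yields a \emph{strict} inequality $\xi _p(a,b,m)<b$. This relies on $\deg df\wedge dg>0$, which holds because a nonzero differential form has positive degree. One must also confirm that Proposition~\ref{thm:admissible} exhausts all cases for $p\ge 7$; it does, since $(a,b)$ is never singular in that range.
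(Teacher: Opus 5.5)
Your proposal is correct and follows the paper's proof essentially step for step. You place $(a,b,m)$ in $\mathcal{S}_p$ via Theorem~\ref{thm:pSUineq} and read off (i) and (ii) from Proposition~\ref{thm:admissible}~(ii) and~(iv). You then obtain (iii) from the inequalities (a)--(c) of Example~\ref{cor:nonsingular}. Your explicit checks (strictness from $\deg df\wedge dg>0$, and domination of the target in the $(3,2)$ and $(3,7,7)$ cases) only spell out what the paper leaves implicit.
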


\begin{proof}
Take any $\phi \in \mathcal{P}(f,g,m)$. 
Then, (S) holds with $\deg \phi \le b\delta $. 
Since $\deg \phi >\xi _p(a,b,m)\delta $ by Theorem~\ref{thm:pSUineq}, 
we get $\xi _p(a,b,m)<b$. 
Since $\deg f\le \deg g$, $g^{\w }\not\in k[f^{\w }]$, 
and $\gcd (a,b)=1$ imply that $b>a\ge 2$, 
we obtain $(a,b,m)\in \mathcal{S}_p$. 
Therefore, 
since $p\ge 7$, 
Proposition~\ref{thm:admissible} (ii) and (iv) yield (i) and (ii), 
respectively. 
By (ii), 
the inequality (a), (b), or (c) in Example~\ref{cor:nonsingular} holds. 
This implies (iii). 
\end{proof}

\begin{cor}\label{claimm:ineq1}
Assume that $p\ge 7$, 
$\deg f\le \deg g$, and $g^{\w }\not\in k[f^{\w }]$. 
Then, 
$\deg \phi >(1/2)\deg f$ holds for all 
$\phi \in k[f,g]\sm k$. 
\end{cor}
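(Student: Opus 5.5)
We split according to whether $\phi \in \mathcal{D}(f,g)$ or not. Fix $\phi \in k[f,g]\sm k$ and write $\phi =\theta (f,g)$ with $\theta \in k[x,y]\sm k$.

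\emph{Case 1: $m_{\w }^{f,g}(\phi )=0$.} By (\ref{eq:1.3.7}), $\deg \phi =\degw ^{f,g}\phi $. Since $\phi \not\in k$, Remark~\ref{rem:deg^fg} gives $\degw ^{f,g}\phi \ge \min \{ \deg f,\deg g\} =\deg f$, using the hypothesis $\deg f\le \deg g$. Hence $\deg \phi \ge \deg f>(1/2)\deg f$, because $\deg f>0$ (note $f\not\in k$, as $df\ne 0$).

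\emph{Case 2: $\phi \in \mathcal{D}(f,g)$.} Then (S) holds with $m:=m_{\w }^{f,g}(\phi )\ge 1$. As in the proof of Theorem~\ref{ex:ineq1}, the conditions $\deg f\le \deg g$, $g^{\w }\not\in k[f^{\w }]$ and $\gcd (a,b)=1$ force $b>a\ge 2$. We split again.

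\emph{Subcase 2a: $\deg \phi >\deg g$.} Then $\deg \phi >\deg g\ge \deg f>(1/2)\deg f$ immediately.

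\emph{Subcase 2b: $\deg \phi \le \deg g$.} Then $\phi \in \mathcal{P}(f,g,m)$, so Theorem~\ref{ex:ineq1} applies, since $p\ge 7$. It gives an odd $b\ge 3$ with $(g^{\w })^2\approx (f^{\w })^b$, so $a=2$ and $\delta =(1/2)\deg f$. Part (iii) of that theorem gives
\[
\deg \phi \ge (b-2)\delta +\deg df\wedge dg+\ep (f).
\]
Since $b\ge 3$, $\ep (f)\ge 0$ and $\deg df\wedge dg>0$ (as $df\wedge dg\ne 0$), we get $\deg \phi >\delta =(1/2)\deg f$.

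The only step needing care is confirming that (S) applies in Subcase 2b with the same normalization $\delta =(1/a)\deg f$ used in Theorem~\ref{ex:ineq1}. This holds because that theorem explicitly sets $\delta :=(1/2)\deg f$ and identifies $a=2$. Everything else is direct bookkeeping, so no real obstacle is expected.
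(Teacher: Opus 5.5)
Your proof is correct and is essentially the paper's argument: both rest on Remark~\ref{rem:deg^fg}, the equivalence (\ref{eq:1.3.7}) characterizing $m_{\w }^{f,g}(\phi )=0$, and Theorem~\ref{ex:ineq1}~(iii). The paper argues by contradiction instead: assuming $\deg \phi \le (1/2)\deg f$ forces both $\deg \phi <\deg ^{f,g}\phi $ and $\deg \phi <\deg g$ at once, which absorbs your Case~1 and Subcase~2a into a single line.
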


\begin{proof}
Since $\phi \not\in k$ and $\deg f\le \deg g$, 
Remark~\ref{rem:deg^fg} gives $\deg ^{f,g}\phi \ge \deg f$. 
Now, if $\deg \phi \le (1/2)\deg f$, 
then we have $\deg \phi <\deg ^{f,g}\phi $ and $\deg \phi <\deg g$. 
This implies that 
$\phi \in \mathcal{P}(f,g,m)$ for some $m\ge 1$. 
Hence, 
Theorem~\ref{ex:ineq1}~(iii) yields 
$\deg \phi >(b-2)\delta \ge \delta =(1/2)\deg f$, 
a contradiction. 
\end{proof}

\begin{rem}\label{rem:ep<1/7}\rm 
Assume that $p\ge 7$, 
$(\deg f,\deg g)=(2\delta ,3\delta )$ 
for some $\delta \in \Gamma _+$, 
and $\mathcal{P}(f,g,m)\ne \emptyset $ for some $m\ge 2$. 
Then, 
it follows from Theorem~\ref{ex:ineq1} (ii) that $m=2$ or $p=m=7$. 
In either case, 
$\deg df\wedge dg\le (1/2)\delta $ holds 
by Remark~\ref{rem:1/7} (ii) and~(i). 
\end{rem}

\subsection{Modification theorem}\label{sect:modifi}

We remark the following, where $F\in \kx ^3$: \\
(1) If $dF\ne 0$, 
then $dg\wedge df_2\ne 0$ holds for all $g\in f_1+kf_3$, 
since $dg\wedge df_2\wedge df_3=dF\ne 0$. 

\nd 
(2) The condition 
$(\sharp )$ implies $(\flat )$, 
where $\delta :=(1/2)\deg f_2$, 
and $(\sharp )$ and $(\flat )$ are defined as follows:

$(\sharp )$ 
$2\delta <\deg f_3<\deg f_1=3\delta $; \ 
$(\flat )$ 
$\deg f_3<\deg f_1=3\delta $ 
and $f_3^{\w },(f_1f_3)^{\w }\not\in k[f_2^{\w }]$.

\nd 
In fact, 
$\deg f_1,\deg f_2\in \Z \delta $ and $\deg f_3\not\in \Z \delta $ 
imply that $f_3^{\w },(f_1f_3)^{\w }\not\in k[f_2^{\w }]$.

The purpose of this subsection is to prove the following theorem.

\begin{thm}\label{thm:modification}
Assume that $F\in \kx ^3$ satisfies $dF\ne 0$ and $(\flat )$.

\nd{\rm (i)} 
If $p\ge 3$, 
then there exists $g\in f_1+kf_3$ such that 
$\mathcal{P}(f_2,g,1)=\mathcal{P}(f_2,g,2)=\emptyset $.

\nd{\rm (ii)} 
If $p=7$, 
$\deg df_1\wedge df_2>(1/2)\delta $, and 
$\mathcal{P}(f_2,f_1,m)\ne \emptyset $ for some $m\ge 1$, 
then 
$\deg df_2\wedge d\phi >3\delta +\deg dg\wedge df_2$ 
holds for all 
$g\in f_1+k^*f_3$ and $\phi \in \mathcal{P}(f_2,g,7)$. 

\end{thm}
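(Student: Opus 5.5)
The plan is to work throughout with $f:=f_2$, so $\deg f=2\delta $. For each $c\in k$ set $g_c:=f_1+cf_3$. Since $\deg f_3<\deg f_1=3\delta $, we have $g_c^{\w }=f_1^{\w }$ and $\deg g_c=3\delta $ for every $c$. Moreover $dg_c\wedge df_2\ne 0$ by remark (1) preceding the theorem.

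\emph{Reduction to $(a,b)=(2,3)$.} Suppose $\mathcal{P}(f_2,g_c,m)\ne \emptyset $ for some $m\ge 1$. Then $g_c^{\w }\not\in k[f_2^{\w }]$, because $3\delta \not\in \N \cdot 2\delta $. So setting (S) applies to $(f_2,g_c)$. For (i) I would use Theorem~\ref{thm:pSUineq}, Proposition~\ref{thm:admissible} and Lemma~\ref{lem:nonsingular} (which only need $p\ge 3$ once $m\in\{1,2\}$), with $\deg ^{f,g}$-bookkeeping, to get $(a,b)=(2,3)$. The constant $\kappa =(f_1^{\w })^2/(f_2^{\w })^3$ then does not depend on $c$. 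By Lemma~\ref{lem:nonsingular}, every $\phi \in \mathcal{P}(f_2,g_c,m)$ with $m\in \{ 1,2\} $ (or $m=7$ when $p=7$) has the form
\[
\phi \approx h_c^m+\psi ,\qquad h_c:=g_c^2-\kappa f_2^3 ,
\]
with $\deg ^{f_2,g_c}\psi <6m\delta $. Expanding,
\[
h_c=h_0+2cf_1f_3+c^2f_3^2 ,
\]
and the term $f_1f_3$ has degree $>3\delta $ with $(f_1f_3)^{\w }\not\in k[f_2^{\w }]$ by $(\flat )$.

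\emph{Proof of (i).} I will show that for each $m\in \{ 1,2\} $ there is at most one $c$ with $\mathcal{P}(f_2,g_c,m)\ne \emptyset $. Since $|k|\ge p\ge 3$, some $c$ then avoids both exceptional values.

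For $m=1$: $\psi $ is a $k$-combination of $1,f_2,g_c,f_2^2$. If $\phi _c$ and $\phi _{c'}$ both exist with $c\ne c'$, normalize them so the coefficient of $h$ is $1$ and subtract. This gives an element
\[
2(c-c')f_1f_3+(c^2-c'^2)f_3^2+\alpha f_2^2+(\text{terms in }f_2,\ f_1,\ f_3,\ 1)
\]
of degree $\le 3\delta $. Compare top $\w $-parts by Remark~\ref{rem:principle}. The term $f_1f_3$ has degree $>3\delta $. It cannot cancel against $f_3^2$ (whose degree is smaller) or against $f_2^2$: either the degrees differ, or they are equal and then $(f_1f_3)^{\w }\not\approx (f_2^{\w })^2$ since $(f_1f_3)^{\w }\not\in k[f_2^{\w }]$. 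This is a contradiction.

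For $m=2$ the same comparison should work, now with $h_c^2$. Here $\phi _c-\phi _{c'}$ contains
\[
4(c-c')h_0f_1f_3+(\text{higher powers of }f_3) ,
\]
and $\psi $ ranges over $f_2^ig_c^j$ with $2i+3j<12$. The $f_2^{\w }$-independence of $(f_1f_3)^{\w }$, combined with $\deg h_0<6\delta $ from $m\ge 1$, must rule out the cancellations.

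\emph{Proof of (ii).} Let $p=7$. By Theorem~\ref{ex:ineq1} applied to $(f_2,f_1)$, the assumption $\mathcal{P}(f_2,f_1,m)\ne\emptyset $ gives $(b,m)\in \{(3,1),(3,2),(3,7)\}$. Remark~\ref{rem:ep<1/7} together with $\deg df_1\wedge df_2>(1/2)\delta $ excludes $m\ge 2$, so $\mathcal{P}(f_2,f_1,m)\ne\emptyset $ only for $m=1$. Now take $c\ne 0$ and $\phi \in \mathcal{P}(f_2,g_c,7)$. Remark~\ref{rem:1/7}~(i) gives $\deg dg_c\wedge df_2\le (1/8)\delta $ and $\ep (f_2,g_c)<(1/7)\delta $. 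Write $\phi \approx h_c^7+\psi $. In characteristic $7$ we have $d(h_c^7)=0$, so
\[
df_2\wedge d\phi =df_2\wedge d\psi .
\]
Here $\psi \in k[f_2,g_c]$ has $\deg ^{f_2,g_c}\psi <42\delta $, and $\deg \psi $ must cancel $h_c^7$ down to degree $\le 3\delta $. I will then estimate $\deg df_2\wedge d\psi =\deg P_\psi ^{(1)}(g_c)+\deg df_2\wedge dg_c$ via Remark~\ref{rem:chain rule}~(ii), using Theorem~\ref{thm:pSUineq0} on $P_\psi ^{(1)}$. The aim is $\deg P_\psi ^{(1)}(g_c)>3\delta $, and I would use $c\ne 0$, via the $f_1f_3$ term in $h_c$ (whose top part is not in $k[f_2^{\w }]$), to prevent $\psi $ from being essentially a $7$th power.

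The main obstacle is (ii): controlling the $\w $-degree of $d\psi $ when $\psi $ is forced to nearly cancel $h_c^7$. Establishing the uniqueness-in-$c$ claim for $m=2$ in (i) is the secondary difficulty.
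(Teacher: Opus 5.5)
\textbf{There are genuine gaps in the case $m=2$ of (i) and in (ii).}

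\textbf{Part (i), $m=1$.} This is essentially the paper's argument (Lemma~\ref{prop:modification2}~(i)), with one slip. The correction $\psi$ also contains the term $f_2g_c$, since $2+3<6$. So the difference $\phi_c-\phi_{c'}$ contains $f_1f_2$ and $f_2f_3$. To exclude $(f_1f_3)^{\w}\approx (f_1f_2)^{\w}$ you need the other half of $(\flat )$, namely $f_3^{\w}\notin k[f_2^{\w}]$.

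\textbf{Part (i), $m=2$.} Your leading-form comparison does not close. The leading form of $h_0=f_1^2-\kappa f_2^3$ is unknown. Moreover, $\psi _c-\psi _{c'}$, rewritten in $f_2,g_{c'},f_3$, contains terms $f_2^ig_{c'}^{j-1}f_3$ whose leading forms can cancel that of $4(c-c')h_0f_1f_3$. The independence of $(f_1f_3)^{\w}$ from $k[f_2^{\w}]$ says nothing about this cancellation. Past it you would need control of the lower terms of $\psi _c$, which you do not have.

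The paper avoids this with a differential-form argument. Suppose $\mathcal{P}(f_2,g_\alpha ,2)\ne \emptyset $. Then Remark~\ref{rem:1/7}~(ii) gives $\deg dg_\alpha \wedge df_2\le \delta /2$. Suppose also $P(g_\beta )\in \mathcal{P}(f_2,g_\beta ,2)$ with $\beta \ne \alpha $. Lemma~\ref{lem:deg P'(g)}~(i) gives $\deg P^{(1)}(g_\beta )\ge \eta (2,3,2)\delta =4\delta $. But $dg_\alpha \wedge df_2\wedge dP(g_\beta )=(\beta -\alpha )P^{(1)}(g_\beta )\,dF$, and the left side has degree at most $\delta /2+3\delta $. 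This contradicts $dF\ne 0$.

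\textbf{Part (ii).} You correctly reduce to showing $\deg P^{(1)}(g)>3\delta $, where $\phi =(g^2-\kappa f_2^3)^7+P(g)$. However, Theorem~\ref{thm:pSUineq0} applied to $P^{(1)}$ cannot deliver this. It bounds $\deg P^{(1)}(g)$ from below only through $\deg _{\w _g}P^{(1)}$. The whole danger is that $P$ is, up to small terms, a polynomial in $T^7$, so that $P^{(1)}$ is tiny or even zero.

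Excluding this is exactly where the hypothesis $\mathcal{P}(f_2,f_1,m)\ne \emptyset $ and $(\flat )$ enter. You reduce that hypothesis to $m=1$ but never use the resulting element. The paper's argument runs as follows.
\begin{itemize}
\item Assume $\deg P^{(1)}(g)\le 3\delta $. Then $P^{(1)}(g)\in \mathcal{P}(f_2,g,m')$, and Theorem~\ref{ex:ineq1}~(ii) with Lemma~\ref{lem:nonsingular} force $\deg ^{f_2,g}P^{(1)}(g)\le 12\delta $.
\item Hence $\pi :=\sum _{i\ne 0,7}p_ig^i$ satisfies $\deg \pi \le 15\delta $.
\item Take $\phi _0=f_1^2-\kappa f_2^3+q_1f_1+q_0\in \mathcal{P}(f_2,f_1,1)$ and set $\phi _1:=g^2-\kappa f_2^3+q_1g+q_0=2cf_1f_3+c(cf_3+q_1)f_3+\phi _0$. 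Then $\phi _1^{\w}\approx (f_1f_3)^{\w}$ and $\deg \phi _1>3\delta $.
\item In characteristic $7$ one has $\phi _1^7+\pi -\phi =\lambda _1(f_2)g^7+\lambda _0(f_2)$, where $\lambda _i(f_2)=q_i^7-p_{7i}$.
\item Differentiating and using Remark~\ref{rem:parity} gives $\lambda _1^{(1)}=0$ and $\deg \lambda _0^{(1)}(f_2)<15\delta $.
\item So $(\phi _1^{\w})^7$ is the leading form of $\lambda _1(f_2)g^7$ or of $\lambda _0(f_2)$, and each is a seventh power up to a scalar. Taking seventh roots puts $f_3^{\w}$ or $(f_1f_3)^{\w}$ into $k[f_2^{\w}]$, contradicting $(\flat )$.
\end{itemize}
Your remark about preventing $\psi $ from being essentially a seventh power points in this direction, but none of these steps is supplied.
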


\begin{rem}\label{rem:parity}\rm 
We record the following simple remark for later use: 
Assume that 
$(\deg f,\deg g)=(2\delta ,b\delta )$ 
for some $\delta \in \Gamma _+$ and odd $b\ge 1$. 
Then, 
for every $(q_1,q_2)\in k[f]^2\sm \zs $, 
we have $\deg q_1g\ne \deg q_2$ 
and $(q_1g+q_2)^{\w }\in \{ (q_1g)^{\w },q_2^{\w }\} $. 
\end{rem}

\begin{lem}\label{prop:modification2}
Let $G\in \kx ^3$ be such that $dG\ne 0$, 
and set $\delta :=(1/2)\deg g_2$ and $h:=g_1+g_3$. 
Assume that $p\ge 3$, 
and $\deg g_3<\deg g_1=b\delta $ 
for some odd~$b\ge 3$.

\nd{\rm (i)} 
If $g_3^{\w },(g_1g_3)^{\w }\not\in k[g_2^{\w }]$ 
and $\mathcal{P}(g_2,g_1,1)\ne \emptyset $, 
then we have $\mathcal{P}(g_2,h,1)=\emptyset $.

\nd{\rm (ii)} 
If $b=3$ and $\mathcal{P}(g_2,g_1,2)\ne \emptyset $, 
then we have $\mathcal{P}(g_2,h,2)=\emptyset $. 

\end{lem}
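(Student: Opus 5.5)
The plan is to argue by contradiction, comparing an element of $\mathcal{P}(g_2,h,m)$ with an element of $\mathcal{P}(g_2,g_1,m)$ via the normal form of Lemma~\ref{lem:lcm}~(ii)~(1). First, since $\deg g_3<\deg g_1$, we have $h^{\w }=g_1^{\w }$ and $\deg h=b\delta $. Also $dg_2\wedge dh\ne 0$ and $dg_2\wedge dg_1\ne 0$, because $dG\ne 0$ (as in remark (1) before Theorem~\ref{thm:modification}).

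Take $\psi \in \mathcal{P}(g_2,g_1,m)$ for the relevant $m$. Then setting (S) holds for $(f,g)=(g_2,g_1)$. Since $\deg g_2=2\delta $ and $\deg g_1=b\delta $ with $b$ odd, the pair in Lemma~\ref{lem:dependence} is $(a,b)=(2,b)$, so $(g_1^{\w })^2=\kappa (g_2^{\w })^b$ for some $\kappa \in k^*$. Because $p\ge 3$, case (a) (resp.\ (b) when $b=3$, $m=2$) of (\ref{eq:a--d}) applies. Lemma~\ref{lem:nonsingular} then gives
$$\psi \approx (g_1^2-\kappa g_2^b)^m+\psi ',\qquad \deg ^{g_2,g_1}\psi '<2mb\delta .$$
Now suppose, for contradiction, that $\phi \in \mathcal{P}(g_2,h,m)$. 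Since $h^{\w }=g_1^{\w }$, the same $a$, $b$, $\kappa $ occur, and the same lemmas give $\phi \approx (h^2-\kappa g_2^b)^m+\phi '$ with $\deg ^{g_2,h}\phi '<2mb\delta $.

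For $m=1$, the bound $\deg ^{g_2,h}\phi '<2b\delta $ forces $\phi '=q_1h+q_2$ with $q_1,q_2\in k[g_2]$, and similarly $\psi '=q_1'g_1+q_2'$. Since $\deg \phi ,\deg \psi \le b\delta $, after normalizing constants the difference
$$\phi -\psi =2g_1g_3+g_3^2+q_1g_3+(q_1-q_1')g_1+(q_2-q_2')$$
has degree at most $b\delta $. I would show instead that its degree is at least $\deg g_1g_3=b\delta +\deg g_3>b\delta $. The term $2g_1g_3$ is nonzero in top degree because $p\ge 3$. The term $g_3^2$ has smaller degree. The terms $(q_1-q_1')g_1+(q_2-q_2')$ have leading forms in $k[g_2^{\w }]g_1^{\w }\cup k[g_2^{\w }]$ by Remark~\ref{rem:parity}, so they cannot cancel $(g_1g_3)^{\w }$, using the hypothesis $(g_1g_3)^{\w }\notin k[g_2^{\w }]$ together with $g_3^{\w }\notin k[g_2^{\w }]$.

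The main obstacle is the remaining term $q_1g_3$, and the possibility that it cancels jointly with the $k[g_2^{\w }]g_1^{\w }$ terms. My plan is to compare degrees. If $\deg q_1g_3=\deg g_1g_3$, then $\deg q_1=b\delta $, which is impossible since $\deg q_1\in 2\Z \delta $ and $b$ is odd. So $q_1g_3$ is strictly below or strictly above $g_1g_3$. If it is strictly above, its leading form $q_1^{\w }g_3^{\w }$ must cancel against something in $k[g_2^{\w }]+k[g_2^{\w }]g_1^{\w }$. Dividing by $q_1^{\w }$ and using a parity argument on $\w $-degrees then contradicts one of $g_3^{\w }\notin k[g_2^{\w }]$ or $(g_1g_3)^{\w }\notin k[g_2^{\w }]$. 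This case analysis is where I expect most of the care to be needed.

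For (ii), with $b=3$ and $m=2$, I would run the same comparison on $(h^2-\kappa g_2^3)^2-(g_1^2-\kappa g_2^3)^2$. The dominant new term is $4(g_1^2-\kappa g_2^3)g_1g_3$. Its degree is controlled from below via Lemma~\ref{lem:nonsingular} applied to $\psi $: there $\deg (g_1^2-\kappa g_2^3)$ is essentially $\deg \psi $ up to lower-order corrections. I would also bring in the bounds of Remark~\ref{rem:1/7}~(ii) on $\deg dg_2\wedge dg_1$ to exclude cancellation with the lower-order parts $\phi '$ and $\psi '$, whose $(g_2,\cdot )$-degrees are below $12\delta $.
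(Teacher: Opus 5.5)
Your part (i) is essentially the paper's argument. The paper organizes it through the auxiliary element $\phi_1:=h^2-\kappa g_2^b+q_1h+q_0$, whose coefficients are taken from the element of $\mathcal{P}(g_2,g_1,1)$; this is the same bookkeeping as your difference $\phi-\psi$.

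The ``main obstacle'' you flag does not arise. The bound $\deg^{g_2,h}\phi'<2b\delta$, which you used to get $\phi'=q_1h+q_2$, also gives $\deg q_1+b\delta<2b\delta$, i.e.\ $\deg q_1\le (b-1)\delta<\deg g_1$. Hence $q_1g_3$ and $g_3^2$ always lie strictly below $g_1g_3$. So $2g_1g_3+g_3^2+q_1g_3$ has leading form $2(g_1g_3)^{\w}$ and degree $>b\delta\ge\deg(\phi-\psi)$. Remark~\ref{rem:parity} then forces $2(g_1g_3)^{\w}\in k[g_2^{\w}]g_1^{\w}\cup k[g_2^{\w}]$, which contradicts the hypotheses. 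The ``strictly above'' case is vacuous and no parity case analysis is needed.

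Part (ii), however, has a genuine gap. In (i) your contradiction comes from $g_3^{\w},(g_1g_3)^{\w}\notin k[g_2^{\w}]$, but these are \emph{not} hypotheses of (ii). In (ii) the only assumption that excludes a degenerate $g_3$ is $dG\ne 0$, and it must enter essentially. For $g_3\in k[g_2]$ with $\deg g_3<\deg g_1$, one has $k[g_2,h]=k[g_2,g_1]$, and Remark~\ref{rem:PP}~(ii) gives $\mathcal{P}(g_2,h,2)=\mathcal{P}(g_2,g_1,2)$. Your plan never says where $dG\ne0$ is used. Leading-form comparisons of $(h^2-\kappa g_2^3)^2-(g_1^2-\kappa g_2^3)^2$ against $\phi'$ and $\psi'$, together with the bound of Remark~\ref{rem:1/7}~(ii), never involve $dG$. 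So they cannot by themselves yield the contradiction.

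Your claim that Lemma~\ref{lem:nonsingular} controls $\deg(g_1^2-\kappa g_2^3)$ through $\deg\psi$ is also unfounded. The correction $\psi'$ may have $(g_2,g_1)$-degree just below $12\delta$ and cancel heavily against $(g_1^2-\kappa g_2^3)^2$.

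The missing idea is to pass to differential forms, which is how the paper proves (ii):
\begin{enumerate}
\item Write $\phi=P(h)\in\mathcal{P}(g_2,h,2)$ with $P\in k[g_2][T]$. Since $p\nmid 2$ and $2\le p-1$, Lemma~\ref{lem:deg P'(g)}~(i) gives $\deg P^{(1)}(h)\ge\eta(2,3,2)\delta=4\delta$.
\item Remark~\ref{rem:1/7}~(ii), applied to $\mathcal{P}(g_2,g_1,2)\ne\emptyset$, gives $\deg dg_1\wedge dg_2\le\frac12\delta$. Hence $\deg dg_1\wedge dg_2\wedge dP(h)\le\frac12\delta+3\delta<4\delta$.
\item By the chain rule, $dg_1\wedge dg_2\wedge dP(h)=P^{(1)}(h)\,dg_1\wedge dg_2\wedge dh=P^{(1)}(h)\,dG$.
\item Since $dG\ne 0$, this form has degree $\deg P^{(1)}(h)+\deg dG>\deg P^{(1)}(h)$, contradicting the first two steps.
\end{enumerate}
This argument uses neither the normal form of $\psi$ nor any condition on $g_3^{\w}$.
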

\begin{proof}
Since $\deg g_1>\deg g_3$, 
we have $\deg h=\deg g_1=b\delta $ 
and $h^{\w }=g_1^{\w }$ ($\dag $).

(i) 
By assumption, 
we can take $\phi \in \mathcal{P}(g_2,g_1,1)$. 
Suppose the contrary, 
that we can also take 
$\hat{\phi }\in \mathcal{P}(g_2,h,1)$. 
Then, 
we have $\deg (\phi -\hat{\phi })\le \deg g_1$ ($\ddag $). 
Moreover, 
Lemma~\ref{lem:nonsingular} (a) implies that 
$\phi \approx g_1^2-\kappa g_2^b+q_{1}g+q_{0}$ 
and $\hat{\phi }\approx h^2-\kappa g_2^b+\hat{q}_{1}h+\hat{q}_{0}$ 
for some 
$q_{1},\hat{q}_{1}\in \sum _{j=0}^{(b-1)/2}kg_2^j$, 
$q_{0},\hat{q}_{0}\in \sum _{j=0}^{b-1}kg_2^j$, 
and a common 
$\kappa \in k^*$ due to ($\dag $). 
By scaling, 
we can replace $\approx $ with $=$. 
Now, 
set $\phi _1 :=h^2-\kappa g_2^b+q_{1}h+q_{0}$. 
Then, 
since $\phi _1 -\hat{\phi }=(q_{1}-\hat{q}_{1})h+q_{0}-\hat{q}_{0}$, 
Remark~\ref{rem:parity} and ($\dag $) yield 
$(\phi _1 -\hat{\phi })^{\w }
\in \{ ((q_{1}-\hat{q}_{1})h)^{\w },(q_{0}-\hat{q}_{0})^{\w }\} 
\subset k[g_2^{\w }]g_1^{\w }\cup k[g_2^{\w }]$. 
Since $h=g_1+g_3$, 
we have $\phi _1=2g_1g_3+(g_3+q_{1})g_3+\phi $. 
Since $p\ge 3$ and 
$\deg (g_3+q_{1})<\deg g_1$, 
this gives $(\phi _1 -\hat{\phi })^{\w }=2(g_1g_3)^{\w }$ 
by ($\ddag $). 
Therefore, $g_3^{\w }$ or $(g_1g_3)^{\w }$ 
belongs to $k[g_2^{\w }]$, 
a contradiction.

(ii) 
By Remark~\ref{rem:1/7} (ii), 
the assumptions 
$p\ge 3$ and $\mathcal{P}(g_2,g_1,2)\ne \emptyset $ 
imply that $\deg dg_1\wedge dg_2\le (1/2)\delta $. 
Now, 
supposing $\mathcal{P}(g_2,h,2)\ne \emptyset $, 
there exists $P\in k[g_2][T]$ such that 
$m_{\w }^{g_2,h}(P(h))=2\in \{ 1,\ldots ,p-1\} $
and $\deg P(h)\le \deg h=3\delta $. 
Then, 
it follows from 
Lemma~\ref{lem:deg P'(g)} (i) 
and Remark~\ref{rem:independence} (iii) 
that 
$$
\deg P^{(1)}(h)\ge \eta (2,3,2)\delta =4\delta 
>\deg dg_1\wedge dg_2+\deg P(h)
\ge \deg dg_1\wedge dg_2\wedge dP(h). 
$$ 
Since $dg_1\wedge dg_2\wedge dP(h)
=P^{(1)}(h)dg_1\wedge dg_2\wedge dh=P^{(1)}(h)dG$ 
by Remark~\ref{rem:chain rule}~(ii), 
and $dG\ne 0$ by assumption, 
we are led to a contradiction. 
\end{proof}

\begin{proof}[Proof of Theorem~{\rm \ref{thm:modification}}]
(i) 
We fix $m\in \{ 1,2\} $. 
Since $\# k\ge p\ge 3$, 
it suffices to verify that 
$C_m:=\{ 
\alpha \in k\mid 
\mathcal{P}(f_2,f_1+\alpha f_3,m)\ne \emptyset \} $ 
has at most one element. 
Suppose that there exist 
different $\alpha ,\beta \in C_m$. 
Then, 
the assumptions of Lemma~\ref{prop:modification2} (i) or (ii) 
hold for 
$G:=(f_1+\alpha f_3,f_2,(\beta -\alpha )f_3)$, 
since $dG\approx dF\ne 0$, 
$g_1^{\w }=f_1^{\w }$, and $g_3^{\w }\approx f_3^{\w }$. 
This yields 
$\mathcal{P}(f_2,f_1+\beta f_3,m)=\mathcal{P}(g_2,g_1+g_3,m)=\emptyset $, 
a contradiction.

(ii) 
For $c\in k^*$, set $g:=f_1+cf_3$. 
Since $\deg f_3<\deg f_1$, 
we have 
$\deg g=\deg f_1=3\delta $ and $g^{\w }=f_1^{\w }$ ($1^*$). 
Now, 
take $\phi \in \mathcal{P}(f_2,g,7)$. 
Then, 
Lemma~\ref{lem:nonsingular}~(c) 
implies that $\phi \approx (g^2-\kappa f_2^3)^7+P(g)$ 
for some $\kappa \in k^*$ 
and $P\in k[f_2][T]$ with 
$\deg ^{f_2,g}P(g)<42\delta $ ($2^*$). 
By scaling, 
we can replace $\approx $ with $=$. 
Then, 
Remark~\ref{rem:chain rule}~(ii) gives 
$df_2\wedge d\phi =df_2\wedge dP(g)=P^{(1)}(g)df_2\wedge dg$. 
Hence, it suffices to verify that~$\deg P^{(1)}(g)>3\delta $.

Suppose the contrary, 
that $\deg P^{(1)}(g)\le 3\delta $. 
Then, 
$P^{(1)}(g)\in \mathcal{P}(f_2,g,m')$ holds for some $m'\ge 0$. 
We first show that $\deg ^{f_2,g}P^{(1)}(g)\le 12\delta $ ($3^*$). 
If $m'\ge 1$, then it follows from 
Theorem~\ref{ex:ineq1} (ii) and 
Lemma~\ref{lem:nonsingular} 
(a), (b), and (c) that 
$m'\in \{ 1,2,7\} $ and $\deg ^{f_2,g}P^{(1)}(g)=6m'\delta $. 
Since $\deg ^{f_2,g}P^{(1)}(g)\le \deg ^{f_2,g}(g)<42\delta $ 
by ($2^*$), 
we get $m'\in \{ 1,2\} $, 
and hence $\deg ^{f_2,g}P^{(1)}(g)\le 12\delta $. 
If $m'=0$, then (\ref{eq:1.3.7}) and the supposition give 
$\deg ^{f_2,g}P^{(1)}(g)=\deg P^{(1)}(g)\le 3\delta $, 
proving ($3^*$).

By ($2^*$), 
we can write $P(g)=\sum _{i=0}^{13}p_ig^i$, 
where $p_i\in k[f_2]$. 
Set $\pi :=\sum _{i\ne 0,7}p_ig^i$. 
Then, 
by (\ref{eq:degw_gP}), we have 
$\deg \pi \le \deg _{\w _g}P^{(1)}+\deg g$. 
This combined with 
Lemma~\ref{lem:PQ} (i), ($3^*$), and ($1^*$) yields 
$\deg \pi \le 15\delta $. 
Since $\deg \phi \le 3\delta $ by the choice of $\phi $, 
we obtain $\deg d(\pi -\phi )\le \deg (\pi -\phi )\le 15\delta $ 
($4^*$).

By assumption, 
we can 
also 
take $\phi _0\in \mathcal{P}(f_2,f_1,m)$. 
Since $p=7$ and 
$\deg df_1\wedge df_2>(1/2)\delta $, 
we must have 
$m=1$ by Remark~\ref{rem:ep<1/7}. 
Hence, 
Lemma~\ref{lem:nonsingular}~(a) yields 
$\phi _0\approx f_1^2-\kappa f_2^3+q_1f_1+q_0$, 
where $q_1\in kf_2+k$, $q_0\in kf_2^2+kf_2+k$, 
and the same $\kappa \in k^*$ as for $\phi $ due to ($1^*$). 
By scaling, we can replace $\approx $ with $=$. 
Now, set 
\begin{equation}\label{eq:pf:modifi77}
\phi  _1:=g^2-\kappa f_2^3+q_1g+q_0
=2cf_1f_3+c(cf_3+q_1)f_3+\phi _0. 
\end{equation}
Then, 
since 
$2c\ne 0$, 
$\deg (cf_3+q_1)<\deg f_1$, 
and $\deg \phi _0\le \deg f_1$, 
we have 
$\phi _1^{\w }\approx (f_1f_3)^{\w }$ ($5^*$) 
and 
$\deg \phi  _1=\deg f_1f_3>\deg f_1=3\delta $ ($6^*$).

For $i=0,1$, 
take $\lambda _{i}(x)\in k[x]$ with 
$\lambda _{i}(f_2)=q_i^7-p_{7i}$. 
Then, it follows 
from (\ref{eq:pf:modifi77}) and the equalities 
$\phi =(g^2-\kappa f_2^3)^7+P(g)$ and $P(g)=\pi +p_7g^7+p_0$ 
that 
\begin{equation}\label{eq:modify h^7}
\phi  _1^7+\pi -\phi =q_1^7g^7+q_0^7-(p_7g^7+p_0)
=\lambda _1(f_2)g^7+\lambda _0(f_2). 
\end{equation}
Differentiating (\ref{eq:modify h^7}) yields 
$d(\pi -\phi )
=(\lambda _1^{(1)}(f_2)g^7+\lambda _0^{(1)}(f_2))df_2$. 
Since 
$\deg g^7=21\delta $ by ($1^*$), 
this equality 
together with ($4^*$) and Remark~\ref{rem:parity} 
implies that 
$\lambda _1^{(1)}(f_2)=0$ ($7^*$) 
and $\deg \lambda _0^{(1)}(f_2)<15\delta $ ($8^*$).

From 
(\ref{eq:modify h^7}), ($6^*$), ($4^*$), 
and Remark~\ref{rem:parity}, 
we have 
either 
$(\phi  _1^7)^{\w }=(\lambda _1(f_2)g^7)^{\w }$ ($9^*$) or 
$(\phi  _1^7)^{\w }=\lambda _0(f_2)^{\w }$ ($10^*$). 
Case ($10^*$) implies that 
$\deg \lambda _0(f_2)=\deg \phi  _1^7>\deg \lambda _0^{(1)}(f_2)f_2$ 
by ($6^*$) and ($8^*$). 
This shows that $\lambda _0(x)$ is of degree $7l$ for some $l\ge 1$, 
hence 
yielding $(\phi  _1^7)^{\w }=\lambda _0(f_2)^{\w }\approx (f_2^{\w })^{7l}$, 
i.e., 
$(\phi _1^{\w }/(f_2^{\w })^{l})^7\in k^*$. 
Since $\phi _1^{\w }/(f_2^{\w })^{l}\in \kxr $, 
it follows that $\phi _1^{\w }/(f_2^{\w })^{l}\in k^*$. 
By ($5^*$), 
this contradicts $(f_1f_3)^{\w }\not\in k[f_2^{\w }]$. 
Case ($9^*$) implies that $(f_3^{\w })^7\approx \lambda _1(f_2)^{\w }$ 
by ($5^*$) and ($1^*$). 
This together with ($7^*$) leads to the contradiction 
$f_3^{\w }\in k[f_2^{\w }]$, 
similarly to Case ($10^*$). 
\end{proof}

\subsection{Further applications}

In this subsection, 
we discuss a different type of application of Theorem~\ref{thm:main}.

\begin{thm}\label{cor:h-q}
Assume {\rm (S)} and $p\nmid a$, 
and let $h\in \kx $ be such that 
$h^{\w }=\phi ^{\w }$ and $df\wedge dg\wedge dh\ne 0$. 
Set $N_0:=(a-1)\deg g-\deg df\wedge dh$.

\nd{\rm (i)} 
We have $\deg (h-\phi )>\min \{ N_0,pN_1\} $, 
where $N_1:=N_0-(p-1)^{-1}\deg fgh$.

\nd{\rm (ii)} 
If $m=p$, 
then we have $\deg (h-\phi )>pN_0-\deg fgh$. 

\end{thm}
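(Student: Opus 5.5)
The plan is to view $h-\phi$ as the value at $g$ of a polynomial with coefficients in $k[f,h]$, and to apply Theorem~\ref{thm:main} with $\ff:=(f,h)$ and $r=2$. Write $\phi=P(g)$ with $P\in k[f][T]$ as in (\ref{eq:description of q}), and set $\tilde P:=P-h\in k[f,h][T]\sm\zs$, so that $\tilde P(g)=\phi-h$. The hypothesis $df\wedge dg\wedge dh\ne 0$ gives $d\ff\wedge dg\ne 0$ up to sign, so the theorem applies. It also shows $h\notin k[f,g]$, hence $h-\phi\notin k$ and $\deg(h-\phi)>0$. Therefore we may assume throughout that the bound to be proved is positive.

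First I compute the data of $\tilde P$. Since $\phi\in\mathcal D(f,g)$, Remark~\ref{rem:(1.3.7)}~(i) and Lemma~\ref{lem:PQ}~(i) give $\deg h=\deg\phi<\deg^{f,g}\phi=\deg_{\w_g}P$. Thus the constant term $-h$ does not reach the top $\w_g$-degree, and so $\deg_{\w_g}\tilde P=\deg_{\w_g}P$ and $\tilde P^{\w_g}=P^{\w_g}$. Because $p\nmid a$, Lemma~\ref{lem:lcm}~(i) gives $m_{\w}^g(\tilde P)=m_{\w}^g(P)=m$, and Lemma~\ref{lem:lcm}~(ii) gives $\deg_{\w_g}\tilde P\ge mab\delta$. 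Put $D:=\deg df\wedge dg\wedge dh>0$. Then
\[
\Lambda_0=\deg df\wedge dh+\deg g-D,\qquad \Lambda_1=\deg fgh-D .
\]
Using $ab\delta-b\delta=(a-1)\deg g$, Theorem~\ref{thm:main} yields
\[
\deg(h-\phi)\ge mN_0+mD-v_p(m!)\,(\deg fgh-D)=mN_0-v_p(m!)\deg fgh+(m+v_p(m!))D .
\]

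Next I split into cases according to $m$. If $1\le m<p$, then $v_p(m!)=0$, and the bound gives $\deg(h-\phi)>mN_0\ge N_0$ when $N_0>0$; when $N_0\le 0$ the conclusion follows from $\deg(h-\phi)>0$. If $m\ge p$, I use Legendre's formula (\ref{eq:Legendre}), $v_p(m!)=(m-\ol m)/(p-1)$, to rewrite
\[
mN_0-v_p(m!)\deg fgh=mN_1+\frac{\ol m}{p-1}\deg fgh .
\]
Since $D>0$, this gives $\deg(h-\phi)>mN_1\ge pN_1$ when $N_1>0$, and again the case $N_1\le 0$ is trivial. Together these prove (i). For (ii), taking $m=p$ gives $v_p(p!)=1$, so the bound reads $\deg(h-\phi)\ge pN_0-\deg fgh+(p+1)D>pN_0-\deg fgh$.

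The main point to verify carefully is that $\tilde P$ keeps both the $\w_g$-leading form and the multiplicity $m$ of $P$. This rests on the strict inequality $\deg h<\deg_{\w_g}P$ and on Lemma~\ref{lem:lcm}~(i), which identifies $m_{\w}^g(P)$ with $m$ precisely because $p\nmid a$. Everything after that is bookkeeping with $\Lambda_0$ and $\Lambda_1$.
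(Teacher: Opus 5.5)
Your proposal is correct and follows the paper's proof step for step: the same auxiliary polynomial (your $P-h$ is the negative of the paper's $\hat P=h-P$), the same application of Theorem~\ref{thm:main} with $(f,h)$ in place of $\ff$, and the same case split into $1\le m<p$ and $m\ge p$. The differences are cosmetic: you use Legendre's formula exactly where the paper uses $v_p(m!)<m/(p-1)$, and you justify $\deg(h-\phi)>0$ via $h\notin k[f,g]$, whereas the paper only needs $h-\phi\ne 0$.
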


\begin{proof}
Let $P$ be as in (\ref{eq:description of q}). 
Then, it follows from $h^{\w }=\phi ^{\w }$, 
(S), and Lemma~\ref{lem:PQ} (i) that 
$\deg h=\deg \phi <\deg ^{f,g}\phi =\deg _{\w _g}P$. 
Hence, 
setting $\hat{P}:=h-P\in k[f,h][T]$, 
we obtain $\hat{P}^{\w _g}=-P^{\w _g}$. 
This implies that $m_{\w }^g(\hat{P})=m_{\w }^g(P)$ ($1^*$). 
We also have 
$\deg _{\w _g}\hat{P}=\deg _{\w _g}P=\deg ^{f,g}\phi \ge mab\delta $ ($2^*$) 
by Lemma~\ref{lem:lcm} (ii). 
Since $p\nmid a$ by assumption, 
Lemma~\ref{lem:lcm} (i) gives 
$m_{\w }^g(P)=m$ ($3^*$).

Now, set $\gamma :=(m+v_p(m!))\deg df\wedge dg\wedge dh >0$. 
Then, 
applying Theorem~\ref{thm:main} with $\ff :=(f,h)$ yields 
\begin{equation*}\label{eq:thm:second conseq}
\begin{aligned}
&\deg (h-\phi )=\deg \hat{P}(g)
\ge \deg _{\w _g}\hat{P}-m\Lambda _0-v_p(m!)\Lambda _1 
&& \text{(by ($1^*$), ($3^*$))} \\ 
&\quad 
\ge mab\delta -m(\deg df\wedge dh+\deg g)-v_p(m!)\deg fgh +\gamma 
&& \text{(by ($2^*$))} \\ 
&\quad >mN_0-v_p(m!)\deg fgh =:K 
&&(\text{by }\gamma >0). 
\end{aligned}
\end{equation*}
This implies (ii) since $v_p(p!)=1$. 
For (i), 
since $h-\phi =\hat{P}(g)\ne 0$, 
we may assume that $N_0,N_1\ge 0$. 
Then, 
$K=mN_0\ge N_0$ holds for $1\le m<p$. 
Since $v_p(m!)<\frac{m}{p-1}$ by (\ref{eq:Legendre}), 
$K>mN_1\ge pN_1$ holds for $m\ge p$, 
proving (i). 
\end{proof}

\begin{thm}\label{prop:Exc27,28}
Assume that $p\ge 7$. 
Let $F\in \kx ^3$ be such that $dF\ne 0$, 
and set $\gamma :=(1/2)\deg f_2$. 
If $(s-2)\gamma +\deg df_1\wedge df_2\le \deg f_3<\deg f_1=s\gamma $ 
for some odd $s\ge 3$, 
then the following assertions hold.

\nd{\rm (i)} 
If $f_2^{\w }\not\in k[f_3^{\w }]$, $f_3^{\w }\not\in k[f_2^{\w }]$, 
and there exists $\phi \in \mathcal{D}(f_2,f_3)$ 
such that $\phi ^{\w }=f_1^{\w }$, 
then we have 
$(7/3)\gamma <\deg (f_1-\phi )<3\gamma $ 
and $\deg f_3=(4/3)\gamma $.

\nd{\rm (ii)} 
If there exists $\phi \in \mathcal{D}(f_1,f_3)$ 
such that $\phi ^{\w }=f_2^{\w }$, 
then we have $f_1^{\w }\approx (f_3^{\w })^2$. 
\end{thm}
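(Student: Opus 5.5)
We prove (ii) first, since it uses only the degree-counting machinery of \S\ref{sect:1st conseq}; then we prove (i), where Theorem~\ref{cor:h-q} enters. In both parts we place ourselves in the setting (S) for a suitable pair drawn from $f_1,f_2,f_3$. The hypothesis $dF\ne 0$ gives $df_i\wedge df_j\ne 0$ for all $i\ne j$, and also $\deg df_i\wedge df_j>0$.

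For (ii), apply (S) to $(f,g):=(f_1,f_3)$ and the given $\phi\in\mathcal{D}(f_1,f_3)$. Write $(\deg f_1,\deg f_3)=(a\delta,b\delta)$. The hypothesis $(s-2)\gamma<\deg f_3<\deg f_1=s\gamma$ gives $(s-2)a<sb<sa$, so $b<a$, and $b/a$ is close to $1$ once $s$ is large. The condition $\phi^{\w}=f_2^{\w}$ gives $\deg\phi=2\gamma=(2a/s)\delta$. Corollary~\ref{cor:pSU0}~(ii) with $c=2a/s$ then forces $\zeta_i(a,b,2a/s)<0$ for some $i\in\{0,1\}$. By the first case of Lemma~\ref{lem:increasing function}~(ii), this excludes $(a,b)\in I(7,3)$; the parallel estimate with $c=a+b$ handles the variant where the roles of $f_1,f_3$ are exchanged. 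What remains is a finite list of small coprime pairs $(a,b)$ with $b<a$. Pairs with $b\ge 2$ are incompatible with $(s-2)a<sb$ for the relevant $s$, and the few survivors, such as $(3,2)$ and $(5,3)$, are killed by the sharper Theorem~\ref{thm:pSUineq}, using the term $\frac{pm-\ol{m}}{p-1}\deg df_1\wedge df_3>0$ and $p\ge 7$. This leaves $b=1$. Then $a=2$ by the bound $\deg f_3>(s-2)\gamma\ge\deg f_1/3$ (for $s=3$), together with the case analysis for general $s$. Finally Lemma~\ref{lem:dependence} gives $f_1^{\w}\approx(f_3^{\w})^2$.

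For (i), apply (S) to $(f,g):=(f_2,f_3)$, ordered so that $p\nmid a$, with $\phi\in\mathcal{D}(f_2,f_3)$ and $\phi^{\w}=f_1^{\w}$. The assumptions $f_2^{\w}\not\in k[f_3^{\w}]$ and $f_3^{\w}\not\in k[f_2^{\w}]$ give $(a,b)\in I(2,3)\cup I(3,2)$. Moreover $\deg\phi=s\gamma=(sa/2)\delta$, and $\deg f_3=b\delta$ lies in $[(s-2)\gamma,s\gamma)$. First use Corollary~\ref{cor:pSU0}~(ii) and the third case $(3,4,3x/2)$ of Lemma~\ref{lem:increasing function}~(ii) to pin down the pair. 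This should force $s=3$ and $(a,b)=(3,2)$, so that $\deg f_3=(4/3)\gamma$. Next, take $h:=f_1$ in Theorem~\ref{cor:h-q}: $df\wedge dg\wedge dh\approx dF\ne 0$ and $h^{\w}=\phi^{\w}$. This gives $\deg(f_1-\phi)>\min\{N_0,pN_1\}$ with $N_0=(a-1)\deg g-\deg df\wedge df_1$. The hypothesis $\deg f_3\ge(s-2)\gamma+\deg df_1\wedge df_2$ controls the subtracted term and should yield the lower bound $(7/3)\gamma$, using $p\ge 7$ to make $pN_1\ge N_0$ or at least $\ge(7/3)\gamma$.

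The upper bound $\deg(f_1-\phi)<3\gamma$ is immediate: $\phi^{\w}=f_1^{\w}$ cancels the leading forms, so $\deg(f_1-\phi)<\deg f_1=s\gamma=3\gamma$ once $s=3$ is established.

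The main obstacle is the initial pinning-down step in (i): showing $s=3$ and $(a,b)=(3,2)$, where a few small exceptional pairs may need the full Theorem~\ref{thm:pSUineq} with the $m\ep$ terms rather than just the $\zeta_i$ estimates. Matching the exact constant $7/3$ from $N_0,N_1$ is the second delicate point.
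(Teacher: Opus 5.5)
\textbf{There is a genuine gap in both parts.} You try to pin down $(a,b)$ using only the two-variable degree counting of \S\ref{sect:1st conseq} (Corollary~\ref{cor:pSU0}, Theorem~\ref{thm:pSUineq}). Those results involve only $f,g,\phi$ and $\deg df\wedge dg$. They never see the hypothesis $\deg df_1\wedge df_2\le\deg f_3-(s-2)\gamma$, which you use only through $\deg f_3>(s-2)\gamma$.

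\textbf{Part (ii).} Your first step, Corollary~\ref{cor:pSU0}~(ii) with $c=2a/s$ plus the case $(7,3,2x/s)$ of Lemma~\ref{lem:increasing function}~(ii) to exclude $I(7,3)$, is correct. It is exactly how the paper obtains $p\nmid a$. The problems come after that.
\begin{itemize}
\item Your claim that pairs with $b\ge 2$ are incompatible with $(s-2)a<sb$ is false.
\item The pair $(3,2)$ is not killed by Theorem~\ref{thm:pSUineq}. For $(a,b)=(3,2)$ and $s=3$ we have $\deg f_1=3\delta$ and $\deg f_2=\deg f_3=\deg\phi=2\delta$. Theorem~\ref{thm:pSUineq} with $m=1$ then gives only $\deg df_1\wedge df_3+\ep(f_1,f_3)\le\delta$, which is no contradiction.
\item Likewise $(3,2)$ with $s=5$ and $(5,2)$ with $s=3$ survive.
\end{itemize}
The missing idea is to apply Theorem~\ref{cor:h-q}~(i) with $(f,g,h):=(f_1,f_3,f_2)$. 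By the hypothesis, $N_0=(a-1)\deg f_3-\deg df_1\wedge df_2\ge(a-2)\deg f_3+(s-2)\gamma$. So for $a\ge 3$ you get $N_0>2\gamma$, and a computation using $p\ge 7$ gives $pN_1>s\gamma$. This contradicts $\deg(f_2-\phi)<\deg f_2=2\gamma$. Hence $(a,b)=(2,1)$.

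\textbf{Part (i).} The same issue arises. The case $(3,4,3x/2)$ of Lemma~\ref{lem:increasing function}~(ii) applies only when $\deg\phi\le(3a/2)\delta$, i.e.\ $s=3$, and it only excludes $I(3,4)$. The pairs $(a,b)=(4,3),(5,3)$ with $s=3$, and $(3,5)$ with $s=5$, pass Corollary~\ref{cor:pSU0} and even the full Theorem~\ref{thm:pSUineq}. For example, for $(4,3)$ we have $\deg\phi=6\delta$ against $\xi_p(4,3,1)\delta=5\delta$, which leaves only $\deg df_2\wedge df_3+\ep(f_2,f_3)\le\delta$.

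In the paper, Theorem~\ref{cor:h-q} is the pinning-down tool, not merely the source of the constant $7/3$. Take $(f,g,h)=(f_2,f_3,f_1)$ and first show $pN_1>s\gamma$ for $a\ge 3$. Then the chain $s\gamma>\deg(f_1-\phi)>N_0\ge(a-2)\deg f_3+(s-2)\gamma$, with $\deg f_3=(2b/a)\gamma$, gives $(a-2)b/a<1$. This forces $(a,b)=(3,2)$, then $s=3$ from $(s-2)\gamma<\deg f_3$, and the bound $\deg(f_1-\phi)>(7/3)\gamma$ drops out.

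You also cannot order $(f_2,f_3)$ ``so that $p\nmid a$''. The hypothesis bounds $\deg df_1\wedge df_2$, and this equals $\deg df\wedge dh$ only when $f=f_2$. So you must prove $p\nmid a$ in that order. The paper does this as follows:
\begin{itemize}
\item $a=2$ is excluded by parity, since it would force $b=s-1$, which is even.
\item $a\ge 7$ is excluded by Corollary~\ref{cor:pSU0}~(ii) with $c=a+b$ (valid because $\deg\phi=\deg f_1<\deg f_2+\deg f_3$) together with the case $(7,3,x+y)$ of Lemma~\ref{lem:increasing function}~(ii).
\end{itemize}
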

\begin{proof}
By assumption, we have 
(1$^*$) $\gamma \le (s-2)\gamma <\deg f_3<s\gamma $, 
(2$^*$) $\deg f_1=s\gamma$ and $\deg f_2=2\gamma $, 
%$(\deg f_1,\deg f_2)=(s\gamma ,2\gamma )$, 
and (3$^*$) $\deg df_1\wedge df_2\le \deg f_3-(s-2)\gamma $.

(i) 
The assumptions of Theorem~\ref{cor:h-q}, 
except for $p\nmid a$, 
hold with $(f,g,h):=(f_2,f_3,f_1)$. 
We first show that $3\le a\le 6$, 
which implies $p\nmid a$ since $p\ge 7$. 
Since $\deg f_3=(b/a)\deg f_2=(2b/a)\gamma $, 
(1$^*$) implies that $1\le s-2<2b/a<s$ (4$^*$). 
From $f_2^{\w }\not\in k[f_3^{\w }]$ and $f_3^{\w }\not\in k[f_2^{\w }]$, 
we have $a,b\ge 2$. 
Now, if $a=2$, 
then~(4$^*$) yields $b=s-1$, 
contradicting $\gcd (a,b)=1$. 
If $a\ge 7$, then (4$^*$) yields $(a,b)\in I(7,3)$. 
Hence, 
by Lemma~\ref{lem:increasing function} (ii), 
$\zeta _i(a,b,a+b)>0$ holds for $i=0,1$. 
However, 
the assumptions $\phi ^{\w }=f_1^{\w }$, 
(2$^*$), and (1$^*$) imply that $\deg \phi =\deg f_1
=2\gamma +(s-2)\gamma <\deg f_2+\deg f_3=(a+b)\delta $. 
This contradicts Corollary~\ref{cor:pSU0}~(ii).

Now, 
applying Theorem~\ref{cor:h-q} (i) with $(f,g,h):=(f_2,f_3,f_1)$, 
we obtain 
\begin{gather}
s\gamma =\deg f_1>\deg (f_1-\phi )>\min \{ N_0,pN_1\} , 
\text{ where } \label{eq:exc:pf:alpha beta0} \\
N_0=(a-1)\deg f_3-\deg df_1\wedge df_2
\ge (a-2)\deg f_3+(s-2)\gamma \label{eq:exc:pf:alpha beta} 
\quad (\text{by }(3^*)). 
\end{gather}
As for $N_1$, 
note that $\deg f_1f_2f_3=\deg f_3+(s+2)\gamma $ by (2$^*$). 
Hence, 
(\ref{eq:exc:pf:alpha beta}) implies that 
$N_1\ge (a-2-\frac{1}{p-1})\deg f_3+(s-2-\frac{s+2}{p-1})\gamma $. 
Since 
$a\ge 3$, 
we get $(a-2-\frac{1}{p-1})\deg f_3>(1-\frac{1}{p-1})(s-2)\gamma $ 
by (1$^*$). 
Thus, 
$N_1$ is greater than $\zeta (s,p)\gamma $, 
where 
\begin{align*}
\zeta (s,p):=
\left( 1-\dfrac{1}{p-1}\right) (s-2)
+\left(s-2-\dfrac{s+2}{p-1}\right) 
=2\left( 1-\dfrac{1}{p-1}\right) s-4. 
\end{align*}
Since $\zeta (3,7)>3/7$, and 
$\zeta (x,y)-x/y$ is an increasing function in $x$ and $y$ on $I(3,7)$, 
we have $\zeta (s,p)>s/p$, 
and so 
$pN_1>s\gamma $. 
Therefore, 
(\ref{eq:exc:pf:alpha beta0}) and (\ref{eq:exc:pf:alpha beta}) 
yield 
\begin{equation}\label{eq:Exc28}
s\gamma >\deg (f_1-\phi )>N_0\ge (a-2)\deg f_3+(s-2)\gamma  .
\end{equation}
Substituting $\deg f_3=(2b/a)\gamma $ 
into (\ref{eq:Exc28}) gives $\frac{a-2}{a}b<1$. 
Since $a\ge 3$ and $b\ge 2$ as shown above, 
it follows that $(a,b)=(3,2)$, i.e., $\deg f_3=(4/3)\gamma $. 
This implies that $s=3$ by (1$^*$). 
Then, (\ref{eq:Exc28}) yields 
$3\gamma >\deg (f_1-\phi )>(4/3)\gamma +\gamma =(7/3)\gamma $.

(ii) 
We prove $f_1^{\w }\approx (f_3^{\w })^2$ 
by applying Theorem~\ref{cor:h-q} (i) 
with $(f,g,h):=(f_1,f_3,f_2)$ and showing $(a,b)=(2,1)$. 
The assumptions of Theorem~\ref{cor:h-q} hold except for $p\nmid a$. 
We first show that $p\nmid a$. 
Since $\deg f_3=(b/a)\deg f_1=(bs/a)\gamma $, 
it follows from (1$^*$) and $s\ge 3$ that $1<bs/(a(s-2))\le 3b/a$ ($5^*$). 
Now, if $p\mid a$, 
then since $a\ge p\ge 7$, 
we have $(a,b)\in I(7,3)$ by ($5^*$). 
Hence, 
by Lemma~\ref{lem:increasing function}~(ii), 
$\zeta _i(a,b,2a/s)>0$ holds for $i=0,1$. 
This contradicts Corollary~\ref{cor:pSU0}~(ii), 
since the assumptions $\phi ^{\w }=f_2^{\w }$ and (2$^*$) 
imply that $\deg \phi =\deg f_2=(2/s)\deg f_1=(2a/s)\delta $.

Theorem~\ref{cor:h-q} (i) now yields 
$2\gamma =\deg f_2>\deg (f_2-\phi )>\min \{ N_0,pN_1\} $ (6$^*$). 
Here, 
$N_0$ and $N_1$ have the same form as in (i), 
since $g=f_3$ and $df\wedge dh=\pm df_1\wedge df_2$
in both (i) and (ii). 
Now,
supposing $(a,b)\ne (2,1)$, 
we must have $a\ge 3$, 
since $b<a$ by the assumption that $\deg f_3<\deg f_1$. 
Then, 
$pN_1>s\gamma $ holds as shown in (i), 
while (\ref{eq:exc:pf:alpha beta}), (1$^*$), and $s\ge 3$ yield 
$N_0>2\gamma $. 
This contradicts~(6$^*$). 
\end{proof}

\subsection{Degrees of differential forms}\label{sect:cofactor}

We first extend the results of \cite[\S ~1.3.5]{JC} 
(see also \cite[Lemma 3.5]{tame3}) to the case where $p>0$. 
Let $k$ be any field, 
and $H=(h_1,h_2,h_3)\in \kx ^3$ such that $dH\ne 0$. 
Then, $\deg h_i>0$ holds for $i=1,2,3$. 
Take $a,c\in k$, 
$\Psi (x)\in k[x]$, 
and $\phi \in k[h_3]$, 
and set
\begin{equation}\label{eq:k1k2!}
h_1':=h_1+ah_3^2+ch_3+\Psi (h_2)\quad\text{and}\quad 
h_2':=h_2+\phi . 
\end{equation}

\begin{lem}\label{note:cofactor}
Assume that $d\phi =0$, i.e., $dh_2'=dh_2$.

\nd {\rm (i)} 
If $2a\neq 0$ and 
$\deg h_3+\deg dh_2\wedge dh_3>\deg dh_1'\wedge dh_2'$, 
then we have 
$$
\deg dh_1\wedge dh_2=\deg h_3+\deg dh_2\wedge dh_3. 
$$

\nd {\rm (ii)} 
If $2a=0$ and $\deg dh_2\wedge dh_3>\deg dh_1'\wedge dh_2'$, 
then we have 
$$
\deg dh_1\wedge dh_2=\left\{
\begin{array}{ll}
\deg dh_2\wedge dh_3 & \text{ if }
c\ne 0 \\
\deg dh_1'\wedge dh_2' & \text{ if }c=0 .
\end{array}
\right. 
$$
\end{lem}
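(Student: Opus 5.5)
We expand $dh_1'\wedge dh_2'$ directly. Since $d\phi =0$, we have $dh_2'=dh_2$. The chain rule gives
$$dh_1'=dh_1+(2ah_3+c)\,dh_3+\Psi ^{(1)}(h_2)\,dh_2 .$$
The last term is killed by wedging with $dh_2$. Hence
$$dh_1'\wedge dh_2'=dh_1\wedge dh_2+(2ah_3+c)\,dh_3\wedge dh_2,$$
that is, $dh_1\wedge dh_2=dh_1'\wedge dh_2'+(2ah_3+c)\,dh_2\wedge dh_3$ (up to sign on the second term). The whole argument is then a comparison of the $\w $-degrees of the two summands on the right, using properties (1) and (2) of $\degww $ on forms. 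Note that $dh_2\wedge dh_3\ne 0$, because $dH\ne 0$.

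For (i), assume $2a\ne 0$. Then $2ah_3+c$ is a nonzero polynomial with $\deg (2ah_3+c)=\deg h_3$, since $\deg h_3>0$ and the constant $c$ has degree $\le 0$. By property (1),
$$\deg (2ah_3+c)\,dh_2\wedge dh_3=\deg h_3+\deg dh_2\wedge dh_3 .$$
The hypothesis says this strictly exceeds $\deg dh_1'\wedge dh_2'$. The degree of a sum of two forms of different degrees equals the larger degree: the top-degree coefficient of some $d\x _{\bi }$ cannot cancel. Thus $\deg dh_1\wedge dh_2=\deg h_3+\deg dh_2\wedge dh_3$.

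For (ii), assume $2a=0$, so the relation reads $dh_1\wedge dh_2=dh_1'\wedge dh_2'\pm c\,dh_2\wedge dh_3$. If $c=0$, the two forms coincide, giving $\deg dh_1\wedge dh_2=\deg dh_1'\wedge dh_2'$ (the hypothesis is not even needed). If $c\ne 0$, then $\deg c\,dh_2\wedge dh_3=\deg dh_2\wedge dh_3$, which by hypothesis exceeds $\deg dh_1'\wedge dh_2'$. The same non-cancellation principle then gives $\deg dh_1\wedge dh_2=\deg dh_2\wedge dh_3$.

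The only point needing care is the strict-inequality non-cancellation for forms. It follows from the definition (\ref{eq:deg omega}): take an index $\bi $ realizing the maximum for the larger form. That coefficient keeps its degree in the sum, because the other form's $d\x _{\bi }$-coefficient has strictly smaller weighted degree. Combined with (2) for the upper bound, this gives equality. I expect no real obstacle; the computation is routine.
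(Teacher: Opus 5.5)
Your proposal is correct and follows essentially the same route as the paper: expand $dh_1'\wedge dh_2$ by the chain rule, using $dh_2'=dh_2$ and $\Psi^{(1)}(h_2)\,dh_2\wedge dh_2=0$, and then compare the $\w$-degrees of the summands via non-cancellation of the strictly dominant term. The only cosmetic difference is that you merge $2ah_3$ and $c$ into the single coefficient $2ah_3+c$, using $\deg h_3>0$, whereas the paper keeps $2ah_3\,dh_3\wedge dh_2$ and $c\,dh_3\wedge dh_2$ as separate summands.
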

\begin{proof}
It follows from (\ref{eq:k1k2!}) that 
\begin{equation}\label{eq:differential form A'}
dh_1'\wedge dh_2
=dh_1\wedge dh_2+2ah_3dh_3\wedge dh_2+cdh_3\wedge dh_2. 
\end{equation}
Now, for (i), 
we have $\deg 2ah_3dh_3\wedge dh_2=\deg h_3+\deg dh_2\wedge dh_3$, 
which is greater than 
$\deg dh_1'\wedge dh_2=\deg dh_1'\wedge dh_2'$ 
and $\deg dh_3\wedge dh_2$. 
Hence, 
(\ref{eq:differential form A'}) 
yields the required equality. 
(ii) can be proved similarly, 
using (\ref{eq:differential form A'}). 
\end{proof}

The proof of the following theorem 
is independent of the characteristic of $k$ 
(see \cite[Theorem 5.2]{SUineq}; see also 
\cite[Theorem 1.3.9]{JC} and \cite[Lemma 5]{SU}).

\begin{thm}
\label{thm:another}
Let $\eta _1,\eta _2,\eta _3\in \Omega _{\kx /k}$. 
For $i=1,2,3$, 
we define 
$\delta _i:=\deg \eta _i+\deg \eta _j\wedge \eta _l$, 
where $1\le j<l\le 3$ and $j,l\ne i$. 
Then, at least two of $\delta _1$, $\delta _2$, and $\delta _3$ 
are equal to $\max \{ \delta _1,\delta _2,\delta _3\} $. 
\end{thm}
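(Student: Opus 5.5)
We reduce the statement to the elementary fact that if three elements sum to zero, the two largest $\w$-degrees among them coincide (by property (2) of the degree of differential forms). The key is the identity in $\bigwedge ^3\Omega _{\kx /k}$
\begin{equation*}
(\eta _1\wedge \eta _2\wedge \eta _3)\,\eta _i ,
\end{equation*}
or rather its analogue one degree up. Since the ambient exterior algebra need not have $\bigwedge ^3\ne 0$ usefully, we will instead use a Plücker-type relation in $\bigwedge ^3\Omega _{\kx /k}$ after wedging with a suitable fixed form, or, more simply, the standard relation
\begin{equation*}
\eta _1\wedge (\eta _2\wedge \eta _3)\otimes \cdots
\end{equation*}
Concretely, we plan to argue as in \cite[Theorem 5.2]{SUineq}: write each $\eta _i=\sum _{l}a_{i,l}dx_l$ and consider, for each pair $(j,l)$, the $2$-form $\eta _j\wedge \eta _l=\sum _{\bi \in \Lambda (n,2)}\Delta ^{jl}_{\bi }d\x _{\bi }$. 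The relation we will use is the Laplace (Plücker) identity, valid over any commutative ring and hence characteristic-free:
\begin{equation*}
a_{1,t}\,\Delta ^{23}_{\bi }-a_{2,t}\,\Delta ^{13}_{\bi }+a_{3,t}\,\Delta ^{12}_{\bi }=\det\bigl(a_{i,c}\bigr)_{i;\,c\in \{t\}\cup \bi }
\end{equation*}
for every index $t$ and $\bi \in \Lambda (n,2)$. When $t\in \bi $ the right side vanishes, giving a three-term linear relation among the products $a_{i,t}\Delta ^{jl}_{\bi }$.

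Key steps in order. First, assume $\delta _1$ is the strict unique maximum and aim for a contradiction. Choose $t$ and $\bi $ realizing $\deg \eta _1=\deg a_{1,t}+w_t$ and $\deg \eta _2\wedge \eta _3=\deg \Delta ^{23}_{\bi }+|\w _{\bi }|$, so that $\deg a_{1,t}\Delta ^{23}_{\bi }+w_t+|\w _{\bi }|=\delta _1$, while the other two products in the relation have degree plus $w_t+|\w _{\bi }|$ at most $\delta _2$ and $\delta _3$ respectively, hence strictly less than $\delta _1$. Second, we need the relation with vanishing right-hand side; this requires $t\in \bi $, which need not hold. To handle this, we use instead the full $3$-form identity: the right-hand side is the coefficient of $\eta _1\wedge \eta _2\wedge \eta _3$ on $dx_t\wedge d\x _{\bi }$. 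So the relation reads $\eta _1\otimes(\eta _2\wedge \eta _3)-\eta _2\otimes(\eta _1\wedge \eta _3)+\eta _3\otimes(\eta _1\wedge \eta _2)$ maps to $\eta _1\wedge \eta _2\wedge \eta _3$ under wedge product. The cleaner route is to work with the kernel: the element $\sum \pm \eta _i\otimes (\eta _j\wedge \eta _l)\in \Omega \otimes \bigwedge ^2\Omega $ lies in the kernel of the map $\omega \otimes \mu \mapsto \ldots$ contracting via the symmetric part; equivalently, the coefficient of $dx_t\otimes d\x _{\bi }$ with $t\in \bi $ vanishes, and also the totally antisymmetric part cancels against the wedge. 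We will define a degree on $\Omega \otimes \bigwedge ^2\Omega $ by $\max(\deg \text{coeff}+w_t+|\w _{\bi }|)$ and show the degree of $\eta _i\otimes(\eta _j\wedge \eta _l)$ equals $\delta _i$ (multiplicativity on pure tensors, since coefficients are products).

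Third, the expected main obstacle: $\sum \pm \eta _i\otimes (\eta _j\wedge \eta _l)$ is not zero but equals the image of $\eta _1\wedge \eta _2\wedge \eta _3$ under the comultiplication $\bigwedge ^3\to \Omega \otimes \bigwedge ^2$, whose degree is $\deg \eta _1\wedge \eta _2\wedge \eta _3\le \min _i\delta _i$ by property (3). So we obtain three terms of degrees $\delta _1,\delta _2,\delta _3$ summing to an element of degree $\le \min\delta _i$. If $\delta _1$ were a strict maximum, the sum would have degree exactly $\delta _1>\min\delta _i$ (the case where all three are not all equal), a contradiction; if all are equal the claim is trivial. This finishes the proof, and checking that comultiplication preserves the degree bound is the only technical point.
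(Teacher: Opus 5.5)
\textbf{Verdict.} Your final argument is correct. The paper gives no proof of its own here: it defers to \cite[Theorem 5.2]{SUineq} (and \cite[Theorem 1.3.9]{JC}, \cite[Lemma 5]{SU}) and only remarks that the argument is characteristic-free. Your last paragraph supplies a complete, self-contained proof that makes this remark evident.

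The ingredients are all routine, but they should be stated:
\begin{enumerate}
\item The map $(u,v,w)\mapsto u\otimes(v\wedge w)-v\otimes(u\wedge w)+w\otimes(u\wedge v)$ is multilinear and alternating. Hence it factors through $\bigwedge^3$ and equals the comultiplication $\Delta(u\wedge v\wedge w)$.
\item $\Delta$ sends $d\x_{\bk}$ to a signed sum of basis tensors $dx_t\otimes d\x_{\bi}$ with $\{t\}\cup\bi=\bk$. All of these have weight $|\w_{\bk}|$, and distinct $\bk$ give disjoint supports, so $\deg\Delta(\omega)=\deg\omega$.
\item The degree on $\Omega\otimes\bigwedge^2\Omega$ is additive on pure tensors, because each coefficient of $\eta\otimes\mu$ is a single product in the domain $k[\x]$.
\end{enumerate}
Combine these with $\deg\eta_1\wedge\eta_2\wedge\eta_3\le\delta_i$ for each $i$ (property (3) together with $\eta_1\wedge\eta_2\wedge\eta_3=\pm\eta_i\wedge\eta_j\wedge\eta_l$) and with the ultrametric property. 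Then a strict unique maximum among $\delta_1,\delta_2,\delta_3$ is impossible, exactly as you say. Nothing divides by an integer, so the argument works in every characteristic.

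\textbf{Presentation.} Delete the false starts in the first half. The displayed expressions there (such as $(\eta_1\wedge\eta_2\wedge\eta_3)\eta_i$, and the ``contracting via the symmetric part'' remark) are not meaningful. The ``obstacle'' that the Laplace relation needs $t\in\bi$ is also illusory. For $t\notin\bi$, the right-hand side $\det(a_{i,c})$ is, up to sign, the coefficient of $d\x_{\{t\}\cup\bi}$ in $\eta_1\wedge\eta_2\wedge\eta_3$. So its degree plus $w_t+|\w_{\bi}|$ is at most $\deg\eta_1\wedge\eta_2\wedge\eta_3\le\min_i\delta_i$. With $t$ and $\bi$ chosen as in your first step, your three-term relation already gives the contradiction, whether or not $t\in\bi$.

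That coordinate argument is just the $dx_t\otimes d\x_{\bi}$-component of your tensor identity. An equally short packaging avoids the tensor module altogether: contract with $\partial/\partial x_t$ to get
\[
\iota_{\partial/\partial x_t}(\eta_1\wedge\eta_2\wedge\eta_3)=a_{1,t}\,\eta_2\wedge\eta_3-a_{2,t}\,\eta_1\wedge\eta_3+a_{3,t}\,\eta_1\wedge\eta_2 .
\]
Here the left side has degree at most $\deg\eta_1\wedge\eta_2\wedge\eta_3-w_t$. The three terms on the right have degrees $\delta_1-w_t$, at most $\delta_2-w_t$, and at most $\delta_3-w_t$, respectively.
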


Now, 
consider the following conditions:

\smallskip 

(a) 
$\deg dh_1'+\deg dh_2\wedge dh_3
>\deg dh_3+\deg dh_1'\wedge dh_2'$; 

\smallskip

(b) 
$\deg dh_1'-\deg dh_2'>\deg \Psi ^{(1)}(h_2)$; 

\smallskip 

(c) $\deg h_3>\deg dh_1'-\deg dh_2'>0$; 

\smallskip 

(d) $\phi =bh_3+e$ for some $b,e\in k$. 

\smallskip

\nd 
Since $\phi \in k[h_3]$ by assumption, 
we have $dh_2\wedge dh_3=dh_2'\wedge dh_3$. 
Hence, 
applying Theorem~\ref{thm:another} 
with $(\eta _1,\eta _2,\eta _3):=(dh_1',dh_2',dh_3)$, 
we see that (a) implies

\smallskip

(a$'$) 
$\deg dh_1'+\deg dh_2\wedge dh_3
=\deg dh_2'+\deg dh_1'\wedge dh_3$.

\begin{lem}\label{lem:determinant}
\nd{\rm (i)} 
If {\rm (a)} and {\rm (b)} hold, 
then we have

$(1)$ 
$\deg dh_1\wedge dh_3=\deg dh_1'-\deg dh_2'+\deg dh_2\wedge dh_3;$

$(2)$ $\deg dh_1\wedge dh_3>\deg \Psi ^{(1)}(h_2)dh_2\wedge dh_3$.

\nd{\rm (ii)} 
If $\deg dh_2\wedge dh_3>\deg dh_1'\wedge dh_2'$ 
and {\rm (a)}--{\rm (d)} hold, 
then we have

$(1)$ $\deg h_3+\deg dh_2\wedge dh_3
>\deg dh_1\wedge dh_3 
>\deg dh_2\wedge dh_3>\deg dh_1'\wedge dh_2';$

$(2)$ 
$\deg dh_1\wedge dh_2=\left\{
\begin{array}{cl}
\deg h_3+\deg dh_2\wedge dh_3 & \text{ if }2a\neq 0 \\
\deg dh_1\wedge dh_3 & \text{ if }2a=0\text{ and }b\neq 0 \\
\deg dh_2\wedge dh_3 & \text{ if }2a=b=0
\text{ and }c\neq 0 \\
\deg dh_1'\wedge dh_2' & \text{ if }2a=b=c=0 .
\end{array}
\right. $

\end{lem}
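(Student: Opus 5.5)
The plan is to work directly with the differential identities coming from (\ref{eq:k1k2!}). Since $dh_2'=dh_2+d\phi$ and $\phi\in k[h_3]$, we have $dh_2'\wedge dh_3=dh_2\wedge dh_3$. Moreover,
$$
dh_1'=dh_1+(2ah_3+c)dh_3+\Psi^{(1)}(h_2)dh_2 .
$$
Wedging with $dh_3$ yields the key identity
\begin{equation*}
dh_1'\wedge dh_3=dh_1\wedge dh_3+\Psi^{(1)}(h_2)dh_2\wedge dh_3 .
\end{equation*}

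For (i), I start from (a$'$), which follows from (a) by Theorem~\ref{thm:another}. It gives
$$
\deg dh_1'\wedge dh_3=\deg dh_1'-\deg dh_2'+\deg dh_2\wedge dh_3 .
$$
By (b), the right-hand side is strictly greater than $\deg \Psi^{(1)}(h_2)+\deg dh_2\wedge dh_3=\deg\Psi^{(1)}(h_2)dh_2\wedge dh_3$. So in the key identity the term $\Psi^{(1)}(h_2)dh_2\wedge dh_3$ has strictly smaller degree than the left-hand side. Hence $\deg dh_1\wedge dh_3=\deg dh_1'\wedge dh_3$, which proves (1). Combining (1) with the strict inequality just obtained gives (2).

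For (ii)(1), I use (i)(1) together with (c). The bound $\deg h_3>\deg dh_1'-\deg dh_2'>0$ gives the first two inequalities, and the third is the hypothesis.

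For (ii)(2), I compute $dh_1'\wedge dh_2'$ using (d): $dh_2'=dh_2+b\,dh_3$. Then
$$
dh_1'\wedge dh_2'=dh_1\wedge dh_2+b\,dh_1\wedge dh_3+(2ah_3+c)dh_3\wedge dh_2+b\Psi^{(1)}(h_2)dh_2\wedge dh_3 ,
$$
and I compare the degrees of the terms in each case.
\begin{itemize}
\item[$\bullet$] If $2a\ne0$, the term $2ah_3dh_3\wedge dh_2$ has degree $\deg h_3+\deg dh_2\wedge dh_3$. By (ii)(1) this strictly exceeds $\deg dh_1\wedge dh_3$, $\deg dh_2\wedge dh_3$ and $\deg dh_1'\wedge dh_2'$. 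It also strictly exceeds $\deg b\Psi^{(1)}(h_2)dh_2\wedge dh_3$, using (i)(2). So $dh_1\wedge dh_2$ must cancel this term, giving the first case. Alternatively, this case follows from Lemma~\ref{note:cofactor}(i) applied after a translation, but the direct argument avoids its hypothesis $d\phi=0$.
\item[$\bullet$] If $2a=0$ and $b\ne0$, the dominant term is $b\,dh_1\wedge dh_3$. It beats $c\,dh_3\wedge dh_2$ and $dh_1'\wedge dh_2'$ by (ii)(1), and beats $b\Psi^{(1)}(h_2)dh_2\wedge dh_3$ by (i)(2). Hence $\deg dh_1\wedge dh_2=\deg dh_1\wedge dh_3$.
\item[$\bullet$] If $2a=b=0$, then $d\phi=0$, and Lemma~\ref{note:cofactor}(ii) gives the remaining two cases.
\end{itemize}

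The main obstacle is bookkeeping rather than any single hard estimate: at each step one must check that every competing term is strictly dominated, and this is where (i)(2) is essential for handling the $b\Psi^{(1)}$ term.
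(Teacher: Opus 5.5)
Your proof is correct and follows the paper's argument. For (i), you use the identity $dh_1'\wedge dh_3=dh_1\wedge dh_3+\Psi^{(1)}(h_2)dh_2\wedge dh_3$ together with (a$'$) and (b); for (ii)(2), you expand $dh_1'\wedge dh_2'$ via (d) and compare degrees term by term using (ii)(1) and (i)(2). The only cosmetic difference is in the cases $2a=b=0$: you cite Lemma~\ref{note:cofactor}~(ii) there, whereas the paper reads all four cases directly off the same expansion.
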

\begin{proof}
(i) Since 
$dh_1'\wedge dh_3=dh_1\wedge dh_3+\Psi ^{(1)}(h_2)dh_2\wedge dh_3$ 
by (\ref{eq:k1k2!}), 
and $\deg dh_1'\wedge dh_3 
=\deg dh_1'-\deg dh_2'+\deg dh_2\wedge dh_3 
>\deg \Psi ^{(1)}(h_2)dh_2\wedge dh_3$ ($*$) 
by (a$'$) and~(b), 
we have 
$\deg dh_1'\wedge dh_3=\deg dh_1\wedge dh_3$. 
This together with ($*$) yields (1) and (2).

(ii) 
(1) follows from 
(i) (1), (c), and the assumption. 
By (\ref{eq:k1k2!}) and (d), we have
\begin{align*}
dh_1'\wedge dh_2'
=dh_1'\wedge (dh_2+bdh_3)
=dh_1'\wedge dh_2
+b(dh_1\wedge dh_3+\Psi ^{(1)}(h_2)dh_2\wedge dh_3). 
\end{align*}
Combining this equality with (\ref{eq:differential form A'}), 
(ii) (1), and (i) (2) yields (2). 
\end{proof}

Next, let $A$ be a $k$-subalgebra of $\kx $.

\begin{definition}\rm 
We define $A^{\w }:=\sum _{f\in A}kf^{\w }$. 
The $k$-vector space $A^{\w }$ is a $k$-subalgebra of $\kx $, 
since $f^{\w }g^{\w }=(fg)^{\w }\in A^{\w }$ 
holds for all $f,g\in A$. 
\end{definition}

\begin{rem}\label{rem:homogeneous}\rm 
If $f,f_1,\ldots ,f_r\in \kx \sm \zs $ satisfy 
$f^{\w }=\sum _{i=1}^rf_i^{\w }$, 
then we have 
$f^{\w }=\sum _{i\in I}f_i^{\w }=(\sum _{i\in I}f_i)^{\w }$, 
where $I:=\{ i\mid i=1,\ldots ,r,\ \deg f_i=\deg f\} $. 
\end{rem}

\begin{lem}\label{def:initial algebra}\rm 
$f\in \kx $ 
satisfies $f^{\w }\in A^{\w }$ 
if and only if $f^{\w }=\phi ^{\w }$ for some $\phi \in A$. 
\end{lem}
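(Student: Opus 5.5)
The "if" direction is immediate: if $f^{\w }=\phi ^{\w }$ with $\phi \in A$, then $f^{\w }$ is one of the spanning elements $\phi ^{\w }$ of $A^{\w }=\sum _{\phi \in A}k\phi ^{\w }$. So the plan concerns the "only if" direction. There we assume $f^{\w }\in A^{\w }$, and since $f^{\w }$ must be nonzero for the statement to have content, we take $f\ne 0$.

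By the definition of $A^{\w }$, we can write $f^{\w }=\sum _{i=1}^rc_i\phi _i^{\w }$ with $c_i\in k$ and $\phi _i\in A$. Discarding the terms with $c_i\phi _i=0$, we absorb each scalar into the polynomial by setting $f_i:=c_i\phi _i\in A\sm \zs $; this uses $(c\phi )^{\w }=c\phi ^{\w }$ for $c\in k^*$. We obtain $f^{\w }=\sum _{i=1}^rf_i^{\w }$ with every $f_i\in A\sm \zs $. The sum is nonempty because $f^{\w }\ne 0$.

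Now apply Remark~\ref{rem:homogeneous}. Let $I:=\{ i\mid \deg f_i=\deg f\} $; the remark gives $f^{\w }=(\sum _{i\in I}f_i)^{\w }$. Put $\phi :=\sum _{i\in I}f_i$. Then $\phi \in A$ because $A$ is a $k$-subalgebra, and $f^{\w }=\phi ^{\w }$, as required. Since $f^{\w }\ne 0$, the right-hand side is nonzero as well, so $I\ne \emptyset $ and $\phi \ne 0$ automatically.

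The only step that needs any checking is Remark~\ref{rem:homogeneous} itself, if one wants to verify it rather than quote it. Each $f_i^{\w }$ is $\w $-homogeneous of degree $\deg f_i$, so comparing homogeneous components in degree $\deg f$ kills every term with $i\notin I$. The terms with $i\in I$ sum to a nonzero form of degree $\deg f$, so Remark~\ref{rem:principle} applied to the tuple $(f_i)_{i\in I}$ gives $\deg \phi =\deg f$ and $\phi ^{\w }=\sum _{i\in I}f_i^{\w }$. No real obstacle is expected.
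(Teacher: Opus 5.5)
Your proposal is correct and follows the paper's own proof: the ``if'' part is immediate, and for the ``only if'' part you write $f^{\w}$ as a sum of initial forms $f_i^{\w}$ with $f_i\in A\sm\zs$, then apply Remark~\ref{rem:homogeneous} to obtain $\phi=\sum_{i\in I}f_i\in A$. The one loose end is that the case $f=0$ is not vacuous; it is settled by taking $\phi=0$.
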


\begin{proof}
The ``if" part is clear, 
and the ``only if" part follows from  
Remark~\ref{rem:homogeneous}. 
\end{proof}

For all $f_1,\ldots ,f_r\in \kx $, 
we have 
$k[f_1^{\w },\ldots ,f_r^{\w }]\subset k[f_1,\ldots ,f_r]^{\w }$, 
but the $k$-algebra 
$k[f_1,\ldots ,f_r]^{\w }$ is generally difficult to analyze.

\begin{thm}\label{ex:total}
Let $F\in \kx ^3$ be such that $dF\ne 0$, 
and set $\delta :=(1/2)\deg f_2$. 
Assume that 
\begin{equation}\label{eq:ex:total}
\deg df_1\wedge df_2\le \dfrac{1}{2}\delta ,\ \ 
\deg f_1=3\delta ,\ \ 
\deg f_3=\dfrac{3}{2}\delta ,
\text{ \ and \ }a:=f_1^{\w }(f_3^{\w })^{-2}\in k^*. 
\end{equation}
Take any $f_1'\in f_1-af_3^2+kf_3+kf_2+k$. 
Then, the following assertions hold. 

\nd {\rm (i)} 
If $p\ge 3$, 
then 
$(1)$ $\deg df_1'\wedge df_2=\deg f_3+\deg df_2\wedge df_3$ 
and $(2)$ $\delta <\deg f_1'<3\delta $. 

\smallskip

\nd For {\rm (ii)--(iv)} below, 
assume further that 
$p\ge 7$ and $(f_1')^{\w }\not\approx f_i^{\w }$ for $i=2,3$.

\nd {\rm (ii)} 
We have $f_3^{\w }\not\in k[f_1',f_2]^{\w }$.

\nd {\rm (iii)} 
If $f_2^{\w }\in k[f_1',f_3]^{\w }$, 
then we have $\deg f_1'=(9/4)\delta $, 
and $\mathcal{P}(f_3,f_1',m)\ne \emptyset $ for some $m\in \{ 1,2\} $.

\nd {\rm (iv)} 
There exists $f_1''\in f_1'+kf_2$ 
such that $(f_1'')^{\w }=(f_1')^{\w }$, 
$f_2^{\w }\not\in k[f_1'',f_3]^{\w }$, 
and $f_3^{\w }\not\in k[f_1'',f_2]^{\w }$.

\end{thm}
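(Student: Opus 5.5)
For (i) I would apply Lemma~\ref{note:cofactor}~(i) with $(h_1,h_2,h_3):=(f_1,f_2,f_3)$, $\phi:=0$, and $h_1':=f_1'=f_1-af_3^2+cf_3+\Psi(f_2)$, where $\Psi$ is linear. The coefficient in Lemma~\ref{note:cofactor} is $-a$, and $2(-a)\ne 0$ because $p\ge 3$. The hypothesis to check is
\[
\deg f_3+\deg df_2\wedge df_3>\deg df_1'\wedge df_2 .
\]
To bound the right side, I would note that $(f_1-af_3^2)^{\w}$ cancels, so $\deg f_1'<3\delta$. Then Remark~\ref{rem:independence}~(iii) gives
\[
\deg df_1'\wedge df_2\le \deg df_1'+\deg df_2<3\delta+2\delta .
\]
This is too weak, so I would argue differently. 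Write $df_1'\wedge df_2=df_1\wedge df_2-2af_3\,df_3\wedge df_2+c\,df_3\wedge df_2$. The first term has degree at most $\delta/2$. For the middle term, since $f_2^{\w}$ and $f_3^{\w}$ have degrees $2\delta$ and $\tfrac32\delta$, they are algebraically dependent only if $(f_3^{\w})^4\approx(f_2^{\w})^3$. In that case $f_1^{\w}\approx(f_3^{\w})^2$ forces $(f_1^{\w})^2\approx (f_2^{\w})^3$.

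The case split is then as follows. If $\deg df_2\wedge df_3$ is large enough, the term $2af_3\,df_3\wedge df_2$ dominates and (1) follows directly. Otherwise $\deg f_3+\deg df_2\wedge df_3\le\deg df_1\wedge df_2\le\delta/2$, which is impossible since $\deg f_3=\tfrac32\delta$. The dominance in the first case is guaranteed because $\deg f_3+\deg df_2\wedge df_3>\tfrac32\delta>\delta/2$ and this exceeds $\deg c\,df_3\wedge df_2$. So (1) holds with the hypothesis of Lemma~\ref{note:cofactor}~(i) verified.

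For (2), the upper bound $\deg f_1'<3\delta$ comes from the cancellation. For the lower bound, (1) and Remark~\ref{rem:independence}~(iii) give
\[
\deg f_1'+2\delta\ge \deg df_1'\wedge df_2=\tfrac32\delta+\deg df_2\wedge df_3 .
\]
This is insufficient by itself. Instead I would use $df\wedge dF\neq0$: the form $df_1'\wedge df_2\wedge df_3=dF$ has degree $|\w|$. Combining this with (1) and Theorem~\ref{thm:another} applied to $(df_1',df_2,df_3)$, I would push $\deg f_1'$ above $\delta$.

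For (ii) I would argue by contradiction. Suppose $f_3^{\w}=\psi^{\w}$ with $\psi\in k[f_1',f_2]$, using Lemma~\ref{def:initial algebra}. Since $\deg f_3=\tfrac32\delta\notin\Z\deg f_2$ and $f_3^{\w}$ is not a polynomial in the initial forms, Remark~\ref{rem:(1.3.7)}~(ii) gives $\psi\in\mathcal{D}(f_1',f_2)$. I would then combine Corollary~\ref{cor:pSU0}~(ii) and Theorem~\ref{prop:Exc29}, with the roles of $f$ and $g$ chosen according to whether $\deg f_1'$ is above or below $2\delta$. This restricts $(a,b)$ to a few values, each of which I expect Theorem~\ref{cor:h-q} with $h:=f_3$ to exclude, using the degree of $df_1'\wedge df_2$ from (i).

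For (iii), apply Theorem~\ref{prop:Exc29} to $(f,g)=(f_1',f_3)$, or to the reversed pair, with $\phi\in\mathcal{D}$ satisfying $\phi^{\w}=f_2^{\w}$ and $\deg\phi=2\delta$. This yields $\deg f_1'=\tfrac94\delta$. Then Theorem~\ref{ex:ineq1} together with Remark~\ref{rem:ep<1/7} should give $m\in\{1,2\}$.

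For (iv), choose $f_1''=f_1'+\alpha f_2$. Here I would mimic the modification argument of Theorem~\ref{thm:modification}~(i): at most one bad $\alpha$ exists for each $m$, via Lemma~\ref{prop:modification2}, and $k$ has at least $p$ elements.

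The main obstacle is (ii) and the case analysis behind (iii).
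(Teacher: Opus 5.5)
\textbf{Part (i).} Your expansion proving (i)(1) is correct, but it is not the application of Lemma~\ref{note:cofactor} you announce. With $(h_1,h_2,h_3)=(f_1,f_2,f_3)$ and $h_1'=f_1'$, that lemma would conclude $\deg df_1\wedge df_2=\deg f_3+\deg df_2\wedge df_3$, which is false here. Its hypothesis in that set-up is exactly the negation of (1). The correct assignment is $H=(f_1',f_2,f_3)$, $h_1'=f_1$, $h_2'=f_2$. Then the hypothesis reads $\deg f_3+\deg df_2\wedge df_3>\deg df_1\wedge df_2$, which is immediate, and your direct computation is precisely this.

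The genuine gap is the lower bound $\deg f_1'>\delta$ in (i)(2). Theorem~\ref{thm:another}, (1) and $dF\ne 0$ cannot give it. For instance, the degree profile $\deg f_1'=\deg df_1'=\delta$, $\deg df_2=2\delta$, $\deg df_3=\tfrac32\delta$, $\deg df_2\wedge df_3=\tfrac12\delta$, $\deg df_1'\wedge df_2=2\delta$, $\deg df_1'\wedge df_3=\tfrac32\delta$ is compatible with (1), Remark~\ref{rem:independence}~(iii) and Theorem~\ref{thm:another}. What you need is the Shestakov--Umirbaev type inequality itself. Write $f_1'=P(f_3)$ with $P=f_1-aT^2-cT-lf_2-l'\in k[f_1,f_2][T]$. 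Then $\deg_{\w_{f_3}}P=3\delta$ and $P^{\w_{f_3}}=-a(T-f_3^{\w})(T+f_3^{\w})$, so $m_{\w}^{f_3}(P)=1$ since $p\ne 2$, and the $\Lambda_1$-term vanishes. Theorem~\ref{thm:main} then gives $\deg f_1'\ge 3\delta-(\deg df_1\wedge df_2+\deg f_3-\deg dF)>\delta$. Since (2) is what supplies $\deg f_2<\deg (f_1')^2$, $\deg f_3<\deg(f_1')^2$ and $\deg f_1'>\gamma$ in Theorem~\ref{prop:Exc29}, all of (ii)--(iv) currently rests on this missing step.

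\textbf{Parts (ii)--(iv).} In (ii) no case analysis is needed, and Theorem~\ref{prop:Exc29} does not fit: its hypothesis $\deg\psi=\tfrac43\deg g$ fails for $g=f_2$, and for $g=f_1'$ it holds only in the special case $\deg f_1'=\tfrac98\delta$. Once $\psi\in\mathcal{D}(f_1',f_2)$, and $(f_1')^{\w}\notin k[f_2^{\w}]$, $f_2^{\w}\notin k[(f_1')^{\w}]$ (from the $\not\approx$ hypotheses and the bounds in (2)), Corollary~\ref{cor:pSU0}~(iii) gives $\deg f_3=\deg\psi>\deg df_1'\wedge df_2$. This contradicts (1).

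In (iii), Theorem~\ref{prop:Exc29} with $(f,g,\gamma)=(f_1',f_3,\delta)$ also allows alternative (b): $\deg f_1'=\tfrac98\delta$ and $\deg df_1'\wedge df_3+\ep(f_1',f_3)\le\tfrac18\delta$. You never exclude it. It is killed by Theorem~\ref{thm:another} for $(df_1',df_2,df_3)$: by (1), $\deg df_3+\deg df_1'\wedge df_2=\deg df_3+\tfrac32\delta+\deg df_2\wedge df_3$, which strictly exceeds the bounds $\tfrac98\delta+\deg df_2\wedge df_3$ and $\tfrac58\delta+\deg df_3$ for the other two quantities.

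Also in (iii), Theorem~\ref{ex:ineq1}~(ii) leaves $m=7$ open when $p=7$, and Remark~\ref{rem:ep<1/7} does not remove it. Use Remark~\ref{rem:1/7}~(i), which gives $\deg df_1'\wedge df_3\le\tfrac18\delta'$ and $\ep(f_1',f_3)<\tfrac17\delta'$ with $\delta'=\tfrac34\delta$. From $df_1=df_1'+2af_3df_3+c\,df_3+l\,df_2$ this yields $\deg df_1=\deg f_3df_3$. Then $\deg df_1+\deg df_2\wedge df_3$ strictly exceeds the other two quantities of Theorem~\ref{thm:another} for $(df_1,df_2,df_3)$, a contradiction.

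For (iv) your idea is right, but you should apply Theorem~\ref{thm:modification}~(i) directly to $(f_1',f_3,f_2)$. Its hypothesis $(\sharp)$ holds because (iii) gives $\deg f_1'=\tfrac94\delta$. You must then invoke (iii) again for $f_1''$ to turn $\mathcal{P}(f_3,f_1'',1)=\mathcal{P}(f_3,f_1'',2)=\emptyset$ into $f_2^{\w}\notin k[f_1'',f_3]^{\w}$; note $(f_1'')^{\w}=(f_1')^{\w}$ since $\deg f_1'>\deg f_2$. The $f_3$-condition comes for free from (ii), because $k[f_1'',f_2]=k[f_1',f_2]$.
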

\begin{proof}
Write $f_1'=f_1-af_3^2-cf_3-lf_2-l'$, 
where $c,l,l'\in k$. 
Then, we have 
\begin{equation}\label{eq:big ex}
df_1=df_1'+2af_3df_3+cdf_3+ldf_2
\text{ \ and \ }
df_1\wedge df_3=df_1'\wedge df_3+ldf_2\wedge df_3. 
\end{equation}

(i) 
(1) is proved by applying Lemma~\ref{note:cofactor} (i) 
with 
$H:=(f_1',f_2,f_3)$, $h_1':=f_1$, and $h_2':=f_2$, 
since $p\ge 3$, 
$a\ne 0$, 
and $\deg f_3+\deg df_2\wedge df_3>\deg f_3>\deg df_1\wedge df_2$ 
by (\ref{eq:ex:total}). 
For (2), 
set $P:=f_1-aT^2-cT-lf_2-l'\in k[f_1,f_2][T]$. 
Then, we have 
$\deg _{\w _{f_3}}P=3\delta $, $P^{\w _{f_3}}=f_1^{\w }-aT^2$, 
and $m_{\w }^{f_3}(P)=1$ 
since $f_1^{\w }=a(f_3^{\w })^2$ and $p\nmid 2$. 
Hence, 
it follows from Theorem~\ref{thm:main} that 
$\deg f_1'=\deg P(f_3)\ge 
\deg _{\w _{f_3}}P-(\deg df_1\wedge df_2+\deg f_3-\deg dF)
>(3-1/2-3/2)\delta =\delta $. 
Clearly, we have $\deg f_1'<\deg f_1=3\delta $.

(ii) 
We first claim that 
$f_3^{\w }\not\in k[(f_1')^{\w },f_2^{\w }]$, 
i.e., 
$f_3^{\w }$ is not a $k$-linear combination of 
$((f_1')^{\w })^i(f_2^{\w })^j$ 
for $i,j\in \N $ with $\deg {(f_1')^if_2^j}=\deg f_3$ 
(see Remark~\ref{rem:homogeneous}). 
In fact, 
we have 
$\deg f_3<\deg f_2=2\delta <\deg (f_1')^2$ 
by (\ref{eq:ex:total}) and~(i)~(2), 
and $f_3^{\w }\not\approx (f_1')^{\w }$ by assumption. 
Now, supposing $f_3^{\w }\in k[f_1',f_2]^{\w }$, 
there exists $\phi \in k[f_1',f_2]$ such that $\phi ^{\w }=f_3^{\w }$ 
by Lemma~\ref{def:initial algebra}. 
Then, by the claim and Remark~\ref{rem:(1.3.7)} (ii), 
we have $\phi \in \mathcal{D}(f_1',f_2)$. 
In addition, 
$(f_1')^{\w }\not\in k[f_2^{\w }]$ and 
$f_2^{\w }\not\in k[(f_1')^{\w }]$ hold, 
since $(f_1')^{\w }\not\approx f_2^{\w }$, 
$\deg f_1'<3\delta <\deg f_2^2$, 
and $\deg f_2<\deg (f_1')^2$. 
Thus, since $p\ge 7$, 
Corollary~\ref{cor:pSU0} (iii) yields 
$\deg f_3=\deg \phi >\deg df_1'\wedge df_2$. 
This contradicts (i) (1).

(iii) 
Since $f_2^{\w }\not\approx (f_1')^{\w }$ 
and $\deg f_3<\deg f_2<\min \{ \deg (f_1')^2,\deg f_3^2\} $, 
we have 
$f_2^{\w }\not\in k[(f_1')^{\w },f_3^{\w }]$ 
similarly to (ii). 
Now, 
by assumption, 
there exists 
$\phi \in k[f_1',f_3]$ such that $\phi ^{\w }=f_2^{\w }$. 
It satisfies 
$\phi \in \mathcal{D}(f_1',f_3)$ by Remark~\ref{rem:(1.3.7)}~(ii), 
and $\deg \phi =\deg f_2=2\delta $. 
We also have $\deg f_1'>\delta =(2/3)\deg f_3$, 
and $(f_1')^{\w }\not\in k[f_3^{\w }]$ 
since $(f_1')^{\w }\not\approx f_3^{\w }$ and $\deg f_1'<\deg f_3^2$. 
Thus, 
either (a) or (b) in Theorem~\ref{prop:Exc29} 
holds with $(f,g,\gamma ):=(f_1',f_3,\delta )$. 
We claim that (b) contradicts Theorem~\ref{thm:another} 
with $(\eta _1,\eta _2,\eta _3):=(df_1',df_2,df_3)$. 
In fact, 
(b) implies that 
$\deg df_1'+\deg df_2\wedge df_3
\le (9/8)\delta +\deg df_2\wedge df_3=:\delta _1'$ 
and 
$\deg df_2+\deg df_1'\wedge df_3
\le \deg f_2+(1/8)\delta -\ep (f_3)
=(2+1/8-3/2)\delta +\deg df_3
=(5/8)\delta +\deg df_3=:\delta _2'$. 
However, since $\deg f_3=(3/2)\delta $, 
(i) (1) yields 
$\deg df_3+\deg df_1'\wedge df_2
=\deg df_3+(3/2)\delta +\deg df_2\wedge df_3
>\max \{ \delta _1',\delta _2'\} $. 
Therefore, 
(a) must hold, proving 
$\deg f_1'=(9/4)\delta $. 
Setting $\delta ':=(3/4)\delta $, we have 
\begin{equation}\label{eq:totalExample}
\deg f_3=2\delta ',\ \ \deg f_1'=3\delta ',\ \ 
\deg \phi =\deg f_2=\frac{8}{3}\delta ',
\text{ \ and \ }
\phi \in \mathcal{P}(f_3,f_1',m) 
\end{equation}
with $m\ge 1$. 
It remains to prove that $m\le 2$. 
If not, 
then since $p\ge 7$, 
it follows from 
(\ref{eq:totalExample}) and Theorem~\ref{ex:ineq1}~(ii) that $p=m=7$. 
Hence, 
we have 
$\gamma _2':=\deg df_1'\wedge df_3\le (1/8)\delta '$ 
and $\ep:=\ep (f_1',f_3)<(1/7)\delta '$ 
by Remark~\ref{rem:1/7} (i). 
Thus, 
(\ref{eq:totalExample}) gives 
\begin{equation}\label{eq:totalExample2}
\deg f_3df_3\ge 4\delta '-\ep >3\delta '\ge 
\max \{\deg df_1',\deg df_2+\gamma _2'\} >\deg df_2. 
\end{equation}
The first part of (\ref{eq:big ex}) and (\ref{eq:totalExample2}) 
yield $\deg df_1=\deg f_3df_3$. 
Since 
$\deg f_3>\deg df_1\wedge df_2$ by (\ref{eq:ex:total}), 
this gives 
$\alpha :=\deg df_1+\deg df_2\wedge df_3
>\deg f_3df_3
>\deg df_3+\deg df_1\wedge df_2$. 
The last part of (\ref{eq:big ex}) 
implies that 
$\gamma _2'+\deg df_2\wedge df_3>\deg df_1\wedge df_3$. 
Since $\deg df_1=\deg f_3df_3>\deg df_2+\gamma _2'$ 
by (\ref{eq:totalExample2}), 
it follows that 
$\alpha >\deg df_2+\gamma _2'+\deg df_2\wedge df_3
>\deg df_2+\deg df_1\wedge df_3$. 
This contradicts Theorem~\ref{thm:another} as before.

(iv) 
Since 
$f_3^{\w }\not\in k[f_1',f_2]^{\w }=k[f_1'+cf_2,f_2]^{\w }$ 
holds for all $c\in k$ by (ii), 
it suffices to find $f_1''\in f_1'+kf_2$ 
satisfying the first two conditions. 
If $f_2^{\w }\not\in k[f_1',f_3]^{\w }$, 
then taking $f_1'':=f_1'$ suffices. 
Assume that $f_2^{\w }\in k[f_1',f_3]^{\w }$. 
Then, we have $\deg f_1'=(9/4)\delta $ by (iii). 
This implies that $(\sharp )$, 
and hence $(\flat )$, in \S~\ref{sect:modifi} 
holds for $(f_1',f_3,f_2)$ by (\ref{eq:totalExample}). 
Hence, 
by Theorem~\ref{thm:modification}~(i), 
there exists $f_1''\in f_1'+kf_2$ 
such that 
$\mathcal{P}(f_3,f_1'',1)=\mathcal{P}(f_3,f_1'',2)=\emptyset $. 
Since $\deg f_1'=(9/4)\delta >\deg f_2$, 
we have $(f_1'')^{\w }=(f_1')^{\w }$, 
and so $(f_1'')^{\w }\not\approx f_i^{\w }$ for $i=2,3$. 
Hence, 
(iii) still holds with $f_1'$ replaced by $f_1''$. 
Then, we obtain $f_2^{\w }\not\in k[f_1'',f_3]^{\w }$, 
for otherwise (iii) yields 
$\mathcal{P}(f_3,f_1'',m)\ne \emptyset $ 
for some $m\in \{ 1,2\} $, 
contradicting the choice of $f_1''$. 
\end{proof}

\part{Theory of Shestakov-Umirbaev reduction}

The goal of Part 2 is to establish Theorem~\ref{thm:mainSU}. 
For readability, 
we first prove Theorem~\ref{thm:mainSU} in \S~\ref{sect:proof}, 
assuming three key lemmas that will be 
proved in \S ~\ref{sect:WSU}. 
The proofs of the key lemmas 
involve novel concepts and techniques 
not required for~$p=0$.

\section{Outline of the proof of Theorem~$\ref{thm:mainSU}$}\label{sect:proof}
\setcounter{equation}{0}

\subsection{Notation and basic facts}

We define 
$\E :=\bigcup _{l=1}^n\E _l$, 
where $\E _l$ is the set of all elementary 
automorphisms $E\in \Aut _k\kx $ such that $E(x_i)=x_i$ for $i\ne l$. 
For $F,F'\in \Aut _k\kx $ and $l\in \{ 1,\ldots ,n\} $, 
we write $F\sim _lF'$ 
if there exists $E\in \E _l$ such that $F'=FE$, 
which is equivalent to $F'\sim _lF$ since $E^{-1}\in \E _l$. 
We write $F\sim F'$ if $F\sim _lF'$ 
for some $l\in \{ 1,\ldots ,n\} $. 
Set $S_l:=\{ f_1,\ldots ,f_n\} \sm \{ f_l\} $ 
for $l=1,\ldots ,n$.

\begin{rem}\label{rem:sim}\rm 
(i) 
Let $F,F'\in \Aut _k\kx $ and $l\in \{ 1,\ldots ,n\} $. 
Then, 
$F\sim _lF'$ holds if and only if 
$F'=(f_1,\ldots ,f_{l-1},f_l-\phi ,f_{l+1},\ldots ,f_n)$ 
for some $\phi \in k[S_l]$. 

\nd (ii) 
If $\deg F>\deg F'$ holds in (i), 
then we have $f_l^{\w }=\phi ^{\w }\in k[S_l]^{\w }$. 
\end{rem}

We define 
$\D _n(k):=\{ (c_1x_1,\ldots ,c_nx_n)\in \Aut _k\kx 
\mid c_1,\ldots ,c_n\in k^*\} $, 
and ${\rm E}_n(k)$ to be the subgroup of $\T _n(k)$ generated by $\E $.

We need the following facts and observations.

\begin{rem}\label{rem:ch2}\rm

\nd (i) 
We have $\D_n(k){\rm E}_n(k)=\T _n(k)$ 
(see \cite[\S 1.1.1, Exercise 2]{JC}).

\nd (ii) ${\rm E}_n(k)$ is equal to the set of all finite compositions of 
elements of $\E $, 
since $E^{-1}\in \E $ for all $E\in \E $.

\nd (iii) 
$\{ \sum _{l=1}^ni_lw_l \mid i_1,\ldots ,i_n\in \N \} \subset \Gamma $ 
is a well-ordered set 
(see \cite[Lemma 1.1.12]{JC}).

\nd (iv) 
Let $A$ be a $k$-subalgebra of $\kx $ 
and $h\in \kx \sm A$. 
Then, by (iii), 
there exists $f\in h+A\subset \kx \sm \zs $ with the least $\w $-degree. 
It satisfies $f^{\w }\not\in A^{\w }$, 
for otherwise Lemma~\ref{def:initial algebra} gives 
$\phi \in A$ such that $f^{\w }=\phi ^{\w }$, 
i.e., 
$\deg (f-\phi )<\deg f$.

\nd (v) 
Let $F\in \Aut _k\kx $. 
If $f_1^{\w },\ldots ,f_n^{\w }$ are algebraically dependent over $k$, 
then we have $\deg F>|\w |$ 
by (\ref{eq:deg F}) and Remark~\ref{rem:independence} (iv).  
\end{rem}

In the rest of the paper, 
we assume that $n=3$ and $p\ge 7$ unless otherwise stated. 
For $l=1,2,3$ and $\phi \in k[S_l]$, 
we write 
$\degw ^{S_l}\phi :=\degw ^{f_i,f_j}\phi $ and 
$m_{\w }^{S_l}(\phi ):=m_{\w }^{f_i,f_j}(\phi )$, 
where $1\le i<j\le 3$ and $i,j\ne l$.

\subsection{Key lemmas}\label{subsect:fundamental lemmas}
Let $(F,G)\in (\Aut _k\kx )^2$. 
Observe that (SU1)--(SU3) 
imply the following conditions (SU$1'$)--(SU$3'$), 
respectively:

\nd 
\begin{enumerate}[leftmargin=12mm]

\item [(SU$1'$)] 
$g_1\in f_1+k[f_2,f_3]$, $g_2\in f_2+k[f_3]$, 
and $g_3\in f_3+k[g_1,g_2]$; 

\item [(SU$2'$)] $\deg f_i\leq \deg g_i$ for $i=1,2$; 

\item [(SU$3'$)] $\deg g_2<\deg g_1$ 
and $g_1^{\w }\not\in k[g_2^{\w }]$. 
\end{enumerate}

\begin{definition}[{\cite{tame3}}]\label{def:wsup}\rm 
(i) 
We call $(F,G)$ 
a {\it weak Shestakov-Umirbaev pair} ({\it weak SU pair}) if 
(SU$1'$)--(SU$3'$) and (SU4)--(SU6) hold.

\nd (ii) We say that 
a weak SU pair $(F,G)$ is {\it proper} if 
$(g_1-f_1,g_2-f_2)\not\in k[f_2]\times k$. 
\end{definition}

By definition, every SU pair is a weak SU pair. 
We prove the following three key lemmas in \S ~\ref{sect:WSU}.

\begin{llem}\label{prop:equivalence}

Let $(F,G)\in (\Aut _k\kx )^2$ be a weak SU pair.

\nd{\rm (i)} 
There exist 
$E_1\in \E _1$ and $E_2\in \E _2$ 
such that 
$(F,GE_1E_2)$ is an SU pair and $\deg G=\deg GE_1$.

\nd{\rm (ii)} We have $\deg F>\deg G$. 
\end{llem}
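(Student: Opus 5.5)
The plan is to extract everything from the element $\phi:=f_3-g_3$, which lies in $k[g_1,g_2]$ by (SU$1'$). By (SU5) we have $\phi^{\w}=f_3^{\w}$ and $\deg\phi=\deg f_3$. By (SU4), $\deg\phi\le\deg g_1$ and $\phi^{\w}\notin k[g_1^{\w},g_2^{\w}]$. Remark~\ref{rem:(1.3.7)}~(ii) then gives $\phi\in\mathcal{D}(g_2,g_1)$, and hence $\phi\in\mathcal{P}(g_2,g_1,m)$ with $m:=m_{\w}^{g_2,g_1}(\phi)\ge 1$. By (SU$3'$) we have $\deg g_2<\deg g_1$ and $g_1^{\w}\notin k[g_2^{\w}]$, so Theorem~\ref{ex:ineq1} applies with $(f,g):=(g_2,g_1)$ (here $p\ge 7$ is used). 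It yields:
\begin{itemize}
\item an odd $s\ge 3$ with $(g_1^{\w})^2\approx(g_2^{\w})^s$, which is (SU3);
\item $m\in\{1,2,7\}$, with $m\ne 1$ only when $s=3$;
\item the bound $\deg f_3=\deg\phi\ge(s-2)\delta+\deg dg_1\wedge dg_2+\ep(g_2)$, where $\delta=\frac12\deg g_2$.
\end{itemize}
By Lemma~\ref{lem:nonsingular} we may also write $\phi\approx(g_1^2-\kappa g_2^s)^m+\psi$ with $\deg^{g_2,g_1}\psi<2ms\delta$. Note that $(s-2)\delta=\deg g_1-\deg g_2$. Hence the lower bound on $\deg f_3$ is exactly the right-hand side of (SU6) plus $\ep(g_2)$, which will drive the degree comparisons below.

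For (i), I need to turn (SU$1'$) into the sharper (SU1). Write $g_2=f_2+\mu(f_3)$ with $\mu\in k[T]$, and $g_1=f_1+\chi$ with $\chi\in k[f_2,f_3]$. I then argue in two steps.
\begin{itemize}
\item \emph{Step for $g_2$.} Since $\deg f_3>(s-2)\delta\ge\delta$, any term $cf_3^j$ of $\mu$ with $j\ge 2$ has degree $>2\delta=\deg g_2=\deg f_2$. Such terms must therefore cancel in leading forms, and this is impossible for a single polynomial in $f_3$. So actually $\mu\in kf_3+k$.
\item \emph{Step for $g_1$.} $\chi$ is more delicate. Using $f_3\in g_3+k[g_1,g_2]$, I rewrite the unwanted part of $\chi$ (everything outside $kf_3^2+kf_3$, after absorbing constants and $k[f_2]$-multiples) as an element of $k[g_2,g_3]$, up to terms of lower degree. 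Subtracting it via some $E_1\in\E_1$ gives $GE_1$. The point is to check that the removed part has degree $<\deg g_1$, so that $\deg GE_1=\deg G$ and (SU2)--(SU6) survive. The cofactor inequality Theorem~\ref{thm:another} and Lemma~\ref{lem:determinant} are what I would use to compare $\deg dg_1\wedge dg_2$ with $\deg f_3+\deg dg_2\wedge dg_3$ for this.
\item A final $E_2\in\E_2$ kills the constant in $g_2-f_2$.
\item To remove residual multiples of $f_3$ beyond degree two, I would appeal to Theorem~\ref{thm:modification} (in the $m=7$ case, part~(ii)) and Theorem~\ref{prop:Exc27,28}~(ii).
\end{itemize}

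For (ii), I reduce by (i) to an honest SU pair, so $g_1=f_1+af_3^2+cf_3$ and $\deg f_2=\deg g_2$. I then apply Theorem~\ref{thm:main} with $\ff:=(g_1,g_2)$, $g:=f_3$ and $P:=g_1-aT^2-cT$, for which $P(f_3)=f_1$.
\begin{itemize}
\item \emph{Case $\deg P(f_3)=\deg_{\w_{f_3}}P$.} Then $\deg f_1\ge\deg g_1$, and $\deg F-\deg G\ge\deg f_3-\deg g_3>0$ by (SU5).
\item \emph{Case of positive multiplicity.} The relevant multiplicity $m_{\w}^{f_3}(P)$ is at most $2<p$, so no $v_p$ correction appears. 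This gives $\deg g_1-\deg f_1\le 2\Lambda_0$ (or $\Lambda_0$), with $\Lambda_0=\deg dg_1\wedge dg_2+\deg f_3-\deg dG$. Combining this with (SU6) and the lower bound on $\deg f_3$ from the first paragraph, I want $\deg g_1-\deg f_1<\deg f_3-\deg g_3$, which is $\deg F>\deg G$.
\end{itemize}

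I expect the main obstacle to be the normalization in (i) in characteristic $p$. Elements with vanishing differential (for instance $f_3^p$-terms, or $\lambda(f_2)$ with $\lambda^{(1)}=0$) can hide in $\chi$ without affecting the differential degrees. Moreover, the exceptional multiplicity $m=7$ with $s=3$ allows cancellations that do not occur when $p=0$. I would first try the argument of Theorem~\ref{thm:modification}~(ii), namely comparing $p$-th powers via Remark~\ref{rem:parity}, to rule these out.
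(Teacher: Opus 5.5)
Your first paragraph (essentially Claim~\ref{claim:A}) is fine, and so are the step $g_2-f_2\in kf_3+k$ and the treatment of the case $\deg f_1=\deg g_1$ in (ii). However, both (i) and (ii) have a genuine gap.

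\emph{Part (i).} The decisive fact, which you never establish, concerns $\phi_1:=g_1-f_1\in k[f_2,f_3]$. It has $\deg\phi_1\le\deg g_1=s\delta$, and you need its apparent degree to satisfy $\deg^{f_2,f_3}\phi_1\le s\delta$ as well. Only this, together with $\deg f_2=2\delta$ and $\deg f_3>(s-2)\delta$, confines $\phi_1$ to $kf_3^2+kf_3+\sum_{i\le(s-1)/2}kf_2^i$. Otherwise $\phi_1$ could contain $f_3^j$ ($j\ge3$) or mixed terms $f_2^if_3^j$ cancelling in leading forms.

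Your sketched mechanism cannot replace this. First, (SU1) demands exact membership in $f_1+kf_3^2+kf_3$, so ``up to terms of lower degree'' is not enough. Second, $E_1\in\E_1$ changes $g_1$ only by some $h(g_2,g_3)$. Since $k[g_1,g_2,f_3]=k[\x]$, $k[f_2,f_3]=k[g_2,f_3]$ and $\phi_3=f_3-g_3\notin k[g_2]$, the condition $h(g_2,g_3)\in k[f_2,f_3]$ forces $h\in k[g_2]$. So $E_1$ can remove only a $k[g_2]$-part, and everything else must be shown absent. Theorems~\ref{thm:modification} and~\ref{prop:Exc27,28} are not the relevant tools.

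The paper's argument (Claim~\ref{claim:S1}) runs as follows. If $\deg^{f_2,f_3}\phi_1>s\delta$, then $\phi_1\in\mathcal{D}(f_2,f_3)$, so Corollary~\ref{cor:pSU0}~(iii) gives $\deg\phi_1>\deg df_2\wedge df_3$. Outside pm7 type, Lemma~\ref{lem:deg P'(g)}~(ii) applied to $\phi_3$, together with (SU6), gives $\deg df_2\wedge df_3\ge\deg g_1+\deg dg_1\wedge dg_2>s\delta$ (Claim~\ref{claim:D}~(i)), a contradiction. In pm7 type one instead shows that the exponents relating $f_2^{\w},f_3^{\w}$ lie in $I(3,4)$, and then uses Lemma~\ref{lem:increasing function}~(ii) with Corollary~\ref{cor:pSU0}~(ii).

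Once the shape is known, the $k[f_2]$-part $\psi$ lies in $k[g_2]$ if $b=0$. If $b\ne0$, then $\deg f_3\le2\delta$ forces $s=3$ and $\psi=lf_2+l'$, and one subtracts $l(g_2-e)+l'$, replacing $c$ by $c-lb$. You also use (SU2), i.e.\ $\deg f_2=\deg g_2$, without proof. It is not a weak-SU hypothesis, but it follows from $g_2=f_2+bf_3+e$ and $f_3^{\w}\notin k[g_2^{\w}]$.

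\emph{Part (ii).} Theorem~\ref{thm:main} with $\ff=(g_1,g_2)$ and $g=f_3$ cannot give the strict inequality. Since $f_3-g_3\in k[g_1,g_2]$, we have $dg_1\wedge dg_2\wedge df_3=dG$, so $\Lambda_0=\deg dg_1\wedge dg_2+\deg f_3-|\w|$. At best the theorem gives $\deg g_1-\deg f_1\le\Lambda_0$. Concluding $\deg g_1-\deg f_1<\deg f_3-\deg g_3$ would then need $\deg dg_1\wedge dg_2+\deg g_3<|\w|$. But $|\w|=\deg dG\le\deg dg_1\wedge dg_2+\deg g_3$ always, and $\deg f_3$ cancels, so neither (SU6) nor your lower bound on $\deg f_3$ helps.

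In Case $<$ (where $s=3$, $m_G^F=1$, $\deg f_3=\frac32\delta$ by Claim~\ref{claim:g1}) you need the much stronger bound $\deg f_1+\deg f_3\ge2\deg g_1-\deg g_2+\deg dg_1\wedge dg_2$. The paper obtains it from $2$-forms:
\begin{itemize}
\item $\deg f_1+\deg f_3\ge\deg df_1\wedge df_3=\gamma_2$;
\item Lemma~\ref{lem:determinant}, applied with $(h_1',h_2')=(g_1,g_2)$ and using the shape of $g_1-f_1$ from (i), gives $\gamma_2=\deg g_1-\deg g_2+\gamma_1$;
\item $\gamma_1\ge\deg g_1+\deg dg_1\wedge dg_2$, as in part (i).
\end{itemize}
Then (SU6) gives $\deg f_1+\deg f_3>\deg g_1+\deg g_3$.
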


\begin{llem}\label{prop:structure2}
Assume that $F\in \Aut _k\kx $ satisfies 
the following conditions 
$\llparenthesis	1 \rrparenthesis$--$\llparenthesis 4 \rrparenthesis$$:$ 

\begin{enumerate}[leftmargin=10mm]

\item [$\llparenthesis	1 \rrparenthesis$] 
$(f_1^{\w })^2\approx (f_2^{\w })^s$ for some odd $s\geq 3;$

\item [$\llparenthesis	2 \rrparenthesis$] 
$(s-2)\delta +\deg df_1\wedge df_2\leq \deg f_3<s\delta $ 
for $s$ in $\llparenthesis 1 \rrparenthesis$ 
and $\delta :=(1/2)\deg f_2;$

\item [$\llparenthesis	3 \rrparenthesis$] 
$\mathcal{P}(f_2,f_1,m)\ne\emptyset $ 
for some $m\ge 1;$

\item [$\llparenthesis	4 \rrparenthesis$] 
$f_3^{\w }\not\in k[f_2^{\w }]$. 

\end{enumerate}

\nd 
Then, the following assertions hold$:$

\nd{\rm (i)} If $(F_{\sigma },G)$ is an SU pair 
for some $\sigma \in \sym _3$ and $G\in \Aut _k\kx $, 
then we have $\sigma =\id $ and $(f_1,f_2)=(g_1,g_2)$.

\nd{\rm (ii)} 
If $f_2^{\w }\in k[S_2]^{\w }$, 
then we have $f_1^{\w }\approx (f_3^{\w })^2$.

\nd{\rm (iii)} 
Assume that $f_1^{\w }\in k[S_1]^{\w }$. 
Then, 

\nd {\rm (1$^\star $)} 
$f_2^{\w }$ and $f_3^{\w }$ are algebraically dependent over $k$.

\nd 
Moreover, 
there exists $f_1'\in f_1+k[S_1]$ satisfying 
the following conditions$:$

\nd {\rm (2$^\star $)} 
$(f_1',f_2,f_3)$ admits no elementary reduction$;$ 

\nd {\rm (3$^\star $)} 
If $((f_1',f_2,f_3)_\sigma ,G)$ 
is a weak SU pair 
for some $\sigma \in \sym _3$ and $G\in \Aut _k\kx $, 
then we have $\sigma =\id $, 
and $(F,G)$ is a weak SU pair. 

\end{llem}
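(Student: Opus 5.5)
The plan is to reduce each part to the degree results of \S\ref{sect:consequence}. The main inputs are Theorem~\ref{prop:Exc27,28}, whose hypothesis $(s-2)\gamma +\deg df_1\wedge df_2\le \deg f_3<\deg f_1=s\gamma $ is exactly $\llparenthesis 1\rrparenthesis$--$\llparenthesis 2\rrparenthesis$ with $\gamma =\delta $, and Theorem~\ref{ex:ineq1}, which applies to $(f,g)=(f_2,f_1)$ thanks to $\llparenthesis 3\rrparenthesis$. Throughout I use that $\deg f_1=s\delta $ and $\deg f_2=2\delta $, and that $f_1^{\w }\not\in k[f_2^{\w }]$ because $s$ is odd.

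For (ii), pick $\phi \in k[f_1,f_3]$ with $\phi ^{\w }=f_2^{\w }$ by Lemma~\ref{def:initial algebra}. If $f_2^{\w }\not\in k[f_1^{\w },f_3^{\w }]$, then Remark~\ref{rem:(1.3.7)}~(ii) gives $\phi \in \mathcal{D}(f_1,f_3)$, and Theorem~\ref{prop:Exc27,28}~(ii) yields $f_1^{\w }\approx (f_3^{\w })^2$. Otherwise $\deg f_2=2\delta <\deg f_1$, so Remark~\ref{rem:homogeneous} forces $f_2^{\w }\approx (f_3^{\w })^j$ for some $j$. Comparing with $\llparenthesis 1\rrparenthesis$, I get $(f_1^{\w })^2\approx (f_3^{\w })^{js}$, so $f_1^{\w }\approx (f_3^{\w })^{js/2}$; here $js$ is even, hence $j$ is even. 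The bound $(s-2)\delta <\deg f_3$ should then leave only $j=2$ and $s=3$ after a short check; this is the step I am least sure about. That case contradicts $\llparenthesis 4\rrparenthesis$, since $f_3^{\w }$ would be a square root of $f_2^{\w }$ times a unit, hence in $k[f_2^{\w }]$ only if its degree matches. It may need an extra argument via Lemma~\ref{lem:dependence}.

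For (iii)~(1$^\star $), take $\phi \in k[f_2,f_3]$ with $\phi ^{\w }=f_1^{\w }$. If $f_2^{\w }$ and $f_3^{\w }$ were algebraically independent, then $\mathcal{D}(f_2,f_3)=\emptyset $ by Remark~\ref{rem:(1.3.7)}~(i), so $\phi ^{\w }=\theta ^{\bv }(f_2^{\w },f_3^{\w })$. Squaring and using $\llparenthesis 1\rrparenthesis$ gives a nontrivial polynomial relation $\theta ^{\bv }(x,y)^2-c\,x^s=0$ in independent elements. This relation is impossible, because $s$ is odd and $\theta ^{\bv }(x,y)^2$ is a square in $k[x,y]$.

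Next, choose $f_1'\in f_1+k[S_1]$ of least $\w $-degree, which exists by Remark~\ref{rem:ch2}~(iv), so that $(f_1')^{\w }\not\in k[S_1]^{\w }$. Then $\phi \in \mathcal{D}(f_2,f_3)$, and Theorem~\ref{prop:Exc27,28}~(i) applies once I check $f_2^{\w }\not\in k[f_3^{\w }]$, which is $\llparenthesis 4\rrparenthesis$ together with the degree comparison. This gives $s=3$, $\deg f_3=(4/3)\delta $ and $(7/3)\delta <\deg f_1'<3\delta $. For (2$^\star $), the first coordinate of $(f_1',f_2,f_3)$ cannot be reduced by construction. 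For the others, Remark~\ref{rem:sim}~(ii) reduces the claim to showing $f_2^{\w }\not\in k[f_1',f_3]^{\w }$ and $f_3^{\w }\not\in k[f_1',f_2]^{\w }$. The degrees $(\deg f_1',2\delta ,(4/3)\delta )$ put us in the setting of Corollary~\ref{cor:pSU0}~(ii) and Lemma~\ref{lem:increasing function}, which should exclude both memberships. If needed, I would adjust $f_1'$ inside $f_1'+kf_2$ as in Theorem~\ref{ex:total}~(iv), using Theorem~\ref{thm:modification}.

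For (i) and (3$^\star $), I would run through $\sigma \in \sym _3$ using (SU3) and (SU$3'$): the pair $(g_1^{\w },g_2^{\w })$ must satisfy $(g_1^{\w })^2\approx (g_2^{\w })^{s'}$, and (SU$2'$) together with the degree pattern leaves only $\sigma =\id $. In the case $\sigma =\id $, condition (SU$1'$) gives $g_1=f_1+\psi $ and $g_2=f_2+\chi $. Applying Theorem~\ref{ex:ineq1} and Lemma~\ref{lem:nonsingular} to $\mathcal{P}(f_2,f_1,m)$, and comparing with (SU4)--(SU6), should force $\psi =\chi =0$ in (i). In (3$^\star $) the same comparison should show that replacing $f_1'$ by $f_1$ preserves (SU$1'$)--(SU6). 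I expect (3$^\star $) to be the main obstacle, since it requires transporting every SU condition between $f_1'$ and $f_1$. I would first try to control $\deg (f_1-f_1')$ using the bounds from Theorem~\ref{prop:Exc27,28}~(i), and the differential-degree bookkeeping of Lemmas~\ref{note:cofactor} and~\ref{lem:determinant} for (SU6).
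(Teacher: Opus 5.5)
\textbf{Main gap: the missing case in (iii).} Your argument for (1$^\star$) is correct, and more direct than the paper's route through $\llparenthesis 11\rrparenthesis$. The rest of (iii), however, has a genuine gap. You assert that $\phi\in\mathcal{D}(f_2,f_3)$ and deduce from Theorem~\ref{prop:Exc27,28}~(i) that $\deg f_3=(4/3)\delta$. This is false whenever $f_1^{\w}\in k[f_3^{\w}]$, a case that $\llparenthesis 1\rrparenthesis$--$\llparenthesis 4\rrparenthesis$ allow. Then $f_1^{\w}\approx(f_3^{\w})^l$ with $s\delta=l\deg f_3>l(s-2)\delta$, which forces $s=3$, $l=2$, $\deg f_3=(3/2)\delta$, and hence $\deg df_1\wedge df_2\le(1/2)\delta$ by $\llparenthesis 2\rrparenthesis$. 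Every $\phi\in k[S_1]$ with $\phi^{\w}=f_1^{\w}$ then satisfies $\deg^{S_1}\phi=\deg\phi$; otherwise Theorem~\ref{prop:Exc27,28}~(i) would give $\deg f_3=(4/3)\delta$. So $\phi\in af_3^2+kf_3+kf_2+k$ with $a\in k^*$, and $\phi\notin\mathcal{D}(f_2,f_3)$.

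The paper splits (iii) along exactly this dichotomy. In the case you treat, nothing needs adjusting. (2$^\star$) follows from Lemma~\ref{exc:ineq1}~(i). (3$^\star$) is vacuous, because Example~\ref{ex:WSU sigma=id}~(1) shows that no $((f_1',f_2,f_3)_\sigma,G)$ is a weak SU pair. So your planned transport of (SU$1'$)--(SU6) from $f_1'$ to $f_1$ is aimed at the wrong case.

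The missing case is where the real work lies, in two places.
\begin{itemize}
\item For (2$^\star$), a least-degree $f_1'$ may fail: Theorem~\ref{ex:total}~(iii) leaves open $\deg f_1'=(9/4)\delta$ with $f_2^{\w}\in k[f_1',f_3]^{\w}$. One must pass to $f_1''\in f_1'+kf_2$ via Theorem~\ref{ex:total}~(iv), which rests on Theorem~\ref{thm:modification}~(i). Theorem~\ref{ex:total} needs $\deg f_3=(3/2)\delta$ and $f_1^{\w}=a(f_3^{\w})^2$, so it cannot be used in the $(4/3)\delta$ case where you invoked it.
\item (3$^\star$) is no longer vacuous. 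Theorem~\ref{ex:total}~(i)~(1) gives $\deg df_1''\wedge df_2-\deg df_2\wedge df_3=\deg f_3=(3/2)\delta$, so Example~\ref{ex:WSU sigma=id}~(2) forces $\sigma=\id$. Table~2 $\clubsuit$ then puts $(F'',G)$ in Case $<$. Claim~\ref{claim:g1}~(iii) gives $\deg g_1=2\deg f_3=\deg f_1$, which is what lets condition [a] of Remark~\ref{exc:c1-c6 0}~(ii) pass from $f_1''$ to $f_1$.
\end{itemize}

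\textbf{Smaller points.} In (ii), the case $f_2^{\w}\in k[f_1^{\w},f_3^{\w}]$ indeed cannot occur, but your check is wrong. From $f_2^{\w}\approx(f_3^{\w})^j$ you get $2\delta=j\deg f_3>j\delta$, so $j=1$, which contradicts $\llparenthesis 4\rrparenthesis$ at once. The relation $(f_1^{\w})^2\approx(f_3^{\w})^{js}$ does not make $js$ even, and $j=2$ would force $\deg f_3=\delta$, which is excluded.

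In (i), ``comparing with (SU4)--(SU6)'' does not supply the mechanism. The paper shows the pair is not proper. Once $\sigma=\id$, $\llparenthesis 8\rrparenthesis$ gives $\gamma_3=\deg df_1\wedge df_2<\deg f_3\le\deg g_1<\gamma_1$, using Claim~\ref{claim:D}~(i) or Claim~\ref{claim:pm7.1}~(ii). Properness would instead give $\gamma_3\ge\gamma_1$ by Remark~\ref{rem:diff forms}~(i). In pm7 type that is not pm7.1, $\llparenthesis 3\rrparenthesis$ forces $c=0$ instead. Remark~\ref{exc:used for Claim B}~(i) then yields $(f_1,f_2)=(g_1,g_2)$. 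Forcing $\sigma=\id$ uses the following:
\begin{itemize}
\item the same ordering of the $\gamma_j$, via Remark~\ref{rem:diff forms}~(iii)~(2), together with Claim~\ref{claim:H}~(i);
\item $\clubsuit$ together with Lemma~\ref{note:prop:key2}~(ii), in the ps and pm7 types.
\end{itemize}
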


\begin{llem}\label{prop:c1-c6}
Let $(F,G)\in (\Aut _k\kx )^2$ be a weak SU pair, 
and $E\in \E $ such that $\deg FE\leq \deg F$.

\nd{\rm (i)} 
If $E\in \E _1\cup \E _2$, 
then $(FE,G)$ is a weak SU pair.

\nd{\rm (ii)} 
Assume that $(F,G)$ is proper and $E\in \E _3$.

\nd $(1)$ If $\deg f_3\ne \deg f_j$ for $j=1,2$, 
then $(FE,G)$ is a weak SU pair.

\nd $(2)$ If $\deg f_3=\deg f_1$ or $\deg f_3=\deg f_2$, 
then there exist $\tau \in \sym _3$ and $H\in {\rm D}_3(k)$ 
such that $((FE)_\tau ,GH)$ is a weak SU pair. 

\end{llem}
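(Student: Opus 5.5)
The plan is to check the six defining conditions of a weak SU pair, namely (SU$1'$)--(SU$3'$) and (SU4)--(SU6), for $(FE,G)$ directly, one elementary type at a time. Conditions involving only $G$, namely (SU$3'$) and (SU6), are automatically preserved. By Remark~\ref{rem:sim}~(i) we write $FE=F$ with one coordinate $f_l$ replaced by $f_l-\phi$, where $\phi\in k[S_l]$. The hypothesis $\deg FE\le \deg F$ gives $\deg (f_l-\phi )\le \deg f_l$. When the inequality is strict, Remark~\ref{rem:sim}~(ii) also gives $f_l^{\w }=\phi ^{\w }\in k[S_l]^{\w }$.

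For (i) with $E\in \E _1$ the check is formal. We have $g_1\in (f_1-\phi )+k[f_2,f_3]$ and $\deg (f_1-\phi )\le \deg f_1\le \deg g_1$, and every remaining condition involves only $f_2,f_3$ and $G$, which are unchanged. For $E\in \E _2$ the only nontrivial point is (SU$1'$): we need $g_2\in (f_2-\phi )+k[f_3]$, i.e.\ $\phi \in k[f_3]$; granting this, $k[f_2-\phi ,f_3]=k[f_2,f_3]$ takes care of $g_1$. Write $\phi =\theta (f_1,f_3)$ and suppose $\theta \notin k[y]$. Since $\deg \phi \le \deg f_2=\deg g_2<\deg g_1$, I would compare with the degree of $f_1$, using $g_1\in f_1+k[f_2,f_3]$ and $g_1^{\w }\not\in k[g_2^{\w }]$. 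From this I would derive that $\phi $ lies in $\mathcal{D}(f_1,f_3)$ or $\mathcal{D}(f_3,f_1)$ with a degree far below $\deg f_1$. That should contradict Corollary~\ref{claimm:ineq1}, Corollary~\ref{cor:pSU0} or Theorem~\ref{ex:ineq1} (here $p\ge 7$ is used), together with (SU4), which says $f_3^{\w }\not\in k[g_1^{\w },g_2^{\w }]$. I expect this exclusion of a genuine $f_1$-dependence in $\phi $ to be the main obstacle. The case $\deg f_1<\deg g_1$ needs separate treatment: there $f_1^{\w }$ is no longer $g_1^{\w }$, and one has to use that $g_1-f_1$ is a polynomial in $f_2,f_3$ whose leading form cancels.

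For (ii), $E\in \E _3$ and $FE=(f_1,f_2,f_3-\phi )$ with $\phi \in k[f_1,f_2]$. The conditions (SU4) and (SU5) concern $\deg (f_3-\phi )$. (SU$1'$) requires $g_3\in f_3-\phi +k[g_1,g_2]$, i.e.\ $\phi \in k[g_1,g_2]$. Since $g_1,g_2$ differ from $f_1,f_2$ by elements involving $f_3$, properness $(g_1-f_1,g_2-f_2)\notin k[f_2]\times k$ is the tool here. I would show that any $\phi $ with $\deg (f_3-\phi )\le \deg f_3$ has $\deg \phi \le \deg f_3\le \deg g_1$, and then use Theorem~\ref{ex:ineq1} and Lemma~\ref{lem:nonsingular} to force $\phi \in k[f_2]+k$ with $\deg \phi <\deg g_2$. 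In case (1), with $\deg f_3\ne \deg f_j$, the leading form is then preserved, which gives (SU4) and (SU5), and $\phi \in k[g_1,g_2]$ follows from $g_2\in f_2+k[f_3]$ plus a degree count. In case (2), $\phi $ may cancel the top part of $f_3$ against $f_j$. Then $f_3-\phi $ plays the role of $f_j$ up to a scalar, and I would choose $\tau $ to swap the corresponding indices and $H\in \D _3(k)$ to absorb the constants before rechecking the six conditions.
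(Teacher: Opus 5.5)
Your reduction is right in outline. For $E\in \E _2$ everything hinges on showing that no $\phi \in k[f_1,f_3]\sm k[f_3]$ has $\deg \phi \le \deg f_2$; this is Claim~\ref{claim:S2}. For $E\in \E _3$ it hinges on pinning down $\phi \in k[f_1,f_2]$ with $\deg \phi \le \deg f_3$; this is Claim~\ref{claim:S3}. What is missing is the ingredient that makes both claims true: lower bounds for $\gamma _2=\deg df_1\wedge df_3$ and $\gamma _3=\deg df_1\wedge df_2$.

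The estimates you cite bound $\deg \phi $ below by a degree term plus a term $\deg df\wedge dg$, and the degree term alone is too weak. For instance, take Case $=$ with $s=3$ and $(f_1^{\w })^2\approx (f_3^{\w })^3$, so $\deg f_3=2\delta $. Then Theorem~\ref{ex:ineq1}~(iii) applied to $\phi \in \mathcal{P}(f_3,f_1,m)$ gives only $\deg \phi \ge \delta +\gamma _2+\ep (f_3)$. This is compatible with $\deg \phi \le 2\delta $ unless you know $\gamma _2>\delta $, so ``a degree far below $\deg f_1$'' is simply not available.

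The paper gets $\gamma _2>\gamma _1>\deg g_1$ from Claims~\ref{claim:G} and~\ref{claim:D}. These come from applying Lemmas~\ref{note:cofactor} and~\ref{lem:determinant} to $F$ with $(h_1',h_2')=(g_1,g_2)$. This is done after first excluding pm7 type and then, via $(s,t)\in \{ (3,5),(5,3),(3,3)\} $, ps type. The subcase $f_1^{\w }\in k[f_3^{\w }]$ needs Claim~\ref{claim:H}~(iv), the bound $\gamma _2>4\delta $, and Corollary~\ref{cor:pSU0}~(i). Even $\deg f_2<\deg f_1$ and $\deg f_3\le \deg f_1$ in Case $<$, which you need just to place $\phi $ in $\mathcal{P}(f_3,f_1,m)$, come from $\gamma _2$ via Claim~\ref{claim:P5}.

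In (ii) the gap is sharper. No argument built only from Theorem~\ref{ex:ineq1} and Lemma~\ref{lem:nonsingular} can exclude $\phi \in \mathcal{D}(f_1,f_2)$ with $\deg \phi \le \deg f_3$. Indeed, for a non-proper pair with $(f_1,f_2)=(g_1,g_2)$, the element $\phi _3=f_3-g_3$ is of this kind by (\ref{eq:m(F,G)}), and $FE=(f_1,f_2,g_3)$ violates (SU5). So properness must be used, and your stated rationale (that $g_1,g_2$ differ from $f_1,f_2$ by terms involving $f_3$) is never turned into an argument.

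In the paper, properness enters as proper $\Rightarrow (a,b,c)\ne (0,0,0)\Rightarrow \gamma _3\ge \gamma _1$, by Remark~\ref{rem:diff forms}~(i). This is combined with $\gamma _1>s\delta $, from Claim~\ref{claim:D}~(i) or, for pm7 type, Claim~\ref{claim:pm7.1}~(ii), which rests on Theorem~\ref{thm:modification}~(ii). Corollary~\ref{cor:pSU0}~(iii) then gives $\deg \phi >\gamma _3\ge \gamma _1>s\delta \ge \deg \phi $, a contradiction.

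There are further problems in (ii):
\begin{itemize}
\item Your target ``$\phi \in k[f_2]+k$ with $\deg \phi <\deg g_2$'' in case (1) is false. When $\deg f_2<\deg f_3$, as in ps type, $\phi =f_2$ is allowed.
\item (SU$1'$) for $(FE,G)$ also requires $g_1-f_1\in k[f_2,f_3-\phi ]$ and $g_2-f_2\in k[f_3-\phi ]$. The latter forces $b=0$ once $\phi \notin k$, which the paper obtains from Claim~\ref{claim:g2}~(ii).
\item In (2), swapping indices is not enough. Writing $g_j=f_j+t_1f_3+\psi _1$ and $f_3-\phi =f_3+t_2f_j+\psi _2$, the swap is used only when $t_1t_2=1$. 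Otherwise one keeps $\tau =\id $ and rescales $g_j$ and $g_3$ by $(1-t_1t_2)^{-1}$ and $1-t_1t_2$, respectively.
\end{itemize}
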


Lemmas~\ref{prop:equivalence}, \ref{prop:structure2}, 
and \ref{prop:c1-c6} 
are the $p\ge 7$ counterparts to 
Lemmas 1.2.2 and 1.2.11 in \cite{JC} 
and Proposition 4.4 in \cite{tame3}, respectively, 
with Lemma~\ref{prop:structure2} requiring an essential modification. 
In the rest of \S ~\ref{sect:proof}, 
we prove Theorem~\ref{thm:mainSU} assuming these lemmas, 
which follows the line of argument for $p=0$ 
(see \cite[\S~1.2]{JC}).

\subsection{Reduction sequence}\label{subsect:AA}
For $\sigma \in \sym _3$, 
we define 
$X_\sigma :=(x_{\sigma (1)},x_{\sigma (2)},x_{\sigma (3)})$.

\begin{rem}\label{rem:Xsigma}\rm 
Let $F,G\in \Aut _k\kx $, 
$\sigma \in \sym _3$, $H\in \D _3(k)$, 
and $l\in \{ 1,2,3\} $. 

\nd (i) We have $FX_\sigma =F_\sigma $.

\nd (ii) 
We have 
$X_{\sigma }^{-1}\E _lX_{\sigma }=\E _{\sigma ^{-1}(l)}$, 
$H^{-1}\E _lH=\E _l$, 
and $X_{\sigma }^{-1}\D _3(k)X_{\sigma }=\D_3(k)$.

\nd (iii) If $F\rightsquigarrow _lG$, 
i.e., 
$G=FE$ for some $E\in \E _l$, 
then since 
$G_\sigma =F_\sigma (X_\sigma ^{-1}EX_\sigma )$ and $GH=FH(H^{-1}EH)$, 
we have 
$F_\sigma \rightsquigarrow _{\sigma ^{-1}(l)}G_\sigma $ 
and $FH\rightsquigarrow _lGH$ 
by (ii).

\nd (iv) 
If $(F_\sigma ,G_\sigma )$ is a weak SU pair, 
then so is $((FH)_\sigma ,(GH)_\sigma )
=(F_\sigma H',G_\sigma H')$, 
where $H':=X_\sigma ^{-1}HX_\sigma \in \D_3(k)$ by (ii). 
\end{rem}

Now, let $F\in \Aut _k\kx $. 
For $l\ge 1$, 
we call $(F_i)_{i=1}^l\in (\Aut _k\kx )^l$ 
a {\it reduction sequence} of $F$ 
if $F_1=F$, $\deg F_l=|\w |$, 
and for $i=2,\ldots ,l$, 
one of the following~holds:

1. $F_{i-1}\rightsquigarrow F_i$ and $\deg F_{i-1}>\deg F_i$;

2. There exists $\sigma _i\in \sym _3$ such that 
$((F_{i-1})_{\sigma _i},(F_i)_{\sigma _i})$ is a weak SU pair.

\nd 
We define 
$\mathcal{A}:=\{ F\in \Aut _k\kx \mid 
\text{$F$ has a reduction sequence}\} . $

\begin{rem}\label{rem:A}\rm 
\nd (i) 
We have 
${\rm D}_3(k)\subset \{ F\in \Aut _k\kx \mid \deg F=|\w |\} 
\subset \mathcal{A}$.

\nd (ii) 
If $F\in \mathcal{A}$, 
then $F_\sigma ,FH\in \mathcal{A}$ hold for all 
$\sigma \in \sym _3$ and $H\in {\rm D}_3(k)$. 
Indeed, 
if $(F_i)_{i=1}^l$ is a reduction sequence of $F$, 
then 
$((F_i)_\sigma )_{i=1}^l$ and 
$(F_iH)_{i=1}^l$ are reduction sequences of $F_\sigma $ and $FH$, 
respectively, 
by Remark~\ref{rem:Xsigma} (iii) and (iv).

\nd (iii) 
If $F\in \Aut _k\kx $ satisfies 
one of the following conditions, 
then we have $F\in \A $:

(A1) 
$F\sim G$ and $\deg F>\deg G$ hold for some $G\in \A $;

(A2) 
$(F_{\sigma },G)$ is a weak SU pair 
for some $\sigma \in \sym _3$ and $G\in \A $.

\nd 
Indeed, 
a reduction sequence of $G$ 
extends to one for $F$ in Case (A1), 
and to one for $F_\sigma $ in Case (A2), 
implying $F\in \A $ by (ii).

\nd (iv) 
Let $F\in \A $ be such that $\deg F>|\w |$. 
Then, $F$ satisfies (A1) or (A2), 
since $F$ has a reduction sequence 
$(F_i)_{i=1}^l$ with $l\ge 2$, 
and $(F_2)_\sigma \in \A $ holds for all $\sigma \in \sym _3$.

\nd (v) 
(A1) implies that $F$ admits an elementary reduction. 
Moreover, 
(A2) implies that $F$ admits an SU reduction 
by virtue of Lemma~\ref{prop:equivalence}~(i). 
\end{rem}

\begin{example}\label{exc:AAA}\rm 
Let $(F,E)\in \A \times \E $. 
If $\deg FE>\deg F$, 
then since $FE\rightsquigarrow F$, 
we have $FE\in \A $ by Remark~\ref{rem:A} (iii) (A1) 
applied with $(F,G):=(FE,F)$. 
\end{example}

By Remark~\ref{rem:A} (iv) and (v), 
to establish Theorem~\ref{thm:mainSU}, 
it suffices to prove that $\T _3(k)\subset \A $. 
The following proposition is essential.

\begin{prop}\label{prop:key}
Let $(F,E)\in \A \times \E $. 
If $\deg FE\le \deg F$, then we have~$FE\in \A $. 
\end{prop}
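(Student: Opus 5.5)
We argue by induction on $\deg F$, which is possible because the set of values $\deg F$ is well-ordered by Remark~\ref{rem:ch2}~(iii). Assume the proposition holds for every $(F',E')\in \A\times\E$ with $\deg F'<\deg F$. If $\deg F=|\w|$, then $\deg FE\le \deg F=|\w|$ together with (\ref{eq:deg F}) gives $\deg FE=|\w|$, so $FE\in\A$ by Remark~\ref{rem:A}~(i). Hence we may assume $\deg F>|\w|$. By Remark~\ref{rem:A}~(iv), $F$ then satisfies (A1) or (A2). We treat the two cases separately. Throughout, Remark~\ref{rem:A}~(ii) and Remark~\ref{rem:Xsigma} let us replace $F$ by $F_\sigma$ and $E$ by the conjugate $X_\sigma^{-1}EX_\sigma$ without loss of generality.

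\emph{Case (A1).} Here $F\sim_l G$ with $G\in\A$ and $\deg G<\deg F$. Write $G=FE'$ with $E'\in\E_l$, and let $E\in\E_{l'}$. If $l=l'$, then $FE=G(E'^{-1}E)$ and $E'^{-1}E\in\E_l$, so $FE\sim G$. If $\deg FE>\deg G$, then $FE\in\A$ by (A1). If $\deg FE\le\deg G$, the induction hypothesis applied to $(G,E'^{-1}E)$ gives $FE\in\A$. If $l\ne l'$, we compare $\deg FE$ with $\deg G$. When $\deg FE<\deg F$ strictly, the component $f_{l'}$ has its $\w$-initial form in $k[S_{l'}]^{\w}$ and $f_l$ has its initial form in $k[S_l]^{\w}$ (Remark~\ref{rem:sim}~(ii)). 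The plan here is to exhibit an intermediate automorphism $H\sim G$ with $\deg H<\deg F$ and $H\sim FE$, and to apply the induction hypothesis twice. Whenever the two elementary reductions do not interact, this follows from the standard argument for $p=0$ (cf.~\cite[\S1.2]{JC}). The case $\deg FE=\deg F$ is handled similarly, using that $E$ changes only one coordinate. The genuinely interacting situation is the one where two initial forms are algebraically dependent. In that situation we expect $F$ to satisfy $\llparenthesis1\rrparenthesis$--$\llparenthesis4\rrparenthesis$ up to a permutation, and we invoke Lemma~\ref{prop:structure2}~(ii) and (iii). Lemma~\ref{prop:structure2}~(iii) supplies $f_1'$ such that $(f_1',f_2,f_3)$ admits no elementary reduction and every weak SU pair of it comes from $(F,G)$. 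This lets us route $FE$ through an automorphism reachable by (A2).

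\emph{Case (A2).} Here $(F_\sigma,G)$ is a weak SU pair with $G\in\A$, and Lemma~\ref{prop:equivalence}~(ii) gives $\deg G<\deg F$. After permuting, assume $\sigma=\id$. If $E\in\E_1\cup\E_2$, then Lemma~\ref{prop:c1-c6}~(i) shows that $(FE,G)$ is a weak SU pair, so $FE\in\A$ by (A2). If $E\in\E_3$ and $(F,G)$ is proper, Lemma~\ref{prop:c1-c6}~(ii) gives either that $(FE,G)$ is a weak SU pair, or that $((FE)_\tau,GH)$ is one with $H\in\D_3(k)$. Since $GH\in\A$ by Remark~\ref{rem:A}~(ii), we again get $FE\in\A$ by (A2). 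If $(F,G)$ is not proper, then $g_1-f_1\in k[f_2]$ and $g_2-f_2\in k$. We plan to show that in this case $G$ is obtained from $F$ by elementary maps that do not raise the degree, and that they can be combined with $E$. Concretely, we write $G=FE_1E_2E_3$ with $E_1\in\E_1$, $E_2\in\E_2$ and $E_3$ a translation type composed with $\E_3$. We then use induction on $\deg$ along the chain $FE\sim FE\cdot E_1\sim\cdots$, treating each step as in Case~(A1).

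The main obstacle is Case~(A1) with $l\ne l'$, where both $f_l^{\w}$ and $f_{l'}^{\w}$ lie in the respective initial algebras. For $p=0$ this is controlled by the Shestakov--Umirbaev inequality. Here we must instead use Theorem~\ref{thm:main} through Corollary~\ref{cor:pSU0}, Theorem~\ref{prop:Exc27,28}, and Lemma~\ref{prop:structure2}, where the exceptional multiplicities $m=2$ and $m=7$ for $p=7$ require care. We would first try to force the configuration $\llparenthesis1\rrparenthesis$--$\llparenthesis4\rrparenthesis$ using Theorem~\ref{ex:ineq1}, and then conclude with Lemma~\ref{prop:structure2}~(iii).
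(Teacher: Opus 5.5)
Your overall frame agrees with the paper's. You induct on $\deg F$, split into (A1) and (A2), use Lemma~\ref{prop:c1-c6} for weak SU pairs, and use the inductive-path argument when $E$ acts on the same coordinate as the known reduction (this is Lemma~\ref{claim:1}~(i)). Your non-proper case can also be made to work. Non-properness forces $\deg f_1=\deg g_1$ and $g_3\in f_3+k[f_1,f_2]$, so $3\in I_F$ and you are back in (A1).

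The gap is the case that carries all the content: $3\in I_F$ with $E\in\E_1\cup\E_2$. There you offer only an expectation, and the tool you name cannot do the job. Lemma~\ref{prop:structure2} describes the shape of an $F$ satisfying $\llparenthesis 1\rrparenthesis$--$\llparenthesis 4\rrparenthesis$, but says nothing about membership in $\A$ or about $I_F$. What is needed is Proposition~\ref{prop:key2}: such an $F\in\A$ has $3\in I_F$. It must be proved \emph{simultaneously} with Proposition~\ref{prop:key}. At degree $\mu$, one first derives Proposition~\ref{prop:key2} from Proposition~\ref{prop:key} in degrees $<\mu$, via the chain $2\in I_F\Rightarrow 1\in I_F\Rightarrow\text{(A3)}\Rightarrow 3\in I_F$. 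That chain uses inductive paths, the upgrade of weak SU pairs to SU pairs (Remark~\ref{rem:sharpflat}), and Lemma~\ref{prop:structure2}~(i)--(iii). Part (iii) enters in the step $1\in I_F\Rightarrow$ (A3), not in routing $FE$. Your single induction never sets this up.

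Here is how Proposition~\ref{prop:key2} is then used. With $3\in I_F$ and $E\in\E_1\cup\E_2$, take $g_3\in f_3+k[S_3]$ of least degree, so that $g_3^{\w}\notin k[S_3]^{\w}$ and $G:=(f_1,f_2,g_3)\in\A$.
\begin{itemize}
\item In the configurations $1^\bullet$, $\mathfrak{f}(1,2,3)$ and $\mathfrak{f}(2,1,3)$, the point is that $(F,G)$ (resp.\ $(F_{(1,2)},G_{(1,2)})$) is itself a weak SU pair, so Lemma~\ref{prop:c1-c6}~(i) gives $FE\in\A$. The only nontrivial condition is (SU6). If it failed, $G$ would satisfy $\llparenthesis 1\rrparenthesis$--$\llparenthesis 3\rrparenthesis$ (Lemma~\ref{exc:ineq1}~(i)). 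Then Proposition~\ref{prop:key2} for $G$, which has degree $<\mu$, together with Remark~\ref{rem:prop key2}~(iv), would give $g_3^{\w}\in k[S_3]^{\w}$, a contradiction.
\item In configuration $2^\bullet$ (with $E\in\E_1$), it is the permuted triple $(f_i,f_j,f_1)$, of degree $\mu$, that satisfies $\llparenthesis 1\rrparenthesis$--$\llparenthesis 4\rrparenthesis$ (Lemma~\ref{exc:ineq1}~(ii)). Proposition~\ref{prop:key2} at degree $\mu$ then yields $1\in I_F$, and $FE\in\A$ follows.
\end{itemize}
Note that in the first group it is $G$, not a permutation of $F$, that would satisfy the structural hypotheses, and the conclusion comes from contradicting the minimality of $g_3$.

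Only the residual configurations (Claims p, q, r) are settled by explicit inductive paths of the kind you describe. In general the obvious intermediate $(f_1+\phi_1,f_2,g_3)$ is not $\sim G$ unless $\phi_1\in k[f_2,g_3]$. That is precisely where the weak SU pair $(F,G)$ is indispensable.
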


Example~\ref{exc:AAA} and Proposition~\ref{prop:key} 
together 
imply that $\A \E \subset \A $, 
which yields $\A {\rm E}_3(k)\subset \A $ by Remark \ref{rem:ch2}~(ii). 
Since ${\rm D}_3(k)\subset \A $ by Remark~\ref{rem:A} (i), 
it follows from Remark~\ref{rem:ch2} (i) 
that $\T_3(k)={\rm D}_3(k){\rm E}_3(k)\subset \A {\rm E}_3(k)\subset \A $. 
Thus, 
we are reduced to proving Proposition~\ref{prop:key}. 
To this end, 
we also need the following proposition.

\begin{prop}\label{prop:key2}
If $F\in \mathcal{A}$ satisfies 
$\llparenthesis 1 \rrparenthesis$--$\llparenthesis 4 \rrparenthesis$ 
in Lemma~{\rm \ref{prop:structure2}}, 
then we have 
$$
3\in I_F:=\{ l\in \{ 1,2,3\} \mid 
F\sim _lG\text{ and }\deg F>\deg G\text{ hold for some }G\in \A \} . 
$$
\end{prop}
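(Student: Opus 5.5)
We need to prove Proposition~\ref{prop:key2}: if $F\in\A$ satisfies $\llparenthesis 1\rrparenthesis$--$\llparenthesis 4\rrparenthesis$, then $3\in I_F$. The approach is to use the fact that $F\in\A$, together with the structural information of Lemma~\ref{prop:structure2}, to rule out every way a reduction sequence can start except a third-coordinate elementary reduction.

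We first note that $\deg F>|\w|$. By $\llparenthesis 1\rrparenthesis$, $f_1^{\w}$ and $f_2^{\w}$ are algebraically dependent, so Remark~\ref{rem:ch2}~(v) applies. Hence Remark~\ref{rem:A}~(iv) says $F$ satisfies (A1) or (A2). We treat the two cases separately.

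\emph{Case (A2).} Suppose $(F_\sigma,G)$ is a weak SU pair with $G\in\A$. By Lemma~\ref{prop:equivalence}~(i), there are $E_1\in\E_1$ and $E_2\in\E_2$ with $(F_\sigma,GE_1E_2)$ an SU pair. Lemma~\ref{prop:structure2}~(i) then gives $\sigma=\id$ and $(g_1',g_2')=(f_1,f_2)$, where $G':=GE_1E_2$. By (SU1), $g_3'\in f_3+k[g_1',g_2']=f_3+k[f_1,f_2]$. So $F\sim_3G'$, and $\deg F>\deg G'$ by Lemma~\ref{prop:equivalence}~(ii). It remains to check that $G'\in\A$. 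For this we use $\deg G=\deg GE_1$ from Lemma~\ref{prop:equivalence}~(i), Proposition~\ref{prop:key} applied inductively, and Example~\ref{exc:AAA}. To avoid circularity, the whole argument is run as a simultaneous induction with Proposition~\ref{prop:key} on $\deg F$, which is legitimate by the well-ordering in Remark~\ref{rem:ch2}~(iii); all auxiliary automorphisms involved have smaller degree. Under this induction, $GE_1,G'\in\A$, and therefore $3\in I_F$.

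\emph{Case (A1).} Here $F\sim_lG$ with $\deg F>\deg G$ and $G\in\A$. If $l=3$ we are done, so suppose $l\in\{1,2\}$.
\begin{enumerate}
\item If $l=2$, Remark~\ref{rem:sim}~(ii) gives $f_2^{\w}\in k[S_2]^{\w}$. Lemma~\ref{prop:structure2}~(ii) then yields $f_1^{\w}\approx(f_3^{\w})^2$, so $\deg f_3=(s/2)\delta$. I expect this to contradict $\llparenthesis 1\rrparenthesis$, $\llparenthesis 2\rrparenthesis$ or $\llparenthesis 4\rrparenthesis$ through a parity or degree computation. If it does not, I will fall back on showing that $G$ again satisfies $\llparenthesis 1\rrparenthesis$--$\llparenthesis 4\rrparenthesis$ (with $f_2$ replaced) and applying induction.
\item If $l=1$, then $f_1^{\w}\in k[S_1]^{\w}$, and Lemma~\ref{prop:structure2}~(iii) provides $f_1'\in f_1+k[S_1]$ with properties (2$^\star$) and (3$^\star$). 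Set $F':=(f_1',f_2,f_3)$, chosen of minimal degree in $f_1+k[S_1]$, and show $F'\in\A$ by induction via $G$ and Proposition~\ref{prop:key}. Since $F'$ admits no elementary reduction by (2$^\star$), Remark~\ref{rem:A}~(iv) (if $\deg F'>|\w|$) forces a weak SU pair $(F'_\sigma,G'')$ with $G''\in\A$. Property (3$^\star$) then gives $\sigma=\id$ and that $(F,G'')$ is a weak SU pair. This returns us to Case (A2), which yields $3\in I_F$. If instead $\deg F'=|\w|$, then $f_2^{\w}$ and $f_3^{\w}$ would be algebraically independent, contradicting (1$^\star$).
\end{enumerate}

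The main obstacle is the bookkeeping of the induction. Membership in $\A$ of the auxiliary automorphisms ($GE_1E_2$ and $F'$) must be obtained from Proposition~\ref{prop:key} only at strictly smaller degree, so Propositions~\ref{prop:key} and~\ref{prop:key2} have to be proved jointly by induction on $\deg F$. The step I am least sure of is the subcase $l=2$.
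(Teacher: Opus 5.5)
\textbf{Gap in the subcase $l=2$.} Your treatment of Case (A2) and of the subcase $l=1$ of Case (A1) is the paper's argument: the chain $1\in I_F\Rightarrow$ (A3) $\Rightarrow 3\in I_F$, via Lemma~\ref{prop:equivalence}, Lemma~\ref{prop:structure2}~(i) and (iii), Remark~\ref{exc:used for Claim B}~(ii), and inductive paths. The problem is the subcase $l=2$, which you flagged yourself.

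The contradiction you hope for does not exist. Combined with $\llparenthesis 2\rrparenthesis$, the relation $f_1^{\w}\approx (f_3^{\w})^2$ only forces $s=3$, $\deg f_3=(3/2)\delta$ and $\deg df_1\wedge df_2\le (1/2)\delta$. Nothing in $\llparenthesis 1\rrparenthesis$--$\llparenthesis 4\rrparenthesis$ excludes this. The paper treats exactly this configuration in case (II) of the proof of Lemma~\ref{prop:structure2}~(iii), in the setting of Theorem~\ref{ex:total}, rather than ruling it out.

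Your fallback also fails. Write $G=(f_1,f_2-\phi,f_3)$. Since $\deg(f_2-\phi)<\deg f_2$, you get $2\deg f_1=s\deg f_2> s\deg (f_2-\phi)$, so $\llparenthesis 1\rrparenthesis$ cannot hold for $G$. Moreover, even if induction applied to $G$, it would give $3\in I_G$, which says nothing about $I_F$.

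\textbf{The missing step.} You need to transfer the reduction from the second coordinate to the first; this is Lemma~\ref{claim:1}~(ii) in the paper. Write $f_1^{\w}=\alpha (f_3^{\w})^2=(\alpha f_3^2)^{\w}$ with $\alpha\in k^*$, and $G=(f_1,f_2-\phi,f_3)\in\A$ with $\phi\in k[f_1,f_3]$ and $\deg G<\deg F$. Since $k[f_1,f_3]=k[f_1-\alpha f_3^2,f_3]$, the sequence
\[
G,\quad (f_1-\alpha f_3^2,\,f_2-\phi,\,f_3),\quad (f_1-\alpha f_3^2,\,f_2,\,f_3)=:F''
\]
consists of an $\E_1$-move followed by an $\E_2$-move. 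Its first two terms have degree $<\deg F$. Hence $F''\in\A$ by Proposition~\ref{prop:key} at smaller degree, or by Example~\ref{exc:AAA} for the last step if the degree rises. As $F\sim_1 F''$ and $\deg F''<\deg F$, you obtain $1\in I_F$, and your $l=1$ argument finishes the proof. With this step inserted, your proof coincides with the paper's chain $2\in I_F\Rightarrow 1\in I_F\Rightarrow$ (A3) $\Rightarrow 3\in I_F$.
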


\begin{rem}\label{rem:prop key2}\rm 
Let $F\in \Aut _k\kx $. 

\nd 
(i) $F$ satisfies (A1) if and only if $I_F\ne \emptyset $.

\nd (ii) 
By Remarks~\ref{rem:Xsigma} (iii) and \ref{rem:A} (ii), 
$I_{F_\sigma }=\{ \sigma ^{-1}(l)\mid l\in I_F\} $ 
holds for all $\sigma \in \sym _3$.

\nd (iii) 
By Remark~\ref{rem:sim}, 
$l\in I_F$ implies that 
$f_l^{\w }=\phi ^{\w }\in k[S_l]^{\w }$ 
for some $\phi \in k[S_l]$.

\nd (iv) 
If $f_3^{\w }\not\in k[f_1,f_2]^{\w }$, 
then we have 
$\llparenthesis 4 \rrparenthesis$, 
and $3\not\in I_F$ by (iii). 
Hence, Proposition~\ref{prop:key2} implies that, 
if $F\in \A $ satisfies 
$\llparenthesis 1 \rrparenthesis$--$\llparenthesis 3 \rrparenthesis$, 
then we have $f_3^{\w }\in k[f_1,f_2]^{\w }$. 
\end{rem}

\begin{rem}\label{exc:used for Claim B}\rm
(i) Let $(F,G)\in (\Aut _k\kx )^2$ be a non-proper SU pair. 
Then, we have $(f_1,f_2)=(g_1,g_2)$, 
since (SU1) yields 
$g_1-f_1\in 
(kf_3^2+kf_3)\cap k[f_2]
=F((kx_3^2+kx_3)\cap k[x_2])=\zs $ 
and $g_2-f_2\in kf_3\cap k=\zs $.

\nd{\rm (ii)} 
Let $(F,G)\in (\Aut _k\kx )^2$ be a weak SU pair 
such that $(f_1,f_2)=(g_1,g_2)$ and~$G\in \A $. 
Then, we have $3\in I_F$. 
Indeed, 
under the assumption that $(f_1,f_2)=(g_1,g_2)$, 
(SU$1'$) and (SU5) imply that $F\sim _3G$ and $\deg G<\deg F$, 
respectively. 
\end{rem}

\subsection{Induction}
By Remark~\ref{rem:ch2} (iii), 
$\{ \deg F\mid F\in \mathcal{A}\} $ 
is a well-ordered set, 
whose minimum element is $|\w |$ by 
(\ref{eq:deg F}) and Remark~\ref{rem:A} (i). 
Since Propositions~\ref{prop:key} and~\ref{prop:key2} 
are statements for $F\in \A $, 
we prove them by simultaneous induction on $\deg F$.

First, assume that $\deg F=|\w |$. 
Then, Proposition~\ref{prop:key} holds by Remark~\ref{rem:A}~(i), 
since $\deg FE\le \deg F=|\w |$ 
implies that $\deg FE=|\w |$. 
Since $\llparenthesis 1 \rrparenthesis$ 
implies that $\deg F>|\w |$ 
by Remark~\ref{rem:ch2} (v), 
Proposition~\ref{prop:key2} holds vacuously.

Next, we fix $\mu \in \{ \deg F\mid F\in \mathcal{A}\} \sm \{ |\w |\} $. 
We aim to establish that 
if Propositions~$\ref{prop:key}$ and $\ref{prop:key2}$ 
hold for $\deg F<\mu $, 
then they also hold for $\deg F=\mu $. 
To this end, it suffices to prove the following claim.

\begin{klaim}\it 
Let $(\dag )$ and $(\ddag )$ be the following statements$:$ 

$(\dag )$ {\rm 
Proposition~{\rm \ref{prop:key}} 
holds for $\deg F<\mu ;$ \ \ 
$(\ddag )$ 
Proposition~{\rm \ref{prop:key2}} holds for $\deg F\le \mu $.} 

\nd Then, the following assertions hold$:$

\nd 
{\rm (A)} Assume {\rm ($\dag $)}. 
Then, Proposition~$\ref{prop:key2}$ 
holds for $\deg F=\mu $.

\nd {\rm (B)} Assume {\rm ($\dag $)} and {\rm ($\ddag $)}. 
Then, Proposition~$\ref{prop:key}$ 
holds for $\deg F=\mu $. 
\end{klaim}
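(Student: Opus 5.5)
We prove (A) and (B) separately, in both cases following the $p=0$ argument of \cite[\S~1.2]{JC} and using Lemmas~\ref{prop:equivalence}--\ref{prop:c1-c6} where characteristic enters.

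\emph{Proof of (A).} Let $F\in\A$ with $\deg F=\mu$ satisfy $\llparenthesis 1\rrparenthesis$--$\llparenthesis 4\rrparenthesis$; we must show $3\in I_F$. Since $\deg F=\mu>|\w|$, Remark~\ref{rem:A}~(iv) shows that $F$ satisfies (A1) or (A2).

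We first treat (A2). Suppose $(F_\sigma,G)$ is a weak SU pair with $G\in\A$. By Lemma~\ref{prop:equivalence}~(i) there are $E_1\in\E_1$ and $E_2\in\E_2$ such that $(F_\sigma,GE_1E_2)$ is an SU pair. Lemma~\ref{prop:structure2}~(i) then gives $\sigma=\id$ and $(f_1,f_2)=(g_1',g_2')$, where $G':=GE_1E_2$. We want to apply Remark~\ref{exc:used for Claim B}~(ii), which needs $G'\in\A$. By Lemma~\ref{prop:equivalence}~(ii), $\deg G<\deg F=\mu$. Moreover $\deg GE_1=\deg G$, and by construction $G'$ has degree at most that of $G$ (it is an SU pair partner of lower degree than $F$). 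So ($\dag$), applied twice together with Example~\ref{exc:AAA} when the degree rises, gives $GE_1\in\A$ and then $G'\in\A$. Remark~\ref{exc:used for Claim B}~(ii) now yields $3\in I_F$.

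Next we treat (A1), that is, $l\in I_F$ for some $l$. If $l=3$ we are done. If $l=2$, Remark~\ref{rem:prop key2}~(iii) gives $f_2^{\w}\in k[S_2]^{\w}$, so $f_1^{\w}\approx(f_3^{\w})^2$ by Lemma~\ref{prop:structure2}~(ii). Comparing degrees with $\llparenthesis 1\rrparenthesis$ and $\llparenthesis 2\rrparenthesis$ gives $\deg f_3=(s/2)\delta$, which forces $s=3$ and $\deg f_3=(3/2)\delta$. We then aim for a contradiction, or for a direct reduction in the third coordinate, using Theorem~\ref{ex:total}.

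If $l=1$, Lemma~\ref{prop:structure2}~(iii) produces $f_1'\in f_1+k[S_1]$ with properties (2$^\star$) and (3$^\star$). Let $G\in\A$ witness $1\in I_F$. Since $F'=(f_1',f_2,f_3)\sim_1F$, we compare $F'$ with $G$. If $\deg F'<\mu$, Proposition~\ref{prop:key} for lower degree (($\dag$)) puts $F'\in\A$. As $F'$ admits no elementary reduction, Remark~\ref{rem:A}~(iv) forces a weak SU pair $(F'_\sigma,H)$ with $H\in\A$. Property (3$^\star$) then gives $\sigma=\id$ and shows that $(F,H)$ is a weak SU pair, which puts us back in case (A2) and hence $3\in I_F$. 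The subcase $\deg F'=\mu$ has to be excluded or handled in the same way.

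\emph{Proof of (B).} Let $F\in\A$ with $\deg F=\mu$ and $E\in\E_l$ with $\deg FE\le\mu$. Again $F$ satisfies (A1) or (A2).

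In case (A2), with $(F_\sigma,G)$ a weak SU pair, we apply Lemma~\ref{prop:c1-c6} to $E':=X_\sigma^{-1}EX_\sigma$. Whenever the conclusion is again a weak SU pair, with partner $G$ or $GH$, and $GH\in\A$ by Remark~\ref{rem:A}~(ii), Remark~\ref{rem:A}~(iii)~(A2) gives $FE\in\A$. The remaining situations are these: $E'\in\E_3$ with the pair non-proper, in which case $(f_1,f_2)=(g_1,g_2)$ after passing to an SU pair, and the analogous degenerate configurations. In these situations $F\sim_3G$ with $\deg G<\mu$, so the problem reduces to (A1).

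In case (A1), $F\sim_jG$ with $G\in\A$ and $\deg G<\mu$. If $j=l$ then $FE\sim_l G$ and we conclude by (A1) or Example~\ref{exc:AAA}. If $j\ne l$, we follow the two-variable-style commuting argument of \cite{JC}: either $\deg GE$ is controlled so that ($\dag$) applies to $G$ and $E$, or the configuration forces $\llparenthesis 1\rrparenthesis$--$\llparenthesis 4\rrparenthesis$ for a permutation of $F$ or $FE$. In the latter case ($\ddag$) supplies a third-coordinate reduction within $\A$, and the argument closes by induction.

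The main obstacle is this last configuration analysis, where Proposition~\ref{prop:key2} must replace the characteristic-zero fact that such $F$ admits no competing reduction. Within (A), the subcase $l=2$ is the hardest step.
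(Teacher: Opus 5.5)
\textbf{Part (A).} Your chain is the paper's: an SU partner in $\A$ gives $3\in I_F$ via Lemma~\ref{prop:structure2}~(i) and Remark~\ref{exc:used for Claim B}~(ii), and $1\in I_F$ leads through $f_1'$, (2$^\star$), (3$^\star$) to a weak SU pair for $F$. The gap is the subcase $2\in I_F$, which you leave open; Theorem~\ref{ex:total} is not the tool for it. What you are missing is Lemma~\ref{claim:1}~(ii).
\begin{itemize}
\item Once $f_1^{\w}\approx(f_3^{\w})^2$, write $f_1^{\w}=(\alpha f_3^2)^{\w}$.
\item Let $G=(f_1,f_2-\phi,f_3)\in\A$ with $\deg G<\mu$ witness $2\in I_F$.
\item Then $G$, $(f_1-\alpha f_3^2,f_2-\phi,f_3)$, $(f_1-\alpha f_3^2,f_2,f_3)$ is an inductive path.
\item Hence $1\in I_F$, and you are in the subcase already treated. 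Your deduction $s=3$, $\deg f_3=\frac32\delta$ is not needed.
\end{itemize}
Two smaller points in the $l=1$ subcase. First, the split by $\deg F'$ is spurious: $F'\sim_1G$ with $G\in\A$ and $\deg G<\mu$, so $F'\in\A$ by ($\dag$) and Example~\ref{exc:AAA} whatever $\deg F'$ is. Second, before invoking Remark~\ref{rem:A}~(iv) you must check $\deg F'>|\w|$, which follows from (1$^\star$) and Remark~\ref{rem:ch2}~(v).

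\textbf{Part (B).} Your SU-pair case is the paper's case ($1'$), and the case where $E$ lies in the same $\E_j$ as a known reduction is Lemma~\ref{claim:1}~(i). The main case, however, is only gestured at: after permuting, $3\in I_F$ and $E\in\E_1\cup\E_2$. This is where all the work lies. The naive path from the reduced automorphism to $FE=(f_1+\phi_1,f_2,f_3)$ is unavailable, because $\phi_1\in k[f_2,f_3]$ while the reduction replaces $f_3$. The paper proceeds as follows.
\begin{itemize}
\item It picks $g_3\in f_3+k[S_3]$ of least degree, so $g_3^{\w}\notin k[S_3]^{\w}$, and sets $G:=(f_1,f_2,g_3)\in\A$ with $\deg G<\mu$.
\item Under $\mathfrak f(1,2,3)$ or $1^\bullet$ (and $\mathfrak f(2,1,3)$ after swapping the first two coordinates), it shows $(F,G)$ is a weak SU pair and concludes by Lemma~\ref{prop:c1-c6}~(i).
\item The crucial condition (SU6) comes from ($\ddag$) applied to the \emph{lower-degree} $G$. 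Lemma~\ref{exc:ineq1}~(i) gives $\llparenthesis 1\rrparenthesis$ and $\llparenthesis 3\rrparenthesis$ for $G$. Failure of (SU6) would give $\llparenthesis 2\rrparenthesis$, and Remark~\ref{rem:prop key2}~(iv) would then force $g_3^{\w}\in k[S_3]^{\w}$, a contradiction.
\item Under $2^\bullet$ (with $E\in\E_1$), Lemma~\ref{exc:ineq1}~(ii) shows $(f_i,f_j,f_1)$ satisfies $\llparenthesis 1\rrparenthesis$--$\llparenthesis 4\rrparenthesis$. Then ($\ddag$) at degree $\mu$, applied to this permutation of $F$, gives $1\in I_F$. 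It cannot be applied to $FE$, which is not yet known to lie in $\A$.
\item The remaining configurations are $f_1^{\w}\in k[f_2^{\w}]$, $\phi_1\in k[f_2]$, $f_3^{\w}=af_1^{\w}+b(f_2^{\w})^l$, and the coincident-degree cases. These need explicit inductive paths, and one must verify that all cases are exhausted.
\end{itemize}
None of this is in your proposal, so (B) remains unproved.
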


\begin{rem}
\label{rem:induction}\rm 
Assume ($\dag $). 
Then, in view of Example~\ref{exc:AAA}, 
we have $F\E \subset \A $ 
for all $F\in \A $ with $\deg F<\mu $. 
By repeatedly applying this, 
we see that 
if $F_1,\ldots ,F_r\in \Aut _k\kx $ satisfy 
$F_1\in \A $, 
$\deg F_i<\mu $ for $i=1,\ldots ,r-1$, 
and $F_1\rightsquigarrow F_2\rightsquigarrow \cdots \rightsquigarrow F_r$, 
then $F_r\in \A $. 
We call such a sequence 
$F_1,\ldots ,F_r$ an {\it inductive path}. 
\end{rem}

\begin{rem}\label{rem:sharpflat}\rm 
(i) 
Assume ($\dag $), 
and let $F\in \Aut _k\kx $ be such that $\deg F\le \mu $. 
If $F$ satisfies (A2), 
then $F$ also satisfies the following condition: 
\begin{enumerate}
\item[\rm (A3)] 
$(F_{\sigma },G)$ is an SU pair 
for some $\sigma \in \sym _3$ and $G\in \A $. 
\end{enumerate}
Indeed, 
applying Lemma~\ref{prop:equivalence} (i) to 
$(F_{\sigma },G)$ in (A2), 
we obtain 
$E_1\in \E _1$ and $E_2\in \E _2$ such that 
$(F_{\sigma },GE_1E_2)$ is an SU pair and $\deg G=\deg GE_1$. 
Since $G\in \A $, 
and $\deg G<\deg F_\sigma \le \mu $ by Lemma~\ref{prop:equivalence}~(ii), 
$G$, $GE_1$, $GE_1E_2$ is an inductive path, 
implying $GE_1E_2\in \A $.

\nd (ii) 
Assume ($\dag $), 
and let $F\in \A $ be such that $|\w |<\deg F\le \mu $. 
Then, 
by Remarks~\ref{rem:A}~(iv) and \ref{rem:sharpflat} (i), 
$F$ satisfies (A1) (i.e., $I_F\neq \emptyset $) or (A3). 
\end{rem}

\begin{lem}\label{claim:1}
Assume $(\dag )$, 
and let $F\in \Aut _k\kx $ be such that $\deg F\le \mu $. 

\noindent
{\rm (i)} 
If $l\in I_F$, then $FE\in \mathcal{A}$ 
holds for all $E\in \E _l$.

\noindent
{\rm (ii)} 
If distinct $i,j,l\in \{ 1,2,3\} $ satisfy 
$l\in I_F$ and $f_i^{\w }\in k[f_j^{\w }]$, 
then we have $i\in I_F$. 

\end{lem}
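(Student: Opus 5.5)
For (i), I will unwind the definition of $I_F$. Since $l\in I_F$, there is $G\in\A$ with $F\sim _lG$ and $\deg G<\deg F\le \mu $; write $G=FE_0$ with $E_0\in \E _l$. Given any $E\in\E _l$, set $E':=E_0^{-1}E\in \E _l$; this works because $\E _l$ is a group, since its elements are the maps $x_l\mapsto x_l+h$ with $h\in k[\{ x_i\} _{i\ne l}]$. Then $FE=GE'$, so $G\rightsquigarrow FE$. Because $G\in\A$ and $\deg G<\mu $, the two-term sequence $G,FE$ is an inductive path in the sense of Remark~\ref{rem:induction}. Assuming $(\dag )$, that remark gives $FE\in\A$. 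Concretely, $(\dag )$ yields Proposition~\ref{prop:key} in degree $\deg G<\mu $, and Example~\ref{exc:AAA} covers the case where the degree goes up.

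For (ii), I will build an automorphism $F'\sim _iF$ with $F'\in\A$ and $\deg F'<\deg F$. Since $f_i^{\w }\in k[f_j^{\w }]$ and $f_i^{\w }$ is $\w $-homogeneous of degree $\deg f_i$, we can write $f_i^{\w }=c(f_j^{\w })^t$ for some $c\in k^*$ and $t\in\N $. Here $t\ge 1$: otherwise $f_i\in k$ up to lower terms, which is impossible because $\deg f_i>0$ by (\ref{eq:deg F}). Take $\phi :=cf_j^t\in k[S_i]$ and put $F':=F$ with $f_i$ replaced by $f_i-\phi $. Then $F\sim _iF'$ by Remark~\ref{rem:sim}~(i), and $\deg (f_i-\phi )<\deg f_i$, so $\deg F'<\deg F\le\mu $.

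It remains to show $F'\in\A$, and this is the step I expect to be the main obstacle, because $F'$ is not obtained from $F$ by an $\E _l$-move. The idea is to route through $\E _l$ using (i). Let $E_i\in\E _i$ be the elementary automorphism with $F'=FE_i$, namely $x_i\mapsto x_i-cx_j^t$. Choose $G\in\A$ with $F\sim _lG$ and $\deg G<\deg F$, writing $G=FE_0$ with $E_0\in\E _l$. The key observation is that $\phi =cf_j^t$ involves only $f_j$, with $j\ne l$, and $E_0$ does not change the $j$-th component. Hence $g_j=f_j$, and $E_0$ and $E_i$ commute: $E_i$ alters only $x_i$ by a polynomial in $x_j$, while $E_0$ alters only $x_l$ by a polynomial in $x_i,x_j$. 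Since they might not literally commute, I will instead use the path
\[
G=FE_0\rightsquigarrow FE_0E_i'\rightsquigarrow FE_0E_i'E_0^{-1}\cdots
\]
More simply, I will consider $GE_i=FE_0E_i$, which is in $\A$ by Example~\ref{exc:AAA} together with Proposition~\ref{prop:key} at degree $\deg G<\mu $, i.e.\ by Remark~\ref{rem:induction}. I then need to reach $F'=FE_i$ from $GE_i$ by a single move in $\E _l$. We have $FE_i=FE_0E_iE_i^{-1}E_0^{-1}E_i$, and $E_i^{-1}E_0^{-1}E_i\in\E _l$ because conjugating an $\E _l$ element by $E_i$, which fixes $x_l$ and sends $x_i\mapsto x_i\pm cx_j^t$, stays in $\E _l$. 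Thus $GE_i\sim _lF'$.

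The remaining check is that $GE_i,F'$ is an inductive path, which requires $\deg GE_i<\mu $. We have $GE_i=(\ldots ,g_i-cg_j^t,\ldots )=(\ldots ,f_i-cf_j^t,\ldots )$ with $g_l$ unchanged, so $\deg GE_i=\deg G-\deg f_i+\deg (f_i-\phi )<\deg G<\mu $. Note that $g_i=f_i$ since $i\ne l$. Remark~\ref{rem:induction} then gives $F'\in\A$, so $i\in I_F$.
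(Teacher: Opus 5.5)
Your proof is correct and is essentially the paper's: (i) is the inductive path $G,\ FE$, and in (ii) your path $G,\ GE_i,\ FE_i$ is exactly the paper's path (first subtract $cf_j^t$ from the $i$th coordinate of $G$, then undo the $\E _l$-move), with your check $E_i^{-1}E_0^{-1}E_i\in \E _l$ replacing the paper's observation that the correction term of $G$ lies in $k[f_i,f_j]=k[f_i-cf_j^t,f_j]$. You can delete the abandoned commutation digression, and note that $t\ge 1$ holds because $\deg f_i>0$, which follows from $f_i\notin k$ and $\w \in (\Gamma _+)^3$ rather than from (\ref{eq:deg F}).
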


\begin{proof}
We may assume that $(i,j,l):=(1,2,3)$. 
Then, 
since $3\in I_F$, 
there exists $\phi \in k[S_3]$ such that 
$G:=(f_1,f_2,f_3-\phi )\in \A $ 
and $\deg G<\deg F\le \mu $ by definition.

(i) Let $E\in \E _3$. Then, $G$, $FE$ is an inductive path, 
proving $FE\in \A $.

(ii) Since $f_1^{\w }\in k[f_2^{\w }]$, 
there exist $\alpha \in k^*$ and $t\in \N _+$ such that 
$f_1^{\w }=\alpha (f_2^{\w })^t=(\alpha f_2^t)^{\w }$. 
Then, 
$G$, $(f_1-\alpha f_2^t,f_2,f_3-\phi )$, $(f_1-\alpha f_2^t,f_2,f_3)=:F'$ 
is an inductive path, so $F'\in \A $. 
Since $F\sim _1F'$ and $\deg F>\deg F'$, 
we obtain $1\in I_F$. 
\end{proof}

\subsection{Proofs of Claim (A) and (B)}\label{subsect:kouhan}

We now deduce (A) from Lemma~\ref{prop:structure2}. 
Assume ($\dag $), 
and let $F\in \A $ satisfy 
$\llparenthesis 1 \rrparenthesis$--$\llparenthesis 4 \rrparenthesis$ 
and $\deg F=\mu $. 
We need to show that $3\in I_F$. 
Since $\deg F=\mu $, 
we can apply Remark~\ref{rem:sharpflat} and Lemma~\ref{claim:1} to $F$. 
Hence, by Remark~\ref{rem:sharpflat}~(ii), 
$F$ satisfies at least one of 
$1\in I_F$, 
$2\in I_F$, 
$3\in I_F$, 
or (A3). 
Thus, it suffices to verify the chain of implications 
$2\in I_F\Rightarrow 1\in I_F\Rightarrow $ 
(A3) $\Rightarrow 3\in I_F$.

\smallskip

\nd $2\in I_F\Rightarrow 1\in I_F$\textbf{:} 
By Remark~\ref{rem:prop key2} (iii) 
and Lemma~\ref{prop:structure2} (ii), 
we have 
$f_1^{\w }\approx (f_3^{\w })^2$. 
Hence, 
applying Lemma~\ref{claim:1}~(ii) with $(i,j,l)=(1,3,2)$ 
yields $1\in I_F$.

\nd $1\in I_F\Rightarrow $ (A3)\textbf{:} 
By Remark~\ref{rem:prop key2} (iii), 
we see that 
Lemma~\ref{prop:structure2} (iii) applies to $F$. 
Let $F':=(f_1',f_2,f_3)$ be as therein. 
Then, since $1\in I_F$ and~$F\sim _1F'$, 
Lemma~\ref{claim:1}~(i) yields $F'\in \A $. 
By (1$^\star $) and Remark~\ref{rem:ch2} (v), 
we have $\deg F'>|\w |$. 
By (2$^\star $), 
$F'$ does not satisfy (A1). 
Thus, 
by Remark~\ref{rem:A} (iv), $F'$ must satisfy (A2), 
i.e., 
$(F'_\sigma ,G)$ is a weak SU pair 
for some $\sigma \in \sym _3$ and $G\in \A $. 
Then, 
$(F,G)$ is a weak SU pair by (3$^\star $), 
so $F$ satisfies (A2). 
Therefore, 
$F$ satisfies (A3) 
by Remark~\ref{rem:sharpflat}~(i).

\nd (A3) $\Rightarrow 3\in I_F$\textbf{:} 
The assertion follows from 
Lemma~\ref{prop:structure2} (i) 
and Remark~\ref{exc:used for Claim B} (ii).

\smallskip

To prove (B), 
we first deduce the following lemma from 
Theorem~\ref{ex:ineq1}. 
Here, for distinct $s,t,u\in \{ 1,2,3\} $, 
$\mathfrak{f}(s,t,u)$ 
denotes 
the conditions 
$\max \{ \deg f_t,\deg f_u\} <\deg f_s$ and 
$f_s^{\w },f_u^{\w }\not\in k[f_t^{\w }]$.

\begin{lem}\label{exc:ineq1}\rm
\nd {\rm (i)} 
Let $F\in \Aut _k\kx $. 
If $f_3^{\w }\in k[S_3]^{\w }$, 
and either $\mathfrak{f}(1,2,3)$ or

$1^\bullet $ \ 
$\deg f_2<\deg f_1=\deg f_3$, 
$f_1^{\w }\not\in k[f_2^{\w }]$, 
and $f_3^{\w }\not\in k[f_1^{\w },f_2^{\w }]$

\nd 
hold, 
then we have 
(1) $\mathcal{P}(f_2,f_1,m)\ne \emptyset $ for some $m\ge 1$, 
and (2) $(f_1^{\w })^2\approx (f_2^{\w })^b$ for some odd $b\ge 3$.

\nd{\rm (ii)} 
Let $F\in \Aut _k\kx $ and $\phi \in k[S_1]$. 
If $\deg \phi \le \deg f_1$, and 

$2^\bullet $ \ 
$\mathfrak{f}(i,j,1)$ and $\phi \not\in k[f_j]$

\nd 
hold for some $(i,j)\in \{ (2,3),(3,2)\} $, 
then $(f_i,f_j,f_1)$ satisfies 
$\llparenthesis 1 \rrparenthesis$--$\llparenthesis 4 \rrparenthesis$. 
\end{lem}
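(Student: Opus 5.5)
For (i), the plan is to reduce to Theorem~\ref{ex:ineq1} with $(f,g):=(f_2,f_1)$. The hypothesis $f_3^{\w }\in k[S_3]^{\w }$ and Lemma~\ref{def:initial algebra} give $\phi \in k[f_1,f_2]$ with $\phi ^{\w }=f_3^{\w }$. In case $\mathfrak{f}(1,2,3)$ we have $f_3^{\w }\not\in k[f_2^{\w }]$ and $\deg f_3<\deg f_1$. Any element of $k[f_1^{\w },f_2^{\w }]$ of degree $\deg f_3$ is a combination of monomials $(f_1^{\w })^i(f_2^{\w })^j$ with $\deg f_1^if_2^j=\deg f_3<\deg f_1$, which forces $i=0$. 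Hence $f_3^{\w }\not\in k[f_1^{\w },f_2^{\w }]$ (via Remark~\ref{rem:homogeneous}). In case $1^\bullet $ this is assumed directly. In both cases Remark~\ref{rem:(1.3.7)}~(ii) gives $\phi \in \mathcal{D}(f_2,f_1)$, i.e. $m:=m_{\w }^{f_2,f_1}(\phi )\ge 1$. Since $\deg \phi =\deg f_3\le \deg f_1$, we get $\phi \in \mathcal{P}(f_2,f_1,m)$, which proves (1).

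For (2), Theorem~\ref{ex:ineq1} needs $\deg f_2\le \deg f_1$ and $f_1^{\w }\not\in k[f_2^{\w }]$. Both hold in either case: in $\mathfrak{f}(1,2,3)$ they are part of the condition, and in $1^\bullet $ they are assumed. Theorem~\ref{ex:ineq1}~(i) then yields an odd $b\ge 3$ with $(f_1^{\w })^2\approx (f_2^{\w })^b$. I only need $p\ge 7$, which is the standing assumption.

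For (ii), set $F':=(f_i,f_j,f_1)$, so that $f_1'=f_i$, $f_2'=f_j$, $f_3'=f_1$. Condition $\llparenthesis 4\rrparenthesis$, namely $f_1^{\w }\not\in k[f_j^{\w }]$, is part of $\mathfrak{f}(i,j,1)$. For $\llparenthesis 3\rrparenthesis$ I will show $\phi \in \mathcal{P}(f_j,f_i,m)$ with $m\ge 1$. Since $\phi \not\in k[f_j]$, Remark~\ref{rem:deg^fg} gives $\deg ^{f_j,f_i}\phi \ge \deg f_i>\deg f_1\ge \deg \phi $. Thus $\phi \in \mathcal{D}(f_j,f_i)$ by Remark~\ref{rem:(1.3.7)}~(i), and $\deg \phi \le \deg f_i$ holds, which gives $\llparenthesis 3\rrparenthesis$.

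Condition $\llparenthesis 1\rrparenthesis$ then follows from Theorem~\ref{ex:ineq1}, applied with $(f,g)=(f_j,f_i)$, using $\deg f_j<\deg f_i$ and $f_i^{\w }\not\in k[f_j^{\w }]$. That theorem gives an odd $s\ge 3$ with $(f_i^{\w })^2\approx (f_j^{\w })^s$, and with $\delta =(1/2)\deg f_j$ it also gives
\[
\deg \phi \ge (s-2)\delta +\deg df_j\wedge df_i+\ep (f_j).
\]

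The main obstacle is $\llparenthesis 2\rrparenthesis$, i.e.
\[
(s-2)\delta +\deg df_i\wedge df_j\le \deg f_1<s\delta .
\]
The upper bound is clear, because $\deg f_1<\deg f_i=s\delta $. For the lower bound, combine $\deg f_1\ge \deg \phi $ with the inequality above and drop the nonnegative term $\ep (f_j)\ge 0$; note $\deg df_j\wedge df_i=\deg df_i\wedge df_j$. The delicate point is checking that Theorem~\ref{ex:ineq1}~(iii) applies to this specific $\phi $, with the same $s$ and the same $m$, which is exactly the $\phi \in \mathcal{P}(f_j,f_i,m)$ established above.
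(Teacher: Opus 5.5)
Your proposal is correct and follows essentially the same route as the paper's proof. In (i) you take $\phi\in k[S_3]$ with $\phi^{\w}=f_3^{\w}$, get $\phi\in\mathcal{P}(f_2,f_1,m)$ from Remark~\ref{rem:(1.3.7)}~(ii), and then apply Theorem~\ref{ex:ineq1}~(i). In (ii) you obtain $\llparenthesis 3\rrparenthesis$ from Remarks~\ref{rem:deg^fg} and \ref{rem:(1.3.7)}~(i), $\llparenthesis 1\rrparenthesis$ and $\llparenthesis 2\rrparenthesis$ from Theorem~\ref{ex:ineq1}~(i) and (iii) together with $\deg\phi\le\deg f_1<\deg f_i$, and $\llparenthesis 4\rrparenthesis$ directly from $\mathfrak{f}(i,j,1)$, exactly as the paper does.
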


\begin{proof}
(i) 
Note that 
$f_3^{\w }\not\in k[f_1^{\w },f_2^{\w }]$ holds in either case, 
since $\deg f_3<\deg f_1$ and $f_3^{\w }\not\in k[f_2^{\w }]$ 
in Case $\mathfrak{f}(1,2,3)$. 
Now, 
since $f_3^{\w }\in k[S_3]^{\w }$, 
there exists 
$\phi \in k[S_3]$ such that $\phi ^{\w }=f_3^{\w }$ 
by Lemma~\ref{def:initial algebra}. 
Then, 
Remark~\ref{rem:(1.3.7)} (ii) gives $m_{\w }^{S_3}(\phi )\ge 1$. 
Since $\deg \phi =\deg f_3\le \deg f_1$, 
we also have $\phi \in \mathcal{P}(f_2,f_1,m_{\w }^{S_3}(\phi ))$, 
proving (1). 
Then, 
since $\deg f_2<\deg f_1$ and $f_1^{\w }\not\in k[f_2^{\w }]$, 
Theorem~\ref{ex:ineq1} (i) yields~(2).

(ii) 
Since $\phi \in k[S_1]=k[f_i,f_j]$, and 
$\deg \phi \le \deg f_1<\deg f_i$ ($*$), 
we have $\phi \in \mathcal{P}(f_j,f_i,m_{\w }^{S_1}(\phi ))$. 
Since $\phi \not\in k[f_j]$, 
Remark~\ref{rem:deg^fg} gives $\deg f_i\le \deg ^{S_1}\phi $, 
and hence $\deg \phi <\deg ^{S_1}\phi $ by ($*$). 
This proves $\llparenthesis 3 \rrparenthesis$ 
in view of Remark~\ref{rem:(1.3.7)} (i). 
Then, 
since $\deg f_j<\deg f_i$ and $f_i^{\w }\not\in k[f_j^{\w }]$, 
Theorem~\ref{ex:ineq1} (i) yields $\llparenthesis 1 \rrparenthesis$, 
and Theorem~\ref{ex:ineq1} (iii) combined with $(*)$ yields 
$\llparenthesis 2 \rrparenthesis$. 
Since $\mathfrak{f}(i,j,1)$ implies $f_1^{\w }\not\in k[f_j^{\w }]$, 
we get 
$\llparenthesis 4 \rrparenthesis$. 
\end{proof}

We now prove (B). 
Assume ($\dag $) and $(\ddag )$, 
and let $F\in \A $ satisfy $\deg F=\mu $. 
Then, by Remark~\ref{rem:sharpflat} (ii), 
$F$ satisfies either 
(1) (A3) and $I_F=\emptyset $, or (2) $I_F\ne \emptyset $. 
Now, 
take $E\in \E $ with $\deg FE\le \deg F$. 
Our goal is to show that 
$FE\in \A $. 
By Remark~\ref{rem:Xsigma} (ii) and (i), 
$E':=X_\sigma ^{-1}EX_\sigma \in \E $ 
and $F_\sigma E'=(FE)_\sigma $ 
hold for all $\sigma \in \sym _3$. 
Since $(FE)_\sigma \in \A $ implies $FE\in \A $ 
by Remark~\ref{rem:A} (ii), 
we may replace $F$ with $F_\sigma $ 
for any $\sigma \in \sym _3$. 
Hence, in view of Remark~\ref{rem:prop key2} (ii) as well, 
it suffices to consider 
the following two cases ($1'$) and ($2'$):

\smallskip

\nd ($1'$) 
\underline{$(F,G)$ is an SU pair for some $G\in \mathcal{A}$, 
and $I_F=\emptyset $}\textbf{:} 
By Remark~\ref{exc:used for Claim B} (i) and~(ii), 
$I_F=\emptyset $ implies 
that the SU pair $(F,G)$ is proper. 
Hence, by the assumption that $\deg FE\le \deg F$ 
and Lemma~\ref{prop:c1-c6} (i) or (ii), 
we obtain 
$\tau \in \sym _3$ 
and $H\in {\rm D}_3(k)$ such that 
$((FE)_\tau ,GH)$ is a weak SU pair, 
where properness is required for (ii). 
Since $G\in \A $, 
we have $GH\in \A $ by Remark~\ref{rem:A} (ii). 
Thus, 
applying Remark~\ref{rem:A} (iii) (A2) 
with $(F_\sigma ,G):=((FE)_\tau ,GH)$ 
yields $FE\in \A $.

\smallskip

\nd 
($2'$) 
\underline{$3\in I_F$}\textbf{:} 
Since $\deg F=\mu $, 
we can apply Lemma~\ref{claim:1} to $F$. 
Hence, 
if $E\in \E _3$, 
then $FE\in \A $ holds by Lemma~\ref{claim:1} (i). 
Below, we assume that $E\in \E _1\cup \E _2$.

Since $3\in I_F$, 
we see from Remark~\ref{rem:prop key2} (iii) 
that $f_3^{\w }\in k[S_3]^{\w }$ (a) 
and there exists $g_3\in f_3+k[S_3]$ 
such that $\deg g_3<\deg f_3$ (b). 
By Remark~\ref{rem:ch2} (iv), 
we may assume that $g_3^{\w }\not\in k[S_3]^{\w }$ (c). 
Set $G:=(f_1,f_2,g_3)$. 
Then, we have 
$G\in \A $ (d) by Lemma~\ref{claim:1}~(i), 
and $\deg G<\deg F=\mu $ (e) by (b).

\smallskip

We first derive $FE\in \A $ from ($\ddag $) 
when $F$ satisfies 
$1^{\bullet }$, 
$\mathfrak{f}(1,2,3)$, 
or $\mathfrak{f}(2,1,3)$.

\nd $1^\bullet $ or $\mathfrak{f}(1,2,3)$\textbf{:} 
We show that $(F,G)$ is a weak SU pair. 
Then, 
since $E\in \E _1\cup \E _2$, 
without invoking properness, 
Lemma~\ref{prop:c1-c6}~(i) and (d) 
yield $FE\in \A $ as in Case~($1'$).

The conditions other than (SU6) are verified directly. 
For (SU6), by (a), (1) and (2) in 
Lemma~\ref{exc:ineq1}~(i) hold for $F$. 
It follows that $G$ satisfies 
$\llparenthesis 1 \rrparenthesis$ and $\llparenthesis 3 \rrparenthesis$, 
and $(\deg f_1,\deg f_2)=(b\delta ,2\delta )$ 
with $\delta :=(1/2)\deg f_2$. 
Now, suppose that (SU6) does not hold, 
i.e., 
$(b-2)\delta +\deg df_1\wedge df_2\le \deg g_3$. 
Then, 
together with (b) and $\deg f_3\le \deg f_1=b\delta $, 
$G$ satisfies $\llparenthesis 2 \rrparenthesis$. 
Thus, 
in view of (d) and (e), 
applying ($\ddag $) and Remark~\ref{rem:prop key2} (iv) to $G$ 
yields $g_3^{\w }\in k[f_1,f_2]^{\w }=k[S_3]^{\w }$. 
This contradicts (c).

\nd $\mathfrak{f}(2,1,3)$\textbf{:}  
Let $\tau :=(1,2)$. Then, 
since $E\in \E _1\cup \E _2$, 
Remark~\ref{rem:Xsigma}~(ii) gives 
$E':=X_\tau ^{-1}EX_\tau \in \E _1\cup \E _2$. 
Since 
$F_\tau$ satisfies $\mathfrak{f}(1,2,3)$, 
and $G_\tau \in \A $ by (d) and Remark~\ref{rem:A} (ii), 
we can also verify that 
$(F_\tau ,G_\tau )$ is a weak SU pair as in the preceding case. 
Thus, 
$(FE)_\tau =F_\tau E'\in \A $ holds as before, 
yielding $FE\in \A $ via Remark~\ref{rem:A}~(ii).

\smallskip

By symmetry, we assume that $E\in \E _1$ below. 
Write $FE=(f_1+\phi _1,f_2,f_3)$, 
where $\phi _1\in k[S_1]$ 
with $\deg \phi _1\le \deg f_1$ (f), 
since $\deg FE\le \deg F$ by assumption. 
Now, assume that $2^\bullet $ 
holds for some $(i,j)\in \{ (2,3),(3,2)\} $. 
Then, 
by (f) and Lemma~\ref{exc:ineq1} (ii), 
$F':=(f_i,f_j,f_1)$ satisfies 
$\llparenthesis 1 \rrparenthesis$--$\llparenthesis 4 \rrparenthesis$. 
Since $F'\in \A $ by Remark~\ref{rem:A} (ii) 
and $\deg F'=\deg F=\mu $, 
applying ($\ddag $) to $F'$ yields $3\in I_{F'}$. 
By Remark~\ref{rem:prop key2} (ii), 
this implies that $1\in I_F$. 
Hence, since $E\in \E _1$, we obtain $FE\in \A $ by Lemma~\ref{claim:1}~(i).

\smallskip

For the remaining cases, 
we consider the following three conditions:

\maru{1} $f_1^{\w }\in k[f_2^{\w }]$; \quad 
\maru{2} $\phi _1\in k[f_2]$;

\maru{3} 
$f_3^{\w }=af_1^{\w }+b(f_2^{\w })^l$ 
for some $(a,b)\in k^2\sm \zs $ and $l\in \N _+$.

\begin{klaim}\label{claim:2}\it 
If one of the following 
{\rm p}, {\rm q}, or {\rm r} holds, 
then we have $FE\in \mathcal{A}$$:$

\nd {\rm p}. 
One of {\rm \maru{1}}, {\rm \maru{2}}, or {\rm \maru{3}} 
holds$;$ 

\nd {\rm q}. 
$f_2^{\w }\in k[f_1^{\w }]$, 
and one of 
{\rm \maru{1}}, {\rm \maru{2}}, or {\rm \maru{3}} 
holds with $f_2$ and $f_3$ interchanged$;$

\nd {\rm r}. 
$\deg f_1<\deg f_2=\deg f_3$, 
$\deg f_3<\deg f_1=\deg f_2$, 
or 
$\deg f_1=\deg f_2=\deg f_3$. 
\end{klaim}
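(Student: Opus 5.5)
In each of the cases p, q, r we will exhibit an inductive path from $G=(f_1,f_2,g_3)$ to $FE=(f_1+\phi _1,f_2,f_3)$, possibly after a detour through an automorphism of degree $<\mu $, and then conclude with Remark~\ref{rem:induction}. Alternatively, we will show $1\in I_F$ and conclude with Lemma~\ref{claim:1}~(i). Recall the setting: $E\in \E _1$, $3\in I_F$, $G\in \A $, $\deg G<\mu $, and (a)--(f) hold. The basic move is the following. Any $\psi \in k[f_2,g_3]$ with $\deg (f_1+\psi )\le \deg f_1$ gives
$$
G\rightsquigarrow (f_1+\psi ,f_2,g_3)\rightsquigarrow (f_1+\psi ,f_2,f_3).
$$
Here the middle automorphism has degree $\le \deg G<\mu $. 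The last step is legitimate because $f_3-g_3\in k[f_1,f_2]=k[f_1+\psi ,f_2]$ whenever $\psi \in k[f_2]$, and more generally whenever $f_3-g_3\in k[f_1+\psi ,f_2]$. So the whole task is to move between $k[S_1]=k[f_2,f_3]$ and $k[f_2,g_3]$ without letting the degree reach $\mu $.

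\textbf{Case p.} If \maru{2} holds, then $\phi _1\in k[f_2]$, and the path $G\rightsquigarrow (f_1+\phi _1,f_2,g_3)\rightsquigarrow FE$ works directly. Its middle term has degree $\le \deg G<\mu $ by (f). If \maru{1} holds, Lemma~\ref{claim:1}~(ii) with $(i,j,l)=(1,2,3)$ gives $1\in I_F$, and we are done by Lemma~\ref{claim:1}~(i). For \maru{3}, the strategy is to reduce to \maru{1} or \maru{2}. If $a=0$, then $f_3^{\w }\in k[f_2^{\w }]$. Subtracting $bf_2^l$ from $f_3$ (a degree-lowering move in $\E _3$ commuting with our $\E _1$-moves) and applying Lemma~\ref{claim:1}~(ii) with the roles of $1$ and $3$ exchanged should reduce us to a smaller configuration. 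If $a\ne 0$, the analogous move is an $\E _3$-step $f_3\mapsto f_3-af_1-bf_2^l$. This step lowers the degree, and the corresponding $\E _1$-translate of $\phi _1$ differs only by an element of $k[f_2]$ plus a multiple of $f_3$. I expect to have to check carefully here that the intermediate degrees stay $<\mu $.

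\textbf{Case q.} This is Case p with $f_2$ and $f_3$ interchanged. The extra hypothesis $f_2^{\w }\in k[f_1^{\w }]$ lets Lemma~\ref{claim:1}~(ii) produce $2\in I_F$ from $3\in I_F$. Then we repeat the argument of p with $F_{(2,3)}$, using Remark~\ref{rem:prop key2}~(ii) and Remark~\ref{rem:A}~(ii) to transport the conclusion back to $F$.

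\textbf{Case r.} Here two of the degrees coincide at the top or all three agree, so initial forms of equal degree are either proportional or algebraically independent by Remark~\ref{rem:dependence}. The plan is as follows. When two top initial forms are proportional, an affine-type elementary move (subtracting a scalar multiple) reduces to \maru{1} or \maru{3}, hence to Case p. When they are independent, a degree comparison using (f) forces the leading part of $\phi _1$ into $k[f_2]$, i.e.\ essentially \maru{2}, again reducing to Case p.

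The main obstacle is \maru{3} with $a\neq 0$, where $f_1$ itself must be used inside an $\E _3$-move. There I would first try the path that passes through $(f_1,f_2,f_3-af_1-bf_2^l)$ and bound its degree by $\deg G$ using (b).
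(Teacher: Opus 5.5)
Your treatment of \maru{1} and \maru{2} matches the paper, and so does your reduction of Case q to Case p (obtain $2\in I_F$ from Lemma~\ref{claim:1}~(ii), then pass to $F_{(2,3)}$). There are, however, two genuine gaps: \maru{3} with $a\neq 0$, and Case r.

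\textbf{Case \maru{3}.} Start not from $G$ but from $F':=(f_1,f_2,f)$ with $f:=f_3-af_1-bf_2^l$. It lies in $\A$ by Lemma~\ref{claim:1}~(i), since $3\in I_F$. Also $\deg F'<\mu$ simply because $\deg f<\deg f_3$; no comparison with $\deg G$ via (b) is needed.

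If $a=0$, then $k[f_2,f]=k[f_2,f_3]$, so $F'$, $(f_1+\phi_1,f_2,f)$, $FE$ is already an inductive path. Lemma~\ref{claim:1}~(ii) plays no role here; with $1$ and $3$ exchanged it would need $1\in I_F$ as input.

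If $a\neq 0$, the path you propose does not close. The move $F'\sim_1(f_1+\phi_1,f_2,f)$ needs $\phi_1\in k[f_2,f]$, which fails e.g.\ for $\phi_1=f_3$. The final move back to $f_3$ needs $af_1+bf_2^l\in k[f_1+\phi_1,f_2]$, which holds only if $\phi_1\in k[f_2]$.

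The missing idea is to exchange the first and third coordinates. Since $\deg f_1=\deg f_3$, the move $F'\sim_1(a^{-1}f_3,f_2,f)$ keeps the degree at $\deg F'<\mu$. Then $(a^{-1}f_3,f_2,f)\sim_3(a^{-1}f_3,f_2,-a(f_1+\phi_1))$ is legitimate, because $-a(f_1+\phi_1)-f=-a\phi_1-f_3+bf_2^l\in k[f_2,f_3]$. The endpoint is $(FEH)_{(1,3)}$ with $H=(-ax_1,x_2,a^{-1}x_3)\in\D_3(k)$, so Remark~\ref{rem:A}~(ii) gives $FE\in\A$. Case q inherits this step when you run Case p for $F_{(2,3)}$.

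\textbf{Case r.} Your dichotomy only works when $\deg f_1<\deg f_2=\deg f_3$. There, $f_2^{\w}\approx f_3^{\w}$ is \maru{3} outright, with no affine move needed. Otherwise $f_2^{\w},f_3^{\w}$ are algebraically independent by Remark~\ref{rem:dependence}. A nonconstant $\phi_1$ would then satisfy $\deg\phi_1=\deg^{S_1}\phi_1\ge\deg f_2>\deg f_1$ by Remarks~\ref{rem:(1.3.7)}~(i) and~\ref{rem:deg^fg}, contradicting (f). Hence $\phi_1\in k$, which is \maru{2}.

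In the other two subcases, no bound on $\phi_1$ can give \maru{2}. For instance, if $\deg f_3<\deg f_1=\deg f_2$, then $\phi_1=f_3$ satisfies (f) but not \maru{2}. What you are missing is hypothesis (a), $f_3^{\w}\in k[S_3]^{\w}$. Choose $\phi\in k[f_1,f_2]$ with $\phi^{\w}=f_3^{\w}$ (Lemma~\ref{def:initial algebra}).
\begin{itemize}
\item If $\deg f_3<\deg f_1=\deg f_2$, Remark~\ref{rem:deg^fg} gives $\deg^{S_3}\phi\ge\deg f_1>\deg\phi$, so $\phi\in\mathcal{D}(f_1,f_2)$. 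By Remark~\ref{rem:(1.3.7)}~(i), $f_1^{\w},f_2^{\w}$ are algebraically dependent, and by Remark~\ref{rem:dependence} they are therefore proportional. This is \maru{1}.
\item If all three degrees coincide and $f_1^{\w}\not\approx f_2^{\w}$, then $\mathcal{D}(f_1,f_2)=\emptyset$. So $\deg^{S_3}\phi=\deg\phi=\deg f_1$, which forces $\phi\in kf_1+kf_2+k$ and hence $f_3^{\w}\in kf_1^{\w}+kf_2^{\w}$. This is \maru{3} with $l=1$.
\end{itemize}
In every subcase, Claim p then applies.
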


\begin{proof}
Recall that $3\in I_F$ by assumption.

\nd 
p. \maru{1} 
Since $3\in I_F$ and $f_1^{\w }\in k[f_2^{\w }]$, 
applying Lemma~\ref{claim:1} (ii) with $(i,j,l)=(1,2,3)$ 
yields $1\in I_F$. 
Since $E\in \E _1$, 
this gives $FE\in \A $ by Lemma~\ref{claim:1} (i).

\nd 
\maru{2} 
From (d), (e), (f), 
and $\phi _1\in k[f_2]$, 
we see that 
$G$, $(f_1+\phi _1,f_2,g_3)$, $(f_1+\phi _1,f_2,f_3)=FE$ 
is an inductive path, 
yielding $FE\in \A $.

\nd 
\maru{3} 
Set $f:=f_3-(af_1+bf_2^l)$ 
and $F':=(f_1,f_2,f)$. 
Then, 
we have $\deg f<\deg f_3$, 
and $F'\in \A $ by Lemma~\ref{claim:1}~(i), 
since $3\in I_F$. 
Now, 
if $a=0$, 
then similarly to \maru{2}, 
$F'$, $(f_1+\phi _1,f_2,f)$, $FE $ is an inductive path, 
so $FE\in \A $. 
If $a\ne 0$, 
then we have $\deg f_1=\deg f_3$. 
Hence, $F'$, 
$(f_1+a^{-1}f+a^{-1}bf_2^l,f_2,f)=(a^{-1}f_3,f_2,f)$, 
$(a^{-1}f_3,f_2,-a(f_1+\phi _1))
=(FEH)_{(1,3)}$ 
is an inductive path, 
where $H:=(-ax_1,x_2,a^{-1}x_3)\in \D _3(k)$. 
This yields $FE\in \A $ via Remark~\ref{rem:A} (ii).

\nd 
q. 
Since $3\in I_F$ and $f_2^{\w }\in k[f_1^{\w }]$, 
applying Lemma~\ref{claim:1}~(ii) with $(i,j,l)=(2,1,3)$ 
yields $2\in I_F$. 
Then, 
the assertions hold analogously to Case p.

\nd 
r. 
This case can be proved similarly to \cite[p.~105, Claim~5]{tame3}, 
using Remarks~\ref{rem:deg^fg} 
and \ref{rem:(1.3.7)} (i) 
and Claim p. 
Hence, we omit the details. 
\end{proof}

Therefore, 
we may assume the following, 
where $'$ denotes the negation:

p$'$. \maru{1}$'$, \maru{2}$'$, and \maru{3}$'$ hold;

q$'$. If $f_2^{\w }\in k[f_1^{\w }]$, then 
\maru{1}$'$, \maru{2}$'$, and \maru{3}$'$ 
hold with $f_2$ and $f_3$ interchanged.

\begin{klaim}\label{rem:five cases}\it 

Assume {\rm p}$'$ and {\rm q}$'$. 
Then, the following statements are true$:$

\nd 
{\rm (1)} If $\max \{ \deg f_2,\deg f_3\} <\deg f_1$, 
then $\mathfrak{f}(1,2,3)$ holds.

\nd 
{\rm (2)} If $\max \{ \deg f_1,\deg f_2\} <\deg f_3$, 
then $2^\bullet $ holds for $(i,j)=(3,2)$.

\nd 
{\rm (3)} If $\max \{ \deg f_1,\deg f_3\} <\deg f_2$ 
and $f_2^{\w }\in k[f_1^{\w }]$, 
then $2^\bullet $ holds for $(i,j)=(2,3)$.

\nd 
{\rm (4)} If $\max \{ \deg f_1,\deg f_3\} <\deg f_2$ 
and $f_2^{\w }\not\in k[f_1^{\w }]$, 
then $\mathfrak{f}(2,1,3)$ holds.

\nd 
{\rm (5)} If $\deg f_2<\deg f_1=\deg f_3$, 
then $1^{\bullet }$ holds. 
\end{klaim}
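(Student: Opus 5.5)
Each of (1)--(5) will be obtained by unpacking the negated conditions p$'$ and q$'$ in the setting of Case ($2'$). The standing facts are: $3\in I_F$, so (a) $f_3^{\w }\in k[S_3]^{\w }$; (f) $\deg \phi _1\le \deg f_1$ for $\phi _1\in k[S_1]$; and Claim~\ref{claim:2} r has been disposed of. We also use two elementary observations. First, for $u,v\in \kx \sm \zs $ with $\deg u>\deg v$, we have $u^{\w }\not\in k[v^{\w }]$ exactly when $u^{\w }\not\approx (v^{\w })^l$ for every $l\in \N _+$. Second, if $\deg u<\deg v$, then $u^{\w }\not\in k[v^{\w }]$ holds automatically, since $u\notin k$.

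For (1), the degree inequality of $\mathfrak{f}(1,2,3)$ is the hypothesis. Condition \maru{1}$'$ gives $f_1^{\w }\not\in k[f_2^{\w }]$. For $f_3^{\w }\not\in k[f_2^{\w }]$, note that if $f_3^{\w }\approx (f_2^{\w })^l$ with $l\ge 1$, then \maru{3} holds with $a=0$, contradicting \maru{3}$'$. If instead $\deg f_3<\deg f_2$, the second observation applies.

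For (2), set $(i,j)=(3,2)$; we must verify $\mathfrak{f}(3,2,1)$ and $\phi _1\not\in k[f_2]$. The latter is \maru{2}$'$. The degree condition is the hypothesis, and $f_1^{\w }\not\in k[f_2^{\w }]$ is \maru{1}$'$. For $f_3^{\w }\not\in k[f_2^{\w }]$, the relation $f_3^{\w }\approx (f_2^{\w })^l$ would again give \maru{3} with $a=0$. Since $\deg f_3>\deg f_2$, we have $l\ge 1$, so this is excluded.

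For (3), we have $f_2^{\w }\in k[f_1^{\w }]$, so q$'$ is available, giving \maru{1}$'$--\maru{3}$'$ with $f_2$ and $f_3$ interchanged. These say: $f_1^{\w }\not\in k[f_3^{\w }]$; $\phi _1\not\in k[f_3]$; and $f_2^{\w }\ne af_1^{\w }+b(f_3^{\w })^l$. The last excludes $f_2^{\w }\in k[f_3^{\w }]$, because $\deg f_2>\deg f_3$ forces $l\ge 1$. Together with the degree hypothesis, this gives $\mathfrak{f}(2,3,1)$ and $\phi _1\not\in k[f_3]$, that is, $2^\bullet $ for $(i,j)=(2,3)$.

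For (4), the condition $\mathfrak{f}(2,1,3)$ requires $f_2^{\w }\not\in k[f_1^{\w }]$, which is assumed. It also requires $f_3^{\w }\not\in k[f_1^{\w }]$. Suppose $f_3^{\w }\approx (f_1^{\w })^l$. If $l=1$, this is \maru{3} with $b=0$. If $l\ge 2$, then $(f_1^{\w })^l\approx ((f_1^{\w })^{l})$ does not fit \maru{3} directly. In that case I would instead use (a) together with the argument of Lemma~\ref{exc:ineq1}: if $f_3^{\w }\in k[f_1^{\w }]$, then Lemma~\ref{claim:1}~(ii) with $(i,j,l)=(3,1,3)$ is unavailable, so the case must be handled directly. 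Since $f_3\sim $ a power of $f_1$ up to a $\w $-degree drop, $(f_1,f_2,f_3-cf_1^l)$ gives an inductive path exactly as in \maru{3}. I expect this step to be the main obstacle: one has to extend the \maru{3}-type argument to higher powers of $f_1$, or show this subcase is vacuous via r and the degree constraints.

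For (5), $1^\bullet $ requires:
\begin{itemize}
\item $\deg f_2<\deg f_1=\deg f_3$, which is the hypothesis;
\item $f_1^{\w }\not\in k[f_2^{\w }]$, which is \maru{1}$'$;
\item $f_3^{\w }\not\in k[f_1^{\w },f_2^{\w }]$.
\end{itemize}
For the third point, an element of $k[f_1^{\w },f_2^{\w }]$ of degree $\deg f_1$ has the form $af_1^{\w }+b(f_2^{\w })^l$. The only possible monomials are $f_1^{\w }$ and $(f_2^{\w })^l$, because $\deg f_1^if_2^j>\deg f_1$ whenever $i\ge 1$ and $(i,j)\ne (1,0)$. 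So this point is exactly \maru{3}$'$.
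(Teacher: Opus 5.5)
Your treatment of (1), (2), (3), and (5) is correct and matches the paper's. Each is a direct unpacking of \maru{1}$'$--\maru{3}$'$ or their interchanged versions. The substantive points are that \maru{3}$'$ forbids $f_3^{\w}\approx (f_2^{\w})^l$, and that in (5) an element of $k[f_1^{\w},f_2^{\w}]$ equal to $f_3^{\w}$ must have the form $af_1^{\w}+b(f_2^{\w})^l$.

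\textbf{The gap is in (4).} The subcase $f_3^{\w}\approx (f_1^{\w})^l$ with $l\ge 2$ is left open, and your proposed repair does not work.
\begin{itemize}
\item The claim asserts the condition $\mathfrak{f}(2,1,3)$, not $FE\in\A$. So constructing an inductive path would not prove the statement even if it succeeded.
\item The path itself fails. Since $\phi_1\in k[f_2,f_3]$ need not lie in $k[f_2,f_3-cf_1^l]$, passing from $(f_1,f_2,f_3-cf_1^l)$ to $(f_1+\phi_1,f_2,f_3-cf_1^l)$ is not an elementary step. The \maru{3} argument works only because $k[f_2,f]=k[f_2,f_3]$ when $a=0$, or via a swap of roles when $\deg f_1=\deg f_3$.
\item Claim r cannot make the subcase vacuous either. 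Here $\deg f_1<\deg f_3<\deg f_2$ are all distinct, so none of r's equal-degree conditions applies.
\end{itemize}

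\textbf{What closes it.} The subcase is in fact impossible, and the reason is the degree inequality, specifically Corollary~\ref{claimm:ineq1}, applied with $(f,g):=(f_3,f_2)$. Its hypotheses hold:
\begin{itemize}
\item $\deg f_3<\deg f_2$ by assumption.
\item $f_2^{\w}\notin k[f_3^{\w}]$, since $k[f_3^{\w}]=k[(f_1^{\w})^l]\subset k[f_1^{\w}]$ and $f_2^{\w}\notin k[f_1^{\w}]$ by the hypothesis of (4).
\item $\phi_1\in k[f_2,f_3]\setminus k$, by \maru{2}$'$.
\end{itemize}
The corollary then gives
$$\deg\phi_1>\tfrac12\deg f_3\ge \tfrac1l\deg f_3=\deg f_1,$$
which contradicts (f), i.e.\ $\deg\phi_1\le\deg f_1$. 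Together with \maru{3}$'$, which handles $l=1$, this yields $f_3^{\w}\notin k[f_1^{\w}]$. Without this lower bound from Theorem~\ref{ex:ineq1}, which is where $p\ge 7$ enters, your argument for (4) is incomplete.
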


\begin{proof}
(1), (2), and (3) are straightforward, 
since \maru{3}$'$ implies $f_3^{\w }\not\in k[f_2^{\w }]$.

(4) 
Supposing $f_3^{\w }\in k[f_1^{\w }]$, 
we have $f_3^{\w }\approx (f_1^{\w })^l$, 
where $l\ge 2$ by \maru{3}$'$. 
Then, since $\deg f_3<\deg f_2$, 
$f_2^{\w }\not\in k[(f_1^{\w })^l]=k[f_3^{\w }]$, 
and $\phi _1\not\in k$ by \maru{2}$'$, 
Corollary~\ref{claimm:ineq1} yields 
$\deg \phi _1>(1/2)\deg f_3\ge (1/l)\deg f_3=\deg f_1$, 
contradicting~(f).

(5) 
We have $f_1^{\w }\not\in k[f_2^{\w }]$ by \maru{1}$'$, 
and $f_3^{\w }\not\in k[f_1^{\w },f_2^{\w }]$ by \maru{3}$'$. 
\end{proof}

Now, observe that (1)--(5) and r cover all possibilities. 
Since $FE\in \A $ holds in all these cases, 
the proof of (B) is completed. 
We remark that, if $\w $ is independent, 
then Claim r is not required. 
In fact, 
\maru{1}$'$, \maru{3}$'$, and (\ref{eq:approx}) 
imply that $\deg f_1$, $\deg f_2$, and $\deg f_3$ 
are mutually distinct. 
Hence, one of (1)--(4) must hold.

\section{Properties of weak Shestakov-Umirbaev pairs}\label{sect:WSU}
\setcounter{equation}{0}

In this section, 
we establish key properties of the weak SU pairs 
and prove the three lemmas in \S~\ref{subsect:fundamental lemmas}. 
Throughout, 
let $(F,G)\in (\Aut _k\kx )^2$ be a weak SU pair 
unless otherwise stated, 
and let $s$, $\delta $, 
$b$, $e$, $a$, $c$, and $\psi $ 
be as defined in Claims~\ref{claim:A}, \ref{claim:g2}, 
and \ref{claim:g1} below. 
It is shown in Claims~\ref{claim:A} and~\ref{claim:g2} 
that $(F,G)$ satisfies (SU2) and (SU3).

\subsection{Descriptions of $g_1$ and $g_2$}\label{subsect:g1g2}

By (SU1$'$), 
we have $\phi _3:=f_3-g_3\in k[g_1,g_2]$. 
Moreover, (SU5) gives $\deg \phi _3=\deg f_3$ and 
$\phi _3^{\w }=f_3^{\w }$ ($5^{\rm s}$). 
From (SU4) and ($5^{\rm s}$), we get 
$\deg \phi _3\le \deg g_1$ and 
$\phi _3^{\w }\not\in k[g_1^{\w },g_2^{\w }]$. 
Thus, by Remark~\ref{rem:(1.3.7)} (ii), 
we obtain 
\begin{equation}\label{eq:m(F,G)}
\phi _3\in \mathcal{P}(g_2,g_1,m_G^F), 
\text{ \ where \ } 
m_G^F\ge 1. 
\end{equation}
Therefore, 
together with (SU3$'$) and the assumption that $p\ge 7$, 
Theorem~\ref{ex:ineq1} yields the following claim, 
in which (iii) also uses ($5^{\rm s}$).

\begin{claim}\label{claim:A}
There exists an odd $s\ge 3$ such that the following 
{\rm (i)}--{\rm (iii)} hold$:$

\nd{\rm (i)} 
We have $(g_1^{\w })^2\approx (g_2^{\w })^s$. 
Hence, {\rm (SU3)}~holds.

\nd{\rm (ii)} 
Exactly one of 
$m_G^F=1$, 
$(s,m_G^F)=(3,2)$, 
or $(s,p,m_G^F)=(3,7,7)$ holds.

\nd{\rm (iii)} 
We have 
$\deg f_3\ge (s-2)\delta +\deg dg_1\wedge dg_2+\ep (g_2)$, 
where $\delta :=(1/2)\deg g_2$. 
\end{claim}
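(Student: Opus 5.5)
The plan is to apply Theorem~\ref{ex:ineq1} to the pair $(f,g):=(g_2,g_1)$ and the element $\phi:=\phi_3$.

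First, I check the hypotheses of Theorem~\ref{ex:ineq1}. We have $p\ge 7$ by the standing assumption. The conditions $\deg g_2\le\deg g_1$ and $g_1^{\w}\not\in k[g_2^{\w}]$ are exactly (SU$3'$). By (\ref{eq:m(F,G)}) we have $\phi_3\in\mathcal{P}(g_2,g_1,m_G^F)$ with $m:=m_G^F\ge 1$, so $\mathcal{P}(g_2,g_1,m)\ne\emptyset$. We also need $dg_2\wedge dg_1\ne 0$, which is the standing assumption of \S~\ref{sect:consequence}. It holds because $G\in\Aut_k\kx$, so $dg_1\wedge dg_2\wedge dg_3=dG\ne 0$.

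The theorem then produces an odd $b\ge 3$, which we rename $s$, with three consequences:
\begin{itemize}
\item Theorem~\ref{ex:ineq1}~(i) gives $(g_1^{\w})^2\approx(g_2^{\w})^s$. This is the first part of (i), and since (SU3) asks precisely for this relation with some odd $s\ge 3$, (SU3) holds.
\item Theorem~\ref{ex:ineq1}~(ii) is literally statement (ii), with $m=m_G^F$.
\item Theorem~\ref{ex:ineq1}~(iii), applied to $\phi=\phi_3\in\mathcal{P}(g_2,g_1,m)$ with $\delta=(1/2)\deg g_2$, gives
\[
\deg\phi_3\ge(s-2)\delta+\deg dg_2\wedge dg_1+\ep(g_2).
\]
\end{itemize}

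To obtain (iii), replace $\deg\phi_3$ by $\deg f_3$ using ($5^{\rm s}$), which says $\deg\phi_3=\deg f_3$ because (SU5) gives $\deg g_3<\deg f_3$. Also note that $\deg dg_2\wedge dg_1=\deg dg_1\wedge dg_2$, since the two forms differ only by a sign.

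The only step needing care is confirming that the conventions match. Theorem~\ref{ex:ineq1} is stated with its $f$ of smaller degree and $\delta=(1/2)\deg f$; here that role is played by $g_2$, which matches the claim's $\delta=(1/2)\deg g_2$. The $\ep(f)$ in the theorem then becomes $\ep(g_2)$, exactly as in (iii). I do not expect a genuine obstacle; the argument is essentially bookkeeping.
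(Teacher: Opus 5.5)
Your proposal is correct and follows the paper's argument: the paper also applies Theorem~\ref{ex:ineq1} to $(f,g)=(g_2,g_1)$. It uses (SU$3'$) and $p\ge 7$ for the hypotheses and (\ref{eq:m(F,G)}) for $\phi_3\in\mathcal{P}(g_2,g_1,m_G^F)$, and invokes ($5^{\rm s}$), i.e.\ $\deg\phi_3=\deg f_3$, to turn part (iii) of the theorem into the inequality for $\deg f_3$.
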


We note that 
Claim~\ref{claim:A} (i) and (iii) and (SU4) 
imply the following:

($3^{\rm s}$) $(\deg g_1,\deg g_2)=(s\delta ,2\delta )$. 
\quad 
($3^{\rm u}$) $\delta \le (s-2)\delta <\deg f_3\le s\delta $. 

($4^{\rm s}$) $\ep (g_1)\ge \max \{ \deg df_3-\deg dg_1,0\} $. 
\quad 
($4^{\rm u}$) $g_1^{\w },g_2^{\w }\not\in kf_3^{\w }$.

\begin{claim}\label{claim:g2}
\nd {\rm (i)} 
$g_2-f_2\in kf_3+k$, i.e., 
$g_2=f_2+bf_3+e$ for some $b,e\in k$.

\nd{\rm (ii)} 
If $\deg f_2<\deg f_3$, 
then $b=0$. 
Hence, if $\deg f_2\ne \deg f_3$, 
then $g_2^{\w }=f_2^{\w }$.

\nd{\rm (iii)} We have $\deg f_2=\deg g_2=2\delta $ 
and $f_2^{\w }\not\approx f_3^{\w }$. 
Hence, {\rm (SU2)} holds.

\nd{\rm (iv)} 
$f_i^{\w }\not\in k[f_j^{\w }]$ holds for $(i,j)=(2,3),(3,2)$.

\nd{\rm (v)} 
If $f_2^{\w }$ and $f_3^{\w }$ are algebraically dependent over $k$, 
then we have $\deg f_3<s\delta $. 
\end{claim}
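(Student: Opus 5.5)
The plan is to work directly from (SU$1'$), (SU$2'$), (SU4) and the consequences $(3^{\rm s})$, $(3^{\rm u})$, $(4^{\rm u})$ of Claim~\ref{claim:A}. The only real inputs are degree comparisons and the fact that $\deg f_3>(s-2)\delta \ge \delta $.

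For (i), I would write $g_2=f_2+\psi (f_3)$ with $\psi \in k[x]$, which is possible by (SU$1'$). Suppose $\psi $ has degree $t\ge 2$.
\begin{itemize}
\item Then $\deg \psi (f_3)=t\deg f_3\ge 2\deg f_3>2\delta \ge \deg f_2$, using $(3^{\rm u})$ and (SU$2'$).
\item Hence $\deg g_2=t\deg f_3>2\delta $, which contradicts $(3^{\rm s})$.
\end{itemize}
So $\psi $ has degree at most one, i.e., $g_2=f_2+bf_3+e$.

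For (ii), suppose $\deg f_2<\deg f_3$ and $b\ne 0$. Then $g_2^{\w }\approx f_3^{\w }$, which contradicts $(4^{\rm u})$, so $b=0$. If instead $\deg f_2>\deg f_3$, the term $f_2$ dominates. Since $\deg f_2>0$, in both cases $g_2^{\w }=f_2^{\w }$.

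For (iii), I would split according to whether the two degrees agree.
\begin{itemize}
\item If $\deg f_2\ne \deg f_3$, then (ii) gives $\deg f_2=\deg g_2=2\delta $ at once.
\item If $\deg f_2=\deg f_3$, then $\deg f_2<2\delta $ would force $\deg g_2\le \max \{ \deg f_2,\deg f_3\} <2\delta $, so again $\deg f_2=2\delta $.
\end{itemize}
Next suppose $\deg f_2=\deg f_3$ and $f_2^{\w }=cf_3^{\w }$. Either $c+b\ne 0$, in which case $g_2^{\w }\approx f_3^{\w }$, contradicting $(4^{\rm u})$. Or $c+b=0$, in which case there is cancellation, so $\deg g_2<\deg f_2=2\delta $, contradicting $(3^{\rm s})$. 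Hence $f_2^{\w }\not\approx f_3^{\w }$, and (SU2) follows, since (SU$2'$) already gives $\deg f_1\le \deg g_1$.

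For (iv), first suppose $f_2^{\w }\approx (f_3^{\w })^t$. Then $2\delta =t\deg f_3>t\delta $, so $t=1$, which (iii) excludes. Now suppose $f_3^{\w }\approx (f_2^{\w })^t$.
\begin{itemize}
\item Then $\deg f_3=2t\delta $, and $(3^{\rm u})$ gives $(s-2)\delta <2t\delta \le s\delta $. Since $s$ is odd, $t=(s-1)/2$.
\item The case $t=1$ is excluded by (iii).
\item If $t\ge 2$, then $\deg f_3\ne \deg f_2$, so (ii) gives $g_2^{\w }=f_2^{\w }$. This yields $f_3^{\w }\in k[g_2^{\w }]$, contradicting (SU4).
\end{itemize}

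For (v), which I expect to be the only mildly delicate step, suppose $\deg f_3=s\delta $. Then $\deg f_3/\deg f_2=s/2$ with $\gcd (s,2)=1$. So Lemma~\ref{lem:dependence} gives $(f_3^{\w })^2\approx (f_2^{\w })^s$. Since $s\ge 3$, we have $\deg f_3\ne \deg f_2$, so $f_2^{\w }=g_2^{\w }$ by (ii). Combining this with Claim~\ref{claim:A}~(i) yields $(f_3^{\w }/g_1^{\w })^2\in k^*$. The element $f_3^{\w }/g_1^{\w }\in \kxr $ is then algebraic over $k$. Because $k$ is algebraically closed in $\kxr $, the argument used in the proof of Theorem~\ref{thm:modification}~(ii) gives $f_3^{\w }\in k^*g_1^{\w }$. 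This contradicts (SU4), so $\deg f_3<s\delta $.
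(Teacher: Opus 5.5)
Your proof is correct and follows essentially the same route as the paper. Parts (i)--(iv) rest on the same degree comparisons from (SU$1'$), (SU$2'$), $(3^{\rm s})$, $(3^{\rm u})$, $(4^{\rm u})$ and (SU4); in (iv) you rule out $t=1$ by (iii) and so handle both cases at once, where the paper instead splits into $\deg f_2=\deg f_3$ and $\deg f_2\ne \deg f_3$. In (v) the paper transfers the algebraic dependence to $g_1^{\w}$ and $f_3^{\w}$ and applies Remark~\ref{rem:dependence}, whereas you extract the explicit relation $(f_3^{\w})^2\approx (f_2^{\w})^s$ from Lemma~\ref{lem:dependence} and use that $k$ is algebraically closed in $\kxr$; this is the same idea in a more concrete form.
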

\begin{proof}
We have 
$g_2-f_2\in k[f_3]$ by (SU$1'$), 
$\deg f_3>\delta $ by ($3^{\rm u}$), 
and $\deg (g_2-f_2)\le \deg g_2=2\delta $ 
by (SU2$'$) and ($3^{\rm s}$). 
Hence, (i) holds. 
(ii) and (iii) follow from (i), 
($4^{\rm u}$), and $\deg g_2\ge \deg f_2$. 
Since $f_2^{\w }\not\approx f_3^{\w }$ by (iii), 
it follows from Remark~\ref{rem:dependence} that, 
if $\deg f_2=\deg f_3$, 
then $f_2^{\w }$ and $f_3^{\w }$ 
are algebraically independent over $k$. 
In this case, (iv) and (v) are clear. 
Assume that $\deg f_2\ne \deg f_3$. 
Then, 
since $\deg f_2=2\delta <\deg f_3^2$ 
by (iii) and ($3^{\rm u}$), 
we obtain $f_2^{\w }\not\in k[f_3^{\w }]$. 
Moreover, 
the last part of 
(ii) gives $g_2^{\w }=f_2^{\w }$. 
Hence, (SU4) yields 
$f_3^{\w }\not\in k[g_2^{\w }]=k[f_2^{\w }]$, 
proving (iv). 
If $f_2^{\w }$ 
(and hence $g_2^{\w }$) 
and $f_3^{\w }$ are algebraically dependent over $k$, 
then so are $g_1^{\w }$ and $f_3^{\w }$ by 
Claim~\ref{claim:A} (i). 
Since $g_1^{\w }\not\approx f_3^{\w }$ by~($4^{\rm u}$), 
this implies that 
$\deg f_3\ne \deg g_1$ by Remark~\ref{rem:dependence}. 
Hence, we obtain (v) by (SU4) and~($3^{\rm s}$). 
\end{proof}

\begin{definition}\rm 
We say that a weak SU pair 
$(F,G)$ is of {\it ps type} if $p\mid s$, 
and of {\it pm7 type} if 
$(s,p,m_G^F)=(3,7,7)$. 
\end{definition}

By (\ref{eq:m(F,G)}) and ($3^{\rm s}$), 
the setting (S) defined in \S ~\ref{subsect:basic concepts} 
holds with 
\begin{equation}\label{eq:(S) for phi3}
(f,g,\phi ,m,a,b):=(g_2,g_1,\phi _3,m^F_G,2,s). 
\end{equation}

\begin{claim}\label{claim:pm7}

\nd{\rm (i)} 
If $(F,G)$ is of non-pm7 type, 
then 
\begin{equation}\label{eq:pfSUreduction1.5}
\deg dg_2\wedge d\phi _3\ge 
\deg g_1+\deg dg_1\wedge dg_2. 
\end{equation}

\nd{\rm (ii)} 
If $m_G^F=1$ and $p\nmid s$, 
then $\deg dg_1-\deg dg_2=\deg g_1-\deg g_2=(s-2)\delta $.

\nd{\rm (iii)} 
If $(s,m_G^F)=(3,2)$, 
then $\deg f_3>2\delta $ 
and 
$(1/2)\delta <\deg dg_1-\deg dg_2<(3/2)\delta $.

\nd{\rm (iv)} 
If $(F,G)$ is of pm7 type, 
then $\deg f_3>2\delta $, 
$\ep (g_1,g_2)<(1/7)\delta $, 
$\deg dg_1\wedge dg_2\le (1/8)\delta $, 
and 
$(6/7)\delta <\deg dg_1-\deg dg_2<(8/7)\delta $.

\nd{\rm (v)} 
If $(F,G)$ is of ps type, 
then 
$\deg f_3>5\delta $ by {\rm ($3^{\rm u}$)}, 
and $m_G^F=1$ by Claim~{\rm \ref{claim:A} (ii)}.

\end{claim}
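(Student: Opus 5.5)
The plan is to work throughout in the setting (S) given by (\ref{eq:(S) for phi3}), i.e.\ $(f,g,\phi ,m,a,b)=(g_2,g_1,\phi _3,m_G^F,2,s)$, with $(\deg g_1,\deg g_2)=(s\delta ,2\delta )$ by ($3^{\rm s}$). Since $\deg \phi _3=\deg f_3\le \deg g_1$, we have $\phi _3\in \mathcal{P}(g_2,g_1,m_G^F)$ by (\ref{eq:m(F,G)}). Every item then reduces to earlier results once we know which case of Claim~\ref{claim:A}~(ii) we are in; part (v) is already justified in its statement.

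For (i), non-pm7 type means, by Claim~\ref{claim:A}~(ii), that $m_G^F=1$ or $(s,m_G^F)=(3,2)$, so $1\le m_G^F\le 2<p$. Since $a=2$, $p\nmid a$, and $b=s>a\ge 2$, Lemma~\ref{lem:deg P'(g)}~(ii) applies and gives
$\deg dg_2\wedge d\phi _3\ge s\delta +\deg dg_2\wedge dg_1=\deg g_1+\deg dg_1\wedge dg_2$, which is (\ref{eq:pfSUreduction1.5}).
For (ii), the assumptions $m_G^F=1$ and $p\nmid s$ together with $\phi _3\in \mathcal{P}(g_2,g_1,1)$ are exactly the hypotheses of Lemma~\ref{lem:deg P'(g)}~(iii). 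It yields $\deg g_2-\deg dg_2=\deg g_1-\deg dg_1$, hence $\deg dg_1-\deg dg_2=\deg g_1-\deg g_2=(s-2)\delta $.

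For (iii) and (iv), use $\deg \phi _3=\deg f_3$ from ($5^{\rm s}$) together with the inequalities of Example~\ref{cor:nonsingular}. If $(s,m_G^F)=(3,2)$, case (b) with $p\ge 7\ge 5$ gives $\deg f_3\ge 2\delta +2\deg dg_1\wedge dg_2+2\ep (g_1,g_2)>2\delta $, because $\deg dg_1\wedge dg_2>0$. Combining this with $\deg f_3\le 3\delta $ gives $\ep (g_1,g_2)<(1/2)\delta $, as in Remark~\ref{rem:1/7}~(ii). In the pm7 case, Example~\ref{cor:nonsingular}~(c) gives $\deg f_3\ge 2\delta +8\deg dg_1\wedge dg_2+7\ep (g_1,g_2)>2\delta $. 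Remark~\ref{rem:1/7}~(i) then gives $\deg dg_1\wedge dg_2\le (1/8)\delta $ and $\ep (g_1,g_2)<(1/7)\delta $.

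Finally, write $\ep _i:=\deg g_i-\deg dg_i$, so that $0\le \ep _i\le \ep (g_1,g_2)$. Then $\deg dg_1-\deg dg_2=(s-2)\delta -\ep _1+\ep _2=\delta -\ep _1+\ep _2$ with $s=3$. The strict bound $\ep (g_1,g_2)<\eta $ then puts this quantity in $(\delta -\eta ,\delta +\eta )$. Taking $\eta =\delta /2$ gives the interval in (iii), and $\eta =\delta /7$ gives the interval in (iv).

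The only delicate point is bookkeeping the strictness of inequalities; no new estimates are needed. Strictness comes from $\deg dg_1\wedge dg_2>0$, which holds because $dG\ne 0$ forces $dg_1\wedge dg_2\ne 0$.
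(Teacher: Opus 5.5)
Your proposal is correct and follows the paper's proof essentially step by step. You get (i) and (ii) from Lemma~\ref{lem:deg P'(g)}~(ii) and (iii) in the setting (\ref{eq:(S) for phi3}). You get (iii) and (iv) from Example~\ref{cor:nonsingular}~(b) and (c), the bounds of Remark~\ref{rem:1/7}, and the two-sided estimate $\delta -\ep (g_1)\le \deg dg_1-\deg dg_2\le \delta +\ep (g_2)$, which is exactly (\ref{eq:1/2<<3/2}).
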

\begin{proof}
(i) follows from Lemma~\ref{lem:deg P'(g)} (ii) 
since $m_G^F\in \{ 1,2\} \subset \{ 1,\ldots ,p-1\} $. 
(ii) follows from (\ref{eq:m(F,G)}) and Lemma~\ref{lem:deg P'(g)} (iii). 
For (iii), 
by ($5^{\rm s}$) and Example~\ref{cor:nonsingular}~(b), 
we have $\deg f_3=\deg \phi _3>2\delta $. 
Observe that Remark~\ref{rem:independence} (ii) and ($3^{\rm s}$) give 
\begin{equation}\label{eq:1/2<<3/2}
\begin{gathered}
\deg dg_1-\deg dg_2
\ge \deg dg_1-\deg g_2
=(3\delta -\ep (g_1))-2\delta 
=\delta -\ep (g_1), \\
\deg dg_1-\deg dg_2
\le \deg g_1-\deg dg_2
=3\delta -(2\delta -\ep (g_2))
=\delta +\ep (g_2). 
\end{gathered}
\end{equation}
By (\ref{eq:m(F,G)}) and Remark~\ref{rem:1/7} (ii), 
we have $\ep (g_1,g_2)<(1/2)\delta $. 
Thus, (\ref{eq:1/2<<3/2}) yields 
$(1/2)\delta <\deg dg_1-\deg dg_2<(3/2)\delta $. 
(iv) can be proved similarly, 
using Example~\ref{cor:nonsingular} (c), 
Remark~\ref{rem:1/7}~(i), 
and~(\ref{eq:1/2<<3/2}). 
(v) is clear since $s\ge p\ge 7$. 
\end{proof}

Next, we define $\gamma _i:=\gamma _i^F:=\deg df_j\wedge df_l$ 
for $i=1,2,3$, 
where $1\le j<l\le 3$ and $j,l\ne i$. 
We note that $\gamma _1=\deg dg_2\wedge df_3$, 
since $g_2\in f_2+k[f_3]$ by (SU$1'$).

\begin{claim}\label{claim:D}
\nd{\rm (i)} 
If $(\ref{eq:pfSUreduction1.5})$ holds 
$($e.g., if $(F,G)$ is of non-pm$7$ type$)$, 
then we have 
$\gamma _1=\deg dg_2\wedge d\phi _3$. 
Hence, 
$\gamma _1\ge \deg g_1+\deg dg_1\wedge dg_2>s\delta $ 
holds by $(\ref{eq:pfSUreduction1.5})$.

\nd{\rm (ii)} 
If $(F,G)$ is of pm$7$ type, 
then we have $\gamma _1>(95/56)\delta $. 
\end{claim}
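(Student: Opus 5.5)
The plan for (i) is to split $df_3$ using $f_3=g_3+\phi _3$, which gives
$$
dg_2\wedge df_3=dg_2\wedge d\phi _3+dg_2\wedge dg_3 .
$$
Since $\gamma _1=\deg dg_2\wedge df_3$, it suffices to show that the second term has strictly smaller $\w $-degree than the first. By Remark~\ref{rem:independence}~(iii) (more precisely property (3) and (ii) there), $\deg dg_2\wedge dg_3\le \deg dg_2+\deg g_3\le \deg g_2+\deg g_3$. (SU6) gives $\deg g_3<\deg g_1-\deg g_2+\deg dg_1\wedge dg_2$. Chaining these,
$$
\deg dg_2\wedge dg_3<\deg g_1+\deg dg_1\wedge dg_2\le \deg dg_2\wedge d\phi _3
$$
by (\ref{eq:pfSUreduction1.5}). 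Property (2) of the $\w $-degree, together with this strict inequality, then yields $\gamma _1=\deg dg_2\wedge d\phi _3$. The final bound follows from $\deg g_1=s\delta $ ($3^{\rm s}$) and $\deg dg_1\wedge dg_2>0$, which holds because $dg_1\wedge dg_2\ne 0$ (as $dG\ne 0$). The non-pm7 case is covered by Claim~\ref{claim:pm7}~(i).

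For (ii), we have $s=3$, $p=m_G^F=7$, and Claim~\ref{claim:pm7}~(iv) gives $\ep (g_1,g_2)<(1/7)\delta $, $\deg dg_1\wedge dg_2\le (1/8)\delta $, $\deg f_3>2\delta $ and $(6/7)\delta <\deg dg_1-\deg dg_2<(8/7)\delta $. The same decomposition still bounds the $g_3$-term, now by $\deg dg_2\wedge dg_3<\deg dg_2+\delta +\deg dg_1\wedge dg_2$. So either $\gamma _1=\deg dg_2\wedge d\phi _3$ or $\gamma _1$ is controlled by this term, and I need a lower bound on one of them. I would argue by contradiction, assuming $\gamma _1\le (95/56)\delta $. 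I would then apply Theorem~\ref{thm:another} with $(\eta _1,\eta _2,\eta _3):=(dg_1,dg_2,df_3)$. This gives $\delta _1=\deg dg_1+\gamma _1$, $\delta _3=\deg df_3+\deg dg_1\wedge dg_2$, and $\delta _2=\deg dg_2+\deg dg_1\wedge df_3$, where $dg_1\wedge df_3=dg_1\wedge d\phi _3+dg_1\wedge dg_3$. The numerical bounds above, with $\deg dg_i\ge \deg g_i-\ep (g_1,g_2)$ and ($4^{\rm s}$) for $\deg df_3$, should make the assumed small $\gamma _1$ force a single strict maximum among $\delta _1,\delta _2,\delta _3$, contradicting Theorem~\ref{thm:another}. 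The constant $95/56$ presumably arises as a combination of $1/7$ and $1/8$ of the kind produced here, for instance $2\delta -(1/7)\delta -(9/56)\delta $.

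The expected main obstacle is a usable lower bound for $\deg df_3$, equivalently for $\deg d\phi _3$, since in characteristic $7$ the leading part $(g_1^2-\kappa g_2^3)^7$ of $\phi _3$ (Lemma~\ref{lem:nonsingular}~(c)) has zero differential. To handle this, I would first write $\phi _3=(g_1^2-\kappa g_2^3)^7+P(g_1)$, so that $dg_2\wedge d\phi _3=P^{(1)}(g_1)dg_2\wedge dg_1$. I would then bound $\deg P^{(1)}(g_1)$ from below via Theorem~\ref{thm:pSUineq0}, in the spirit of the proof of Theorem~\ref{thm:modification}~(ii), and feed the resulting estimate into the case analysis above.
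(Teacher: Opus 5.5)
Your part (i) is correct and is exactly the paper's argument: split $dg_2\wedge df_3=dg_2\wedge d\phi_3+dg_2\wedge dg_3$, bound the second term by $\deg g_2+\deg g_3<\deg g_1+\deg dg_1\wedge dg_2$ using (SU6), and conclude with (\ref{eq:pfSUreduction1.5}).

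\textbf{Part (ii) has a genuine gap.} Theorem~\ref{thm:another} is a purely formal statement valid for any three $1$-forms. It therefore yields a contradiction only if you already control all three $\delta_i$. Here $\delta_2=\deg dg_2+\deg dg_1\wedge df_3$ is uncontrolled, so with the information you list nothing excludes, say, $\delta_1=\delta_2\ge\delta_3$ with $\gamma_1$ small. Moreover, ($4^{\rm s}$) gives only the upper bound $\deg df_3\le \deg dg_1+\ep(g_1)$, not the lower bound you need.

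Your fallback does not close the gap either. Theorem~\ref{thm:pSUineq0} with $f=g_2$ involves only $dg_2\wedge dg_1$ and never sees $df_3$. The estimate $\deg P^{(1)}(g)>3\delta$ in the proof of Theorem~\ref{thm:modification}~(ii) is obtained by contradiction from the extra hypotheses $\deg df_1\wedge df_2>(1/2)\delta$, $\mathcal{P}(f_2,f_1,m)\ne\emptyset$ and $g\in f_1+k^*f_3$. These are precisely what single out pm7.1 type (Claim~\ref{claim:pm7.1}), and they are not available here. In general $dg_2\wedge d\phi_3$ need not dominate $dg_2\wedge dg_3$: the part of $\phi_3$ lying in $k[g_2,g_1^7]$ contributes nothing to $dg_2\wedge d\phi_3$. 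So no argument that splits $\gamma_1$ into these two pieces and bounds one of them can work.

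\textbf{The missing idea} is to apply the positive-characteristic inequality with the \emph{two}-element tuple $\ff=(g_2,f_3)$, so that $\gamma_1=\deg d\ff$ enters $\Lambda_0$ directly. Concretely, use Theorem~\ref{cor:h-q}~(ii) with $(f,g,h):=(g_2,g_1,f_3)$. Its hypotheses hold for the following reasons:
\begin{itemize}
\item (S) holds with $(f,g,\phi,m,a,b)=(g_2,g_1,\phi_3,m_G^F,2,3)$, and $p=7\nmid 2$;
\item $f_3^{\w}=\phi_3^{\w}$ by ($5^{\rm s}$);
\item $dg_2\wedge dg_1\wedge df_3=dg_2\wedge dg_1\wedge dg_3\ne 0$.
\end{itemize}
Since $m_G^F=p=7$ and $N_0=3\delta-\gamma_1$, the theorem gives
$\deg g_3=\deg(f_3-\phi_3)>21\delta-7\gamma_1-(5\delta+\deg f_3)\ge 13\delta-7\gamma_1$,
where the last step uses $\deg f_3\le \deg g_1=3\delta$ from (SU4). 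On the other hand, (SU6) together with $\deg dg_1\wedge dg_2\le(1/8)\delta$ (Claim~\ref{claim:pm7}~(iv)) gives $\deg g_3<(9/8)\delta$. Comparing the two bounds yields $\gamma_1>(95/56)\delta$. Thus $95/56=(13-9/8)/7$; your guess agrees numerically but has a different origin. The point of this route is that it converts the smallness of $g_3=f_3-\phi_3$ directly into a lower bound on $\gamma_1$, without ever comparing $dg_2\wedge d\phi_3$ and $dg_2\wedge dg_3$.
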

\begin{proof}
(i) 
Since $f_3=g_3+\phi _3$, we have 
$\gamma _1=\deg dg_2\wedge df_3
=\deg (dg_2\wedge dg_3+dg_2\wedge d\phi _3)$. 
Since 
$\deg dg_2\wedge d\phi _3\ge \deg g_1+\deg dg_1\wedge dg_2
>\deg g_2+\deg g_3\ge \deg dg_2\wedge dg_3$ 
by (\ref{eq:pfSUreduction1.5}), 
(SU6), 
and Remark~\ref{rem:independence} (iii), 
it follows that $\gamma _1=\deg dg_2\wedge d\phi _3$.

(ii) 
Recall that $m_G^F=p=7$. 
Hence, 
by (\ref{eq:(S) for phi3}) and ($5^{\rm s}$), 
applying Theorem~\ref{cor:h-q} (ii) with 
$(f,g,h):=(g_2,g_1,f_3)$ 
yields 
$\deg g_3=\deg (f_3-\phi _3)>
16\delta -7\deg dg_2\wedge df_3-\deg f_3\ge 
13\delta -7\gamma _1$, 
since $\deg f_3\le \deg g_1=3\delta $ by (SU4). 
Since $\deg g_3<(3-2+1/8)\delta =(9/8)\delta $ 
by (SU6) and Claim~\ref{claim:pm7} (iv), 
it follows that~$\gamma _1>(95/56)\delta $. 
\end{proof}

\begin{claim}\label{claim:S1}
Let $\phi \in k[S_1]$ be such that $\deg \phi \le s\delta $. 
Then, we have $\deg ^{S_1}\phi \le s\delta $. 
\end{claim}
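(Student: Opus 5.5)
The plan is to argue by contradiction from Corollary~\ref{cor:pSU0} applied to the pair $(f_2,f_3)$. Take $\phi \in k[S_1]=k[f_2,f_3]$ with $\deg \phi \le s\delta $. If $\deg \phi =\deg ^{S_1}\phi $ there is nothing to prove. So we may assume $\deg \phi <\deg ^{S_1}\phi $, i.e.\ $\phi \in \mathcal{D}(f_2,f_3)$ by Remark~\ref{rem:(1.3.7)}~(i). Then $f_2^{\w }$ and $f_3^{\w }$ are algebraically dependent, and the setting (S) holds with $\{ f,g\} =\{ f_2,f_3\} $, ordered so that $\deg f\le \deg g$. By Claim~\ref{claim:g2}~(iv), $f_i^{\w }\not\in k[f_j^{\w }]$ for $(i,j)=(2,3),(3,2)$, so $a,b\ge 2$ and $(a,b)\in I(2,3)\cup I(3,2)$. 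Recall also that $\gamma _1=\deg df_2\wedge df_3$.

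\emph{Non-pm7 type.} Since $p\ge 7$, Corollary~\ref{cor:pSU0}~(iii) gives $\deg \phi >\deg df_2\wedge df_3=\gamma _1$. Claim~\ref{claim:D}~(i) gives $\gamma _1>s\delta $, contradicting $\deg \phi \le s\delta $.

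\emph{pm7 type.} Here $s=3$ and $p=7$, and the bound $\gamma _1>(95/56)\delta $ of Claim~\ref{claim:D}~(ii) is too weak by itself, so I will use the sharper form Corollary~\ref{cor:pSU0}~(i), which includes the term $(ab-a-b)\delta '$. The degrees are as follows. We have $\deg f_2=2\delta $, and $2\delta <\deg f_3<3\delta $ by Claim~\ref{claim:pm7}~(iv) and Claim~\ref{claim:g2}~(v), the latter being applicable because of the algebraic dependence. Hence $f=f_2$ and $g=f_3$, with $\deg f_2=a\delta '$ and $\deg f_3=b\delta '$, where $\delta ':=2\delta /a$ and $a<b<(3/2)a$.
\begin{itemize}
\item If $a=2$, then $2<b<3$, so no integer $b$ is possible; in particular $(a,b)=(2,3)$ is excluded precisely by Claim~\ref{claim:g2}~(v).
\item If $a\ge 3$, then $b\ge 4$, and Remark~\ref{rem:increasing function} gives
$$(ab-a-b)\delta '=\Bigl(2b-2-\frac{2b}{a}\Bigr)\delta \ge \frac{10}{3}\delta >3\delta .$$
So in the case $1\le m<p$ we get $\deg \phi \ge M_0>3\delta $.
\end{itemize}

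\emph{The case $m\ge p$.} It remains to check $pM_1>3\delta $, which means verifying that
$$7\Bigl(ab-\frac{7}{6}(a+b)\Bigr)\frac{2}{a}>3$$
for $a\ge 3$, $b\ge 4$. This holds comfortably: the left side is non-decreasing in $a,b$ on this range by Remark~\ref{rem:increasing function}, and at $(3,4)$ it is already far above $3$. Every case therefore contradicts $\deg \phi \le 3\delta $.

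The main obstacle is the pm7 case. The crude estimate $\deg \phi >\gamma _1$ fails there, and the argument hinges on the degree bookkeeping that excludes $(a,b)=(2,3)$ via Claim~\ref{claim:g2}~(v). I would double-check that this exclusion, together with $\deg f_3>2\delta $, really pins down $f=f_2$ and the range $a<b<(3/2)a$.
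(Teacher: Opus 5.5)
Your proof is correct and follows the paper's argument. You reduce to $\phi\in\mathcal{D}(f_2,f_3)$, settle the non-pm7 case with Corollary~\ref{cor:pSU0}~(iii) and Claim~\ref{claim:D}~(i), and in the pm7 case use Claims~\ref{claim:pm7}~(iv) and \ref{claim:g2}~(v) to force $(a,b)\in I(3,4)$. The only difference is cosmetic: the paper finishes with Corollary~\ref{cor:pSU0}~(ii) and Lemma~\ref{lem:increasing function}~(ii) (case $z=3x/2$), while you evaluate the bounds of Corollary~\ref{cor:pSU0}~(i) directly. The monotonicity you need there is elementary, e.g.\ $(2/a)(ab-a-b)=2b(1-1/a)-2$, and is not literally an instance of Remark~\ref{rem:increasing function} as you cite it.
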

\begin{proof}
Supposing $\deg ^{S_1}\phi >s\delta $, 
we have 
$\deg \phi <\deg ^{S_1}\phi $, 
since $\deg \phi \le s\delta $ by assumption. 
By Remark~\ref{rem:(1.3.7)}~(i), 
this implies that $\phi \in \mathcal{D}(f_2,f_3)$. 
Hence, 
Claim~\ref{claim:g2} (iv) and Corollary~\ref{cor:pSU0}~(iii) yield 
$\deg \phi >\gamma _1$. 
If $(F,G)$ is of non-pm7 type, 
then this leads to the contradiction $\deg \phi >s\delta $ 
via Claim~\ref{claim:D}~(i). 
Below, we assume that $(F,G)$ is of pm7 type, 
and so $s=3$.

Since 
$\mathcal{D}(f_2,f_3)\ne \emptyset $, 
there exist $i,j\in \N _+$ 
such that $\gcd (i,j)=1$ and 
$(f_2^{\w })^j\approx (f_3^{\w })^i$. 
Then, 
since $\deg f_3=(j/i)\deg f_2=(2j/i)\delta $ 
by Claim~\ref{claim:g2} (iii), 
we get $2\delta <(2j/i)\delta <3\delta $ 
by Claims~\ref{claim:pm7} (iv) and \ref{claim:g2} (v). 
This implies that $(i,j)\in I(3,4)$. 
Hence, 
by Lemma~\ref{lem:increasing function}~(ii), 
$\zeta _l(i,j,3i/2)>0$ holds for $l=0,1$. 
However, the assumption yields 
$\deg \phi \le 3\delta =(3i/2)\cdot (1/i)\deg f_2$, 
which contradicts Corollary~\ref{cor:pSU0}~(ii). 
\end{proof}

By (SU$2'$), we have $\deg f_1\le \deg g_1$. 
We denote by ``Case $<$" and ``Case $=$" 
the cases where $\deg f_1<\deg g_1$ and $\deg f_1=\deg g_1$, 
respectively.

\begin{claim}\label{claim:g1}
\nd{\rm (i)} 
Set $I:=\{ (0,1),(0,2)\} \cup \{ (i,0)\mid 
i=0,\ldots ,(s-1)/2\} $. 
Then, we have $g_1-f_1\in \sum _{(i,j)\in I}kf_2^if_3^j$, 
i.e., 
$g_1=f_1+af_3^2+cf_3+\psi $ for some 
$a,c\in k$ and $\psi \in \sum _{i=0}^{(s-1)/2}kf_2^i$. 
Since $\deg f_2=2\delta $, 
we have $\deg \psi \le (s-1)\delta <\deg g_1$.

\nd{\rm (ii)} 
If $a\ne 0$, 
then $s=3$, $\deg f_3\le (3/2)\delta $, and $m_G^F=1$.

\nd{\rm (iii)} 
In Case $<$, we have $a\ne 0$ and $\deg f_3=(1/2)\deg g_1=(3/2)\delta $.

\nd{\rm (iv)} 
Assume that $\deg f_1=\deg g_1=\deg f_3$. 
Then, we have $f_1^{\w }\not\approx f_3^{\w }$. 
If moreover $c\ne 0$, 
then $f_1^{\w }$ and $g_1^{\w }$ are algebraically independent over $k$.

\end{claim}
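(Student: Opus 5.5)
We prove the four parts of Claim~\ref{claim:g1} in order. The inputs are (SU$1'$), (SU$2'$), ($3^{\rm s}$), ($3^{\rm u}$), Claim~\ref{claim:g2} and Claim~\ref{claim:S1}.

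\emph{Part (i).} By (SU$1'$) we have $\phi :=g_1-f_1\in k[S_1]=k[f_2,f_3]$. Moreover $\deg \phi \le \max \{ \deg g_1,\deg f_1\} =s\delta $ by (SU$2'$) and ($3^{\rm s}$). Claim~\ref{claim:S1} therefore gives $\deg ^{S_1}\phi \le s\delta $. So every monomial $f_2^if_3^j$ occurring in $\phi $ satisfies $2i\delta +j\deg f_3\le s\delta $. Since $\deg f_3>(s-2)\delta \ge \delta $ by ($3^{\rm u}$), we get $j\le 2$. If $j=2$, then $2i\delta <s\delta -2(s-2)\delta \le \delta $ forces $i=0$. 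If $j=1$, then $2i\delta <2\delta $ forces $i=0$. If $j=0$, then $i\le s/2$, and since $s$ is odd, $i\le (s-1)/2$. This yields the index set $I$ and the bound on $\deg \psi $.

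\emph{Part (ii).} Suppose $a\ne 0$. The term $af_3^2$ must satisfy $2\deg f_3\le s\delta $. Combined with $\deg f_3>(s-2)\delta $, this gives $2(s-2)<s$, hence $s<4$, so $s=3$ and $\deg f_3\le (3/2)\delta $. To get $m_G^F=1$, I use Claim~\ref{claim:pm7} (iii) and (iv): both the case $(s,m_G^F)=(3,2)$ and the pm7 case force $\deg f_3>2\delta $, which contradicts $\deg f_3\le (3/2)\delta $. By Claim~\ref{claim:A} (ii), only $m_G^F=1$ remains.

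\emph{Part (iii).} In Case $<$, $\deg (g_1-f_1)=\deg g_1=s\delta $, so the top-degree part of $af_3^2+cf_3+\psi $ must reach degree $s\delta $. The term $\psi $ has degree at most $(s-1)\delta $, so it cannot. The term $cf_3$ has degree $\deg f_3$. If $cf_3$ realized degree $s\delta $, then $g_1^{\w }$ would be a combination of $f_1^{\w }$ and $f_3^{\w }$ with $\deg f_3=\deg g_1$. Since $\deg f_1<\deg g_1$, this gives $g_1^{\w }\approx f_3^{\w }$, contradicting ($4^{\rm u}$). Hence $a\ne 0$ and $2\deg f_3=s\delta $. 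By (ii), $s=3$, so $\deg f_3=(3/2)\delta $.

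\emph{Part (iv), first assertion.} Assume $\deg f_1=\deg g_1=\deg f_3=s\delta $. Then $a=0$, because $2\deg f_3>s\delta $. Suppose $f_1^{\w }=\lambda f_3^{\w }$ with $\lambda \in k^*$. Consider the automorphism $F$: since $dF\ne 0$, the elements $f_1,f_2,f_3$ are algebraically independent. Replacing $f_1$ by $f_1-\lambda f_3$ lowers $\deg F$. I expect the contradiction to come from feeding this into the pair structure, namely from (SU4), $f_3^{\w }\notin k[g_1^{\w },g_2^{\w }]$. We have $g_1^{\w }=(f_1+cf_3)^{\w }$, with $\psi $ of lower degree. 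If $c\ne -\lambda $, then $g_1^{\w }=(\lambda +c)f_3^{\w }$, contradicting ($4^{\rm u}$). If $c=-\lambda $, then $\deg g_1<\deg f_1$, contradicting Case $=$. Either way $f_1^{\w }\not\approx f_3^{\w }$.

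\emph{Part (iv), second assertion.} Assume in addition $c\ne 0$. Then $\deg f_1=\deg g_1$, and $f_1^{\w }\not\approx g_1^{\w }$, because otherwise $f_3^{\w }$ would be a multiple of $g_1^{\w }$ by the relation $g_1^{\w }=f_1^{\w }+cf_3^{\w }$ (or an equality of degrees forcing this). Remark~\ref{rem:dependence} then gives algebraic independence.

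The step I expect to be most delicate is (iii): ruling out that $g_1-f_1$ attains degree $s\delta $ through $cf_3$ while cancellation with $f_1$ occurs. This is handled by ($4^{\rm u}$), as above.
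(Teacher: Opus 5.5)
Your proposal is correct and follows essentially the paper's argument: (i) via Claim~\ref{claim:S1} together with $\deg f_2=2\delta$ and $\deg f_3>(s-2)\delta$; (ii) from $2(s-2)\delta<\deg f_3^2\le s\delta$ and Claim~\ref{claim:pm7} (iii)--(iv); (iii) by showing that neither $\psi$ nor $cf_3$ can supply the top degree $s\delta$, using ($4^{\rm u}$); and (iv) from $g_1^{\w}=f_1^{\w}+cf_3^{\w}$, ($4^{\rm u}$), and Remark~\ref{rem:dependence}. The differences are cosmetic: in (iv) you split on whether $c=-\lambda$ instead of reading off $g_1^{\w}=f_1^{\w}+cf_3^{\w}$ directly, and your aside about lowering $\deg F$ by replacing $f_1$ with $f_1-\lambda f_3$ is not needed.
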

\begin{proof}
(i) 
We have 
$\phi _1:=g_1-f_1\in k[S_1]$ by (SU1$'$). 
It satisfies $\deg \phi _1\le s\delta $, 
since $\deg f_1\le \deg g_1=s\delta $. 
Hence, Claim~\ref{claim:S1} yields $\deg ^{S_1}\phi _1\le s\delta $. 
This implies that $\phi _1\in \sum _{(i,j)\in J}kf_2^if_3^j$, 
where $J:=\{ (i,j)\in \N ^2\mid \deg f_2^if_3^j\le s\delta \} $. 
Since $\deg f_2=2\delta $ 
by Claim~\ref{claim:g2} (iii), 
$\deg f_3>(s-2)\delta $ by ($3^{\rm u}$), 
and $s\ge 3$, 
we obtain $J\subset I$.

(ii) 
If $a\ne 0$, then we have $(0,2)\in J$. 
This gives 
$2(s-2)\delta <\deg f_3^2\le s\delta $. 
Since $s\ge 3$, 
it follows that $s=3$ and $\deg f_3\le (3/2)\delta $. 
The latter implies that $m_G^F=1$, 
for if $m_G^F\ge 2$, 
then Claim~\ref{claim:pm7} (iii) and (iv) 
yield $\deg f_3>2\delta $, 
a contradiction.

(iii) 
Since $\deg f_1<\deg g_1$, 
$\deg \psi <\deg g_1$, 
and $g_1^{\w }\not\approx f_3^{\w }$ by ($4^{\rm u}$), 
we must have 
$a\ne 0$ and $2\deg f_3=\deg f_3^2=\deg g_1=s\delta $. 
From $a\ne 0$, 
we obtain $s=3$ by (ii).

(iv) 
The assumption yields $g_1^{\w }=f_1^{\w }+cf_3^{\w }$. 
Hence, ($4^{\rm u}$) gives $f_1^{\w }\not\approx f_3^{\w }$. 
Since $g_1^{\w }=f_1^{\w }+cf_3^{\w }\not\approx f_1^{\w }$ if $c\ne 0$, 
the last part follows from Remark~\ref{rem:dependence}. 
\end{proof}

For convenience, we summarize 
the properties of $(F,G)$ obtained so far as follows 
(see ($3^{\rm s}$), ($3^{\rm u}$), and 
Claims~\ref{claim:g2} (ii) and (iii), 
\ref{claim:pm7} (iii) and (iv), 
and \ref{claim:g1} (ii) and (iii)).

\begin{center}
Table 2 \ ($m:=m_G^F$)

\begin{tabular}{|c||cccc|}
\hline
 & \multicolumn{1}{|c|}{Case $<$} & 
\multicolumn{3}{|c|}{Case $=$ \ \ ($\deg f_1=\deg g_1=s\delta $)} \\ \hline
\hline
$s$ & \multicolumn{3}{|c|}{$s=3$} & \multicolumn{1}{|c|}{$s\ge 5$, 
ps type} \\ \hline
\multirow{2}{*}{$m$} & 
\multicolumn{2}{c}{\multirow{2}{*}{$m=1$}}& 
\multicolumn{1}{|c|}{$m=2$, $p=m=7$} &  \\  \cline{4-4} \vspace{-1.5mm}
& \multicolumn{4}{|c|}{} \\ \hline 
$f_3$ & 
\multicolumn{1}{|c|}{$\deg f_3=(3/2)\delta $} & 
\multicolumn{1}{|c|}{$\delta <\deg f_3\le 2\delta $} & 
\multicolumn{1}{|c|}{$2\delta <\deg f_3\le 3\delta $} & 
\multicolumn{1}{|c|}{$\deg f_3>3\delta $} \\ \hline 
$f_2$ & \multicolumn{4}{|c|}{$\deg f_2=2\delta $} 
\\ \hline
$a$, $b$ & $a\ne 0$ & 
\multicolumn{1}{|c|}{---} &
\multicolumn{2}{|c|}{$a=b=0$} \\ \hline 
$\clubsuit $ & $3\deg f_2=4\deg f_3$ & 
\multicolumn{3}{|c|}{$2\deg f_1=s\deg f_2$} \\ \hline 
\end{tabular}

\end{center}

\medskip

Case $<$ of the following claim will be verified in \S ~\ref{subsect:dodf}, 
after Claim~\ref{claim:P5}.

\begin{claim}\label{claim:H}
\nd {\rm (i)} 
We have $\deg f_2<\deg f_1$ and $\deg f_3\leq \deg f_1$.

\nd {\rm (ii)} 
$f_i^{\w }\not\in k[f_j^{\w }]$ holds for $(i,j)=(1,2),(2,1)$. 

\nd {\rm (iii)} 
We have $f_3^{\w }\not\in k[f_1^{\w }]$.

\nd{\rm (iv)} 
If $f_1^{\w }\in k[f_3^{\w }]$, 
then $s=3$, 
$f_1^{\w }\approx (f_3^{\w })^2$, 
and $\deg f_3=(3/2)\delta $.

\end{claim}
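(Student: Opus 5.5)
The plan is to treat Case $=$ and Case $<$ separately. In Case $=$ everything follows from the degree data in Table~2 together with Claim~\ref{claim:g1}~(iv). In Case $<$ the one real difficulty is the lower bound $\deg f_1>\deg f_2$; I expect this to need the finer differential-form estimates of \S~\ref{subsect:dodf}, which is why that case is deferred.

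\emph{Case $=$.} Here $\deg f_1=\deg g_1=s\delta$, while $\deg f_2=2\delta$ by Claim~\ref{claim:g2}~(iii) and $\deg f_3\le \deg g_1=s\delta$ by (SU4). Since $s\ge 3$, this gives (i).
\begin{itemize}
\item For (ii): $f_2^{\w}\in k[f_1^{\w}]$ is impossible because $0<\deg f_2<\deg f_1$. Next, $f_1^{\w}\in k[f_2^{\w}]$ would force $f_1^{\w}\approx (f_2^{\w})^t$, hence $s\delta=2t\delta$, which contradicts the oddness of $s$.
\item For (iii): since $0<\deg f_3\le\deg f_1$, the relation $f_3^{\w}\in k[f_1^{\w}]$ can only mean $f_3^{\w}\approx f_1^{\w}$ with $\deg f_3=\deg f_1=\deg g_1$. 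This is excluded by Claim~\ref{claim:g1}~(iv).
\item For (iv): write $f_1^{\w}\approx (f_3^{\w})^t$ with $t\ge 1$. The case $t=1$ is excluded as in (iii). So $t\ge 2$ and $\deg f_3=(s/t)\delta$. Combining this with $\deg f_3>(s-2)\delta$ from ($3^{\rm u}$) gives $s/t>s-2$. This forces $t=2$ and $s<4$, so $s=3$, $f_1^{\w}\approx (f_3^{\w})^2$ and $\deg f_3=(3/2)\delta$.
\end{itemize}

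\emph{Case $<$.} By Claim~\ref{claim:g1}~(ii) and~(iii) we have $a\ne 0$, $s=3$, $m_G^F=1$ and $\deg f_3=(3/2)\delta$. Also $\psi\in k+kf_2$, and $g_1^{\w}=a(f_3^{\w})^2$ because $\deg f_1<\deg g_1$. Moreover, Claim~\ref{claim:A}~(iii) gives $\deg dg_1\wedge dg_2\le(3/2-1)\delta=(1/2)\delta$. So the triple $(g_1,g_2,f_3)$ satisfies (\ref{eq:ex:total}). Using $f_2=g_2-bf_3-e$ from Claim~\ref{claim:g2}~(i), I will rewrite
\[
f_1=g_1-af_3^2-cf_3-\psi\in g_1-af_3^2+kf_3+kg_2+k .
\]
Then Theorem~\ref{ex:total}~(i) applies and yields $\delta<\deg f_1<3\delta$, as well as $\deg df_1\wedge dg_2=\deg f_3+\deg dg_2\wedge df_3$.

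The main obstacle is upgrading $\deg f_1>\delta$ to $\deg f_1>2\delta=\deg f_2$. I would first argue by contradiction, assuming $\deg f_1\le 2\delta$. I would then combine the formula $\deg df_1\wedge dg_2=\deg f_3+\gamma_1$ with Theorem~\ref{thm:another} applied to $(df_1,df_2,df_3)$, and with the bound $\deg dg_1\wedge dg_2\le(1/2)\delta$, to reach a contradiction. I expect this step to require the additional estimates of Claim~\ref{claim:P5}, which is why it is postponed.

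Once $2\delta<\deg f_1<3\delta$ is known, (i) holds because $\deg f_3=(3/2)\delta<\deg f_1$. Parts (ii) and (iii) then follow from simple degree considerations, with $\deg f_1\ne 2\delta,3\delta$ settled through the same argument. For (iv), if $f_1^{\w}\approx(f_3^{\w})^t$, then $\deg f_1=(3t/2)\delta\in(2\delta,3\delta)$ has no integer solution $t$. So (iv) holds vacuously in Case $<$, completing the proof in both cases.
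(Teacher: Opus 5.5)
Your Case $=$ is essentially the paper's argument, and your final deductions in Case $<$ from $2\delta<\deg f_1<3\delta$ are fine. The gap is the one step that carries all the content of Case $<$, namely $\deg f_1>2\delta$. You do not prove it, and the ingredients you name cannot prove it.

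Theorem~\ref{thm:another}, the identity $\deg df_1\wedge dg_2=\deg f_3+\gamma_1$, and the bound $\deg dg_1\wedge dg_2\le(1/2)\delta$ give no lower bound on $\gamma_1=\deg df_2\wedge df_3$. As degree bookkeeping they are consistent with $\deg f_1=2\delta$. For example, take $b=0$, $\deg df_i=\deg f_i$, $\gamma_1=\delta$, $\gamma_3=(5/2)\delta$ and $\gamma_2=2\delta$. In the notation of Theorem~\ref{thm:another} this gives $\delta_1=3\delta<\delta_2=\delta_3=4\delta$, so no contradiction arises. Theorem~\ref{ex:total}~(i)~(2) likewise only yields $\deg f_1>\delta$.

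The missing idea is the lower bound of Claim~\ref{claim:D}~(i).
\begin{itemize}
\item Since $m_G^F=1$, the pair is not of pm7 type, so (\ref{eq:pfSUreduction1.5}) holds. This is Lemma~\ref{lem:deg P'(g)}~(ii), i.e.\ Theorem~\ref{thm:main} applied to $\phi_3\in\mathcal{P}(g_2,g_1,1)$.
\item Combine this with (SU6), which makes $dg_2\wedge d\phi_3$ dominate $dg_2\wedge dg_3$ in $dg_2\wedge df_3$. This gives $\gamma_1\ge\deg g_1+\deg dg_1\wedge dg_2>3\delta$.
\item Your own identity then finishes at once: $\deg f_3+\gamma_1=\deg df_1\wedge dg_2\le\deg f_1+\deg g_2$, hence $\deg f_1\ge(5/2)\delta+\deg dg_1\wedge dg_2>2\delta$.
\end{itemize}

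This is exactly Claim~\ref{claim:P5}. The paper derives it differently, from $\deg f_1+\deg f_3\ge\gamma_2=(s-2)\delta+\gamma_1$ (Claim~\ref{claim:G}) together with the same bound on $\gamma_1$. Neither derivation uses Claim~\ref{claim:H}, so citing Claim~\ref{claim:P5} would not be circular. However, saying the step ``will require'' Claim~\ref{claim:P5} without supplying the bound on $\gamma_1$ leaves Case $<$ unproved.
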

\begin{proof}[Proof for Case $=$] 
Since $\deg f_1=\deg g_1=s\delta $ ($*$) and $s\ge 3$, 
(i) follows from $\deg f_2=2\delta $ and (SU4). 
(ii) follows from $\clubsuit $. 
For (iii), 
suppose that $f_3^{\w }\in k[f_1^{\w }]$. 
Then, 
since $\deg f_3\leq \deg f_1$ by (i),  
we have $f_3^{\w }\approx f_1^{\w }$, 
so $\deg f_3=\deg f_1=\deg g_1$ by ($*$). 
This contradicts Claim~\ref{claim:g1} (iv). 
By~(iii), 
$f_1^{\w }\in k[f_3^{\w }]$ implies 
that $f_1^{\w }\approx (f_3^{\w })^l$ for some $l\ge 2$. 
Then, ($*$) and ($3^{\rm u}$) give 
$s\delta =\deg f_1=\deg f_3^l>l(s-2)\delta $. 
Since $s\ge 3$, 
this yields $(s,l)=(3,2)$ and $\deg f_3=(s/l)\delta =(3/2)\delta $, 
proving (iv). 
\end{proof}

\begin{proof}[Proof of Lemma~{\rm \ref{prop:equivalence} (i)}]
First, assume that 
$(F,G)\in (\Aut _k\kx )^2$ satisfies 
$g_3\in f_3+k[g_1,g_2]$, 
(SU2)--(SU6), 
and the following conditions: 

(a) 
There exists $g_1'\in M:=(g_1+k[g_2])\cap (f_1+kf_3^2+kf_3)$ 
such that $(g_1')^{\w }=g_1^{\w }$;

(b) There exists $e\in k$ such that $g_2':=g_2-e\in f_2+kf_3$.

\nd 
Then, 
we can find $E_1\in \E _1$ and $E_2\in \E _2$ 
such that $(g_1',g_2',g_3)=GE_1E_2$ 
and $\deg G=\deg GE_1$. 
Since 
$k[g_1',g_2']=k[g_1,g_2]$, 
$(g_i')^{\w }=g_i^{\w }$ for $i=1,2$, 
and $dg_1'\wedge dg_2'=dg_1\wedge dg_2$, 
we easily see that $(F,(g_1',g_2',g_3))$ is an SU pair.

Now, 
let $(F,G)\in (\Aut _k\kx )^2$ be a weak SU pair. 
It satisfies $g_3\in f_3+k[g_1,g_2]$ by (SU$1'$); 
(SU2)--(SU6) by Claims~\ref{claim:A} (i) and \ref{claim:g2} (iii) 
and definition; 
and (b) by Claim~\ref{claim:g2}~(i). 
Hence, 
it suffices to verify (a). 
If $b\ne 0$, then $s=3$ by Table~2, 
so Claim~\ref{claim:g1}~(i) gives $\psi =lf_2+l'$ 
with $l,l'\in k$. 
Since $g_2=f_2+bf_3+e$, 
this yields 
$g_1':=g_1-(l(g_2-e)+l')=g_1-(\psi +lbf_3)=f_1+af_3^2+(c-lb)f_3\in M$. 
If $b=0$, then we have $\psi \in k[f_2]=k[g_2]$, 
and so $g_1':=g_1-\psi =f_1+af_3^2+cf_3\in M$. 
Since $\max \{ \deg g_2,\deg \psi \} <\deg g_1$, 
the equality $(g_1')^{\w }=g_1^{\w }$ holds in both cases. 
\end{proof}

\subsection{Differential forms}\label{subsect:dodf}

To establish further properties of the weak SU pairs, 
we need the following claim. 
Recall that $s\ge 7$ and $m_G^F=1$ 
if $(F,G)$ is of ps type.

\begin{claim}\label{claim:G}
\nd{\rm (i)} 
If $(F,G)$ is of non-ps type, 
then we have the following$:$ 
\begin{align}
&\left\{
\begin{array}{ll}
\gamma _2=\deg g_1-\deg g_2+\gamma _1=(s-2)\delta +\gamma _1
& \text{ if \ }m_G^F=1 \\
(1/2)\delta +\gamma _1<\gamma _2<(3/2)\delta +\gamma _1
& \text{ if \ }(s,m_G^F)=(3,2) \\
(6/7)\delta+\gamma _1<\gamma _2<(8/7)\delta +\gamma _1
&  \text{ if \ pm7 type. } 
\end{array}
\right.
\label{eq:pfSUred5.5 p}  \\
&\gamma _3 =\left\{
\begin{array}{cl}
\deg f_3+\gamma _1
& \text{ if }a\neq 0 \\
\gamma _2 & \text{ if }a=0\text{ and }b\neq 0 \\
\gamma _1 & \text{ if }a=b=0\text{ and }c\neq 0 \\
\deg dg_1\wedge dg_2 & \text{ if }a=b=c=0.  
\end{array}
\right. \label{eq:P12-1}\\
&\deg f_3+\gamma _1>
\gamma _2
>\gamma _1>\deg dg_1\wedge dg_2. 
\label{eq:gamma order}
\end{align}

\nd \nd{\rm (ii)} 
If $(F,G)$ is of ps type, 
then we have $a=b=0$, 
\begin{equation}\label{eq:P12-1 ps}
\gamma _3 =\left\{
\begin{array}{cl}
\gamma _1 & \text{ if }
c\neq 0 \\
\deg dg_1\wedge dg_2 & \text{ if }
c=0  
\end{array},
\right. 
\quad \text{and}\quad 
\gamma _1\ge \deg g_1+\deg dg_1\wedge dg_2. 
\end{equation}
\end{claim}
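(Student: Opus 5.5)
The plan is to apply the differential-form machinery of \S~\ref{sect:cofactor} with
\[
(h_1,h_2,h_3):=(f_1,f_2,f_3),\qquad (h_1',h_2'):=(g_1,g_2),
\]
taking $\Psi(x)$ so that $\Psi(f_2)=\psi$ and $\phi:=bf_3+e$. Claims~\ref{claim:g1}~(i) and \ref{claim:g2}~(i) put $(g_1,g_2)$ exactly in the form (\ref{eq:k1k2!}), and condition (d) holds by construction. Once the hypotheses of Lemma~\ref{lem:determinant}~(ii) are verified, parts (ii)(1) and (ii)(2) give (\ref{eq:gamma order}) and (\ref{eq:P12-1}) directly. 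Lemma~\ref{lem:determinant}~(i)(1) then gives
\[
\gamma_2=\deg dg_1-\deg dg_2+\gamma_1 ,
\]
using $\gamma_1=\deg dg_2\wedge df_3$. Feeding in Claim~\ref{claim:pm7}~(ii), (iii), (iv) produces the three lines of (\ref{eq:pfSUred5.5 p}).

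The first thing to check is $\gamma_1>\deg dg_1\wedge dg_2$. In the non-pm7 case this follows from Claim~\ref{claim:D}~(i). In the pm7 case it follows from Claim~\ref{claim:D}~(ii) together with $\deg dg_1\wedge dg_2\le(1/8)\delta$.

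Next come (a)--(c). For (a), I need
\[
\deg dg_1+\gamma_1>\deg df_3+\deg dg_1\wedge dg_2 .
\]
I use $\deg df_3\le\deg f_3\le s\delta$ and $\deg dg_1\ge\deg g_1-\ep(g_1)$, and I compare with the lower bound on $\gamma_1$ from Claim~\ref{claim:D}. In the non-pm7 case, $\gamma_1\ge s\delta+\deg dg_1\wedge dg_2$ makes it enough to show $\deg dg_1\ge\deg df_3$, which is ($4^{\rm s}$)-type information. In the pm7 case I use $\ep<(1/7)\delta$, $\gamma_1>(95/56)\delta$ and $\deg f_3\le3\delta$.

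For (b), I need $\deg dg_1-\deg dg_2>\deg\Psi^{(1)}(f_2)$. Since $\deg\Psi^{(1)}(f_2)\le\deg\psi-\deg f_2\le(s-3)\delta$, it suffices to have $\deg dg_1-\deg dg_2>(s-3)\delta$. In the non-ps case, Claim~\ref{claim:pm7} gives $\deg dg_1-\deg dg_2=(s-2)\delta$ when $m=1$. When $s=3$, the bounds in (iii) and (iv) are positive, and $\Psi^{(1)}(f_2)$ is constant (zero or of degree $0$), so (b) holds.

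For (c), I need $\deg f_3>\deg dg_1-\deg dg_2>0$. When $m=1$ this means $\deg f_3>(s-2)\delta$, which is ($3^{\rm u}$). When $m\ge2$ it follows from $\deg f_3>2\delta$ in Claim~\ref{claim:pm7}. The subtle point is the case $m=1$, $s=3$: Claim~\ref{claim:pm7}~(ii) needs $p\nmid s$, which holds in the non-ps case.

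The main obstacle is part (ii), the ps type, where $\deg dg_1-\deg dg_2$ is not controlled because $p\mid s$. Here I will not use Lemma~\ref{lem:determinant}. First, $a=b=0$ comes from Claim~\ref{claim:g1}~(ii) and Table~2, since $s\ge7$ and $\deg f_3>3\delta$. The inequality on $\gamma_1$ is Claim~\ref{claim:D}~(i), which applies since ps type is non-pm7. Since $b=0$ gives $\phi=e$ with $d\phi=0$, Lemma~\ref{note:cofactor}~(ii) applies once $\gamma_1>\deg dg_1\wedge dg_2$, which is again Claim~\ref{claim:D}~(i). Lemma~\ref{note:cofactor}~(ii) then yields (\ref{eq:P12-1 ps}) directly, with the cases $c\ne0$ and $c=0$.
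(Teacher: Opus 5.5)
Your proposal follows the paper's proof essentially step for step. It uses the same $H:=F$ and $(h_1',h_2'):=(g_1,g_2)$, and the same verification of $\gamma_1>\deg dg_1\wedge dg_2$ and of (a)--(c) from Claims~\ref{claim:D} and \ref{claim:pm7}. It then applies Lemma~\ref{lem:determinant} for (i) and Lemma~\ref{note:cofactor}~(ii) for the ps type. Your case-by-case check of (c) is a harmless variant of the paper's uniform bound $\deg f_3>(s-2)\delta+\ep(g_2)-\ep(g_1)=\deg dg_1-\deg dg_2$, which comes from Claim~\ref{claim:A}~(iii).

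One sentence should be fixed. In the non-pm7 check of (a), do not reduce to ``$\deg dg_1\ge\deg df_3$''. That is not what ($4^{\rm s}$) gives: it only gives $\deg df_3-\deg dg_1\le\ep(g_1)$, and $\deg dg_1$ can be much smaller than $\deg g_1$ when $p>0$. You do not need it anyway. The bound $\gamma_1\ge s\delta+\deg dg_1\wedge dg_2$, together with $\deg df_3\le s\delta$ and $\deg dg_1>0$, already gives (a). Equivalently, as in the paper, $\deg df_3-\deg dg_1\le\ep(g_1)<\deg g_1\le\gamma_1-\deg dg_1\wedge dg_2$.
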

\begin{proof}
We prove the claim by applying 
Lemmas~\ref{note:cofactor} and \ref{lem:determinant} 
with $H:=F$ and $(h_1',h_2'):=(g_1,g_2)$. 
By Claims~\ref{claim:g2} (i) and \ref{claim:g1} (i), 
the conditions 
(\ref{eq:k1k2!}) and (d) are satisfied. 
We further verify the following: 
(1) (a) and $\deg dh_2\wedge dh_3>\deg dh_1'\wedge dh_2'$ 
always hold. 
(2)~If $(F,G)$ is of non-ps type, 
then (b) and (c) hold.

For (1), 
by ($4^{\rm s}$), 
it suffices to verify that 
$\gamma _1-\deg dg_1\wedge dg_2>\ep (g_1)$. 
If $(F,G)$ is of non-pm7 type, 
then Claim~\ref{claim:D} (i) gives 
$\gamma _1-\deg dg_1\wedge dg_2\ge \deg g_1>\ep (g_1)$. 
Otherwise, 
Claims~\ref{claim:D} (ii) and \ref{claim:pm7} (iv) yield 
$\gamma _1-\deg dg_1\wedge dg_2>(95/56-1/8)\delta >(1/7)\delta >\ep (g_1)$.

For (2), 
note that 
$\deg f_3>(s-2)\delta +\ep (g_2)-\ep (g_1)=\deg dg_1-\deg dg_2$ 
by Claim~\ref{claim:A} (iii) and ($3^{\rm s}$). 
Since $(F,G)$ is of non-ps type, 
Claim~\ref{claim:pm7} (ii)--(iv) give 
$\deg dg_1-\deg dg_2>(s-3)\delta \ge 0$. 
By Claim~\ref{claim:g1}~(i), 
we have $\Psi (x)\in \sum _{i=0}^{(s-1)/2}kx^i$, 
and so $\deg \Psi ^{(1)}(f_2)\le \frac{s-3}{2}\deg f_2=(s-3)\delta $. 
These yield (b) and (c).

(i) 
From (1), (2), 
and Lemma~\ref{lem:determinant} (i) (1) and (ii), 
we obtain 
$\gamma _2=\deg dg_1-\deg dg_2+\gamma _1$ ($\dag $), 
(\ref{eq:P12-1}), and (\ref{eq:gamma order}). 
Combining ($\dag $) with 
Claim~\ref{claim:pm7}~(ii)--(iv) yields~(\ref{eq:pfSUred5.5 p}).

(ii) 
By Table 2, 
the ps type satisfies $a=b=0$, 
and so $dg_2=df_2$. 
Hence, 
(1) and Lemma~\ref{note:cofactor}~(ii) yield 
the first part of (\ref{eq:P12-1 ps}). 
The last part is due to Claim~\ref{claim:D} (i). 
\end{proof}

\begin{proof}[Proof of Lemma~{\rm \ref{prop:equivalence} (ii)}]
Since $\deg f_2=\deg g_2$ by Claim~\ref{claim:g2} (iii), 
and $\deg f_3>\deg g_3$ by (SU5), 
it suffices to verify that 
$\deg f_1+\deg f_3>\deg g_1+\deg g_3$ in Case $<$, 
where $(F,G)$ is of non-ps and non-pm7 type and $m_G^F=1$ 
(see Table 2). 
It follows that 
\begin{equation}\label{eq:pfSUreduction5}
\begin{aligned}
&\deg f_1+\deg f_3\ge \gamma _2
=\deg g_1-\deg g_2+\gamma _1 
&& (\text{by Remark~\ref{rem:independence} (iii), 
(\ref{eq:pfSUred5.5 p})}) \\
&\quad 
\ge 2\deg g_1-\deg g_2+\deg dg_1\wedge dg_2 
&& (\text{by Claim~\ref{claim:D} (i)}). 
\end{aligned}
\end{equation}
By (SU6), 
the right-hand side of (\ref{eq:pfSUreduction5}) 
is greater than $\deg g_1+\deg g_3$. 
\end{proof}

In Case $<$, 
we have $(\deg g_1,\deg g_2,\deg f_3)=(3\delta ,2\delta ,(3/2)\delta )$. 
Substituting these values into 
(\ref{eq:pfSUreduction5}) yields the following claim.

\begin{claim}\label{claim:P5}
In Case $<$, we have 
$\deg f_1\ge (5/2)\delta +\deg dg_1\wedge dg_2
>(5/3)\deg f_3$. 
\end{claim}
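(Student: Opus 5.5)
The plan is to substitute the Case $<$ data into (\ref{eq:pfSUreduction5}) and then compare with $\deg f_3$. In Case $<$, Table~2 and Claim~\ref{claim:g1}~(iii) give $s=3$, $m_G^F=1$, non-ps and non-pm7 type, and $(\deg g_1,\deg g_2,\deg f_3)=(3\delta,2\delta,(3/2)\delta)$.

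The chain (\ref{eq:pfSUreduction5}) is derived from Remark~\ref{rem:independence}~(iii) (namely $\deg f_1+\deg f_3\ge\gamma_2$), the first line of (\ref{eq:pfSUred5.5 p}) (valid since $m_G^F=1$ and the pair is non-ps), and Claim~\ref{claim:D}~(i) (valid since the pair is non-pm7). Together these give
$$
\deg f_1+\deg f_3\ge 2\deg g_1-\deg g_2+\deg dg_1\wedge dg_2 .
$$
I will make sure that Claim~\ref{claim:G}~(i) is used only through these inputs, none of which relies on Claim~\ref{claim:H} in Case $<$, so that the argument is not circular. Substituting the values yields
$$
\deg f_1\ge 6\delta-2\delta-\tfrac32\delta+\deg dg_1\wedge dg_2=\tfrac52\delta+\deg dg_1\wedge dg_2 .
$$

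For the strict inequality, we have $(5/3)\deg f_3=(5/3)(3/2)\delta=(5/2)\delta$. It therefore suffices that $\deg dg_1\wedge dg_2>0$, which holds because $dg_1\wedge dg_2\ne0$ (as $dG\ne0$) and every nonzero form has positive degree.

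The only delicate point is the non-circularity check above; the rest is arithmetic.
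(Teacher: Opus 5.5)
Your proposal is correct and follows the paper's argument: substitute $(\deg g_1,\deg g_2,\deg f_3)=(3\delta,2\delta,(3/2)\delta)$ into (\ref{eq:pfSUreduction5}) and use $\deg dg_1\wedge dg_2>0$ for the strict inequality. Your non-circularity remark is also accurate. Claims~\ref{claim:D}~(i) and \ref{claim:G}~(i), which feed (\ref{eq:pfSUreduction5}), use only Claims~\ref{claim:A}, \ref{claim:pm7}, \ref{claim:g1} and ($3^{\rm s}$), ($4^{\rm s}$), and never Claim~\ref{claim:H}.
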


Case $<$ of Claim~\ref{claim:H} 
now follows from the inequalities 
$\deg f_3=(3/2)\delta 
<\deg f_2=2\delta <\deg f_1<\deg g_1=3\delta =\deg f_3^2<\deg f_2^2$, 
where Claim~\ref{claim:P5} gives $2\delta <\deg f_1$.

\begin{example}\label{ex:WSU sigma=id}
Let $F\in \Aut _k\kx $ 
and $\sigma \in \sym _3$, 
and set $\delta :=(1/2)\deg f_2$. 
If one of the following holds, 
then $(F_{\sigma },G)$ is not a weak SU pair 
for any $G\in \Aut _k\kx $:

\nd $(1)$ 
$(7/3)\delta <\deg f_1<3\delta $ 
and $\deg f_3=(4/3)\delta $;

\nd $(2)$ 
$\delta <\deg f_1<3\delta $, 
$\deg df_1\wedge df_2-\deg df_2\wedge df_3=\deg f_3=(3/2)\delta $, 
and $\sigma \ne \id $. 

\end{example}
\begin{proof}
Suppose the contrary, 
that $(F_{\sigma },G)$ is a weak SU pair for some $G\in \Aut _k\kx $. 
Then, 
one of the two equalities in Table 2 $\clubsuit $ 
must hold for $F_\sigma $.

(1) 
The assumptions imply that $\deg f_3<\deg f_2<\deg f_1$. 
Hence, 
Claim~\ref{claim:H} (i) applied to $(F_{\sigma },G)$ 
yields $f_{\sigma (1)}=f_1$. 
It follows that 
$(7/3)\delta <\deg f_{\sigma (1)}<3\delta $ and 
$\{ \deg f_{\sigma (2)},\deg f_{\sigma (3)}\} 
=\{ 2\delta ,(4/3)\delta \} $, 
contradicting $\clubsuit $.

(2) 
Since $\deg f_3<\deg f_2$ 
and $\sigma \ne \id $ by assumption, 
we see from Claim~\ref{claim:H} (i) applied to $(F_{\sigma },G)$ 
that one of the following three cases must hold:

\nd 
\underline{$F_{\sigma }=(f_1,f_3,f_2)$}: 
Since $3\deg f_3=(9/2)\delta \ne 8\delta =4\deg f_2$ by assumption, 
it follows from $\clubsuit $ that 
$2\deg f_1=s'\deg f_3=(3s'/2)\delta $ for some odd $s'\ge 3$. 
Since $\deg f_1<3\delta $, we obtain $s'=3$. 
Hence, $(F_\sigma ,G)$ is of non-ps type. 
Thus, 
by (\ref{eq:pfSUred5.5 p}), 
$\gamma _2^{F_\sigma }-\gamma _1^{F_\sigma }$ 
is equal to $(s'-2)\delta '=\delta '$, 
or less than $(3/2)\delta '$ or $(8/7)\delta '$, 
where $\delta ':=(1/2)\deg f_3$. 
This contradicts 
$\gamma _2^{F_\sigma }-\gamma _1^{F_\sigma }
=\deg df_1\wedge df_2-\deg df_2\wedge df_3=\deg f_3=2\delta '$.

\nd 
\underline{$F_{\sigma }=(f_2,f_3,f_1)$}: 
Since $2\deg f_2\ne s'\deg f_3$ for all $s'\ge 3$, 
we see from $\clubsuit $ that $(F_{\sigma },G)$ is in Case $<$. 
Then, 
the case $a\ne 0$ of (\ref{eq:P12-1}) 
combined with (\ref{eq:gamma order}) yields 
$\deg df_2\wedge df_3=\gamma _3^{F_\sigma }
>\gamma _2^{F_\sigma }=\deg df_1\wedge df_2
=\deg df_2\wedge df_3+(3/2)\delta $, 
a contradiction.

\nd 
\underline{$F_{\sigma }=(f_2,f_1,f_3)$}: 
Since $\deg f_2<(5/3)\deg f_3$, 
Claim~\ref{claim:P5} implies that 
$(F_{\sigma },G)$ is in Case $=$. 
Hence, by $\clubsuit $, 
$2\deg f_2=s'\deg f_1$ 
holds for some odd $s'\ge 3$. 
Since $\deg f_2=2\delta $ and $\deg f_1>\delta $, 
we get $s'=3$ and $\deg f_1=(2/s')\deg f_2=(4/3)\delta <\deg f_3$. 
This implies that $(F_{\sigma },G)$ is of non-ps type 
and corresponds to the 
``$2\delta <\deg f_3$" case in Table 2. 
Then, 
the case $a=b=0$ of (\ref{eq:P12-1}) 
combined with (\ref{eq:gamma order}) yields 
$\deg df_1\wedge df_2=\gamma _3^{F_\sigma }\le 
\gamma _1^{F_\sigma }
<\gamma _2^{F_\sigma }=\deg df_2\wedge df_3$, 
a contradiction as before. 
\end{proof}

\begin{rem}\label{rem:diff forms}\rm 
Let $(F,G)$ be a weak SU pair.

\nd (i) 
The implications 
``proper $\Rightarrow $ $(a,b,c)\ne (0,0,0)$ 
$\Rightarrow $ $\gamma _3\ge \gamma _1$" hold, 
with the first step following from Definition~\ref{def:wsup} (ii), 
and the second from 
(\ref{eq:P12-1}), 
(\ref{eq:gamma order}), 
and (\ref{eq:P12-1 ps}).

\nd (ii) 
If $(F,G)$ is of pm7 type, 
then we have $a=b=0$ by Table 2. 
Hence, 
the~implications 
``proper 
$\Rightarrow c\ne 0$ 
$\Rightarrow $ 
$\gamma _3=\gamma _1>(95/56)\delta >\delta $" 
hold by (\ref{eq:P12-1}) and Claim~\ref{claim:D}~(ii).

\nd (iii) 
Assume that $(F,G)$ is of non-ps and non-pm7 type.

\nd {\rm (1)} 
We have 
$\gamma _2>\gamma _1>\deg g_1\ge \deg f_1\ge \max \{ \deg f_2,\deg f_3\} $ 
by (\ref{eq:gamma order}), 
Claim~\ref{claim:D} (i), 
(SU$2'$), 
and Claim~\ref{claim:H} (i).

\nd {\rm (2)} 
If $\deg f_i\ge \gamma _j$ holds for some $i,j\in \{ 1,2,3\} $, 
then it follows from (1) that 
$j=3$ and $\gamma _2>\gamma _1>\gamma _3$. 
This implies that $(F,G)$ is not proper by~(i). 

\end{rem}

\begin{definition}\label{def:pm7.1}\rm 
Let $(F,G)$ be a weak SU pair of pm 7 type. 
We say that $(F,G)$ is of {\it pm7.1 type} if 
$c\ne 0$ 
and $\mathcal{P}(f_2,f_1,m)\ne \emptyset $ 
for some $m\ge 1$. 
\end{definition}

\begin{claim}\label{claim:pm7.1}
Assume that $(F,G)$ is of pm7.1 type. 

\nd{\rm (i)} 
We have $\deg f_3\ne \deg g_1$. 
Hence, $2\delta <\deg f_3<3\delta $ holds by Table $2$.

\nd{\rm (ii)} 
We have $(\ref{eq:pfSUreduction1.5})$. 
Hence, 
$\gamma _1\ge \deg g_1+\deg dg_1\wedge dg_2>s\delta $ 
holds by Claim~{\rm \ref{claim:D} (i)}. 

\end{claim}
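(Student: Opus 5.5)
The claim specializes to the pm7 situation $s=3$, $p=m_G^F=7$. Table~2 then gives Case $=$, so $\deg f_1=\deg g_1=3\delta $, and $a=b=0$. Hence Claims~\ref{claim:g2}~(i) and \ref{claim:g1}~(i) give $g_2=f_2+e$ and $g_1=f_1+cf_3+\psi $ with $c\ne 0$ and $\psi \in kf_2+k$. I prove (i) by an algebraic-dependence argument, and then obtain (ii) from Theorem~\ref{thm:modification}~(ii).

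\emph{Part (i).} Suppose $\deg f_3=\deg g_1$. Then $\deg f_1=\deg g_1=\deg f_3$, and $c\ne 0$, so Claim~\ref{claim:g1}~(iv) says that $f_1^{\w }$ and $g_1^{\w }$ are algebraically independent over $k$. On the other hand, take $\phi _0\in \mathcal{P}(f_2,f_1,m)$ with $m\ge 1$, which exists by the pm7.1 hypothesis. Remark~\ref{rem:(1.3.7)}~(i) shows that $f_1^{\w }$ and $f_2^{\w }$ are algebraically dependent. Claim~\ref{claim:A}~(i) gives $(g_1^{\w })^2\approx (g_2^{\w })^3$, and $g_2^{\w }=f_2^{\w }$, so $g_1^{\w }$ and $f_2^{\w }$ are also dependent. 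Since $f_2^{\w }\notin k$, both $f_1^{\w }$ and $g_1^{\w }$ are algebraic over $k(f_2^{\w })$, which has transcendence degree one. Therefore $f_1^{\w }$ and $g_1^{\w }$ are algebraically dependent, a contradiction. The range $2\delta <\deg f_3<3\delta $ then follows from Table~2.

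\emph{Part (ii).} I apply Theorem~\ref{thm:modification}~(ii) to $F$ with $g:=f_1+cf_3\in f_1+k^*f_3$, and check its hypotheses one by one.
\begin{itemize}
\item $dF\ne 0$ holds since $F$ is an automorphism.
\item For $(\flat )$: we have $\deg f_3<\deg f_1=3\delta $ by (i), and $f_3^{\w }\notin k[f_2^{\w }]$ by Claim~\ref{claim:g2}~(iv). Moreover $5\delta <\deg f_1f_3<6\delta $, while every power of $f_2^{\w }$ has degree in $2\N \delta $. Hence $(f_1f_3)^{\w }\notin k[f_2^{\w }]$.
\item For $\deg df_1\wedge df_2>(1/2)\delta $: write $df_1\wedge df_2=dg_1\wedge dg_2-c\,df_3\wedge df_2$. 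Here $\deg dg_1\wedge dg_2\le (1/8)\delta $ by Claim~\ref{claim:pm7}~(iv), and $\deg df_2\wedge df_3=\gamma _1>(95/56)\delta $ by Claim~\ref{claim:D}~(ii). So the second term dominates, and $\deg df_1\wedge df_2=\gamma _1>(1/2)\delta $.
\item $\mathcal{P}(f_2,f_1,m)\ne \emptyset $ is part of the pm7.1 hypothesis.
\end{itemize}

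Next I identify $\phi _3$ as an element of $\mathcal{P}(f_2,g,7)$. Since $g_1=g+\psi $ with $\deg \psi \le 2\delta <\deg g$ and $g_2=f_2+e$, Remark~\ref{rem:PP}~(ii) gives $\mathcal{P}(g_2,g_1,7)=\mathcal{P}(f_2,g,7)$. By (\ref{eq:m(F,G)}), $\phi _3$ lies in this set.

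Theorem~\ref{thm:modification}~(ii) then yields $\deg df_2\wedge d\phi _3>3\delta +\deg dg\wedge df_2$. Because $\psi \in k[f_2]$ and $dg_2=df_2$, we have $dg_1\wedge dg_2=dg\wedge df_2$ (up to the order of factors, which does not affect the degree). Also $dg_2\wedge d\phi _3=df_2\wedge d\phi _3$ and $\deg g_1=3\delta $. So this is (\ref{eq:pfSUreduction1.5}), with strict inequality. The last statement of (ii) then follows from Claim~\ref{claim:D}~(i).

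The main step is verifying the hypotheses of Theorem~\ref{thm:modification}~(ii), especially the estimate $\deg df_1\wedge df_2=\gamma _1$. This rests on the lower bound for $\gamma _1$ from Claim~\ref{claim:D}~(ii), which is what prevents cancellation between $dg_1\wedge dg_2$ and $c\,df_3\wedge df_2$.
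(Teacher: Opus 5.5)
Your proof is correct and follows the paper's argument. Part (i) uses Claim~\ref{claim:g1}~(iv), Claim~\ref{claim:A}~(i) and Remark~\ref{rem:(1.3.7)}~(i), and part (ii) uses Remark~\ref{rem:PP}~(ii) and Theorem~\ref{thm:modification}~(ii). The only differences are minor. You check $\deg df_1\wedge df_2>(1/2)\delta$ by expanding $df_1\wedge df_2=dg_1\wedge dg_2-c\,df_3\wedge df_2$; this is just the $a=b=0$, $c\neq 0$ case of (\ref{eq:P12-1}), which the paper reaches through $\gamma_3>\delta$ in Remark~\ref{rem:diff forms}~(ii). You also verify $(\flat)$ directly instead of through $(\sharp)\Rightarrow(\flat)$.
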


\begin{proof}
Note that 
$\gamma _3>\delta $ (a) by Remark~\ref{rem:diff forms} (ii), 
since $c\ne 0$ by definition; 
$\deg f_1=\deg g_1$ (b) 
and $g:=g_1-\psi =f_1+cf_3\in f_1+k^*f_3$ (c), 
since $a=0$; 
$f_2=g_2-e$ and $\psi \in k[f_2]=k[g_2]$ (d), 
since $b=0$; 
and $\deg \psi <\deg g_1$ (e) by~Claim~\ref{claim:g1}~(i).

(i) 
Suppose that $\deg f_3=\deg g_1$. 
Then, together with (b) and $c\ne 0$, 
it follows from Claim~\ref{claim:g1}~(iv) that 
$f_1^{\w }$ and $g_1^{\w }$ are algebraically independent over $k$; 
hence, so are $f_1^{\w }$ and $g_2^{\w }=(f_2+e)^{\w }=f_2^{\w }$ 
by Claim~\ref{claim:A} (i) and (d). 
By Remark~\ref{rem:(1.3.7)}~(i), 
this contradicts the condition that 
$\mathcal{P}(f_2,f_1,m)\ne \emptyset $ for some $m\ge 1$.

(ii) 
By (\ref{eq:m(F,G)}) and by (c)--(e) and Remark~\ref{rem:PP} (ii), 
we have 
$\phi _3\in \mathcal{P}(g_2,g_1,7)=\mathcal{P}(f_2,g,7)$ 
with $g\in f_1+k^*f_3$. 
Hence, 
it follows from 
(i), (a), $\mathcal{P}(f_2,f_1,m)\ne \emptyset $, 
and Theorem~\ref{thm:modification}~(ii) that 
$\deg df_2\wedge d\phi _3>3\delta +\deg dg\wedge df_2$, 
since (i) implies $(\sharp )$, and hence $(\flat )$. 
This yields (\ref{eq:pfSUreduction1.5}), 
since $dg\wedge df_2=dg_1\wedge dg_2$ by (d). 
\end{proof}

\subsection{Proof of Lemma~{\rm \ref{prop:c1-c6}}}\label{subsect:pf of lem 3}
We remark the following:

\nd 
($1^\circ $) 
$\deg f_3=\deg f_1$ implies 
$\deg f_1=\deg g_1$ by Claim~\ref{claim:P5}, 
and $a=b=0$ by Table~2.

\nd 
($2^\circ $) 
By Claims~\ref{claim:H} (iii) and \ref{claim:g2} (iii), 
$f_3^{\w }\not\approx f_j^{\w }$ holds for $j=1,2$.

If $\w $ is independent, 
then ($2^\circ $) implies that $\deg f_3\ne \deg f_j$ for $j=1,2$ 
by (\ref{eq:approx}). 
In this case, 
Lemma~{\rm \ref{prop:c1-c6}} (ii) (2) is vacuously true.

\begin{claim}\label{claim:S3}
Assume that $(F,G)$ is proper. 
Let $\phi \in k[S_3]$ be such that $\deg \phi \le \deg f_3$. 
Then, we have $\deg ^{S_3}\phi \le \deg f_3$ $(\dag )$, 
which yields the following statements 
{\rm (i)}--{\rm (iii):}

\nd{\rm (i)} If $\deg f_3\ne \deg f_j$ for $j=1,2$, 
then $\phi \in k$, or 
$\phi \in k[f_2]$ and $g_2=f_2+e$.

\nd{\rm (ii)} If $\deg f_3=\deg f_1$, 
then $\phi \in kf_1+\rho $ 
for some $\rho \in k[f_2]$ with $\deg \rho <\deg g_1$.

\nd{\rm (iii)} If $\deg f_3=\deg f_2$, 
then $\phi \in kf_2+k$.

\end{claim}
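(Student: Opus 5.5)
First I would prove $(\dag)$ by imitating Claim~\ref{claim:S1}. Suppose $\deg ^{S_3}\phi >\deg f_3$. Since $\deg \phi \le \deg f_3$, we get $\deg \phi <\deg ^{S_3}\phi $, so $\phi \in \mathcal{D}(f_1,f_2)$ by Remark~\ref{rem:(1.3.7)}~(i). By Claim~\ref{claim:H}~(ii), $f_1^{\w }\not\in k[f_2^{\w }]$ and $f_2^{\w }\not\in k[f_1^{\w }]$. Hence Corollary~\ref{cor:pSU0}~(iii), which uses $p\ge 7$, gives $\deg \phi >\deg df_1\wedge df_2=\gamma _3$.

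Properness now enters. Remark~\ref{rem:diff forms}~(i) gives $\gamma _3\ge \gamma _1$, so $\deg f_3\ge \deg \phi >\gamma _1$. In the non-pm7 type, Claim~\ref{claim:D}~(i) and (\ref{eq:P12-1 ps}) give $\gamma _1>s\delta \ge \deg f_3$ via (SU4), which is a contradiction. In the pm7 type, properness forces $c\ne 0$ by Remark~\ref{rem:diff forms}~(ii). I then expect to need a separate estimate, which is the main obstacle. Claim~\ref{claim:D}~(ii) only gives $\gamma _1>(95/56)\delta $, while $\deg f_3$ may be as large as $3\delta $.

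For the pm7 case I would imitate the second half of Claim~\ref{claim:S1}. Choose $(i,j)$ coprime with $(f_1^{\w })^j\approx (f_2^{\w })^i$. By $\clubsuit $ we have $\deg f_1=3\delta $ and $\deg f_2=2\delta $, so $(f_1^{\w })^2\approx (f_2^{\w })^3$ and the setting (S) holds with $(a,b)=(2,3)$. Using $\deg \phi \le \deg f_3\le 3\delta $, Corollary~\ref{cor:pSU0}~(i) gives $\deg \phi \ge \delta +\gamma _3$ when $m<p$. If instead $m\ge 7$, Lemma~\ref{lem:nonsingular}~(c), or Example~\ref{cor:nonsingular}~(c), gives $\deg \phi \ge 2\delta +8\gamma _3$. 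Since $\gamma _3=\gamma _1>(95/56)\delta $, both bounds exceed $3\delta $ except possibly the case $m<p$ with $\deg f_3$ near $3\delta $. In that remaining case I would fall back on Claim~\ref{claim:pm7.1}-type reasoning, or on Theorem~\ref{thm:modification}~(ii), to obtain $\gamma _1>3\delta $ and close the argument.

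Given $(\dag )$, write $\phi =\sum c_{ij}f_1^if_2^j$ with every monomial satisfying $\deg f_1^if_2^j\le \deg f_3$. For (i), suppose $\deg f_3\ne \deg f_1,\deg f_2$. Since $\deg f_3<\deg f_1$, only powers of $f_2$ occur, so $\phi \in k[f_2]$. If $\phi \notin k$, then $\deg f_2<\deg f_3$, and Claim~\ref{claim:g2}~(ii) gives $b=0$, that is, $g_2=f_2+e$. For (ii), when $\deg f_3=\deg f_1$, the monomials allowed are $f_1$ and the $f_2^j$ with $\deg f_2^j\le \deg f_1=\deg g_1$. Using ($1^\circ $) together with parity ($\deg f_1$ is an odd multiple of $\delta $), strict inequality holds and $\rho <\deg g_1$. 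For (iii), when $\deg f_3=\deg f_2<\deg f_1$, only $f_2$ and $1$ survive, since $\deg f_2^2>\deg f_3$.
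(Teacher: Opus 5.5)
Your outline is the paper's. You suppose $\deg^{S_3}\phi>\deg f_3$ and get $\phi\in\mathcal{D}(f_1,f_2)$ and $\deg\phi>\gamma_3$ from Claim~\ref{claim:H}~(ii) and Corollary~\ref{cor:pSU0}~(iii). You then use properness to get $\gamma_3\ge\gamma_1$, and contradict this with $\gamma_1>s\delta$. Your non-pm7 case and your derivations of (i)--(iii) from $(\dag)$ are correct and match the paper.

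The gap is the pm7 case, which as written you do not prove. For $m<p$ (in particular $m=1$), your bound from Corollary~\ref{cor:pSU0}~(i) is $\deg\phi\ge\delta+\gamma_3>(151/56)\delta$. This is compatible with $\deg\phi\le\deg f_3\le 3\delta$, so it gives no contradiction. For $m\ge 7$ you would also need Theorem~\ref{ex:ineq1}~(ii) to reduce to $m=7$ before Example~\ref{cor:nonsingular}~(c) applies. Your closing appeal to ``Claim~\ref{claim:pm7.1}-type reasoning'' is not yet an argument, because you never check that the hypotheses of that claim hold.

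They do hold, and this is the missing observation. The element $\phi$ itself witnesses $\mathcal{P}(f_2,f_1,m)\ne\emptyset$ for some $m\ge 1$: indeed $\phi\in\mathcal{D}(f_1,f_2)$, and $\deg\phi\le\deg f_3\le\deg f_1$ by Claim~\ref{claim:H}~(i). You already obtained $c\ne 0$ from properness via Remark~\ref{rem:diff forms}~(ii). These two facts together say, by definition, that $(F,G)$ is of pm7.1 type. Claim~\ref{claim:pm7.1}~(ii) then gives $\gamma_1>s\delta=3\delta\ge\deg f_3\ge\deg\phi$ directly, for every $m$.

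This is exactly how the paper argues. It observes that a proper $(F,G)$ with such a $\phi$ is either of non-pm7 type or of pm7.1 type, and takes $\gamma_1>s\delta$ from Claim~\ref{claim:D}~(i) or Claim~\ref{claim:pm7.1}~(ii) respectively. Your case split on $m$ via Corollary~\ref{cor:pSU0}~(i) and Example~\ref{cor:nonsingular} is therefore unnecessary and should be replaced by this one-line identification.
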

\begin{proof}
Suppose that $\deg ^{S_3}\phi >\deg f_3$. 
Then, since $\deg \phi \le \deg f_3$ by assumption, 
we have $\deg \phi <\deg ^{S_3}\phi $, 
implying $\phi \in \mathcal{D}(f_1,f_2)$. 
Hence, 
Claim~\ref{claim:H}~(ii) and Corollary~\ref{cor:pSU0} (iii) 
yield $\deg \phi >\gamma _3$. 
For a contradiction, 
we prove $\gamma _3>\deg \phi $.

The assumption, Claim~\ref{claim:H}~(i), and (SU$2'$) yield 
$\deg \phi \le \deg f_3\le \deg f_1\le \deg g_1=s\delta $~($*$). 
Since $\phi \in \mathcal{D}(f_1,f_2)$, 
this gives $\phi \in \mathcal{P}(f_2,f_1,m)$ with $m\ge 1$. 
Hence, 
by Remark~\ref{rem:diff forms}~(ii) 
and the assumption that $(F,G)$ is proper, 
$(F,G)$ cannot be of pm7 and non-pm7.1 type. 
Thus, we get $\gamma _1>s\delta \ge \deg \phi $ 
by Claims~\ref{claim:D}~(i) and \ref{claim:pm7.1} (ii) and ($*$). 
By Remark~\ref{rem:diff forms}~(i), 
the properness of $(F,G)$ also implies that $\gamma _3\ge \gamma  _1$, 
yielding $\gamma _3>\deg \phi $.

(i) 
Since $\deg ^{S_3}\phi \le \deg f_3\le \deg f_1$ 
by ($\dag $) and Claim~\ref{claim:H}~(i), 
and since $\deg f_3\ne \deg f_1$ by assumption, 
we have $\deg ^{S_3}\phi <\deg f_1$. 
By Remark~\ref{rem:deg^fg}, 
this implies that $\phi \in k[f_2]$. 
Hence, if $\phi \not\in k$, 
then $\deg f_2\le \deg ^{S_3}\phi $ holds. 
This together with ($\dag $) and $\deg f_3\ne \deg f_2$ gives 
$\deg f_2<\deg f_3$, 
yielding $b=0$ via Claim~\ref{claim:g2}~(ii).

(ii) 
Since $\deg ^{S_3}\phi \le \deg f_3=\deg f_1$, 
we have $\phi \in kf_1+\rho $, 
where $\rho \in k[f_2]$ with 
$\deg \rho \le \deg f_1\le \deg g_1=s\delta $. 
Since $\deg f_2=2\delta $ and $s$ is odd, 
we get $\deg \rho <\deg g_1$.

(iii) 
Since $\deg ^{S_3}\phi \le \deg f_3=\deg f_2<\deg f_1$ 
by Claim~\ref{claim:H}~(i), 
this is clear. 
\end{proof}

\begin{claim}\label{claim:S2}
For all $\phi \in k[S_2]\sm k[f_3]$, 
we have $\deg \phi >\deg f_2$. 
\end{claim}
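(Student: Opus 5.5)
Let $\phi \in k[S_2]\sm k[f_3]$, so $\phi =\theta (f_1,f_3)$ with $\theta $ involving $x$ nontrivially. The plan is to argue by contradiction: assume $\deg \phi \le \deg f_2=2\delta $ and derive a contradiction from degree bookkeeping combined with Corollary~\ref{cor:pSU0}.

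First, by Remark~\ref{rem:deg^fg}, $\phi \not\in k[f_3]$ gives $\deg ^{S_2}\phi \ge \deg f_1$. Claim~\ref{claim:H}~(i) gives $\deg f_1>\deg f_2=2\delta \ge \deg \phi $, so $\deg \phi <\deg ^{S_2}\phi $. By Remark~\ref{rem:(1.3.7)}~(i), this means $\phi \in \mathcal{D}(f_1,f_3)$. Therefore the setting (S) applies to $(f,g)=(f_3,f_1)$, since $\deg f_3\le \deg f_1$. In particular $f_1^{\w }$ and $f_3^{\w }$ are algebraically dependent. By Lemma~\ref{lem:dependence}, $(f_1^{\w })^{a'}\approx (f_3^{\w })^{b'}$ for some coprime $a',b'$.

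Second, we locate $(a',b')$. By Claim~\ref{claim:H}~(iii) we have $f_3^{\w }\not\in k[f_1^{\w }]$. If also $f_1^{\w }\not\in k[f_3^{\w }]$, Corollary~\ref{cor:pSU0}~(iii) applies (recall $p\ge 7$) and gives $\deg \phi >\gamma _2=\deg df_1\wedge df_3$. We then need $\gamma _2\ge 2\delta $:
\begin{itemize}
\item In the non-ps case, (\ref{eq:gamma order}) gives $\gamma _2>\gamma _1$. We have $\gamma _1>s\delta \ge 3\delta $ in the non-pm7 case by Claim~\ref{claim:D}~(i), and $\gamma _1>(95/56)\delta $ in the pm7 case by Claim~\ref{claim:D}~(ii). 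This settles the non-pm7 case but not the pm7 case. For pm7, (\ref{eq:pfSUred5.5 p}) gives $\gamma _2>(6/7)\delta +\gamma _1>(6/7+95/56)\delta >2\delta $.
\item In the ps case I would use Remark~\ref{rem:independence}~(iii) and Theorem~\ref{thm:another} to bound $\gamma _2$ from below, or treat this case directly via Corollary~\ref{cor:pSU0}~(ii).
\end{itemize}
Either way we reach the contradiction $\deg \phi >2\delta $.

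Third, the remaining case is $f_1^{\w }\in k[f_3^{\w }]$. By Claim~\ref{claim:H}~(iv), $s=3$, $f_1^{\w }\approx (f_3^{\w })^2$, and $\deg f_3=(3/2)\delta $. So (S) holds with $(a,b)=(1,2)$ relative to $(f,g)=(f_3,f_1)$, and $\delta '=(3/2)\delta $. Here Theorem~\ref{thm:pSUineq} is weak because $ab-a-b<0$, so I would instead argue structurally. Lemma~\ref{lem:lcm}~(ii) gives $\deg ^{S_2}\phi \ge 2m\delta '=3m\delta $. The leading form of $\theta $ is divisible by $(y-\kappa x^2)^m$. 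One can then replace $f_1$ by $f_1'=f_1-\kappa f_3^2$ and reduce to the analysis in Theorem~\ref{ex:total}. In Case~$<$, Claim~\ref{claim:P5} gives $\deg f_1'<\deg f_1$ only if $\deg f_1=3\delta $; otherwise the relation $(f_1^{\w })\approx (f_3^{\w })^2$ is incompatible with the values in Table~2. For Case~$=$, $\deg f_1=3\delta =2\deg f_3$ is consistent, $a=b=0$, and Theorem~\ref{ex:total}~(i)(2) gives $\deg f_1'>\delta $. The key input is the differential estimate $\deg df_1'\wedge df_3\ge \gamma _2$, which by the first part exceeds $2\delta $. Writing $\phi $ as a polynomial in $(f_1',f_3)$ and using $m\le $ small values from Theorem~\ref{thm:pSUineq0}, we get $\deg \phi \ge \deg ^{S_2}\phi -m\Lambda _0-v_p(m!)\Lambda _1>2\delta $.

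The main obstacle is this last subcase $f_1^{\w }\approx (f_3^{\w })^2$. There the generic inequality gives nothing, and one must exploit $\gamma _2$ being large via a Theorem~\ref{thm:main}-type bound with $\Lambda _0=\deg df_3+\deg f_1-\gamma _2$ small. I would try that first: $\Lambda _0<\deg f_1+\deg f_3-2\delta $ should force $\deg \phi \ge 3m\delta -m\Lambda _0>2\delta $ when $m<p$, with a separate argument when $m\ge p$.
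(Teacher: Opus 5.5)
Your opening reduction (assume $\deg\phi\le 2\delta$, deduce $\phi\in\mathcal{D}(f_1,f_3)$) and the case $f_1^{\w}\not\in k[f_3^{\w}]$ are sound in the non-ps types. Corollary~\ref{cor:pSU0}~(iii) gives $\deg\phi>\gamma_2$, and your bounds $\gamma_2>3\delta$ (non-pm7) and $\gamma_2>(\frac67+\frac{95}{56})\delta>2\delta$ (pm7) are correct. The ps subcase, however, is only hand-waved, and Remark~\ref{rem:independence}~(iii) cannot help because it only bounds $\gamma_2$ from above. The paper avoids this by using Theorem~\ref{ex:ineq1}~(i) and (iii) instead. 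Besides $\deg\phi>\gamma_2$, this gives $(f_1^{\w})^2\approx(f_3^{\w})^t$ with $t\ge 3$ odd. Then $\deg f_1\le s\delta$ and $\deg f_3>(s-2)\delta$ force $(s,t)\in\{(3,5),(5,3),(3,3)\}$, so neither ps nor pm7 type can occur. Alternatively, in ps type $\deg f_3>5\delta$, so Corollary~\ref{claimm:ineq1} gives $\deg\phi>\frac12\deg f_3>2\delta$ directly.

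The genuine gap is the case $f_1^{\w}\in k[f_3^{\w}]$, where $s=3$, $f_1^{\w}\approx(f_3^{\w})^2$ and $\deg f_3=\frac32\delta$.

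\begin{itemize}
\item Your claim that Theorem~\ref{thm:pSUineq} is useless here because $ab-a-b<0$ is wrong.
\item The detour through $f_1-\kappa f_3^2$ and Theorem~\ref{ex:total} is unfounded. Its hypothesis $\deg df_1\wedge df_2\le\frac12\delta$ is not available, and re-expanding $\phi$ in $(f_1',f_3)$ loses control of both $\deg^{S_2}\phi$ and the multiplicity.
\item Your final idea, a Theorem~\ref{thm:main} bound with small $\Lambda_0$, is the right one, but your input is too weak. From $\gamma_2>2\delta$ you only get $\Lambda_0<\frac52\delta$, hence $\deg\phi>\frac12 m\delta$, which is no contradiction for $m=1$. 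Even $\gamma_2>3\delta$ would only give $\deg\phi>\frac32 m\delta$.
\end{itemize}

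What is missing is the exact value of $\gamma_2$. Since $\deg f_3<2\delta$, Claim~\ref{claim:pm7}~(iii)--(v) force $(F,G)$ to be of non-ps, non-pm7 type with $m_G^F=1$. Then (\ref{eq:pfSUred5.5 p}) and Claim~\ref{claim:D}~(i) give $\gamma_2=\delta+\gamma_1>4\delta$. Now apply Corollary~\ref{cor:pSU0}~(i) with $(f,g)=(f_3,f_1)$, $(a,b)=(1,2)$, and $\delta$ replaced by $\deg f_3$. This yields $\deg\phi\ge\min\{M_0,pM_1\}$, where $M_0=\gamma_2-\deg f_3>\frac52\delta$ and $M_1=M_0-\frac{1}{p-1}(3\deg f_3-\gamma_2)>\frac2p\delta$. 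That contradicts $\deg\phi\le 2\delta$ for every $m$, so no separate argument for $m\ge p$ is needed.
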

\begin{proof}
By assumption and Remark~\ref{rem:deg^fg}, 
we have $\deg ^{S_2}\phi \ge \deg f_1$. 
Now, 
suppose that $\deg \phi \le \deg f_2$. 
Then, 
since $\deg f_2<\deg f_1$ by Claim~\ref{claim:H} (i), 
we get $\deg \phi <\deg f_1\le \deg ^{S_2}\phi $. 
This implies that 
$\phi \in \mathcal{P}(f_3,f_1,m)$ for some $m\ge 1$ (a).

First, 
assume that $f_1^{\w }\not\in k[f_3^{\w }]$. 
Then, since $\deg f_3\le \deg f_1$ by Claim~\ref{claim:H} (i), 
it follows from Theorem~\ref{ex:ineq1} (i) and (iii) 
that $(f_1^{\w })^2\approx (f_3^{\w })^t$ for some odd $t\ge 3$ (b) 
and $\deg \phi >\gamma _2$ (c). 
Since $\deg f_1\le s\delta $, 
(b) yields 
$\deg f_3\le (2s/t)\delta $ (d). 
This implies that $(F,G)$ is of non-pm7 type, 
for otherwise $s=3$ and $\deg f_3>2\delta $ 
by Claim~\ref{claim:pm7} (iv), 
contradicting (d) and $t\ge 3$. 
From ($3^{\rm u}$) and (d), we also have 
\begin{equation}\label{eq:1/t+1/s}
s-2<\frac{2s}{t}\ \Rightarrow \ 
\frac{1}{2}<\frac{1}{s}+\frac{1}{t}
\ \Rightarrow \ 
(s,t)=(3,5),(5,3),(3,3). 
\end{equation}
Since $p\nmid s$, 
$(F,G)$ is of non-ps type. 
Thus, 
we obtain $\gamma _2>\deg f_2$ by Remark~\ref{rem:diff forms}~(iii) (1), 
and so $\deg \phi >\deg f_2$ by (c). 
This contradicts the supposition.

Next, assume that $f_1^{\w }\in k[f_3^{\w }]$. 
Then, Claim~\ref{claim:H} (iv) gives 
$s=3$ (e), 
$\deg f_3=(3/2)\delta $ (f), 
and $f_1^{\w }\approx (f_3^{\w })^2$ (g). 
Since $\deg f_3<2\delta $ by (f), 
we see from Claim~\ref{claim:pm7} (iii)--(v) that 
$(F,G)$ is of non-ps and non-pm7 type and $m_G^F=1$. 
Hence, from 
(\ref{eq:pfSUred5.5 p}), Claim~\ref{claim:D} (i), and (e), 
we get $\gamma _2=(s-2)\delta +\gamma _1>(2s-2)\delta =4\delta $. 
Thus, in view of (a) and (g), 
applying Corollary~\ref{cor:pSU0} (i) to $\phi $ 
yields $\deg \phi \ge \min \{ M_0,pM_1\} $, 
where $M_0=
-\deg f_3 +\gamma _2
>\frac{5}{2}\delta $ 
and $M_1=M_0-\frac{1}{p-1}((2+1)\deg f_3-\gamma _2)
>(\frac{5}{2}-\frac{1}{2(p-1)})\delta >\frac{2}{p}\delta $. 
This contradicts the supposition 
that $\deg \phi \le \deg f_2=2\delta $. 
\end{proof}

\begin{rem}\label{exc:c1-c6 0}\rm
Let $(F,G)\in (\Aut _k\kx )^2$ be an arbitrary pair.

\nd (i) 
If $(F,G)$ satisfies (SU$1'$), 
then so does $((f_1+q_1,f_2+q_2,f_3),G)$ 
for all $(q_1,q_2)\in k[f_2,f_3]\times k[f_3]$ 
(e.g., 
$(FE,G)$ with 
$E\in \E _1 \cup \{ E \in \E _2\mid E(x_2)\in x_2+k[x_3]\} $).

\nd (ii) 
Assume that $(F,G)$ satisfies 
(SU$3'$) and (SU4)--(SU6), 
and let $P\in \Aut _k\kx $ be such that 
[a] $\deg p_i\leq \deg g_i$ for $i=1,2$ and 
[b] $p_3^{\w }\in k^*f_3^{\w }+k[g_1^{\w },g_2^{\w }]$. 
Then, we can verify that $(P,GH)$ satisfies 
(SU$2'$), (SU$3'$), and (SU4)--(SU6) for all $H\in \D _3(k)$. 
In fact, 
since $f_3^{\w }\not\in k[g_1^{\w },g_2^{\w }]$ by assumption, 
[b] implies that $\deg p_3=\deg f_3$.

We remark that [a] (resp.\ [b]) holds if 
[a$'$] 
$(F,G)$ satisfies (SU$2'$) and $\deg p_i\le \deg f_i$ for $i=1,2$ 
(resp.\ [b$'$] $p_3=f_3$). 
\end{rem}

\begin{proof}[Proof of Lemma~{\rm \ref{prop:c1-c6}}]
(i) 
Applying Remark~\ref{exc:c1-c6 0} (ii) with $(F,G,P):=(F,G,FE)$ 
verifies that 
$(FE,G)$ satisfies 
(SU$2'$), (SU$3'$), and (SU4)--(SU6), 
since [a$'$] and [b$'$] hold 
by the assumption that $E\in \E _1\cup \E _2$ 
and $\deg FE\le \deg F$. 
For (SU$1'$), 
by Remark~\ref{exc:c1-c6 0}~(i), 
it suffices to show that 
$E\in \E _1\cup \{ E \in \E _2\mid E(x_2)\in x_2+k[x_3]\} $. 
If not, 
then we have $FE=(f_1,f_2+\phi ,f_3)$ 
for some $\phi \in k[S_2]\sm k[f_3]$. 
Since $\deg \phi >\deg f_2$ by Claim~\ref{claim:S2}, 
this contradicts $\deg FE\le \deg F$.

(ii) 
We can write $FE=(f_1,f_2,f_3')$, 
where $f_3':=f_3+\phi $, 
and $\phi \in k[S_3]$ with $\deg \phi \le \deg f_3$ (a). 
Since $(F,G)$ is proper by assumption, 
Claim~\ref{claim:S3} applies.

(1) 
Claim~\ref{claim:S3} (i) implies that 
$k[f_2,f_3]=k[f_2,f_3']$ (b), 
$k[f_3]=k[f_3']$ or $g_2-f_2\in k$ (c), and 
$\phi \in k[g_2]$ (d). 
Since $(F,G)$ satisfies (SU$1'$), 
(b)--(d) imply that so does $(FE,G)$. 
We verify the remaining conditions by applying 
Remark~\ref{exc:c1-c6 0}~(ii) with $(F,G,P):=(F,G,FE)$. 
Since [a$'$] holds, 
we only check [b]. 
Since $f_3^{\w }\not\in k[g_2^{\w }]$ 
by (SU4) applied to $(F,G)$, 
and $\phi ^{\w }\in k[g_2^{\w }]$ by (d), 
we have $f_3^{\w }+\phi ^{\w }\ne 0$. 
Hence, 
by (a), we obtain 
$(f_3')^{\w }=(f_3+\phi )^{\w }\in \{ f_3^{\w }+\phi ^{\w },f_3^{\w }\} 
\subset f_3^{\w }+k[g_2^{\w }]$, 
proving~[b].

(2) Take $j\in \{ 1,2\} $ with $\deg f_3=\deg f_j$. 
Then, 
by ($1^\circ $) and Claims~\ref{claim:g2} (i), 
\ref{claim:g1} (i), and \ref{claim:S3} (ii) and (iii), 
we can write $g_j=f_j+t_1f_3+\psi _1$ and $f_3'=f_3+t_2f_j+\psi _2$, 
where $t_1,t_2\in k$, 
and where 
$\psi _i\in k[f_2]=k[g_2]$ with $\deg \psi _i<\deg g_1=\deg f_1$ if $j=1$, 
and $\psi _i\in k$ if $j=2$, for $i=1,2$ (e). 
We define 
\begin{align*}
&(F',G'):=
\left\{ \!\!
\begin{array}{ll}
((f_1,f_2,f_3'),((1-t_1t_2)^{-1}g_1,g_2,(1-t_1t_2)g_3)) & 
\text{if $j=1$ and $t_1t_2\ne 1$} \\ 
((f_1,f_2,f_3'),(g_1,(1-t_1t_2)^{-1}g_2,(1-t_1t_2)g_3)) & 
\text{if $j=2$ and $t_1t_2\ne 1$} \\ 
((f_3',f_2,f_1),(t_1^{-1}g_1,g_2,-t_1g_3)) & 
\text{if $j=1$ and $t_1t_2=1$} \\ 
((f_1,f_3',f_2),(g_1,t_1^{-1}g_2,-t_1g_3)) & 
\text{if $j=2$ and $t_1t_2=1$.} 
\end{array}\!\! \right.
\end{align*}
Since $(F',G')$ has the form $((FE)_{\tau },GH)$ 
for some $\tau \in \sym _3$ and $H\in {\rm D}_3(k)$, 
it suffices to verify that $(F',G')$ is a weak SU pair.

By (SU$1'$) applied to $(F,G)$, 
we have 
$g_1-f_1\in k[f_2,f_3]$ (f) 
and $g_3-f_3\in k[g_1,g_2]$ (g). 
If $j=2$, then 
$g_1-f_1\in 
k[f_2,f_3']$ 
holds by (f), 
since $f_3'\in f_3+kf_2+k$. 
If $j=1$, then $g_2-f_2=e\in k[f_3']$ holds, 
since $b=0$ by ($1^\circ $). 
The following equalities, 
combined with (e) and (g), 
complete the proof that $(F',G')$ satisfies (SU$1'$): 
\begin{align*}
&\left\{ \!\!\begin{array}{l}
g_j-(1-t_1t_2)f_j
=t_1f_3'+\psi _1-t_1\psi _2\\
(1-t_1t_2)g_3-f_3'
=(1-t_1t_2)(g_3-f_3)-t_2g_j+t_2\psi _1-\psi _2 
\end{array}\right.\quad \text{if $t_1t_2\ne 1$},\\
&\left\{  \!\!\begin{array}{l}
t_1^{-1}g_j-f_3'
=t_1^{-1}\psi _1-\psi _2 \\
-t_1g_3-f_j
=-t_1(g_3-f_3)-g_j+\psi _1
\end{array}\right.\quad \text{if $t_1t_2=1$}. 
\end{align*}

For the remaining conditions, 
it suffices to verify [a] and [b] in Remark~\ref{exc:c1-c6 0}~(ii) 
with $(F,G,P):=(F,G,F')$. 
Since 
$\deg \psi _i<\deg f_j=\deg f_3$ for $i=1,2$ by (e), 
and $f_j^{\w }\not\approx f_3^{\w }$ by ($2^\circ $), 
we have 
$g_j^{\w }=f_j^{\w }+t_1f_3^{\w }$ and 
$(f_3')^{\w }
=f_3^{\w }+t_2f_j^{\w }
=(1-t_1t_2)f_3^{\w }+t_2g_j^{\w }$. 
This implies that 
$(f_3')^{\w }\in k^*f_3^{\w }+kg_j^{\w }$ 
if $t_1t_2\ne 1$, and 
$\deg f_3'=\deg g_j$ and $f_j^{\w }\in k^*f_3^{\w }+g_j^{\w }$ 
if $t_1t_2=1$. 
By (SU$2'$) applied to $(F,G)$, 
we also have $\deg f_l\le \deg g_l$ for $l=1,2$. 
These verify [a] and [b]. 
\end{proof}

\subsection{Proof of Lemma~\ref{prop:structure2}}
\label{sect:proof of key lemma}

Throughout this subsection, 
let $F$ be an arbitrary element of $\Aut _k\kx $ satisfying 
$\llparenthesis 1 \rrparenthesis$--$\llparenthesis 4 \rrparenthesis$. 
We remark that 
$\llparenthesis 1 \rrparenthesis$ and $\llparenthesis 2 \rrparenthesis$ 
imply the following: 
$\llparenthesis 5 \rrparenthesis$ 
$(\deg f_1,\deg f_2)=(s\delta ,2\delta )$; 
$\llparenthesis 6 \rrparenthesis$ 
$\delta \le (s-2)\delta <\deg f_3$; 
$\llparenthesis 7 \rrparenthesis$ 
$\deg f_1<\deg f_2f_3$ 
by $\llparenthesis 5 \rrparenthesis$ and $\llparenthesis 6 \rrparenthesis$; 
$\llparenthesis 8 \rrparenthesis$ $\deg df_1\wedge df_2<\deg f_3$; and 
$\llparenthesis 9 \rrparenthesis$ $\deg f_3<\deg f_1$.

\begin{lem}\label{note:prop:key2}
\nd{\rm (i)} The following 
$\llparenthesis 10 \rrparenthesis$--$\llparenthesis 12 \rrparenthesis$ 
hold$:$ 

$\llparenthesis 10 \rrparenthesis$ 
$f_2^{\w }\not\in k[f_1^{\w },f_3^{\w }];$ 
\quad 
$\llparenthesis 11 \rrparenthesis$ 
$f_1^{\w }\not\in k[f_2^{\w },f_3^{\w }]\sm k[f_3^{\w }];$ 

$\llparenthesis 12 \rrparenthesis$ 
If $f_1^{\w }\in k[f_3^{\w }]$, 
then $s=3$ and $\deg f_3=(3/2)\delta $.

\nd{\rm (ii)} 
If $2\deg f_1=t\deg f_3$ for some odd $t\ge 3$, 
then we have 
$t=5$ or $\deg f_2\le \deg f_3$.

\nd{\rm (iii)} 
If $(F_{\sigma },G)$ is a weak SU pair 
for some $\sigma \in \mathfrak{S}_3$ and $G\in \Aut _k\kx $, 
then we have $\sigma =\id $, 
and $(F,G)$ is not proper.

\end{lem}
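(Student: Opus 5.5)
Throughout, $F$ satisfies $\llparenthesis 1 \rrparenthesis$--$\llparenthesis 9 \rrparenthesis$. Write $\deg f_3=\lambda\delta$ with $s-2<\lambda<s$, and note $f_1^{\w}\approx (f_2^{\w})^{s/2}$ up to scalars.

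For (i), I first prove $\llparenthesis 10 \rrparenthesis$. Since $\deg f_2=2\delta<\deg f_1$, any monomial $(f_1^{\w})^i(f_3^{\w})^j$ of degree $2\delta$ has $i=0$. So $f_2^{\w}\in k[f_1^{\w},f_3^{\w}]$ forces $f_2^{\w}\approx (f_3^{\w})^j$. The case $j=1$ contradicts $\deg f_3>(s-2)\delta\ge\delta$ only when combined with $\llparenthesis 4 \rrparenthesis$: we would get $f_3^{\w}\approx f_2^{\w}\in k[f_2^{\w}]$. The case $j\ge 2$ gives $\deg f_3\le \delta$, contradicting $\llparenthesis 6 \rrparenthesis$.

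For $\llparenthesis 11 \rrparenthesis$, suppose $f_1^{\w}$ is a combination of $(f_2^{\w})^i(f_3^{\w})^j$ of degree $s\delta$. By Remark~\ref{rem:homogeneous} some term with $j\ge 1$ appears, unless $f_1^{\w}\in k[f_3^{\w}]$. The terms with $i\ge 1$, $j\ge 1$ have degree $\ge \deg f_2f_3>\deg f_1$ by $\llparenthesis 7 \rrparenthesis$, so they do not occur. Hence $f_1^{\w}=\alpha (f_2^{\w})^{s/2}+\beta (f_3^{\w})^{j}$. Here $(f_2^{\w})^{s/2}$ is read through $\llparenthesis 1 \rrparenthesis$, and in fact the $f_2$-only term is absent because $s$ is odd. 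So $f_1^{\w}\in k[f_3^{\w}]$.

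For $\llparenthesis 12 \rrparenthesis$, $f_1^{\w}\approx (f_3^{\w})^l$ with $l\ge 2$ by $\llparenthesis 9 \rrparenthesis$. Then $s\delta=l\lambda\delta>l(s-2)\delta$, which forces $(s,l)=(3,2)$ and $\lambda=3/2$.

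For (ii), the equation gives $\lambda=2s/t$, and $\lambda>s-2$ gives $1/s+1/t>1/2$. Hence $(s,t)\in\{(3,3),(3,5),(5,3)\}$. If $t=3$, then $\lambda=2s/3\ge 2$, so $\deg f_2\le\deg f_3$. The remaining case is $t=5$.

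For (iii), which I expect to be the main work, I use Claim~\ref{claim:H}~(i) for the pair $(F_\sigma,G)$: the first coordinate of $F_\sigma$ has the strictly largest degree. By $\llparenthesis 5 \rrparenthesis$ and $\llparenthesis 9 \rrparenthesis$ this coordinate is $f_1$. Hence $\sigma=\id$ or $\sigma=(2\,3)$.

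To exclude $\sigma=(2\,3)$, I look at the $\clubsuit$ row of Table~2 for $F_\sigma=(f_1,f_3,f_2)$. Case $<$ would need $3\deg f_3=4\deg f_2$, so $\deg f_3=\tfrac{8}{3}\delta$; this means $s=3$ with $\deg f_1<\deg g_1$ and $\deg f_3<2\deg f_2$ inside Table~2's constraints, and I would rule it out by comparing $\deg f_3=(3/2)\delta'$ where $\delta'=\tfrac12\deg f_3$. Case $=$ needs $2\deg f_1=t\deg f_3$, so by (ii) either $t=5$ or $\deg f_2\le\deg f_3$. Both conflict with the row ``$\deg(\text{third})$'' in Table~2, which requires the third coordinate $f_2$ to satisfy $\deg f_2>(t-2)\tfrac12\deg f_3$ and related bounds, together with $\llparenthesis 10 \rrparenthesis$ via Claim~\ref{claim:g2}~(iv).

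For $\sigma=\id$, properness would give $\gamma_3\ge\gamma_1$ by Remark~\ref{rem:diff forms}~(i). On the other hand $\gamma_3=\deg df_1\wedge df_2<\deg f_3$ by $\llparenthesis 8 \rrparenthesis$, while $\gamma_1>\deg f_3$ in all types by Remark~\ref{rem:diff forms}~(iii)(1) and Claims~\ref{claim:D}, \ref{claim:pm7.1}. The pm7 non-pm7.1 case is excluded because $c\ne0$ and $\llparenthesis 3 \rrparenthesis$ make the pair pm7.1. This contradiction shows $(F,G)$ is not proper.
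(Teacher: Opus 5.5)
Your parts (i) and (ii), the reduction to $\sigma(1)=1$ via Claim~\ref{claim:H}~(i), and the non-properness argument for $\sigma=\id$ are correct and essentially the paper's. For the last of these, properness gives $\gamma_3\ge\gamma_1$. On the other hand, $\gamma_1>\deg g_1\ge\deg f_3>\gamma_3$ holds in the non-ps/non-pm7, ps and pm7.1 cases. In the pm7 case, properness together with $\llparenthesis 3\rrparenthesis$ forces pm7.1.

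The gap is in excluding $\sigma=(2\,3)$. In Case $=$ you claim that both outcomes of (ii) contradict the degree bounds of Table~2 for $F_\sigma=(f_1,f_3,f_2)$. They do not. Put $\delta':=\frac12\deg f_3$ and $s':=t$.
\begin{itemize}
\item For $(s,t)=(3,5)$: $\deg f_3=\frac65\delta$, $\delta'=\frac35\delta$, and $\deg f_2=2\delta=\frac{10}{3}\delta'$.
\item For $(s,t)=(3,3)$: $\deg f_2=\deg f_3=2\delta'$.
\item For $(s,t)=(5,3)$: $\deg f_2=\frac65\delta'$.
\end{itemize}
In each case $(s'-2)\delta'<\deg f_2\le s'\delta'$ holds, so $(3^{\rm u})$ and the other rows of Table~2 are satisfied with $m=1$. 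Moreover, $\llparenthesis 4\rrparenthesis$ and $\llparenthesis 10\rrparenthesis$ are exactly what Claim~\ref{claim:g2}~(iv) requires of $F_\sigma$, so no contradiction arises there either.

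Your Case $<$ discussion is likewise not an argument: the relation $\deg f_2=\frac32\delta'$ is just $\clubsuit$ restated. It can be closed by Claim~\ref{claim:P5}, which would force $3\delta=\deg f_1\ge\frac52\delta'=\frac{10}{3}\delta$, but you do not say this.

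What is missing is the differential-form ordering. In the paper, (ii) is used only to show the following: a pair $(F_\sigma,G)$ of ps or pm7 type with $\sigma=(2\,3)$ (such pairs lie in Case $=$) would in fact be of non-ps, non-pm7 type. Indeed, $s'=5$ is prime to $p\ge 7$ and is not $3$, and $\deg f_2\le 2\delta'$ excludes both the pm7 column ($>2\delta'$) and the ps column ($>5\delta'$).

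The non-ps, non-pm7 pairs, which include all of Case $<$, are handled separately. By Remark~\ref{rem:diff forms}~(iii)(2), every coordinate degree of $F_\sigma$ lies below both $\gamma_1^{F_\sigma}$ and $\gamma_2^{F_\sigma}$. Since $\deg df_1\wedge df_2<\deg f_3$ by $\llparenthesis 8\rrparenthesis$, the form $df_1\wedge df_2$ must realize $\gamma_3^{F_\sigma}$, so $f_1,f_2$ occupy positions $1,2$ and $\sigma(3)=3$. For $\sigma=(2\,3)$ this form realizes $\gamma_2^{F_\sigma}$ instead, which is the contradiction. Without this step, or an equivalent substitute, your proof does not rule out $\sigma=(2\,3)$.
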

\begin{proof}
(i) 
Since $\deg f_2<\deg f_1$ by $\llparenthesis 5 \rrparenthesis$, 
$f_2^{\w }\not\approx f_3^{\w }$ by $\llparenthesis 4 \rrparenthesis$, 
and 
$\deg f_2<\deg f_3^2$ 
by $\llparenthesis 5 \rrparenthesis$ and $\llparenthesis 6 \rrparenthesis$, 
we get $\llparenthesis 10 \rrparenthesis$. 
By $\llparenthesis 1 \rrparenthesis$, 
we have $f_1^{\w }\not\in k[f_2^{\w }]$. 
Hence, 
$\llparenthesis 7 \rrparenthesis$ 
yields $\llparenthesis 11 \rrparenthesis$. 
In view of $\llparenthesis 9 \rrparenthesis$ 
and $\llparenthesis 6 \rrparenthesis$, 
we can verify $\llparenthesis 12 \rrparenthesis$ 
similarly to Case $=$ of Claim~\ref{claim:H}~(iv).

(ii) 
Since $(s-2)\delta <\deg f_3=(2/t)\deg f_1=(2s/t)\delta $ by 
$\llparenthesis 6 \rrparenthesis$ and $\llparenthesis 5 \rrparenthesis$, 
(\ref{eq:1/t+1/s}) holds. 
If $t\ne 5$, 
then $(s,t)\in \{ (5,3),(3,3)\} $, 
so $\deg f_3=(2s/t)\delta \ge 2\delta =\deg f_2$ 
by~$\llparenthesis 5 \rrparenthesis$.

(iii) 
Since $\deg f_1>\deg f_i$ for $i=2,3$ by 
$\llparenthesis 5 \rrparenthesis$ and $\llparenthesis 9 \rrparenthesis$, 
Claim~\ref{claim:H}~(i) gives $\sigma (1)=1$.

First, assume that 
$(F_\sigma ,G)$ is of non-ps and non-pm7 type. 
Then, by $\llparenthesis 8 \rrparenthesis$, 
applying Remark~\ref{rem:diff forms}~(iii) (2) to $(F_\sigma ,G)$ 
shows that 
$\gamma _2^{F_\sigma }>\gamma _1^{F_\sigma }>\gamma _3^{F_\sigma }
=\deg df_1\wedge df_2$ 
and $(F_{\sigma },G)$ is not proper. 
The former implies that $\sigma (3)=3$, and hence $\sigma =\id $.

Next, assume that $(F_\sigma ,G)$ is of ps or pm7 type. 
Suppose that 
$\sigma \ne \id $, i.e.,~$F_\sigma =(f_1,f_3,f_2)$. 
Then, 
Case $=$ of $\clubsuit $ defines $t$ as in (ii). 
If $t=5$, 
then $(F_\sigma ,G)$ is of non-ps and non-pm7 type. 
The same holds if $\deg f_2\le \deg f_3$, 
as it corresponds to the ``$\deg f_3\le 2\delta $" case 
in Table 2. 
This is a contradiction, proving $\sigma =\id $.

If $(F,G)$ is of ps or pm7.1 type, 
then Claim~\ref{claim:D} (i) or Claim~\ref{claim:pm7.1}~(ii) 
yields $\gamma _1>\deg g_1$. 
Since $\deg g_1\ge \deg f_3>\deg df_1\wedge df_2=\gamma _3$ 
by (SU4) and $\llparenthesis 8 \rrparenthesis$, 
it follows that $\gamma _1>\gamma _3$. 
If $(F,G)$ is of pm7 and non-pm7.1 type, 
then we must have $c=0$ due to 
$\llparenthesis 3 \rrparenthesis$. 
In either case, 
$(F,G)$ is not proper 
by parts (i) and (ii) of Remark~\ref{rem:diff forms}. 
\end{proof}

We now prove Lemma~\ref{prop:structure2}. 
Lemma~\ref{note:prop:key2} (iii) 
and Remark~\ref{exc:used for Claim B}~(i) directly imply~(i). 
Next, to prove (ii) and (iii), 
note that $\llparenthesis 2 \rrparenthesis$ and 
$\llparenthesis 5 \rrparenthesis$ imply 
the assumptions of Theorem~\ref{prop:Exc27,28}. 
For (ii), 
by Lemma~\ref{def:initial algebra}, 
we can choose $\phi \in k[S_2]$ with $\phi ^{\w }=f_2^{\w }$. 
Then, $\llparenthesis 10 \rrparenthesis$ gives 
$\phi ^{\w }\not\in k[f_1^{\w },f_3^{\w }]$, 
and so $\phi \in \mathcal{D}(f_1,f_3)$ 
by Remark~\ref{rem:(1.3.7)}~(ii). 
Hence, Theorem~\ref{prop:Exc27,28} (ii) 
yields $f_1^{\w }\approx (f_3^{\w })^2$.

Finally, for (iii), 
take $\phi \in k[S_1]$ with $\phi ^{\w }=f_1^{\w }$ (a). 
Then, 
$\llparenthesis 11 \rrparenthesis$ 
and parts (ii) and (i) of Remark~\ref{rem:(1.3.7)} yield 
the chain of implications 
\begin{equation}\label{eq:implication}
f_1^{\w }\not\in k[f_3^{\w }] 
\ \Rightarrow \ 
\phi ^{\w }=f_1^{\w }\not\in k[f_2^{\w },f_3^{\w }]
\ \Rightarrow \ 
\phi \in \mathcal{D}(f_1,f_3) 
\ \Rightarrow \ 
\text{(1$^\star $)}.
\end{equation}
By $\llparenthesis 1 \rrparenthesis$, 
$f_1^{\w }\in k[f_3^{\w }]$ also implies (1$^\star $), 
proving (1$^\star $). 
Next, set $f_1':=f_1-\phi $. 
Then, (a) gives $\deg f_1'<\deg f_1=s\delta $ (b). 
In view of Remark~\ref{rem:ch2} (iv), 
we may also assume that $(f_1')^{\w }\not\in k[S_1]^{\w }$ (c) 
by replacing $\phi $ if necessary. 
The following claim now completes the proof of (iii).

\begin{klaim}\label{claim:step1}\it 
{\rm (I)} If $\phi \in \mathcal{D}(f_1,f_3)$, 
then {\rm (2$^\star $)} holds for $(f_1',f_2,f_3)$, 
and $((f_1',f_2,f_3)_{\sigma },G)$ is not a weak SU pair 
for any $\sigma \in \sym _3$ and $G\in \Aut _k\kx $.

\nd {\rm (II)} 
If $\phi \not\in \mathcal{D}(f_1,f_3)$, 
or equivalently, $\deg ^{S_1}\phi =\deg \phi $, 
then there exists $f_1''\in f_1'+kf_2$ 
such that 
{\rm (2$^\star $)} and {\rm (3$^\star $)} 
hold for $(f_1'',f_2,f_3)$. 
\end{klaim}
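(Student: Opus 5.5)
The plan is to treat the two cases of the Claim separately. Throughout, $\phi\in k[S_1]=k[f_2,f_3]$, $\phi^{\w}=f_1^{\w}$, $f_1'=f_1-\phi$, and (b), (c) hold. In Case (I) the membership $\phi\in\mathcal{D}(f_1,f_3)$ should be read as $\phi\in\mathcal{D}(f_2,f_3)$, since $\phi\in k[S_1]$.

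\emph{Case (I).} First I place $F$ in the setting of Theorem~\ref{prop:Exc27,28} (i). Condition $\llparenthesis 2\rrparenthesis$ gives its degree hypothesis. $\llparenthesis 4\rrparenthesis$ gives $f_3^{\w}\not\in k[f_2^{\w}]$. Finally $f_2^{\w}\not\in k[f_3^{\w}]$ holds because $f_2^{\w}\not\approx f_3^{\w}$ and $\deg f_2<\deg f_3^2$ by $\llparenthesis 6\rrparenthesis$. The theorem then yields $(7/3)\delta<\deg f_1'<3\delta$ and $\deg f_3=(4/3)\delta$. With these values, Example~\ref{ex:WSU sigma=id} (1), applied to $(f_1',f_2,f_3)$, shows that no $((f_1',f_2,f_3)_\sigma,G)$ is a weak SU pair.

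For (2$^\star$), by Remark~\ref{rem:sim} (ii) it suffices to show three things: $(f_1')^{\w}\not\in k[S_1]^{\w}$, which is (c); $f_3^{\w}\not\in k[f_1',f_2]^{\w}$; and $f_2^{\w}\not\in k[f_1',f_3]^{\w}$.

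\begin{itemize}
\item Suppose $f_3^{\w}=\psi^{\w}$ with $\psi\in k[f_1',f_2]$. Then $\psi\not\in k[f_2]$, because $f_3^{\w}\not\in k[f_2^{\w}]$. Remark~\ref{rem:deg^fg} then gives $\deg^{f_2,f_1'}\psi\ge\deg f_1'>\deg\psi$, so $\psi\in\mathcal{P}(f_2,f_1',m)$ for some $m\ge1$. Moreover $(f_1')^{\w}\not\in k[f_2^{\w}]$, since $\deg f_1'\not\in 2\Z\delta$. Theorem~\ref{ex:ineq1} (i) then forces $\deg f_1'=b\delta$ with $b\ge 3$ odd, which is impossible in $((7/3)\delta,3\delta)$.
\item Suppose $f_2^{\w}=\psi^{\w}$ with $\psi\in k[f_1',f_3]$. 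Then $\psi\in\mathcal{P}(f_3,f_1',m)$ in the same way. If $(f_1')^{\w}\in k[f_3^{\w}]$, then $(f_1')^{\w}\approx(f_3^{\w})^2$, which contradicts (c). Otherwise Theorem~\ref{ex:ineq1} (i) gives $\deg f_1'=(2b/3)\delta$ with $b$ odd, and no odd $b$ puts this value in $((7/3)\delta,3\delta)$.
\end{itemize}

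\emph{Case (II).} Since $\deg^{S_1}\phi=\deg\phi=s\delta$, the form $\phi^{\w}=f_1^{\w}$ is a combination of $(f_2^{\w})^i(f_3^{\w})^j$ with $\deg f_2^if_3^j=s\delta$. The inequality $(s-2)\delta<\deg f_3<s\delta$ allows only $j\le 2$. Here $j=0$ is excluded by $\llparenthesis 1\rrparenthesis$ and parity. The case $j=1$ gives $f_1^{\w}\in k[f_2^{\w}]f_3^{\w}$, which I expect to contradict $\llparenthesis 11\rrparenthesis$ and $\llparenthesis 2\rrparenthesis$. So $j=2$ must appear, which forces $s=3$, $\deg f_3=(3/2)\delta$, and $\phi=af_3^2+cf_3+lf_2+l'$ with $a=f_1^{\w}(f_3^{\w})^{-2}\in k^*$. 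By $\llparenthesis 2\rrparenthesis$ we also get $\deg df_1\wedge df_2\le(1/2)\delta$. This is exactly the hypothesis (\ref{eq:ex:total}) of Theorem~\ref{ex:total}.

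Next I apply Theorem~\ref{ex:total} (iv); its hypothesis $(f_1')^{\w}\not\approx f_i^{\w}$ follows from (c). This gives $f_1''\in f_1'+kf_2$ with $(f_1'')^{\w}=(f_1')^{\w}$ and with $f_2^{\w}\not\in k[f_1'',f_3]^{\w}$, $f_3^{\w}\not\in k[f_1'',f_2]^{\w}$. Also $(f_1'')^{\w}\not\in k[S_1]^{\w}$, by (c). Together these give (2$^\star$). For (3$^\star$), Theorem~\ref{ex:total} (i) gives $\delta<\deg f_1''<3\delta$ and $\deg df_1''\wedge df_2-\deg df_2\wedge df_3=\deg f_3=(3/2)\delta$, and Example~\ref{ex:WSU sigma=id} (2) then forces $\sigma=\id$.

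The step I expect to be the main obstacle is the final part of (3$^\star$): showing that $(F,G)$ is a weak SU pair once $((f_1'',f_2,f_3),G)$ is. Since $f_1\in f_1''+kf_3^2+kf_3+kf_2+k$, (SU$1'$) transfers by Remark~\ref{exc:c1-c6 0} (i). The remaining conditions follow from Remark~\ref{exc:c1-c6 0} (ii) once I have $\deg f_1=3\delta\le\deg g_1$. I plan to get this from Table~2: because $2\deg f_1''\ne s\deg f_2$, the pair $((f_1'',f_2,f_3),G)$ must be in Case $<$, and there $\deg g_1=2\deg f_3=3\delta$.
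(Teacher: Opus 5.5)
Your proposal is correct and follows the paper's proof essentially step by step. For (I) you use Theorem~\ref{prop:Exc27,28}~(i), Example~\ref{ex:WSU sigma=id}~(1) and Theorem~\ref{ex:ineq1}; for (II) you use Theorem~\ref{ex:total}~(i),~(iv), Example~\ref{ex:WSU sigma=id}~(2) and Case~$<$ of Table~2. You are also right that $\mathcal{D}(f_1,f_3)$ should read $\mathcal{D}(f_2,f_3)$.

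The only deviation is in (II), where you obtain $s=3$ and $\deg f_3=(3/2)\delta$ by counting monomials instead of citing (\ref{eq:implication}) and $\llparenthesis 12\rrparenthesis$. Your hedged $j=1$ step closes at once: a term $(f_2^{\w})^if_3^{\w}$ of degree $s\delta$ with $\deg f_3>(s-2)\delta$ forces $i=0$, hence $\deg f_3=s\delta$, which contradicts $\llparenthesis 2\rrparenthesis$.
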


\begin{proof}
(I) 
By $\llparenthesis 10 \rrparenthesis$ and 
$\llparenthesis 4 \rrparenthesis$, we have 
$f_2^{\w }\not\in k[f_3^{\w }]$ and $f_3^{\w }\not\in k[f_2^{\w }]$ (d). 
Hence, 
it follows from (a), $\phi \in \mathcal{D}(f_1,f_3)$, 
and Theorem~\ref{prop:Exc27,28} (i) that 
$(7/3)\delta <\deg f_1'<3\delta $ and $\deg f_3=(4/3)\delta $ (e). 
Therefore, the last part follows from 
Example~\ref{ex:WSU sigma=id}~(1).

For (2$^\star $), by (c), 
it suffices to verify that $f_u^{\w }\not\in k[f_1',f_t]^{\w }$ 
for $(t,u)=(2,3),(3,2)$ 
(see Remark~\ref{rem:sim}). 
Note that $\deg f_3<\deg f_2<\deg f_1'$ by (e), 
$(f_1')^{\w }\not\in k[f_t^{\w }]$ by (c), 
and $f_u^{\w }\not\in k[f_t^{\w }]$ by (d). 
Hence, 
the condition $\mathfrak{f}(1,t,u)$ 
defined before Lemma~\ref{exc:ineq1} 
holds for $(f_1',f_2,f_3)$. 
Thus, supposing $f_u^{\w }\in k[f_1',f_t]^{\w }$, 
Lemma~\ref{exc:ineq1} (i) (1) yields an odd $b\ge 3$ 
such that 
$(2/b)\deg f_1'=\deg f_t
\in \{ \deg f_2,\deg f_3\} =\{ 2\delta ,(4/3)\delta \} $. 
This contradicts $(7/3)\delta <\deg f_1'<3\delta $.

(II) 
We define $f_1''$ 
by applying Theorem~\ref{ex:total} (iv) to $f_1'$. 
To this end, 
we first note that 
$f_1^{\w }\in k[f_3^{\w }]$ by (\ref{eq:implication}), 
since $\phi \not\in \mathcal{D}(f_1,f_3)$ by assumption. 
Hence, 
from $\llparenthesis 12 \rrparenthesis$, 
we get 
$s=3$ and $\deg f_3=(3/2)\delta $ (f). 
Thus, 
$\llparenthesis 2 \rrparenthesis$ yields 
$\deg df_1\wedge df_2\le \deg f_3-(s-2)\delta =(1/2)\delta $. 
The assumption, 
(a), $\llparenthesis 5 \rrparenthesis$, and (e) give 
$\deg ^{S_1}\phi =\deg \phi =\deg f_1=s\delta =3\delta $. 
Since $(\deg f_2,\deg f_3)=(2\delta ,(3/2)\delta )$, 
this implies that 
$\phi \in af_3^2+kf_3+kf_2+k$ for some $a\in k^*$, 
yielding $f_1^{\w }=\phi ^{\w }=a(f_3^{\w })^2$ 
and $f_1'=f_1-\phi \in f_1-af_3^2+kf_3+kf_2+k$. 
By (c), 
$(f_1')^{\w }\not\approx f_i$ holds for $i=2,3$.

Now, take $f_1''\in f_1'+kf_2$ as in Theorem~\ref{ex:total} (iv). 
Then, 
(2$^\star $) holds for $F'':=(f_1'',f_2,f_3)$, 
since 
$f_2^{\w }\not\in k[f_1'',f_3]^{\w }$, 
$f_3^{\w }\not\in k[f_1'',f_2]^{\w }$, 
and $(f_1'')^{\w }=(f_1')^{\w }\not\in k[S_1]^{\w }$ by (c). 
Also note that Theorem~\ref{ex:total}~(i) 
still holds with $f_1'$ replaced by $f_1''$ (g).

For (3$^\star $), 
assume that $(F''_\sigma ,G)$ is a weak SU pair 
for some $\sigma \in \sym _3$ and $G\in \Aut _k\kx $. 
Then, 
(g), (f), and Example~\ref{ex:WSU sigma=id}~(2) 
imply that $\sigma =\id $, 
i.e., 
$(F'',G)$ is a weak SU pair. 
Then, 
since $F=(f_1''+(f_1-f_1''),f_2,f_3)$ 
and $f_1-f_1''\in k[f_2,f_3]$, 
applying Remark~\ref{exc:c1-c6 0}~(i) with $(F,G):=(F'',G)$ 
shows that $(F,G)$ satisfies (SU$1'$). 
We verify the remaining conditions by applying 
Remark~\ref{exc:c1-c6 0}~(ii) with $(F,G,P):=(F'',G,F)$. 
Since [b$'$] holds, 
we only check [a]. 
By (b), 
$2\deg f_1''=2\deg f_1'<6\delta \le t\deg f_2$ 
holds for all $t\ge 3$. 
Hence, we see from $\clubsuit $ that $(F'',G)$ is in Case $<$. 
Thus, 
Claim~\ref{claim:g1}~(iii) applied to $(F'',G)$ 
yields $\deg g_1=2\deg f_3$. 
This is equal to $\deg f_1$ 
by (f) and $\llparenthesis 5 \rrparenthesis$. 
(SU$2'$) applied to $(F'',G)$ also gives $\deg f_2\le \deg g_2$, proving~[a]. 
\end{proof}

This completes the proof of 
Lemma~\ref{prop:structure2} 
and thereby of Theorem~\ref{thm:mainSU}.

\subsection{Shestakov-Umirbaev pair of ps type}\setcounter{equation}{0}

Various kinds of weak SU pairs were discussed in this section, 
but they were only theoretically considered 
and it is not clear whether they actually exist. 
When $p=0$, 
the first example of an SU pair was given by 
Shestakov-Umirbaev~\cite[Example 1]{SU} for $s=3$, 
where $\w =\mathbb{1}$. 
Later, similar examples were constructed for $s=3,5,7$ 
by van den Essen--Makar-Limanov--Willems~\cite{EMW}, 
and for all odd $s\ge 3$ by us \cite{typeI}. 
These examples involve $G\in \Aut _k\kx $ 
such that $\kappa :=(g_1^{\w })^2/(g_2^{\w })^s\in k^*$ 
and $\deg (g_1^2-\kappa g_2^s)<\deg g_1$. 
When $p\mid s$, 
however, 
no such $G$ exists, 
because 
$\deg (g_1^2-\kappa g_2^s)\ge \deg d(g_1^2-\kappa g_2^s)
=\deg 2g_1dg_1>\deg g_1$. 
Yet, there does exist an SU pair of ps type 
as constructed below.

\begin{lem}\label{lem:typeI}
Let 
$G\in \Aut _k\kx $ and $\phi \in k[g_1,g_2]$, 
and set 
$\delta :=(1/2)\deg g_2$. 
If $(g_1^{\w })^2\approx (g_2^{\w })^s$, 
$\deg g_3\le (s-2)\delta $, 
and $(s-1)\delta <\deg \phi <s\delta $ 
for some odd $s\ge 3$, 
then $(F,G)$ is an SU pair, 
where $F:=(g_1-g_3-\phi ,g_2,g_3+\phi )$. 
\end{lem}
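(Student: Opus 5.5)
We verify directly that $F:=(g_1-g_3-\phi ,g_2,g_3+\phi )$ is an automorphism and that (SU1)--(SU6) hold for $(F,G)$. First, $F=G\circ E$-type composition: indeed $F$ is obtained from $G$ by the elementary changes $g_3\mapsto g_3+\phi $ (with $\phi \in k[g_1,g_2]$), followed by $g_1\mapsto g_1-(g_3+\phi )$. Both steps are elementary automorphisms, so $F\in \Aut _k\kx $ and $k[f_1,f_2,f_3]=\kx $.

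For (SU1), note $f_1+f_3=g_1$, so $g_1\in f_1+kf_3$; $g_2=f_2$; and $g_3=f_3-\phi \in f_3+k[g_1,g_2]$. For the degrees, $(g_1^{\w })^2\approx (g_2^{\w })^s$ gives $\deg g_1=s\delta $. Since $\deg g_3\le (s-2)\delta <\deg \phi <s\delta $, we get $\deg f_3=\deg (g_3+\phi )=\deg \phi $ with $f_3^{\w }=\phi ^{\w }$, and $\deg (g_3+\phi )<\deg g_1$, so $\deg f_1=\deg g_1$. Hence (SU2) holds ($\deg f_1=\deg g_1$, $f_2=g_2$), (SU3) is the hypothesis, and (SU5) is $\deg f_3=\deg \phi >(s-2)\delta \ge \deg g_3$.

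For (SU4), $\deg f_3<s\delta =\deg g_1$ is clear. The point is $f_3^{\w }\notin k[g_1^{\w },g_2^{\w }]$. Here $\deg f_3=\deg\phi \in ((s-1)\delta ,s\delta )$. The algebra $k[g_1^{\w },g_2^{\w }]$ is spanned by monomials $(g_1^{\w })^i(g_2^{\w })^j$ of degree $(si+2j)\delta $, all lying in $\Z \delta $; the only integers in the open interval $(s-1,s)$—none exist. So no homogeneous element of $k[g_1^{\w },g_2^{\w }]$ (in the sense of Remark~\ref{rem:homogeneous}) has degree $\deg f_3$, and $f_3^{\w }\notin k[g_1^{\w },g_2^{\w }]$.

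The remaining and main step is (SU6): $\deg g_3<\deg g_1-\deg g_2+\deg dg_1\wedge dg_2=(s-2)\delta +\deg dg_1\wedge dg_2$. Since $\deg g_3\le (s-2)\delta $ and $\deg dg_1\wedge dg_2>0$ whenever $dg_1\wedge dg_2\ne 0$ (which holds as $dG\ne 0$), this is immediate from the remark after (\ref{eq:deg omega}) that $\deg \omega >0$ for nonzero $\omega $. Thus no real obstacle arises; the only care needed is in (SU4), where the non-integrality of $\deg\phi /\delta $ is what excludes $f_3^{\w }$ from the initial algebra, which is why $\phi $ (rather than $g_1^2-\kappa g_2^s$, impossible when $p\mid s$) is used.
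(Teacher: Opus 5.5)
Your proof is correct and follows the same route as the paper's. The paper also obtains $\deg f_3=\deg\phi<s\delta=\deg g_1$ and $f_3^{\w}=\phi^{\w}$ from $\deg g_3<\deg\phi$, and gets (SU4) from $\deg\phi\notin\Z\delta$ while $\deg g_1,\deg g_2\in\Z\delta$; it leaves the other conditions as direct checks, which you have written out, including $F\in\Aut_k\kx$ as a composite of $G$ with two elementary automorphisms and $\deg dg_1\wedge dg_2>0$ for (SU6).
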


\begin{proof}
Since $\deg g_3<\deg \phi $, 
we have 
$\deg f_3=\deg \phi <s\delta =\deg g_1$ 
and $f_3^{\w }=\phi ^{\w }$. 
Since $\deg \phi \not\in \Z \delta $ and 
$\deg g_1,\deg g_2\in \Z \delta $, 
we also have $\phi ^{\w }\not\in k[g_1^{\w },g_2^{\w }]$, 
proving (SU4). 
The other conditions are verified directly. 
\end{proof}

Now, 
let $\w =\mathbb{1}$, 
and take $d,e\in \N $ with $d\ge 3e+1$ and $e\ge 2$. 
Set $g_1:=x_3^{pd}+q_1$, 
$g_2:=x_3^{2d}+q_2$, 
and $\phi :=g_1^2-g_2^p-2g_2^{(p+3)/2}$, 
where $q_2:=x_3^{d-e}+x_1x_3^e+x_2$ and 
$$
q_1:=x_3^{3d}+\dfrac{3}{2}x_3^dq_2+\dfrac{3}{8}(x_3^{d-2e}+2x_1)
\in \dfrac{3}{4}x_1+k[q_2,x_3]=\dfrac{3}{4}x_1+k[g_2,x_3].
$$
Then, we have 
$(g_1^{\w },g_2^{\w })=(x_3^{pd},x_3^{2d})$ 
and 
$G:=(g_1,g_2,x_3)\in \T _3(k)$, since 
$G\sim (\frac{3}{4}x_1,g_2,x_3)\sim (\frac{3}{4}x_1,x_2,x_3)$. 
Now, observe that 
$\phi =q_1^2-q_2^p+2x_3^{pd}q_1-2g_2^{(p+3)/2}
=q_1^2-q_2^p+2(\frac{3}{8}h-q)$, 
where 
\begin{gather*}
h:=x_3^{pd}(x_3^{d-2e}+2x_1)-\underline{x_3^{(p-1)d}q_2^2}
=-x_3^{(p-1)d}(x_1^2x_3^{2e}+x_2^2+2x_1x_2x_3^e
+2x_2x_3^{d-e}), \\
q:=g_2^{(p+3)/2}
-x_3^{(p+3)d}-\dfrac{3}{2}x_3^{(p+1)d}q_2
-\dfrac{3}{8}\underline{x_3^{(p-1)d}q_2^2}
=\sum _{i=3}^{(p+3)/2}
\binom{\frac{p+3}{2}}{i}x_3^{(p+3-2i)d}q_2^i. 
\end{gather*}
This shows that $\deg \phi =pd-e+1$, 
since $p\ge 7$, 
$\deg q_1=3d$, $\deg q_2=d-e$, 
$\deg h=pd-e+1$, 
and 
$\deg q=\deg x_3^{(p-3)d}q_2^3=pd-3e$. 
Hence, by Lemma~\ref{lem:typeI}, 
$G$ and $\phi $ construct an SU pair with $s=p$. 
Since $\deg (q_1^2-g_2^3)<5d-e$, 
replacing $\phi $ with $\phi -g_2^3$ 
yields a similar example for $p\ge 5$.

\end{document}